\theoremstyle{plain}
\newtheorem{theo}[equation]{Theorem}
\newtheorem{lem}[equation]{Lemma}
\newtheorem{cor}[equation]{Corollary}
\newtheorem{prop}[equation]{Proposition}
\theoremstyle{definition}
\newtheorem{Def}[equation]{Definition}
\newtheorem{Ex}[equation]{Example}
\newtheorem{Rem}[equation]{Remark} 
\newtheorem{Not}[equation]{Notation}
\newcommand{\F}{{\mathbb F}}
\newcommand{\Q}{{\mathbb Q}}
\newcommand{\T}{{\mathbb T}}
\newcommand{\W}{{\mathbb W}}
\newcommand{\Z}{{\mathbb Z}}
\newcommand{\bfa}{{\mathbf a}}
\newcommand{\bfd}{{\mathbf d}}
\newcommand{\bfL}{{\bf L}}
\newcommand{\bfs}{{\bf s}}
\newcommand{\bft}{{\bf t}}
\newcommand{\bfu}{{\bf u}}
\newcommand{\Kappa}{\boldsymbol \kappa}
\newcommand{\Mu}{\boldsymbol \mu}
\newcommand{\ga}{{\mathfrak a}}
\newcommand{\gb}{{\mathfrak b}}
\newcommand{\gE}{{\mathfrak E}}
\newcommand{\gf}{{\mathfrak f}}
\newcommand{\gh}{{\mathfrak h}}
\newcommand{\gI}{{\mathfrak I}}
\newcommand{\gN}{{\mathfrak N}}
\newcommand{\gp}{{\mathfrak p}}
\newcommand{\gP}{{\mathfrak P}}
\newcommand{\gR}{{\mathfrak R}}
\newcommand{\cD}{{\mathcal D}}
\newcommand{\cE}{{\mathcal E}}
\newcommand{\cI}{{\mathcal I}}
\newcommand{\cO}{{\mathcal O}}
\newcommand{\cP}{{\mathcal P}}
\newcommand{\cS}{{\mathcal S}}
\newcommand{\cU}{{\mathcal U}}
\newcommand{\cV}{{\mathcal V}}
\newcommand{\cW}{{\mathcal W}}
\newcommand{\rL}{{\rm L}}
\newcommand{\rM}{{\rm M}}
\newcommand{\rF}{{\rm F}}
\newcommand{\rR}{{\rm R}}
\newcommand{\rV}{{\rm V}}
\newcommand{\un}[1]{\underline{#1}}
\newcommand{\Artin}{ {\rm Artin}}
\newcommand{\Aut}{\operatorname{Aut}}
\newcommand{\Cor}{\operatorname{Cor}}
\newcommand{\CW}{\operatorname{CW}}
\newcommand{\End}{\operatorname{End}}
\newcommand{\Ext}{\operatorname{Ext}}
\newcommand{\Image}{\operatorname{Image} \, }
\newcommand{\rk}{\operatorname{rank}}
\newcommand{\Gal}{\operatorname{Gal}}
\newcommand{\Hom}{\operatorname{Hom}}
\newcommand{\len}{\operatorname{length}}
\newcommand{\GL}{{\rm GL}}
\newcommand{\Mat}{{\rm Mat}}
\newcommand{\SL}{{\rm SL}}
\newcommand{\GS}{{\rm GSp}}
\newcommand{\SP}{{\rm Sp}}
\newcommand{\spn}{{\rm span}}
\newcommand{\Sym}{{\rm Sym}}
\newcommand{\sign}{\operatorname{sign}}
\newcommand{\res}{\operatorname{res}}
\newcommand{\ord}{\operatorname{ord}}
\newcommand{\0}{\vec{0}}
\newcommand{\CWplus}{\, \dot{+} \, }
\newcommand{\lexp}[2]{{^{#1}\hspace{-.7 pt}{#2}}}
\newcommand{\bino}[2]{\left( \begin{smallmatrix} {#1} \\ {#2} \end{smallmatrix} \right)}
\newcommand{\ov}[1]{\overline{#1}}
\newcommand{\fdeg}[2]{[{#1}\!:\!{#2}]}
\newcommand{\lr}[1]{\langle{#1}\rangle}
\renewcommand*\l@subsection{\@tocline{2}{0pt}{30pt}{0pt}{}}
\author[A. Brumer]{Armand Brumer}
\address{Department of Mathematics, Fordham University, Bronx, NY 10458}
\email{brumer@fordham.edu}
\author[K. Kramer]{Kenneth Kramer}
\address{Department of Mathematics, Queens College (CUNY), Flushing, NY 11367;  Department of Mathematics, The Graduate Center of CUNY, New York, NY 10016}
\email{kkramer@qc.cuny.edu}
\subjclass[2010]{Primary 11G10; Secondary 14K15, 11R37, 11S31}
\keywords{semistable abelian variety, group scheme, Honda system, conductor, paramodular conjecture.}
\begin{document}
\title{Certain abelian varieties bad at only one prime}

\begin{abstract}
An abelian surface $A_{/\Q}$ of prime conductor $N$ is {\em favorable} if its 2-division field $F$ is an $\cS_5$-extension with ramification index 5 over $\Q_2$.  Let $A$ be favorable and let $B$ be any semistable abelian variety of dimension $2d$ and conductor $N^d$ such that $B[2]$ is filtered by copies of $A[2]$.  We give a sufficient class field theoretic criterion on $F$ to guarantee that $B$ is isogenous to $A^d$.  

As expected from our paramodular conjecture, we conclude that there is one isogeny class of abelian surfaces for each conductor in $\{277, 349,461,797,971\}$.  The general applicability of our criterion is discussed in the data section.\end{abstract}

\thanks{Research of the second author was partially supported by a PSC-CUNY Award, cycle 44, jointly funded by The Professional Staff Congress and The City University of New York. }

\maketitle 

\tableofcontents

\numberwithin{equation}{section}

\section{Introduction} 
Let $\gI_d(S)$ be the set of  isogeny classes of simple abelian varieties over $\Q$ of dimension $d$ with good reduction outside $S,$ a finite set  of primes.  By \cite{Falt1}, $\gI_d(S)$ is finite and it is empty when $S$ is by \cite{Abr,Fon4}.  All curves of genus 2 with good reduction outside 2 are found in \cite{MeSm, Sma}, yielding 165 isogeny classes of Jacobians.  Factors of $J_0(2^{10})$ and Weil restrictions of elliptic curves over quadratic fields provide an additional 50 members of  $\gI_2(\{2\})$, but the complete determination of $\gI_2(\{2\})$ is still open.

For {\em semistable} abelian varieties, Fontaine's non-existence result has been slightly extended \cite{BK1, BK2, BK4, Cal, Sch2}. It is much more challenging  to find all isogeny classes when some exist. 

 In a beautiful sequence of papers \cite{Sch2,Sch3,Sch4}, Schoof shows that for  $S=\{N\}$ with $N\le 19$ and $N = 23$ (resp.\! $S=\{3,5\}$), the classical modular variety $J_0(N)$ (resp.$\,J_0(15)$) is the only  simple semistable abelian variety of arbitrary dimension, up to isogeny.  To apply Faltings' isogeny theorem on abelian varieties, Schoof introduces a general result on $p$-divisible groups whose  constituents belong to a category $\un{C}$ of finite flat group schemes.  For the reader's convenience, the statement is included here as Theorem \ref{pdiv}.  For a suitable choice of category $\un{D}$, depending on $S$, Schoof determines all  simple objects and their extensions by one another.  Because the Odlyzko bounds are used, the sets $S$ to which  these methods apply are severely limited. 
 
In fact, given a finite set $S$ of primes, it seems challenging to decide whether the dimension of the simple semistable abelian varieties good outside $S$ is bounded.

This paper grew out of the desire  to check the uniqueness of certain isogeny classes for larger conductors.  Another motivation was  to provide additional evidence  for our conjecture.

\vspace{2 pt}

\noindent{\bf  Paramodular Conjecture}(\cite{BK4}).  {\em Let $K(N)$ be the paramodular group of level $N$.  There is a one-to-one correspondence:}  \vspace{  2pt}

\centerline{\fbox{\parbox{150 pt}{\begin{center} isogeny classes of  abelian surfaces $A_{/\Q}$ of conductor $N$ with $\End_{\Q} A=\Z$ \end{center}}}
$\longleftrightarrow$
\fbox{\parbox{140 pt}{\begin{center}   weight 2 {\em non-lifts} $f$ on $K(N)$, with rational eigenvalues, up to scalar multiplication  \end{center} }}  }   

\vspace{2 pt}

\noindent {\em in which the $\ell$-adic representation of  $\T_\ell(A)\otimes\Q_\ell$ and that associated to $f$ are isomorphic for any $\ell$ prime to $N,$ so that the L-series  of $A$ and $f$ agree.}

\vspace{2 pt}

The $L$-series of abelian surfaces  of $\GL_2$-type are understood via classical elliptic modular forms, while our conjecture treats all other abelian surfaces. It is verified in \cite{BDPS,JLR1} for the Weil restrictions of modular elliptic curves over quadratic fields, not isogenous to their conjugates.   It  is also compatible with twists \cite{JLR2}. 

To ensure that we are not in the endoscopic case, we consider prime conductors.  By \cite[Theorem 3.4.11]{BK4}, an abelian surface of prime conductor  is isogenous to a Jacobian.   For each $N$ in $\{277, 349,461,797,971\}$, the space of weight 2 non-lift paramodular forms on $K(N)$ is one-dimensional \cite{PoYu1}, so our conjecture predicts that there should be exactly one isogeny class of abelian surfaces of conductor $N$.  In \cite{BK4}, we proved that $277$ is the smallest prime conductor.   For each $N$ listed above, there is a unique Galois module structure available for $A[2]$.  For those $N$,  $\Q(A[2])$ must be the Galois closure of a {\em favorable} quintic field as defined below.

\begin{Def}  \label{fav}
Let $N$ be an odd prime.  A quintic extension $F_0/\Q$ of discriminant $\pm 16N$ is {\em favorable} if the prime over 2 has ramification index 5.  A {\em favorable polynomial} is any minimal polynomial for a favorable quintic field.   An abelian surface $A$ of prime conductor $N$ is {\em favorable} its 2-division field $\Q(A[2])$ is the Galois closure of a favorable quintic field. 
\end{Def}

We note some pleasant properties of favorable quintic fields.

\begin{prop} \label{Fproperties}
Let $F$ be the Galois closure of a favorable quintic field $F_0$ of discriminant $d_0 =16N^*$ with $N^* = \pm N$.  Then:
\begin{enumerate}[{\rm i)}]
\item $\Gal(F/\Q)$ is isomorphic to the symmetric group $\cS_5$.  At each prime $\gN \vert N$, the inertia group $\cI_\gN = \cI_\gN(F/\Q)$ is generated by a transposition.   \vspace{2 pt}
\item The completion $F_\gP$ of $F$ at each prime $\gP \vert 2$ is isomorphic to $\Q_2(\Mu_5,\sqrt[5]{2})$ and the decomposition group $\cD_\gP = \cD_\gP(F/\Q)$ is the Frobenius group of order $20$.  The sign of $N^*$ is determined by $N^* \equiv 5 \, (8)$.   \vspace{2 pt}
\item There is only one prime over $2$ in the subfield $K_{20}$ of $F$ fixed by $\Sym \{3,4,5\}$. \vspace{2 pt}
\item If $A$ is a favorable abelian surface, then $A[2]_{|\Z_2}$ is absolutely irreducible and biconnected over $\Z_2$.
\end{enumerate}
\end{prop}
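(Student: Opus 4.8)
The plan is to derive all four parts from the local structure at the two ramified primes, since $d_0=16N^{*}$ shows $F_0/\Q$ is ramified only at $2$ and $N$, tamely at both. At $N$ one has $\ord_N(d_0)=1$, so the tame formula $\ord_N(d_0)=\sum_i(e_i-1)$ leaves a single prime with $e=2$ and the rest unramified; hence $\cI_\gN$ is generated by a transposition. At $2$, favorability gives one totally ramified prime of degree $5$, necessarily tame as $\gcd(5,2)=1$, and since every unit of $\Z_2$ is a fifth power the completion must be $(F_0)_\gp\cong\Q_2(\sqrt[5]{2})$; thus $\cI_\gP$ is a transitive cyclic subgroup of $\cS_5$, i.e.\ a $5$-cycle. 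Now $\Gal(F/\Q)$ is transitive of prime degree $5$, hence primitive, and contains a transposition, so Jordan's theorem gives $\Gal(F/\Q)=\cS_5$. This is (i).

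For (ii) I would pin down $\cD_\gP=\Gal(F_\gP/\Q_2)$ by a two-sided estimate. Since $F_\gP/\Q_2$ is Galois and contains $\Q_2(\sqrt[5]{2})$, it contains the local splitting field $\Q_2(\Mu_5,\sqrt[5]{2})$, of degree $20$ because $2$ has order $4$ modulo $5$; so $|\cD_\gP|\ge 20$. Conversely $\cD_\gP$ is transitive on the five roots and, the extension being tame, is metacyclic with cyclic inertia $\cI_\gP$; the only transitive metacyclic subgroups of $\cS_5$ are $C_5$, $D_5$ and the Frobenius group $\Fr_{20}$ of order $20$, all lying in the normalizer of $\cI_\gP$, which is $\Fr_{20}$. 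Hence $\cD_\gP=\Fr_{20}$ and $F_\gP\cong\Q_2(\Mu_5,\sqrt[5]{2})$. The sign is then read off from the unique quadratic subfield $\Q(\sqrt{d_0})=\Q(\sqrt{N^{*}})$, fixed by $\cA_5$: as $\cI_\gP=C_5\subseteq\cA_5$ but $\cD_\gP=\Fr_{20}\not\subseteq\cA_5$, the prime $2$ is unramified and inert there, which holds precisely when $N^{*}\equiv5\pmod 8$, singling out the correct sign among $\pm N$.

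Parts (iii) and (iv) then follow formally from the structure of $\Fr_{20}$. Writing $K_{20}=F^{H}$ with $H=\Sym\{3,4,5\}=\operatorname{Stab}_{\cS_5}(1,2)$ acting on ordered pairs, the primes of $K_{20}$ above $2$ correspond to the orbits of $\cD_\gP=\Fr_{20}$ on the $20$ ordered pairs of distinct points; since $\Fr_{20}$ is sharply $2$-transitive, it is simply transitive on ordered pairs, giving a single orbit and hence one prime, which is (iii). For (iv), $A[2]$ is the faithful $4$-dimensional symplectic $\F_2$-representation of $\cS_5=\Gal(F/\Q)$ (the sum-zero constituent of the permutation module). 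Restricting to $\cD_\gP=\Fr_{20}$ over $\ov{\F_2}$, the $5$-cycle $\cI_\gP$ acts with the four distinct eigenvalues $\zeta_5^{i}$ ($i=1,2,3,4$) in $\F_{16}$, while the Frobenius generator of $\cD_\gP/\cI_\gP\cong C_4$ cycles the eigenlines in a single $4$-cycle via $\zeta\mapsto\zeta^{2}$; no proper nonzero subspace is stable, so $A[2]_{|\Z_2}$ is absolutely irreducible. Since $\cI_\gP$ has no eigenvalue $1$, $A[2]$ has trivial \'etale quotient and is connected, and self-duality $A[2]\cong A[2]^{\vee}$ (from the principal polarization of the Jacobian $A$) makes the dual connected too, so $A[2]$ is biconnected.

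The one genuine obstacle is the local identification $\cD_\gP=\Fr_{20}$ in (ii): establishing $(F_0)_\gp\cong\Q_2(\sqrt[5]{2})$ and controlling the normalizer of the inertia. Once that is in hand, (iii) and (iv) are immediate consequences of the sharp $2$-transitivity and the eigenvalue pattern of the $5$-cycle under the $C_4$-action.
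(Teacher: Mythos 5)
Your proof is correct and follows essentially the same route as the paper: inertia at $N$ is a transposition via the discriminant valuation at $N$, the decomposition group at $2$ is pinned down as the Frobenius group $\Fr_{20}$ by combining the degree-$20$ lower bound from $\Q_2(\Mu_5,\sqrt[5]{2})$ with a group-theoretic upper bound (you use metacyclicity and the normalizer of inertia where the paper uses solvability), and parts (iii)--(iv) follow from the structure of $\Fr_{20}$ exactly as in the paper (sharp $2$-transitivity is equivalent to the paper's observation that $\cD_\gP$ meets the conjugates of $\Sym\{3,4,5\}$ trivially). The only cosmetic differences are your appeal to Jordan's theorem instead of the fact that a transposition and a $5$-cycle generate $\cS_5$, and your explicit eigenvalue verification of absolute irreducibility where the paper simply cites the unique $4$-dimensional absolutely irreducible $\F_2$-representation of $\Fr_{20}$.
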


\begin{proof}
i) Since $N$ exactly divides $d_0$, only one prime say $\gN_0$ over $N$ ramifies in $F_0/\Q$ and the $\cO_{F_0}$-ideal generated by $N$ factors as $(N) = \gN_0^e \ga$, where $\ga$ is an ideal prime to $\gN_0$ and $e > 1$.  If $f$ is the residue degree of $\gN_0$ then $N^{(e-1)f}$ divides $d_0$, so $e = 2$, $f=1$ and the other primes over $N$ are unramified in $F_0/\Q$.  Thus the completion $F_\gN$ is $\Q_N(\sqrt{d_0})$ and $\cI_\gN$ has order 2.  Since $\cI_\gN$ acts non-trivially on $\sqrt{d_0}$, it is generated by a transposition. A transposition and a 5-cycle generate $\cS_5$.  \vspace{2 pt}

ii) By assumption, $F_\gP/\Q_2$ has tame ramification of degree 5 and thus contains $\Q_2(\Mu_5,\sqrt[5]{2})$.  Since $\cD_\gP$ is solvable, $F = \Q_2(\Mu_5,\sqrt[5]{2})$.  Any Frobenius automorphism at $\gP$ is a 4-cycle, so it acts non-trivially on $\sqrt{d_0}$ and therefore $N^* \equiv 5 \bmod{8}$.   

\vspace{2 pt}

iii) There are no transpositions in $\cD_\gP$, so $D_\gP \cap \Sym\{3,4,5\}$ is trivial.  Since $\fdeg{K_{20}}{\Q} = 20$, there is only one prime over $2$ in $K_{20}$.   \vspace{2 pt}

iv) Since $\cD_\gP$ acts on $A[2]$ via its unique 4-dimensional absolutely irreducible $\F_2$-representation, $A[2]_{|\Z_2}$ has no \'{etale} or multiplicative constituents.
\end{proof}

A {\em favorable} $\cS_5$-field is the Galois closure of a favorable quintic field.    The Jacobian of  a genus 2 curve $C$ is favorable only if $C$ has a model $y^2=f(x)$ with $f$ favorable, but $C$ might have bad reduction outside $N$. 

In general, $L$ is a {\em stem field} for $M$ if $M$ is the Galois closure of $L/\Q$.  A {\em pair-resolvent} for an $\cS_5$-field $F$ is a subfield $K$ fixed by the centralizer of a transposition in $\cS_5$.  Then $K$ is well-defined up to isomorphism and is a stem field for $F$.  If $r_1$ and $r_2$ are distinct roots of a quintic polynomial $f$ with splitting field $F$, we can take $K = \Q(r_1+r_2)$, the fixed field of $\Sym\{1,2\} \times \Sym\{3,4,5\}$.  There is only one prime $\gp$ over 2 in $K$ by Proposition \ref{Fproperties}(iii).  Let $\Omega_K^{(a)}$ be the maximal elementary $2$-extension of $K$ of modulus $\gp^a\!\cdot\!\infty$, i.e., the compositum of all quadratic extensions of $K$ with that modulus.  Write $rk_a$ for the rank of $\Gal(\Omega_K^{(a)}/K)$.

The following is a restatement of Theorem \ref{main}. 

\begin{theo} \label{IntroThm}
Let $A$ be a favorable abelian surface of conductor $N$ and let $K$ be a pair-resolvent field for $F = \Q(A[2])$.  Suppose that $B$ is a semistable abelian variety of dimension $2d$ and conductor $N^d,$ with $B[2]$  filtered by copies of $A[2]$.  If $rk_2=0$ and $rk_4\le 1,$ then $B$ is isogenous to $A^d$.   If $B$ is a surface, it is isogenous to $A$.
\end{theo}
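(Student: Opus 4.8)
The plan is to deduce the isogeny from Faltings' theorem: it suffices to exhibit an isomorphism of $2$-divisible groups $B[2^\infty] \cong A[2^\infty]^d$ over $\Z$, since this yields an isomorphism $\T_2(B)\otimes\Q_2 \cong (\T_2(A)\otimes\Q_2)^d$ of Galois representations and hence $B \sim A^d$. Because $B$ has good reduction at $2$ and conductor $N^d$, each $B[2^n]$ is a finite flat group scheme over $\Z$ that is unramified outside $2N$, biconnected at $2$ with the local structure of $A[2]_{|\Z_2}$ recorded in Proposition \ref{Fproperties}(iv), and semistable at $N$ with inertia generated by a transposition as in Proposition \ref{Fproperties}(i). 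Let $\un{D}$ be the category of finite flat group schemes over $\Z$, killed by $2$, satisfying these three local conditions. The strategy is to classify the objects of $\un{D}$ and then to invoke Schoof's $p$-divisible group criterion (Theorem \ref{pdiv}), which reduces the global statement to the determination of the simple objects of $\un{D}$ and of the extensions among them.

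First I would check that $V = A[2]$ is, up to isomorphism, the unique simple object of $\un{D}$ and that $\End_{\un{D}}(V) = \F_2$; both follow from the absolute irreducibility of $A[2]_{|\Z_2}$ together with the self-duality of $V$ under the Weil pairing, which identifies $V^\vee \cong V$ and so produces no second simple object. The heart of the argument is then the computation of $\Ext^1_{\un{D}}(V,V)$. Since $V$ is the standard $4$-dimensional symplectic representation of $\cS_5 \cong \Gal(F/\Q)$, the symmetric square $\Sym^2 V$ is, as a Galois module, the permutation module on the ten unordered pairs from $\{1,\dots,5\}$, i.e.\ on the cosets defining the pair-resolvent $K$. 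The polarization on $B$ forces the relevant extensions to be self-dual, so the obstruction lives in the part of $H^1(G_\Q, \End V)$ cut out by $\Sym^2 V$, which by Shapiro's lemma is computed by quadratic extensions of $K$. The local conditions of $\un{D}$ restrict the conductor of these extensions: the biconnected, Honda-system condition at $2$ bounds the ramification at the unique prime $\gp$ over $2$ in $K$, while semistability at $N$ and the real place impose the modulus $\gp^a\!\cdot\!\infty$.

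With this dictionary, the hypotheses translate directly into bounds on $\Ext^1_{\un{D}}(V,V)$: the vanishing $rk_2 = 0$ removes the classes of conductor dividing $\gp^2\!\cdot\!\infty$, and $rk_4 \le 1$ bounds those of conductor dividing $\gp^4\!\cdot\!\infty$, giving $\dim_{\F_2}\Ext^1_{\un{D}}(V,V) \le 1$. The one possibly nonzero class is already realized geometrically by the non-split sequence $0 \to A[2] \to A[4] \to A[2] \to 0$ coming from $A$ itself, so it contributes nothing beyond additional copies of $A[2^\infty]$. Feeding this Ext-bound, the self-duality, and the Hodge--Tate data of $V$ into Theorem \ref{pdiv} shows that any $2$-divisible group in $\un{D}$ all of whose constituents are isomorphic to $V$ must be isomorphic to $A[2^\infty]^d$. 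Applying this to $B[2^\infty]$ gives the required isomorphism, and Faltings yields $B \sim A^d$; the case $\dim B = 2$ is simply $d = 1$.

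The main obstacle I anticipate is the Ext computation, and in particular making the passage to the ray-class data $rk_a$ precise. One must verify that Fontaine's classification of biconnected group schemes over $\Z_2$ translates flatness at $2$ into exactly the modulus $\gp^4\!\cdot\!\infty$ on $K$, with the split between the $rk_2$ and $rk_4$ contributions matching the successive steps in the conductor filtration, and one must confirm that the self-duality imposed by the polarization selects the $\Sym^2 V$ summand without introducing symmetric extensions outside the scope of the two rank hypotheses. Once this local-to-class-field translation is pinned down, the remaining input is the finite computation of the $2$-ranks $rk_2$ and $rk_4$ of ray class groups of the degree-$10$ field $K$.
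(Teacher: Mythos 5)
Your overall skeleton (reduce to Schoof's Theorem \ref{pdiv} plus Faltings, then control an Ext group by ray class data over the pair-resolvent $K$) is the same as the paper's, but two of your central steps fail. First, the reduction to self-dual extensions is not available: Schoof's hypothesis (ii) requires the \emph{full} group $\Ext^1_{\un{E}}(\cE,\cE)$ to be one-dimensional, spanned by $\delta(\mathrm{id})=[A[4]]$, because the successive extensions in a filtration of $B[2^n]$ by copies of $A[2]$ carry no individual self-duality --- only $B[2^n]$ as a whole is polarized. So the obstruction lives in all of $H^1(G_\Q,\End E)$, a $16$-dimensional coefficient module (and in characteristic $2$ there is no splitting of $E\otimes E$ into symmetric and alternating summands, so ``the part cut out by $\Sym^2 V$'' is not even well defined); Shapiro's lemma for the $10$-dimensional pair-permutation module cannot by itself compute it. The paper instead shows, using monodromy at $N$ and explicit group theory, that the Galois image of any admissible extension is one of six parabolic subgroups $G_a$ (Corollary \ref{LPropCor}), establishes a bijection between $G_a$-classes with $a\in\{4,5,9\}$ and quadratic subextensions of $\Omega_K/K$ (Proposition \ref{MakeGaFields}), and reduces $a=11,15$ to these via the diagram \eqref{GaDiagram}; your $\Sym^2$/Shapiro dictionary is a heuristic shadow of this, not a proof.

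Second, and more seriously, your treatment of the case $rk_4=1$ is wrong. The class of $0\to A[2]\to A[4]\to A[2]\to 0$ is \emph{not} killed by $2$, so it cannot ``realize'' the potential nonzero class in $\Ext^1_{[2],\un{E}}(\cE,\cE)$ attached to the conductor-$4$ quadratic extension $K'/K$; by Proposition \ref{Ext}, $\Ext^1_{\un{E}}(\cE,\cE)$ is one-dimensional precisely when $\Ext^1_{[2],\un{E}}(\cE,\cE)=0$, so that exponent-$2$ class must be shown not to exist, not absorbed. This is exactly where the paper needs its hardest local input: the Honda-system classification of exponent-$2$ flat extensions over $\Z_2$ and their corner cocycles (Propositions \ref{Hexpp} and \ref{dim48}), together with the patching criterion of Remark \ref{Compat} and Lemma \ref{ProCon}, which show that a $G_4$-, $G_5$- or $G_{11}$-class prolonging to a flat group scheme over $\Z[\frac{1}{N}]$ must have $\gf_\gp\le 2$ (so $rk_2=0$ kills it), while $G_9$- and $G_{15}$-classes would produce \emph{two} independent quadratic extensions of $K$, which $rk_4\le 1$ forbids. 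Finally, your category $\un{D}$ omits the conductor condition {\bf E3} at $N$ (minimality $\rk\rho_W(\sigma_v-1)=2$); without it, Remark \ref{obstrep} exhibits a nonzero class in $\Ext^1_{[2]}$ (the sign cocycle $\psi=\sign^+\cdot I_4$, flat at $2$, with $\Q(W)=F$) that exists for \emph{every} favorable $F$, so Schoof's hypothesis (ii) would fail identically and no ray class hypothesis could rescue the argument. (Also note $B[2^n]$ is finite flat over $\Z[\frac{1}{N}]$, not over $\Z$, and the category fed into Theorem \ref{pdiv} must contain these exponent-$2^n$ objects, not only schemes killed by $2$.)
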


For the proof, we first construct suitable categories $\un{E}$, chosen so that extensions of the simple objects $\cE$ in $\un{E}$ can be identified.  A description of their extensions as group schemes over $\Z_p$ is obtained via Honda systems.   For global applications, assume that $p=2$ and $\Q(\cE)$ is a favorable $\cS_5$-field.  Monodromy at $N$ restricts the extensions $\cW$ of $\cE$ by $\cE$ as group schemes over $\Z[\frac{1}{2N}]$.  A comparison with local data determines when $\cW$ prolongs to a group scheme over $\Z[\frac{1}{N}]$ and leads to our class field theoretic criterion for the control of $\Ext^1_{\un{E}}(\cE,\cE)$ required by Schoof's theorem.  Ray class field information, difficult to reach over $F$,  becomes accessible over the degree 10 field $K$.  Moreover, we found that Theorem \ref{IntroThm} and  Proposition \ref{MakeGaFields} have no analog for other intermediate fields of $F/\Q$.  A more detailed overview of our paper follows.

The category $\un{E}$ of finite flat $p$-group schemes over $\Z[\frac{1}{N}]$ defined in \S3 is motivated by necessary conditions for an abelian variety $B$ to be isogenous to a product of given semistable abelian varieties $A_i$.  It is essential to impose conductor bounds at $N$, without which Theorem \ref{pdiv} does not apply, as indicated in Remark \ref{obstrep}.  Thanks to Proposition \ref{Ext}, we deduce in Theorem \ref{mypdiv} that it suffices to study the subgroup $\Ext^1_{[p],\un{E}}(\cE,\cE)$ consisting of classes of extensions $\cW$ of $\cE$ by $\cE$ such that $p\cW = 0$.

We review group schemes and Honda systems over the ring of Witt vectors $\W$ of a finite field $k$ of characteristic $p$ in \S\ref{GpScheme}.  In \S \ref{Honda}, finite Honda systems are used to classify absolutely simple biconnected finite flat group schemes $\cE$ of rank $p^4$ over $\W$ and describe the classes $[\cW]$ in $\Ext^1_{[p],\Z_p}(\cE,\cE)$.   We give the structure of the associated Galois modules $E$ and $W$ in \S \ref{FP} and obtain a conductor bound for the elementary abelian extension $K(W)/K(E)$ in Proposition \ref{CondExpP}.  The latter improves on Fontaine's bound in our case, cf.\! Remark \ref{FB}.  

In \S \ref{Punchline}, we restrict to $p = 2$ and give a class field theoretic condition equivalent to the vanishing of $\Ext^1_{[2],\un{E}}(\cE,\cE)$ in Proposition \ref{ext2}.   Its proof exploits the following ingredients: (i) monodromy at $N$, to determine the matrix groups available for $\Gal(\Q(W)/\Q)$ as $W$ runs over the extensions of $E$ by $E$ as Galois modules; (ii) conductor bounds at $p=2$, as described above and (iii) rigidification in \S \ref{CornerSection} and \eqref{CorGa} of the cocycles corresponding to local and global extensions of $E$ by $E$, to check whether they are compatibile, as needed for patching.

Appendix \ref{condapp} contains several general facts required for the determination of abelian conductor exponents in our applications.

In Appendix \ref{DataSection}, we apply Theorem \ref{IntroThm} to all the favorable quintic fields with $N$ at most 25000 to obtain Table \ref{Fields}.  In particular, there is a unique isogeny class of abelian surfaces for each conductor $N$ in $\{277, 349,461,797,971\}$.  Curious about the wider applicability of our criterion, we studied the fields corresponding to 276109 favorable abelian surfaces of prime conductor at most $10^{10}$ found by an ad-hoc search.  We were surprised to discover that the uniqueness, up to isogeny, in Theorem \ref{IntroThm} holds uniformly for about 11.8\% of those fields.  The data is summarized in Table  \ref{data}. 

In our companion paper \cite{BK5}, extensions $\cW$ of exponent $p^2$ are studied and new ``full image" results for certain subgroups of $\GS_{2g}(\Z_2)$ generated by transvections are obtained.  As a consequence, if $A$ is a favorable abelian surface,  then  $\Q(A[4])$ is an elementary 2-extension of rank  11  over $\Q(A[2])$ with carefully controlled ramification.  In Table \ref{Fields}, we also indicate the fields for which no favorable abelian surface can exist because there is no candidate for its 4-division field.  

The authors wish to express their gratitude to the anonymous referees for their extremely careful reading of the manuscript.  Their valuable suggestions helped us clarify and improve the exposition.

Write $\ov{K}$ for the algebraic closure  of $K$ and $G_K=\Gal(\ov{K}/K)$.  For any local or global field $K$, let $\cO_K$ be its ring of integers. If $L/K$ is a Galois extension of number fields, let $\cD_v(L/K)$ and $\cI_v(L/K)$ be the decomposition and inertia subgroups of $\Gal(L/K)$ at a place $v$ of $L$.   We also use $v$ for its restriction to each subfield of $L$.   When the local extension $L_v/K_v$ is abelian, $\gf_v(L/K)$ denotes the abelian conductor exponent of $L_v/K_v$.   Write $\gf_v(V)$ for the Artin conductor exponent of a finite $\Z_p[\cD_v]$-module $V$.

\section{Some review of group schemes} \label{GpScheme}
Let $R$ be a Dedekind domain with quotient field $K$.  Calligraphic letters are used for finite flat group schemes $\cV$ over $R$ and the corresponding Roman letter for the Galois module $V=\cV(\ov{K})$. The order of $\cV$ is the rank over $R$ of its affine algebra, or equivalently the order of the finite abelian group $V=\cV(\ov{K})$.

By the following result of Raynaud (\cite{Con1}, \cite{Ray1}), group schemes occurring as subquotients of known group schemes can be treated via their associated Galois modules.  Thus, the generic fiber functor induces an isomorphism between the lattice of finite flat closed $R$-subgroup schemes of $\cV$ and that of finite flat closed $K$-subgroup schemes of $\cV_{|K}$, where $K$ is the field of fractions of $R$.  The following results will be used without explicit reference.

\begin{lem}\label{CoRay} 
Let $R$ be a Dedekind domain with quotient field $K$ and let $\cV$ be a finite flat group scheme over $R$ with generic fiber $V=\cV_{|K}.$  If $W=V_2/V_1$ is a subquotient of $V$, for closed immersions of finite flat $K$-group schemes $V_1 \hookrightarrow V_2 \hookrightarrow V$, there are unique closed immersions of finite flat $R$-group schemes $\cV_1 \hookrightarrow \cV_2\hookrightarrow \cV$, such that $V_i=\cV_{i \, |K}$, and there is a unique isomorphism $\cV_2/\cV_1\simeq \cW$ compatible with $(\cV_2/\cV_1)_{|K}\simeq W.$ 
\end{lem}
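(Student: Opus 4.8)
The plan is to deduce the statement entirely from the lattice isomorphism of Raynaud recalled just above, supplemented by the existence of quotients of finite flat group schemes over a Dedekind domain. All prolongations will be realized as scheme-theoretic closures, and the only input that is not purely formal will be that the fppf quotient remains finite and flat over $R$.

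First I would construct the chain $\cV_1\hookrightarrow\cV_2\hookrightarrow\cV$. Each $V_i$ is a finite flat closed $K$-subgroup scheme of $V=\cV_{|K}$, so the cited lattice isomorphism attaches to it a unique finite flat closed $R$-subgroup scheme $\cV_i\subseteq\cV$ with $\cV_{i\,|K}=V_i$, namely the scheme-theoretic closure of $V_i$ in $\cV$. Because the generic fibre functor is an \emph{isomorphism} of lattices it preserves inclusions, so $V_1\subseteq V_2$ forces $\cV_1\subseteq\cV_2$; this yields the closed immersion $\cV_1\hookrightarrow\cV_2$, and the uniqueness of the $\cV_i$ is exactly the injectivity of the functor.

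Next I would form $\cW=\cV_2/\cV_1$. Over a Dedekind domain the fppf quotient of a finite flat group scheme by a finite flat closed normal subgroup scheme is again finite and flat, of order the quotient of the two orders; this is the one place where the structure theory is needed. In the commutative case one can avoid invoking it by a Cartier-duality argument: realize $\cW$ as the Cartier dual of the scheme-theoretic closure of $\ker(V_2^\vee\to V_1^\vee)$ inside $\cV_2^\vee$, which reduces the claim back to the lattice isomorphism applied to $\cV_2^\vee$. Since quotient formation commutes with the flat base change $R\to K$, one gets $\cW_{|K}=(\cV_2/\cV_1)_{|K}=V_2/V_1=W$, which supplies the identification $(\cV_2/\cV_1)_{|K}\simeq W$.

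Finally, the uniqueness of the isomorphism $\cV_2/\cV_1\simeq\cW$ compatible with that identification is formal: as $\cW$ is $R$-flat and $R$ is a domain, its generic fibre is schematically dense, so the generic fibre functor is faithful on finite flat $R$-group schemes and any morphism between them is determined by its restriction to $K$. I expect the existence and finite flatness of the quotient to be the main obstacle; granting that, everything else follows from order-preservation and faithfulness of the generic fibre functor.
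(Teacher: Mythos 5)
Your proof is correct and takes essentially the same route as the paper, which states this lemma without any written proof, treating it as an immediate consequence of Raynaud's lattice isomorphism (\cite{Con1}, \cite{Ray1}); your argument supplies exactly the standard details behind that deduction, namely scheme-theoretic closures for $\cV_1\hookrightarrow\cV_2\hookrightarrow\cV$, the theorem that fppf quotients of finite flat group schemes by flat closed normal subgroups are again finite flat and commute with base change, and faithfulness of the generic-fiber functor via schematic density. (The only shaky point is the optional Cartier-duality aside, since showing that the dual of the closed immersion $\cK\hookrightarrow\cV_2^\vee$ is faithfully flat with kernel exactly $\cV_1$ requires essentially the exactness theory it purports to avoid; but your main argument does not depend on it.)
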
  

Let $p$ be a prime not dividing $N,$ $R=\Z[\frac{1}{N}]$, $R' = \Z[\frac{1}{pN}]$ and let $\un{Gr}$ be the category of $p$-primary finite flat group schemes over $R.$  Let $\un{C}$ be the category of triples $(\cV_1,\cV_2,\theta)$ where $\cV_1$ is a finite flat $\Z_p$-group scheme, $\cV_2$ a finite flat $R'$-group scheme and $\theta\!: \,\cV_1\otimes_{\Z_p} \Q_p\to \cV_2\otimes_{R'}\Q_p$ an isomorphism of $\Q_p$-group schemes. Then Proposition 2.3 of \cite{Sch1} asserts that the functor 
$\un{Gr}\to \un{C}$ taking the $R$-group scheme $\cV$ to $(\cV\otimes_R\Z_p,\cV\otimes_R R', id\otimes_R\Q_p)$ is an equivalence of categories.   We can identify $\cV \otimes_R R'$ with the Galois module $V$, since $\cV$ is \'{e}tale over $R'$. For  objects $\cV_1,\,\cV_2$ of $\un{Gr}$, the Mayer-Vietoris sequence of \cite[Cor.\! 2.4]{Sch1} specializes to:
\begin{equation} \label{MV}  
\begin{array}{l} 
  \Hom_{\Q_p}(V_1,V_2)  \leftarrow \Hom_{\Z_p}(\cV_1,\cV_2) \times  \Hom_{R'}(\cV_1,\cV_2) \leftarrow \Hom_R(\cV_1,\cV_2) \leftarrow  0  \vspace{2 pt} \\
 \hspace{15 pt}     \delta \, \downarrow    \vspace{2 pt} \\
   \Ext^1_{R}(\cV_1,\,\cV_2)   \rightarrow  \Ext^1_{\Z_p}(\cV_1,\cV_2) \times\Ext^1_{R'}(\cV_1,\cV_2) \rightarrow   \Ext^1_{\Q_{p}}(V_1,V_2).
\end{array}
\end{equation}

\begin{cor}
Let $\cV_1$ and $\cV_2$ be finite flat group schemes over $R = \Z[\frac{1}{N}]$ with $\cV_1,\,\cV_2$ biconnected over $\Z_p$.  The following natural maps are isomorphisms:
$$
\Hom_R(\cV_1,\cV_2)\to \Hom_{\Gal}(V_1,V_2) \quad\text{and}\quad \Ext^1_{R}(\cV_1,\,\cV_2)\to \Ext^1_{\Gal}(V_1,\,V_2).
$$
If $\cV$ is a group scheme over $R$ and $V_{\vert \Q_p}$ is absolutely irreducible, then 
$$
\End_{\Q_p}(\cV) = \End_{R'}(\cV) = \F_p, \quad \text{and} \quad \End_{R}(\cV)=\F_p.  
$$
In addition, $\delta=0$ in \text{\eqref{MV}} with $\cV_1 = \cV_2 = \cV$. 
\end{cor}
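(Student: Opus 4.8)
The plan is to read off all three assertions from the Mayer--Vietoris sequence \eqref{MV}, using that over $R'=\Z[\frac{1}{pN}]$ the schemes $\cV_i$ are \'{e}tale and that the generic fiber functor is fully faithful on biconnected group schemes over $\Z_p$. First I would identify the outer terms of \eqref{MV}. Since $\cV_i$ is \'{e}tale over $R'$, the identification $\cV_i\otimes_R R'=V_i$ gives $\Hom_{R'}(\cV_1,\cV_2)=\Hom_{\Gal}(V_1,V_2)$ and $\Ext^1_{R'}(\cV_1,\cV_2)=\Ext^1_{\Gal}(V_1,V_2)$, while over the field $\Q_p$ the category of finite group schemes is that of Galois modules, so $\Hom_{\Q_p}(V_1,V_2)=\Hom_{G_{\Q_p}}(V_1,V_2)$ and similarly for $\Ext^1$. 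The natural map in the statement is then the composite of the inclusion $\Hom_R\hookrightarrow\Hom_{\Z_p}\times\Hom_{R'}$ coming from \eqref{MV} with the second projection, and likewise for $\Ext^1$.

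The decisive local input is that, because $\cV_1$ and $\cV_2$ are biconnected over $\Z_p$, Raynaud's theorem makes the generic fiber functor fully faithful there; concretely $\Hom_{\Z_p}(\cV_1,\cV_2)\to\Hom_{\Q_p}(V_1,V_2)$ is an isomorphism, injectivity being flatness of $\Z_p$ and surjectivity the content. Granting this, the map $\Hom_{\Z_p}\times\Hom_{R'}\to\Hom_{\Q_p}$ in \eqref{MV} is surjective already on the first factor, so exactness forces the connecting map $\delta=0$. Reading the top row of \eqref{MV} as the kernel of $(f,g)\mapsto f_{\Q_p}-g_{\Q_p}$ then identifies $\Hom_R(\cV_1,\cV_2)$ with the fiber product of $\Hom_{\Z_p}$ and $\Hom_{R'}$ over $\Hom_{\Q_p}$; since the first leg is an isomorphism, the projection to $\Hom_{R'}=\Hom_{\Gal}(V_1,V_2)$ is an isomorphism. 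With $\delta=0$ the bottom row of \eqref{MV} becomes left exact, exhibiting $\Ext^1_R(\cV_1,\cV_2)$ as the fiber product of $\Ext^1_{\Z_p}$ and $\Ext^1_{R'}$ over $\Ext^1_{\Q_p}$. An extension of biconnected schemes by biconnected schemes is again biconnected, so the same full faithfulness applies to the middle terms and gives that $\Ext^1_{\Z_p}(\cV_1,\cV_2)\to\Ext^1_{\Q_p}(V_1,V_2)$ is an isomorphism; the projection to $\Ext^1_{R'}=\Ext^1_{\Gal}(V_1,V_2)$ is then again an isomorphism, as claimed.

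For the endomorphism assertions I would argue by Schur's lemma. Absolute irreducibility of $V_{|\Q_p}$ over $\F_p$ forces $\End_{G_{\Q_p}}(V)=\F_p$, and passing to generic fibers over $\Q_p$ gives $\End_{\Q_p}(\cV)=\F_p$. Restriction to a decomposition group at $p$ yields $\F_p\subseteq\End_{R'}(\cV)=\End_{G_{\Q}}(V)\subseteq\End_{G_{\Q_p}}(V)=\F_p$, so $\End_{R'}(\cV)=\F_p$; and since the affine algebra of $\cV$ is $R$-flat, base change to $R'$ embeds $\End_R(\cV)\hookrightarrow\End_{R'}(\cV)=\F_p$, which together with the scalar copy of $\F_p$ gives $\End_R(\cV)=\F_p$. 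Finally, taking $\cV_1=\cV_2=\cV$ in \eqref{MV}, the term $\Hom_{\Q_p}(V,V)=\End_{\Q_p}(\cV)=\F_p$ is spanned by the identity, which is the generic fiber of $\mathrm{id}_\cV\in\Hom_{\Z_p}(\cV,\cV)$; hence $(f,g)\mapsto f_{\Q_p}-g_{\Q_p}$ is surjective and $\delta=0$ in this case by exactness, using only absolute irreducibility and not biconnectedness.

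The \emph{main obstacle} is the local comparison at $p$, namely the full faithfulness of the generic fiber functor on biconnected group schemes over $\Z_p$ and especially its essential-image form on $\Ext^1$. For odd $p$ this lies in Raynaud's range $e<p-1$, but the applications need $p=2$, where $e=p-1$ is exactly the boundary at which full faithfulness fails in general: the prolongations $\mu_2$ and $\Z/2$ of the trivial module are the standard obstruction, having isomorphic generic fibers but admitting no nonzero map between them over $\Z_2$. Biconnectedness is precisely what removes this ambiguity, excluding \'{e}tale and multiplicative constituents, and the delicate point is to check that every $G_{\Q_p}$-extension of the biconnected generic fibers is again flat; this is the role played by the Honda-system analysis of \S\ref{Honda} in the body of the paper.
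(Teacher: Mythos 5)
Your handling of the Hom isomorphism, the endomorphism statements, and the vanishing of $\delta$ is sound and matches the paper's own (very terse) argument: the paper deduces the first claim from \eqref{MV} together with Fontaine's full-faithfulness theorem for biconnected group schemes over $\Z_p$ (quoted from \cite{Maz}), and the second from Schur's Lemma and a diagram chase, which is exactly what you carry out. (Your opening attribution of the local input to Raynaud is inaccurate for $p=2$, since Raynaud's theorem needs $e<p-1$; you correct this yourself at the end, and the paper's reference is indeed to Fontaine.)

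The genuine gap is in your treatment of $\Ext^1$. You assert that $\Ext^1_{\Z_p}(\cV_1,\cV_2)\to\Ext^1_{\Q_p}(V_1,V_2)$ is an isomorphism because ``the same full faithfulness applies to the middle terms.'' Full faithfulness on biconnected objects does give \emph{injectivity} of this map (isomorphisms of generic fibers lift, and so do splittings), but it gives nothing toward surjectivity, which is an essential-image assertion: it would require every $G_{\Q_p}$-module extension of $V_2$ by $V_1$ to prolong to a finite flat group scheme over $\Z_p$. That assertion is false, and the paper itself quantifies the failure. Take $\cV_1=\cV_2=\cE$ biconnected of order $16$ over $\Z_2$: by Proposition \ref{Hexpp} (see also Remark \ref{Compat}) the group-scheme extensions killed by $2$ are classified by the Honda parameters $\bfs\in(\F_2)^5$, a $5$-dimensional space, whereas the Galois-module extensions killed by $2$ form $H^1(G_{\Q_2},\gh)$ with $\gh=\Hom_{\F_2}(E,E)$, whose $\F_2$-dimension is $h^0+h^2+\dim_{\F_2}\gh\ge 17$ by the local Euler characteristic formula. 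Indeed, the whole mechanism of \S\ref{Punchline} rests on prolongation being a restrictive condition: Remark \ref{Compat} characterizes the global classes that prolong as exactly those matching a Honda parameter locally, and Proposition \ref{ext2} (amiability) would be vacuous if every class prolonged. Your closing paragraph repeats the error in another guise: the ``delicate point'' you assign to \S\ref{Honda} --- that every $G_{\Q_p}$-extension of biconnected generic fibers is flat --- is not what the Honda-system analysis establishes; on the contrary, it computes the small subgroup of flat extensions inside all Galois extensions. What \eqref{MV} plus full faithfulness actually yield is that $\Ext^1_R(\cV_1,\cV_2)$ is the fiber product of $\Ext^1_{\Z_p}$ and $\Ext^1_{R'}$ over $\Ext^1_{\Q_p}$, hence injects into $\Ext^1_{\Gal}(V_1,V_2)$ with image the classes whose restriction to a decomposition group at $p$ prolongs to $\Z_p$; your route cannot produce surjectivity, so this part of the statement cannot be proved by the mechanism you propose.
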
 

\begin{proof}
The first claim follows from \eqref{MV} and a theorem of Fontaine quoted in  \cite[Thm.~1.4]{Maz}. For the second, use Schur's Lemma and a diagram chase.
\end{proof}

\medskip

We next review some basic material on Honda systems found in \cite{BrCo,Con2,Fon3}.   Let $p$ be a prime, $k$ a perfect field of characteristic $p>0$, $\W = \W(k)$ the Witt vectors and $K$ its field of fractions. Let $\sigma:\W \to \W$  be the Frobenius automorphism characterized by $\sigma(x)\equiv x^p  \pmod{p}$ for $x$ in $\W.$ The Dieudonn\'e ring $D_k=\W[\rF,\rV]$ is generated by the Frobenius operator $\rF$ and Verschiebung operator $\rV$. We have $\rF\rV=\rV\rF=p$, $\rF a=\sigma(a)\rF$ and $\rV a=\sigma^{-1}(a)\rV$ for all $a$ in $\W.$ 

A {\em Honda system} over $\W$ is a pair $(\rM,\rL)$ consisting of a finitely generated free $\W$-module $\rM$, a $\W$-submodule $\rL$ and a Frobenius semi-linear injective endomorphism $\rF\!: \, \rM \to \rM$ with $p\rM\subseteq \rF(\rM)$ and the induced map $\rL/p\rL\to \rM/\rF\rM$ an isomorphism. If $\rF$ is topologically nilpotent, then $(\rM,\rL)$ is {\em connected}.  Since $\rM$ is torsion free, $\rM$ becomes a $D_k$-module with $\rV=p\rF^{-1}.$

 A {\em finite Honda system} over $\W$ is a pair $(\rM,\rL)$ consisting of a left $D_k$-module $\rM$ of finite  $\W$-length and a $\W$-submodule $\rL$ with $\rV\!: \, \rL\to \rM$ injective and the induced map $\rL/p\rL\to \rM/\rF\rM$ an isomorphism. If $\rF$ is nilpotent on $\rM$, then $(\rM,\rL)$ is {\em connected}.  Morphisms are defined in the obvious manner. If $(\rM,\rL)$ is a Honda sytem then $(\rM/p^n\rM,\rL/p^n\rL)$ is a finite Honda sytem.

Honda systems owe their importance to the following fundamental result. 
\begin{theo}[Fontaine]\label{FoHon}{\rm  Let $k$ be a perfect field of characteristic $p>0$.
 \begin{enumerate}[{\rm i)}] 
 \item If $p>2,$ there is a natural anti-equivalence of categories $G\rightsquigarrow ({\bf D}(G_k),\bfL(G))$ from the category of $p$-divisible groups over $\W$ to that of Honda systems  (${\bf D}(G_k)$ is the Dieudonn\'e  module of $G_k).$ The same holds for $p=2$ if we restrict to connected objects on both sides. 
 \item If $p>2$, there is a natural anti-equivalence of categories  from the category of finite flat $p$-primary group schemes over $\W$ to that of finite Honda systems and the same holds for $p=2$ if we restrict to connected objects on both sides. 
 \item The cotangent space of $G_k$ at the origin is ${\bf D}(G_k)/\rF{\bf D}(G_k).$ 
 \item Both anti-equivalences respect extensions of $k$. Moreover, if $G$ is a  $p$-divisible group over $\W,$ then $({\bf D}(G_k)/(p^n),\bfL(G)/(p^n))$ is naturally identified with the finite Honda system associated with $G[p^n]$ for all $n\ge 1$.
 \end{enumerate}}
\end{theo}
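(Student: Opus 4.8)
The plan is to establish the anti-equivalence in two stages, separating the special fibre from the lifting datum. First, over the perfect residue field $k$ one has classical contravariant Dieudonn\'e theory, which already provides an anti-equivalence between $p$-divisible groups over $k$ (resp.\ finite flat $p$-primary group schemes over $k$) and finitely generated free $D_k$-modules (resp.\ $D_k$-modules of finite $\W$-length), the connected objects being those on which $\rF$ is topologically nilpotent (resp.\ nilpotent). Applied to $G_k$ this produces $\rM=\bfD(G_k)$ together with its operators $\rF$ and $\rV=p\rF^{-1}$. What remains is to show that the lift of $G_k$ to $\W$ is recorded faithfully by a single $\W$-submodule $\rL\subseteq\rM$ subject to the stated normalizations, and that morphisms over $\W$ are exactly the $D_k$-linear maps of special fibres compatible with this submodule.

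To capture the lift I would invoke Grothendieck--Messing deformation theory. Across the divided-power thickening $\W\to k$, deformations of $G_k$ are classified by admissible lifts of the Hodge filtration inside the Dieudonn\'e crystal of $G_k$ evaluated on $\W$, and this evaluation is canonically $\rM$. The submodule $\rL$ is precisely the datum recording this lift of the cotangent (Hodge) filtration; the two conditions --- that $\rV\colon\rL\to\rM$ be injective and that $\rL/p\rL\to\rM/\rF\rM$ be an isomorphism --- translate the requirement that the filtration lift the cotangent space $\rM/\rF\rM$ as a direct summand, which simultaneously gives part (iii). Full faithfulness then follows because a homomorphism $G\to G'$ over $\W$ restricts to a $D_k$-linear map $\bfD(G'_k)\to\bfD(G_k)$ that additionally respects the Hodge filtrations, i.e.\ carries $\rL'$ into $\rL$; and essential surjectivity follows because Dieudonn\'e theory realizes any abstract connected $\rM$ as $\bfD(G_k)$ for a unique $G_k$, after which Messing's theorem produces the unique deformation $G$ whose Hodge filtration is $\rL$.

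For the finite flat statement (ii) I would pass through $p$-divisible groups. Over the complete base $\W$ every finite flat $p$-primary group scheme embeds into a $p$-divisible group (by prolongation, or by realization inside an abelian scheme), hence arises as the kernel of an isogeny $G\to G'$; applying (i) and taking the induced finite Honda system defines the functor, and exactness of the crystalline construction shows it is an anti-equivalence onto finite Honda systems. The compatibilities in (iv) --- that $(\bfD(G_k)/p^n,\bfL(G)/p^n)$ be the finite Honda system of $G[p^n]$, and that both functors commute with extension of $k$ --- are then formal, obtained by base-changing the crystal along $\W(k)\to\W(k')$ and along multiplication by $p^n$.

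The main obstacle, and the reason the theorem is stated only for connected objects there, is the prime $p=2$. At $p=2$ the divided powers on the ideal $(p)\subseteq\W$ fail to be nilpotent in the sense Grothendieck--Messing requires, and the \'etale--multiplicative duality underpinning the general case degenerates, so the deformation argument above does not apply to an arbitrary $G$. The resolution is to restrict to the connected case, where $G$ is a formal Lie group: one then bypasses the crystalline machinery and builds the group law directly from the module data via Cartier--Honda theory of formal groups, as in Honda's original construction. Checking that this formal-group construction still yields an anti-equivalence at $p=2$, and that the length and extension compatibilities of (ii) and (iv) persist in the connected setting, is the delicate technical point on which the $p=2$ half rests.
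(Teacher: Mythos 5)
The paper does not prove this statement at all: it is Fontaine's classification theorem, quoted as known background with a pointer to \cite{Fon3} (see also \cite{Con2,BrCo}), and what the paper actually relies on afterwards is Fontaine's \emph{explicit} construction --- points of the group scheme attached to a Honda system are $D_k$-maps into Witt covectors killed by the Hasse--Witt exponential, as recalled in \S\ref{GpScheme}. So there is no internal proof to compare yours against; the only question is whether your sketch would constitute a proof of Fontaine's theorem, and as written it would not.

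The decisive gap is the case $p=2$ restricted to connected objects. This is precisely the case the paper needs (its group schemes $\cE$ are biconnected over $\Z_2$ and every application is at $p=2$), and your proposal explicitly defers it: invoking Cartier--Honda theory and calling the verification ``the delicate technical point'' acknowledges where the work lies but does not do it --- in Fontaine's treatment this connected case is the bulk of the argument, carried out by constructing the group scheme of $(\rM,\rL)$ inside the formal group $\widehat{\CW}_k$ and proving full faithfulness and essential surjectivity directly, uniformly in $p$. The $p>2$ half of your sketch also has real gaps. Grothendieck--Messing theory does not apply directly to $\W\to k$, whose kernel $(p)$ is not nilpotent: one must deform along the thickenings $\W/p^{n+1}\to\W/p^n$ (where nilpotence of the divided powers on $(p)$ is exactly what fails at $p=2$) and pass to the limit, and one must then prove that admissible lifts of the Hodge filtration correspond precisely to Fontaine's axioms on $\rL$ --- injectivity of $\rV$ on $\rL$ and $\rL/p\rL\xrightarrow{\sim}\rM/\rF\rM$ --- a nontrivial translation involving $\rV=p\rF^{-1}$ that your sketch asserts rather than carries out. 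Finally, in (ii) the claim that every finite flat $p$-primary group scheme over $\W$ is the kernel of an isogeny of $p$-divisible groups needs justification (Raynaud's embedding theorem, or prolongation over a complete discrete valuation ring), and the induced finite Honda system must be shown to be independent of the chosen resolution and functorial; labelling (iv) as ``formal'' conceals the exactness statements on which that independence rests.
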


\begin{lem} \label{pM}
Let $(\rM,\rL)$ be a Honda system of exponent $p$.  Then $\rM=\rL + \rF\rM$ is a direct sum, $\ker \rF=\rV\rL=\rV\rM$, $\dim \ker \rF = \dim \rL$ and $\ker \rV=\rF\rM$.
\end{lem}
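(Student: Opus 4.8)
The plan is to read all four assertions off the defining axioms of a finite Honda system, once we record what exponent $p$ buys us. Exponent $p$ means $p\rM=0$, so $\rM$ is a $k$-vector space of finite dimension, and since $\rF\rV=\rV\rF=p$, both composites $\rF\rV$ and $\rV\rF$ vanish on $\rM$. This already yields the easy inclusions $\rV\rM\subseteq \ker\rF$ and $\rF\rM\subseteq\ker\rV$; the real content is upgrading these to equalities, and the direct-sum decomposition together with a dimension count will be the lever.

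For the direct sum I would invoke the structural isomorphism $\rL/p\rL\xrightarrow{\sim}\rM/\rF\rM$ built into the definition. Because $p\rL=0$, this is just the map $\rL\to\rM/\rF\rM$ induced by the inclusion $\rL\hookrightarrow\rM$: surjectivity says $\rM=\rL+\rF\rM$, while injectivity says its kernel $\rL\cap\rF\rM$ equals $p\rL=0$. Hence $\rM=\rL\oplus\rF\rM$, the first assertion. Next comes the bookkeeping over $k$. Applying rank--nullity to $\rF$ gives $\dim\rM=\dim\ker\rF+\dim\rF\rM$, and comparing with $\dim\rM=\dim\rL+\dim\rF\rM$ from the direct sum yields $\dim\ker\rF=\dim\rL$, the third assertion. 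For the second, $\rV\rF=0$ forces $\rV\rM=\rV(\rL\oplus\rF\rM)=\rV\rL$; since $\rV$ is injective on $\rL$ we get $\dim\rV\rL=\dim\rL=\dim\ker\rF$, and the inclusion $\rV\rM\subseteq\ker\rF$ together with equal finite dimensions forces $\ker\rF=\rV\rL=\rV\rM$. The fourth assertion is symmetric: rank--nullity for $\rV$ gives $\dim\ker\rV=\dim\rM-\dim\rV\rM=\dim\rM-\dim\rL=\dim\rF\rM$, and $\rF\rM\subseteq\ker\rV$ then forces equality.

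The only genuine subtlety, and the step I would be most careful about, is that $\rF$ and $\rV$ are Frobenius-semilinear rather than $k$-linear, so I must confirm that rank--nullity and the principle ``inclusion plus equal dimension implies equality'' remain valid. Since $k$ is perfect, $\sigma$ is an automorphism; the kernel and image of a $\sigma^{\pm 1}$-semilinear endomorphism are genuine $k$-subspaces and the map induced on the coimage is a semilinear bijection, so rank--nullity holds verbatim. After that observation the four claims fall out of the direct-sum decomposition and two applications of rank--nullity, with no further computation needed.
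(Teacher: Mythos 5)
Your proposal is correct and follows essentially the same route as the paper: the structural isomorphism $\rL/p\rL \to \rM/\rF\rM$ gives the direct sum $\rM = \rL \oplus \rF\rM$, and then dimension counting together with injectivity of $\rV$ on $\rL$ forces the inclusions $\rV\rL \subseteq \rV\rM \subseteq \ker\rF$ and $\rF\rM \subseteq \ker\rV$ to be equalities. Your explicit check that rank--nullity survives the $\sigma^{\pm 1}$-semilinearity of $\rF$ and $\rV$ (using perfectness of $k$) is a point the paper leaves implicit, but it does not change the argument.
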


\begin{proof} 
Since $\rL/p\rL \to \rM/\rF\rM$ is an isomorphism, $\rM=\rL + \rF\rM$ is a direct sum and 
$$
\dim \rM=\dim \rF\rM+\dim \rL=\dim \rM-\dim\ker \rF+\dim \rL.
$$
Hence $\dim \ker \rF = \dim \rL$ and equality holds for each inclusion in $\rV\rL \subseteq \rV\rM \subseteq \ker \rF$ because $\rV_{\vert \rL}$ is injective.
In addition, 
$$
\dim \rL =\dim \rV\rL = \dim \rV\rM = \dim \rM-\dim \ker \rV,
$$
so $\rM= \rL +  \ker \rV$ is a direct sum and the inclusion $\rF\rM \subseteq \ker \rV$ is an equality. 
\end{proof}

Let $\widehat{\CW}_k$ denote the formal $k$-group scheme associated to the {\em Witt covector} group functor $\CW_k$, cf.\! \cite{Con2,Fon3}.  When $k'$ is a finite extension of $k$ and $K'$ is the field of fractions of $W(k')$, we have $\CW_k(k') \simeq K'/W(k')$.  For any $k$-algebra $R$ and $\W = W(k)$, let $D_k = \W[\rF,\rV]$ act on elements $\bfa = ( \dots, a_{-n}, \dots, a_{-1},a_{0})$ of $\CW_k(R)$ by $\rF \bfa = ( \dots,a_{-n}^p, \dots, a_{-1}^p,a_{0}^p)$,  $\rV \bfa = ( \dots, a_{-(n+1)}, \dots, a_{-2},a_{-1})$ and $\dot{c} \, \bfa = ( \dots, c^{p^{-n}}a_{-n}, \dots, c^{p^{-1}}a_{-1},ca_{0})$, where $\dot{c}$ in $\W$ is the Teichm\"uller lift of $c$.  Note that such lifts generate $\W$ as a topological ring.

The Hasse-Witt exponential map is a homomorphism of additive groups: 
$$
\xi: \, \widehat{CW}_k(\cO_{\ov{K}}/p\cO_{\ov{K}}) \to \ov{K}/p\cO_{\ov{K}} 
\quad \text{by} \quad 
(\dots,a_{-n},\dots,a_{-1},a_0) \mapsto \sum \, p^{-n} \, \tilde{a}_{-n}^{p^n}
$$
independent of the choice of lifts $\tilde{a}_{-n}$ in $\cO_{\ov{K}}$.  If $\cU$ is the group scheme of a Honda system $(\rM,\rL)$, the points of the Galois module $U$ correspond to $D_k$-homomorphisms $\varphi\!: \, \rM \to  \widehat{CW}_k(\cO_{\ov{K}}/p\cO_{\ov{K}})$ such that $\xi(\varphi(\rL)) = 0$ and we say that $\varphi$ {\em belongs to} $\cU$.  The action of $G_K$ on $U(\ov{K})$ is induced from its action on $\widehat{CW}_k(\cO_{\ov{K}}/p\cO_{\ov{K}})$.  

We write $\CWplus$ for the usual Witt covector addition \cite[p.\! 242]{Con2} and state some related elementary facts.  For $q$ a power of $p$ and $x, y$ in $\ov{k}$, the congruence $\Phi_q(x,y) \equiv ((\tilde{x}+\tilde{y})^q-\tilde{x}^q-\tilde{y}^q)/q \pmod{p\cO_{\ov{K}}}$ defines a unique, possibly non-integral element of $\ov{K}/p\cO_{\ov{K}}$, independent of the choices of lifts $\tilde{x}, \tilde{y}$ in $\cO_{\ov{K}}$.   The binomial theorem yields the following estimate:

\begin{lem} \label{ppower}
$
\ord_p((\tilde{x} + \tilde{y})^q - \tilde{x}^q - \tilde{y}^q) \ge 1 + q \min\{\ord_p(\tilde{x}), \ord_p(\tilde{y}) \}.
$ \qed  
\end{lem}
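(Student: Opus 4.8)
The plan is to expand the difference by the binomial theorem and then estimate each surviving term with the ultrametric inequality. Writing $q = p^m$, the extreme terms $\tilde{x}^q$ and $\tilde{y}^q$ cancel, leaving
\[
(\tilde{x} + \tilde{y})^q - \tilde{x}^q - \tilde{y}^q = \sum_{j=1}^{q-1} \binom{q}{j} \, \tilde{x}^j \, \tilde{y}^{q-j}.
\]
The one arithmetic input I would isolate first is that $p \mid \binom{q}{j}$ whenever $1 \le j \le q-1$ and $q$ is a power of $p$; more precisely $\ord_p \binom{p^m}{j} = m - \ord_p(j) \ge 1$ on this range, which is immediate from Kummer's theorem (or Legendre's formula). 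Thus $\ord_p \binom{q}{j} \ge 1$ for every index actually occurring in the sum.

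Next I would bound a single term. Set $\mu = \min\{\ord_p(\tilde{x}), \ord_p(\tilde{y})\}$, extending $\ord_p$ to its unique (a priori $\Q$-valued) valuation on $\ov{K}$. For $1 \le j \le q-1$, additivity of the valuation on products together with the divisibility above give
\[
\ord_p\!\left( \binom{q}{j} \tilde{x}^j \tilde{y}^{q-j} \right) = \ord_p\binom{q}{j} + j\,\ord_p(\tilde{x}) + (q-j)\,\ord_p(\tilde{y}) \ge 1 + j\mu + (q-j)\mu = 1 + q\mu,
\]
since $\ord_p(\tilde{x}), \ord_p(\tilde{y}) \ge \mu$. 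Applying the non-archimedean inequality to the finite sum then yields
\[
\ord_p\!\left( \sum_{j=1}^{q-1} \binom{q}{j} \tilde{x}^j \tilde{y}^{q-j} \right) \ge \min_{1 \le j \le q-1} \ord_p\!\left( \binom{q}{j} \tilde{x}^j \tilde{y}^{q-j} \right) \ge 1 + q\mu,
\]
which is exactly the asserted estimate.

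I expect no genuine obstacle here: the content is entirely the $p$-divisibility of the interior binomial coefficients, together with the fact that $\ord_p$ remains a valuation (additive on products, ultrametric on sums) after extension to $\ov{K}$, so that the per-term bounds combine directly. No choice of the lifts $\tilde{x}, \tilde{y}$ needs to be fixed, since the inequality holds uniformly over all lifts in $\cO_{\ov{K}}$.
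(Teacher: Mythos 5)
Your proof is correct and is exactly the argument the paper intends: the paper dispatches this lemma with the remark that ``the binomial theorem yields the following estimate,'' leaving to the reader precisely the details you supply (the $p$-divisibility $\ord_p\binom{q}{j}\ge 1$ for $1\le j\le q-1$ with $q$ a power of $p$, and the ultrametric bound on the remaining sum). Nothing further is needed.
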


It is convenient to write  $(\0,x_{-n},\dots,x_0)$ for the element $(\dots,0,0,x_{-n},\dots,x_0)$ in $\widehat{CW}_k(\cO_{\ov{K}}/p\cO_{\ov{K}})$.  A routine calculation using the formulas in \cite{Abr,Con2} gives:

\begin{lem} \label{Wittadd} 
Addition in $\widehat{CW}_k(\cO_{\ov{K}}/p\cO_{\ov{K}})$ specializes to:
$$
(\0,u_4,u_3,u_2,u_1,u_0) \CWplus (\0,v_2,v_1,v_0)  = (\0,u_4,u_3,u_2+v_2,w_1,w_0)
$$
where $w_1 = u_1+v_1-\Phi_p(u_2,v_2)$ and 
$$
w_0=u_0+v_0+ \frac{1}{p} (u_1^p+v_1^p)-\Phi_{p^2}(u_2,v_2)- \frac{1}{p} (u_1+v_1-\Phi_p(u_2,v_2))^p. \hspace{20 pt} \qed
$$
\end{lem}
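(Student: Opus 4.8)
The plan is to reduce the covector law $\CWplus$ to ordinary Witt vector addition and then read off the five output slots. A finitely supported covector $(\0,a_{-m},\dots,a_0)$ is identified with the length-$(m+1)$ Witt vector over $R=\cO_{\ov K}/p\cO_{\ov K}$ whose $i$th component is $a_{-m+i}$ (so the most negative slot plays the role of the $0$th Witt component), and under this identification $\CWplus$ becomes ordinary Witt addition; this is exactly the content of the addition formulas recorded in \cite{Abr,Con2}. Taking $m=4$ for $\bfa=(\0,u_4,u_3,u_2,u_1,u_0)$ and padding $\bfb=(\0,v_2,v_1,v_0)$ with $b_{-3}=b_{-4}=0$, the slot $-4+j$ of $\bfa\CWplus\bfb$ is the universal Witt addition polynomial $S_j$ evaluated on $(u_4,u_3,u_2,u_1,u_0)$ and $(0,0,v_2,v_1,v_0)$. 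Since $S_0,S_1$ involve only the two lowest Witt entries of the second argument, both of which vanish, slots $-4$ and $-3$ are unchanged and equal $u_4$ and $u_3$; this matches the first two entries on the right-hand side.

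For the remaining three slots I would compute $S_2,S_3,S_4$ from the defining ghost-component identities $w_n(Z)=w_n(X)+w_n(Y)$, equivalently from additivity of the Hasse--Witt exponential $\xi$ together with its composites with $\rV$ (which shifts the window of computation). The $w_2$-identity gives immediately $c_{-2}=u_2+v_2$, the stated middle entry. Solving the $w_3$-identity for the new unknown $c_{-1}$ yields
\[
c_{-1}=u_1+v_1-\tfrac1p\bigl((u_2+v_2)^p-u_2^p-v_2^p\bigr),
\]
and the parenthesized quotient is by definition $\Phi_p(u_2,v_2)$ in $\ov K/p\cO_{\ov K}$, so $c_{-1}=w_1$. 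Solving the $w_4$-identity for $c_0$ and substituting the value just found for $c_{-1}$ produces
\[
c_0=u_0+v_0+\tfrac1p(u_1^p+v_1^p)-\tfrac1{p^2}\bigl((u_2+v_2)^{p^2}-u_2^{p^2}-v_2^{p^2}\bigr)-\tfrac1p c_{-1}^{\,p},
\]
in which the $p^2$-quotient is $\Phi_{p^2}(u_2,v_2)$ and $c_{-1}=w_1=u_1+v_1-\Phi_p(u_2,v_2)$; this is precisely the asserted $w_0$.

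The genuinely delicate point is not the ghost algebra but the passage to $R=\cO_{\ov K}/p\cO_{\ov K}$: the intermediate expressions carry divisions by $p$ and $p^2$, and the symbols $\Phi_q$ are defined only modulo $p\cO_{\ov K}$, so I must verify that each fractional combination is $p$-integral and independent of the chosen Teichm\"uller lifts before reducing. This is exactly what Lemma \ref{ppower} supplies, bounding $\ord_p\bigl((\tilde x+\tilde y)^q-\tilde x^q-\tilde y^q\bigr)$; applied to $\tfrac1p(u_1^p+v_1^p-c_{-1}^{\,p})$ it guarantees that the two terms $\tfrac1p(u_1^p+v_1^p)$ and $-\tfrac1p w_1^{\,p}$ assemble into a well-defined element of $R$. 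Collecting these verified congruences gives the formula; the only laborious part is the bookkeeping in the $w_4$-identity, which I would organize exactly as above so that each carry is recognized as a single $\Phi_q$.
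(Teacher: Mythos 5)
Your computation is correct and is exactly the ``routine calculation using the formulas in [Abr, Con2]'' that the paper invokes without writing out: left-padding the shorter covector with zeros, using additivity of Verschiebung to identify covector addition on finitely supported covectors with ordinary Witt vector addition, and solving the ghost-component identities for $S_2$, $S_3$, $S_4$ reproduces precisely the stated $u_2+v_2$, $w_1$ and $w_0$. One minor attribution point: the lift-independence of the symbols $\Phi_q$ and of the terms $\frac{1}{p}(\cdot)^p$, and the integrality of the total, come from the paragraph preceding Lemma \ref{ppower} together with the fact that the universal polynomials $S_n$ have integer coefficients, whereas Lemma \ref{ppower} itself is a valuation estimate used later in the paper; this does not affect the validity of your argument.
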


\section{The new categories} \label{OurCat}
After a review of local conductors, we introduce the categories in which extension classes will be studied.
   
Fix distinct primes $N$ and $p$ and let $K$ be a finite extension of $\Q_N$.  If  $L/K$ is a Galois extension, let $\cD=\cD(L/K)$ be its Galois group and $\cI=\cI(L/K)$ its inertia subgroup.  When $\cI$ acts tamely on the finite $\Z_p[\cD]$-module $V$, its Artin conductor exponent is given by $\gf_N(V) = {\len}_{\Z_p} V/V^{\cI}$.  If $$0 \to V_1 \to V \to V_2 \to 0$$ is an exact sequence of finite $\Z_p[\cD]$-modules, then $\gf_N(V)\ge \gf_N(V_1)+ \gf_N(V_2)$.

\vspace{2 pt}

Let $A$ be an abelian variety over $\Q_N$ with semistable bad reduction and let $\T_p(A)$ denote its $p$-adic Tate module.   We freely use results of Grothendieck \cite{Gro}, reviewed in \cite{BK1}.  The $p^\infty$-division field $\Q_N(A[p^\infty])$ depends only on the isogeny class of $A$, so is shared by the dual variety $\widehat{A}$.  The inertia subgroup $\cI$ of $\Gal(\Q_N(A[p^\infty])/\Q_N)$ is pro-$p$ cyclic and $(\sigma-1)^2(\T_p(A)) = 0$ for any topological generator $\sigma$ of $\cI$.  The fixed space $M_f(A)= \T_p(A)^{\cI}$  is a $\Z_p$-direct summand $\T_p(A)$ and the toric space $M_t(A)$ is the $\Z_p$-submodule of $\T_p(A)$ orthogonal to $M_f(\widehat{A})$ under the natural pairing of $\T_p(A)$ with $\T_p(\widehat{A})$.  Moreover, $(\sigma-1)(\T_p(A))$ has finite index in $M_t(A)$.  The conductor exponent of $A$ at $N$, denoted $\gf_N(A)$, is the $\Z_p$-rank of $\T_p(A)/M_f(A)$.  Equivalently,  we have
$
\gf_N(A) = \rk_{\Z_p} M_t(A) = \rk_{\Z_p} (\sigma-1)(\T_p(A)).
$

\begin{lem}  \label{Level1}
Suppose that $\gf_N(A[p]) = \gf_N(A)$.  Then $\gf_N(A[p^n]) =n \, \gf_N(A[p])$ for all $n \ge 1$ and $(\sigma-1)(\T_p(A)) = M_t(A)$.
\end{lem}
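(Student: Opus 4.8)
The plan is to translate both assertions into the module theory of $T = \T_p(A)$ under the operator $\nu = \sigma - 1$, where $\sigma$ topologically generates $\cI$. Since $p\neq N$ and $\cI$ is pro-$p$, inertia acts tamely on each $A[p^n] = T/p^nT$, so the tame formula recalled at the start of this section gives $\gf_N(A[p^n]) = \len_{\Z_p}\bigl(A[p^n]/A[p^n]^{\cI}\bigr)$. Because $\cI$ is topologically generated by $\sigma$, one has $V^{\cI} = \ker(\nu|_V)$ for every $\Z_p[\cI]$-module $V$; applying this to $V = T/p^nT$ and using the first isomorphism theorem identifies the quotient with $\nu(T/p^nT)\cong \nu T/(\nu T\cap p^nT)$. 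Thus I would work throughout with the clean identity $\gf_N(A[p^n]) = \len_{\Z_p}\nu T/(\nu T\cap p^nT)$, whose $n=1$ instance records $\gf_N(A) = \rk_{\Z_p}\nu T =: r$.

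For the first claim I would squeeze $\gf_N(A[p^n])$ between matching bounds. The upper bound is immediate: $p^n\nu T\subseteq \nu T\cap p^nT$ gives a surjection $\nu T/p^n\nu T\twoheadrightarrow \nu T/(\nu T\cap p^nT)$, so $\gf_N(A[p^n])\le \len_{\Z_p}\nu T/p^n\nu T = nr = n\,\gf_N(A)$. For the lower bound, multiplication by $p$ furnishes the Galois-equivariant exact sequence $0\to A[p]\to A[p^n]\to A[p^{n-1}]\to 0$, and the subadditivity $\gf_N(A[p^n])\ge \gf_N(A[p]) + \gf_N(A[p^{n-1}])$ recalled earlier, together with induction on $n$, yields $\gf_N(A[p^n])\ge n\,\gf_N(A[p])$. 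Now the hypothesis $\gf_N(A[p]) = \gf_N(A) = r$ collapses both bounds to $nr$, giving $\gf_N(A[p^n]) = nr = n\,\gf_N(A[p])$.

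For the second claim I would extract purity from the $n=1$ case. Here $\gf_N(A[p]) = \dim_{\F_2}\nu T/(\nu T\cap pT)\le \dim_{\F_2}\nu T/p\nu T = r$, and the hypothesis forces equality, which holds precisely when $\nu T\cap pT = p\nu T$; this is exactly the statement that $\nu T$ is saturated in $T$, i.e.\ that $T/\nu T$ is torsion-free. Since $(\sigma-1)\T_p(A) = \nu T$ sits inside $M_t(A)$ with finite index, the finite group $M_t(A)/\nu T$ embeds into the torsion-free group $T/\nu T$ and must therefore vanish, so $(\sigma-1)\T_p(A) = \nu T = M_t(A)$.

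The heart of the argument is routine length bookkeeping over $\Z_p$; the step most deserving of care is the reduction to $A[p^n]^{\cI} = \ker(\nu \bmod p^n)$, since that is where one must verify both that the tame conductor formula is legitimately available (using $p\neq N$ and the pro-$p$-cyclicity of $\cI$) and that $\cI$-invariants of $T/p^nT$ are computed by the single operator $\nu$. After that, everything reduces to an elementary-divisor count and the fact that a finite subgroup of a torsion-free module is trivial.
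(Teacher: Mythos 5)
Your proof is correct, and it splits naturally into a part that mirrors the paper and a part that does not. For the first claim your argument is essentially the paper's: both squeeze $\gf_N(A[p^n])$ between an upper bound $n\,\gf_N(A)$ and the super-additivity lower bound $n\,\gf_N(A[p])$, the only difference being how the upper bound is obtained (writing $\nu=\sigma-1$ and $T=\T_p(A)$ as you do): you use the surjection $\nu T/p^n\nu T\twoheadrightarrow \nu T/(\nu T\cap p^nT)$ together with freeness of $\nu T$, while the paper uses the injection $\nu T/(\nu T\cap p^nT)\hookrightarrow M_t(A)/p^nM_t(A)$ coming from $\nu T\subseteq M_t(A)$ and the fact that $M_t(A)$ is a $\Z_p$-direct summand. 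Where you genuinely diverge is the second claim: the paper notes that once equality holds at every level $n$, those injections are isomorphisms for all $n$, and it concludes $(\sigma-1)\T_p(A)=M_t(A)$ by passage to the limit; you instead use only the $n=1$ equality, which forces $\nu T\cap pT=p\nu T$, hence $T/\nu T$ torsion-free, and then observe that the finite group $M_t(A)/\nu T$ embeds in $T/\nu T$ and so vanishes. Your route is more economical — it isolates purity of $\nu T$ in $T$ as the exact consequence of the hypothesis and needs no limiting argument — at the mild cost of explicitly invoking Grothendieck's fact that $\nu T$ has finite index in $M_t(A)$ (the paper needs the containment $\nu T\subseteq M_t(A)$ anyway to define its injection, so nothing extra is really being assumed). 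One slip to fix: in your $n=1$ dimension count the residue field should be $\F_p$, not $\F_2$; the lemma is stated for an arbitrary prime $p\neq N$.
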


\begin{proof}
In the following diagram
$$
(\sigma-1)(A[p^n]) \xleftarrow{\ov{\pi}} \frac{(\sigma-1)(\T_p(A))}{(\sigma-1)(\T_p(A)) \cap p^n \T_p(A)}  \xrightarrow{\ov{\text{\j}}} M_t(A)/p^n M_t(A), 
$$
$\ov{\pi}$ is  an isomorphism induced by the natural projection $\pi\!: \, \T_p(A) \to A[p^n]$ and $\ov{\text{\j}}$ is an injection induced by the inclusion $j\!: \, (\sigma-1)(\T_p(A)) \to M_t(A)$.  Since $M_t(A)$ is a $\Z_p$-direct summand of $\T_p(A)$, we have $M_t(A)/p^n M_t(A) \simeq (\Z/p^n)^f$, where $f = \gf_N(A)$ and thus
\begin{equation} \label{nf}
nf = \len_{\Z_p} M_t(A)/p^n M_t(A)  \ge \gf_N(A[p^n]) \ge n \, \gf_N(A[p]),
\end{equation}
using super-additivity of conductors for the last inequality.  By assumption, the left and right sides of \eqref{nf} are equal, so $\gf_N(A[p^n]) =n \, \gf_N(A[p])$.  Then $\ov{\text{\j}} \circ \ov{\pi}^{-1}$ is an isomorphism and $(\sigma-1)(\T_p) = M_t(A)$ upon passage to the limit.
\end{proof}

\begin{Def} \label{ourcat} 
Let $\Sigma = \{\cE_i \, \vert \, 1 \le i \le s\}$ be a collection of finite flat group schemes over $\Z[\frac{1}{N}]$ such that:
\begin{enumerate}[i)]
\item $\cE_i$ is biconnected over $\Z_p$ for all $i$ and
\item the Galois modules $E_i$ are absolutely simple and pairwise non-isomorphic.
\end{enumerate}
Given $\Sigma$, a category $\un{E}$ of finite flat group schemes $\cV$ over $\Z[\frac{1}{N}]$ is a $\Sigma$-{\em category} if the following properties are satisfied:
\begin{enumerate}
\item[{\bf E1.}] Each composition factor of  $\cV$ is isomorphic to some $\cE_i$ with $1\le i\le s$.  \vspace{2 pt}
\item[{\bf E2.}] If $\sigma_v$ generates inertia at $v \vert N$, then $(\sigma_v-1)^2$ annihilates $V=\cV(\ov{\Q})$.  \vspace{2 pt}
\item[{\bf E3.}] If $n_i$ is the multiplicity of $E_i$ in the semi-simplification $V^{ss}$ of $V$, then 
$$
\gf_N(V) = f_N(V^{ss}) = \sum n_i \gf_N(E_i).
$$
\end{enumerate}
A collection of semistable abelian varieties $ A_i$, good outside $N$, is $\Sigma$-{\em favorable} if $\End A_i=\Z$, the $\cE_i = A_i[p]$ satisfy (i) and (ii)  and $\gf_N(A_i) = \gf_N(E_i)$ for $1 \le i \le s$.
\end{Def}

In particular, a favorable abelian surface $A$ is $\Sigma$-favorable with $\Sigma = \{A[2]\}$. 

\begin{lem} \label{E3properties}
If $0 \to \cW \to \cV \to \ov{\cV} \to 0$ is an exact sequence of finite flat group schemes and $\cV$ is in $\un{E}$, then $\cW$ and $\ov{\cV}$ also are in $\un{E}$.
\end{lem}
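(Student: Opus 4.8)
The plan is to verify the three defining properties \textbf{E1}, \textbf{E2}, \textbf{E3} of Definition~\ref{ourcat} for the subobject $\cW$ and the quotient $\ov{\cV}$ separately, noting that membership in $\un{E}$ is characterized \emph{solely} by these three conditions (the connectedness and simplicity hypotheses (i), (ii) constrain only the generators $\cE_i$, not arbitrary objects). The first two properties are formal, and the real content is \textbf{E3}. To begin, I would invoke Raynaud's theorem (Lemma~\ref{CoRay}): the generic-fibre functor identifies the closed immersion $\cW \hookrightarrow \cV$ and the quotient $\ov{\cV} = \cV/\cW$ with an exact sequence of Galois modules
$$
0 \to W \to V \to \ov{V} \to 0,
$$
where $W = \cW(\ov{\Q})$, $V = \cV(\ov{\Q})$, $\ov{V} = \ov{\cV}(\ov{\Q})$, and it puts the finite flat closed subgroup schemes of $\cV$ in bijection with the $G_\Q$-submodules of $V$; under this dictionary the composition factors of a group scheme are exactly the Jordan--Hölder factors of its Galois module.

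For \textbf{E1}, refining a composition series of $\cW$ together with the pullback of one for $\ov{\cV}$ produces a composition series of $\cV$, so the factors of $\cW$ and of $\ov{\cV}$ are each a subcollection of those of $\cV$ and hence lie among the $\cE_i$. For \textbf{E2}, the hypothesis that $(\sigma_v-1)^2$ annihilates $V$ immediately implies it annihilates the submodule $W$ and the quotient $\ov{V}$. Both properties are thus inherited with no work.

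The heart is \textbf{E3}, which I would establish by a squeeze on conductor exponents at $N$. The key preliminary fact is that for any finite $\Z_p[\cD]$-module $X$ (the tameness needed for $\gf_N$ is automatic, since the modules are $p$-primary and $v \mid N$ with $p \ne N$), iterating super-additivity $\gf_N(X) \ge \gf_N(X_1) + \gf_N(X/X_1)$ along a composition series yields $\gf_N(X) \ge \gf_N(X^{ss})$, with $\gf_N(X^{ss}) = \sum_i c_i\, \gf_N(E_i)$ when $E_i$ occurs in $X^{ss}$ with multiplicity $c_i$. Since composition factors are additive in short exact sequences, writing $n_i = m_i + \ov{m}_i$ for the multiplicities in $V, W, \ov{V}$ gives $\gf_N(V^{ss}) = \gf_N(W^{ss}) + \gf_N(\ov{V}^{ss})$. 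Chaining the inequalities,
$$
\gf_N(V) \ge \gf_N(W) + \gf_N(\ov{V}) \ge \gf_N(W^{ss}) + \gf_N(\ov{V}^{ss}) = \gf_N(V^{ss}) = \gf_N(V),
$$
where the first step is super-additivity for $0 \to W \to V \to \ov{V} \to 0$, the second applies the preliminary fact to $W$ and to $\ov{V}$, and the last equality is \textbf{E3} for $\cV$. Forcing equality throughout yields $\gf_N(W) = \gf_N(W^{ss}) = \sum m_i \gf_N(E_i)$ and $\gf_N(\ov{V}) = \gf_N(\ov{V}^{ss}) = \sum \ov{m}_i \gf_N(E_i)$, which are precisely \textbf{E3} for $\cW$ and for $\ov{\cV}$.

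The only delicate point — the obstacle in spirit — is making the squeeze tight: one must combine super-additivity of $\gf_N$ with additivity of the semisimplification on composition factors and then read off, from the collapse of all inequalities to equalities, that no conductor is lost either in passing to the sub/quotient or in semisimplifying. (As a byproduct the conductor is exactly additive, $\gf_N(V) = \gf_N(W) + \gf_N(\ov{V})$, though this is not needed for the statement.) Everything else reduces to bookkeeping through the group-scheme/Galois-module dictionary of Lemma~\ref{CoRay}.
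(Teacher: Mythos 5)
Your proof is correct and follows essentially the same route as the paper: the paper's argument is exactly your squeeze, written as the single chain $\gf_N(V^{ss}) = \gf_N(W^{ss}) + \gf_N(\ov{V}^{ss}) \le \gf_N(W) + \gf_N(\ov{V}) \le \gf_N(V) = \gf_N(V^{ss})$, with the verification of \textbf{E1} and \textbf{E2} dismissed as clear. Your write-up merely makes explicit the bookkeeping (Lemma~\ref{CoRay}, additivity of composition factors, iterated super-additivity) that the paper leaves implicit.
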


\begin{proof}
By super-additivity of conductors  and {\bf E3} for $V$, we have
$$
\gf_N(V^{ss}) = \gf_N(W^{ss}) + \gf_N(\ov{V}^{ss}) \le \gf_N(W) + \gf_N(\ov{V}) \le \gf_N(V) = \gf_N(V^{ss}).
$$
Hence {\bf E3} is valid for both $W$ and $\ov{V}$.  The rest is clear.
\end{proof}

Lemma \ref{E3properties} implies that $\un{E}$ is a full subcategory of the category of $p$-primary group schemes over $\Z[\frac{1}{N}]$, closed under taking products, closed flat subgroup schemes and quotients by closed flat subgroup schemes.   As in  \cite{Sch2}, this guarantees that $\Ext_{\un{E}}^1$ is defined.   Note that Schoof had introduced {\bf E2} for his categories $\un{D}$, as a consequence of semistability.  

\begin{Rem} \label{E2E3}
 If $V^{ss} = \oplus \, n_i E_i$, the conductor of $V$ satisfies the lower bound $\gf_N(V) \ge \sum n_i \gf_N(E_i)$, while {\bf E3} imposes equality.  Remark \ref{obstrep} indicates the need for {\bf E3} when $\dim E_i > 2$ and shows that it is not needed when $\dim E_i=2$.   Moreover, {\bf E2} implies {\bf E3} if $\dim E_i = 2 \, \gf_N(E_i)$ for all $i$.  Indeed, $V/V^{\langle\sigma_v\rangle}\simeq (\sigma_v-1)V \subseteq V^{\langle\sigma_v\rangle}$ by {\bf E2}.  Write $\ell(V) = \len_{\Z_p} V$.  Then
$$
\begin{array}{rcl}
2 \, \sum n_i \gf_N(E_i) &=& \sum n_i \dim_{\F_p} E_i \, = \, \ell(V) \, = \, \ell( (\sigma_v-1)V ) + \ell( V^{\lr{\sigma_v}}) \vspace{4 pt}\\
                                          &\ge&  2\, \ell((\sigma_v-1)V) \, = \, 2\, \gf_N(V) \, \ge \, 2 \, \sum n_i \gf_N(E_i).
\end{array}
$$
Hence $f_N(V) = \sum n_i f_N(E_i)$.
\end{Rem}

\begin{Ex}
In Theorem \ref{mypdiv} below, $\gf_N(B)$ is as small as possible, given the structure of $B[p]^{ss}$.  But minimality of conductor does not guarantee that $B$ is semistable.  For example, \cite{Set} gives an elliptic curve over $K = \Q(\sqrt{37})$ with everywhere good reduction:  
$$
C\!: \, y^2 - \epsilon y = x^3 + \frac{1}{2}\left(3\epsilon+1\right) x^2 + \frac{1}{2}\left(11\epsilon+1\right) x,   \hspace{20 pt} \epsilon = 6 + \sqrt{37}.
$$
If $B$ is its Weil restriction to $\Q$, then $B$ has good reduction outside $N = 37$ and $\gf_N(B) = 2$ by Milne's conductor formula \cite[Prop.\! 1]{Mil}.  Let $A$ be any of the elliptic curves over $\Q$ of conductor 37.  These curves share the same group scheme $\cE = A[2]$ and $\gf_N(E) = 1$.   Let $\un{E}$ be the $\Sigma$-category with $\Sigma = \{\cE\}$.  Then $B[2]^{ss} = \cE \oplus \cE$ and so {\bf E3} holds.  But $B$ has potential good reduction at $N$ and inertia at $v \vert N$ acts on $\T_2(B)$ through the finite quotient $\Gal(\Q_N(\sqrt{37})/\Q_N)$, so {\bf E2} fails.  Note that $B$ was considered earlier in \cite{Shi}.
\end{Ex}

We recall the following elegant theorem of Schoof on $p$-divisible groups.

\begin{theo}[{\cite[Theorem 8.3]{Sch2}}]   \label{pdiv} 
Let $\un{C}$ be a full subcategory of the category of $p$-primary group schemes over $O=\Z[\frac{1}{N}]$, closed under taking products, closed flat subgroup schemes and quotients by closed flat subgroup schemes.  Let $G=\{G_n\}$ and $H=\{H_n\}$ be $p$-divisible groups over $O,$ with $G_n$ and $H_n$ in $\un{C}$.  Suppose that
\begin{enumerate}[{\rm i)}]
\item $R=\End(G)$ is a discrete valuation ring with uniformizer $\pi$ and residue field $k=R/\pi R;$
\item the map $\displaystyle{\Hom_O(G[\pi],G[\pi])\xrightarrow{\delta}  \Ext^1_{\un{C}}(G[\pi],G[\pi])},$ induced by the cohomology sequence of $0 \to G[\pi]\to G[\pi^2]\to G[\pi]\to 0,$ is an isomorphism of one-dimensional $k$-vector spaces;
\item each $H_n$ admits a filtration by flat closed subgroup schemes whose successive subquotients are isomorphic to $G[\pi]$. \end{enumerate}
Then $H$ is isomorphic to $G^r$ for some $r$.
\end{theo}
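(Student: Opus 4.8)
The plan is to recover $H$ from its homomorphisms out of $G$. Write $E=G[\pi]$ and $k=R/\pi R$. Hypothesis (ii) forces the source $\Hom_O(E,E)$ of $\delta$ to be one-dimensional over $k$, so $\End_{\un{C}}(E)=k$: the object $E$ is simple with residue endomorphisms $k$, and its unique nonsplit self-extension in $\un{C}$ is the class $\delta(\mathrm{id}_E)=[G[\pi^2]]$. I would reformulate the goal as the assertion that the $R$-module $\cM=\Hom_O(G,H)$ of homomorphisms of $p$-divisible groups over $O$ is free of some finite rank $r$, and that the evaluation morphism $G\otimes_R\cM\to H$, i.e.\ a morphism $G^r\to H$, is an isomorphism. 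If $H\cong G^r$ this is visibly so, with $\cM\cong R^r$, so no information is lost in the reformulation.

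First I would carry out a d\'evissage that propagates hypothesis (ii) from the simple object $E$ to all finite layers $G[\pi^m]$. Working inside the closed subcategory $\un{C}$ (so that the extension groups that arise are genuinely computed there), I would prove by induction on $m+n$ that
$$\Hom_{\un{C}}(G[\pi^m],G[\pi^n])\cong R/\pi^{\min(m,n)} \quad\text{and}\quad \Ext^1_{\un{C}}(G[\pi^m],G[\pi^n])\cong R/\pi^{\min(m,n)},$$
with the latter generated by the class of $G[\pi^{m+n}]$. This matches the purely algebraic values of $\Hom_R$ and $\Ext^1_R$ between the cyclic modules $R/\pi^m$ and $R/\pi^n$. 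The base case $m=n=1$ is exactly hypothesis (ii). The inductive step feeds the filtration sequences $0\to G[\pi^{m-1}]\to G[\pi^m]\to E\to 0$, together with their analogues in the second variable, into the long exact $\Hom$/$\Ext$ sequences, using that $\un{C}$ is closed under the relevant subobjects and quotients so that no extension class escapes it.

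Next I would build $\cM$ by successive lifting and then verify the evaluation map. Each copy of $E$ appearing in the filtration of $H[p]$ from (iii) would be promoted to a homomorphism $G\to H$ by lifting successively across the layers $0\to G[\pi^n]\to G[\pi^{n+1}]\to E\to 0$ of $G$: at each stage the obstruction to extending the partial map lies in an $\Ext^1_{\un{C}}$ group which the d\'evissage shows to be one-dimensional and already realized inside $G$, so after adjusting the partial map the lift exists. Because $R=\End G$ is a discrete valuation ring rather than merely an Artinian local ring, these lifts do not terminate but propagate through all $n$, and assemble to genuine homomorphisms of $p$-divisible groups; this freeness over $R$ is what yields a true power rather than an isogeny. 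Finally the resulting $\psi\colon G^r\to H$ is injective on $p$-power torsion (the chosen generators of $\cM$ remaining independent modulo $\pi$), and the order of $H[p]$ prescribed by the filtration (iii) makes the heights of $G^r$ and $H$ agree, so $\psi$ is an isomorphism and $H\cong G^r$.

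The main obstacle is the d\'evissage of the second paragraph. Hypothesis (ii) controls only the \emph{first-order} self-extensions of $E$; the substance of the proof is to show that no new extension directions appear at higher level, equivalently that the versal deformation ring of $E$ inside $\un{C}$ is exactly the discrete valuation ring $R$ and not some larger local ring. The delicate point is to keep every intermediate $\Ext^1$ computation inside the closed subcategory $\un{C}$, so that these groups really coincide with the one-dimensional spaces pinned down by (ii) and the obstruction classes to lifting land where the induction can control them; it is precisely this discrete-valuation-ring structure, as opposed to a merely Artinian one, that upgrades the conclusion from "$H$ is isogenous to a power of $G$" to "$H$ is isomorphic to a power of $G$."
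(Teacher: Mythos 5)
The paper does not actually prove this statement: it is quoted verbatim from Schoof \cite[Theorem 8.3]{Sch2}, so your proposal has to be measured against Schoof's argument. Your d\'evissage paragraph is sound in outline and is indeed the computational core of that argument: the closure properties of $\un{C}$ make pullbacks and pushouts available, hence the six-term $\Hom$--$\Ext$ sequences, and hypothesis (ii) does propagate to give $\Hom_{\un{C}}(G[\pi^m],G[\pi^n])\cong \Ext^1_{\un{C}}(G[\pi^m],G[\pi^n])\cong R/\pi^{\min(m,n)}$, with non-splitness of $0\to G[\pi^{n-1}]\to G[\pi^n]\to G[\pi]\to 0$ obtained by pushing out along $G[\pi^{n-1}]\twoheadrightarrow G[\pi^{n-1}]/G[\pi^{n-2}]\cong G[\pi]$, which turns it into the level-two extension of (ii).

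The genuine gap is your lifting step. To extend a partial map $f\colon G[\pi^n]\to H[p^N]$ across $0\to G[\pi^n]\to G[\pi^{n+1}]\to G[\pi]\to 0$, the obstruction is the pushforward class $f_*[G[\pi^{n+1}]]$, which lies in $\Ext^1_{\un{C}}(G[\pi],H[p^N])$. This is not one of the groups your d\'evissage computes: its second argument is the unknown scheme $H[p^N]$. Filtering $H[p^N]$ by copies of $G[\pi]$ only bounds the dimension of that group by the filtration length; it is not one-dimensional, it need not be ``realized inside $G$'', and you give no mechanism by which ``adjusting the partial map'' kills the obstruction. Controlling $\Ext^1_{\un{C}}(G[\pi],H[p^N])$ in the way you need amounts to already knowing that $H[p^N]$ is a product of layers $G[\pi^{e_i}]$, which is precisely the finite-level content of the theorem, so your construction of $\cM=\Hom_O(G,H)$ is circular. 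Schoof avoids this by running the induction on $H_n$ itself rather than on maps out of $G$: writing $0\to J\to H_n\to G[\pi]\to 0$ with $J\cong\prod_i G[\pi^{e_i}]$ known inductively, the extension class lies in $\prod_i\Ext^1_{\un{C}}(G[\pi],G[\pi^{e_i}])$, which the d\'evissage does control; composing with a suitable automorphism of $J$ (here one uses that epimorphisms $G[\pi^{e_i}]\to G[\pi^{e_j}]$ induce isomorphisms on $\Ext^1(G[\pi],-)$ while closed immersions induce the zero map) normalizes the class to a single nonzero coordinate, whence $H_n\cong\prod_i G[\pi^{e_i'}]$. Comparing $H_{n+1}[p^n]$ with $H_{n+1}/H_1$ then forces $H_n\cong G_n^r$, and the levelwise isomorphisms are assembled into an isomorphism of $p$-divisible groups by lifting units along the surjections $\End(G_{n+1}^r)\twoheadrightarrow\End(G_n^r)$. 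Your $\Hom$-module reformulation is a corollary of this finite-level structure theorem, not an independent route to it.
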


\begin{Not}
If $\cV$ and $\cW$ in $\un{E}$ are annihilated by $p$, write $\Ext_{[p], \un{E}}^1(\cV,\cW)$ for the subgroup of $\Ext_{\un{E}}^1(\cV,\cW)$ whose classes are represented by extensions killed by $p$. 
\end{Not}

\begin{theo} \label{mypdiv}
Let $\{A_i  \, \vert \, 1 \le i \le s \}$ be a $\Sigma$-favorable collection of abelian varieties and let  $\un{E}$ be the $\Sigma$-category with $\Sigma = \{\cE_i = A_i[p] \, \vert \, 1 \le i \le s\}$.  If $B$ is isogenous to $\prod_i A_i^{n_i},$ then subquotients of $B[p^r]$ are in $\un{E}$.   Conversely, suppose that $B$ is semistable and $\gf_N (B)=  \sum n_i \gf_N(E_i)$, where $B[p]^{ss}=\oplus \, n_i\cE_i$.  If
\vspace{2  pt}

\centerline{  {\bf E4:} $\Ext^1_{[p],\un{E}}(\cE_i,\cE_j)=0$ for all $1 \le i \le j \le s,$}

\vspace{2  pt}
\noindent then $B$ is isogenous to $\prod A_i^{n_i}.$ 
\end{theo}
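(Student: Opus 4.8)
The plan is to derive both halves of the theorem from Schoof's criterion (Theorem \ref{pdiv}), after repackaging $B$ inside the category $\un{E}$. The unifying computation is that under the relevant conductor hypothesis, Lemma \ref{Level1} gives $\gf_N(B[p^n]) = n\sum_i n_i\,\gf_N(E_i)$ for all $n$, which is exactly what is needed to verify {\bf E3}.

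\textbf{Forward direction.} Suppose $B \sim \prod_i A_i^{n_i}$. Isogeny invariance of the semisimplification and the filtration $0 \to A_i[p^{r-1}] \to A_i[p^r] \xrightarrow{p^{r-1}} A_i[p] \to 0$ give $B[p^r]^{ss} = \oplus_i (r\,n_i)\,\cE_i$, so {\bf E1} holds; since $B$ is semistable, $(\sigma_v-1)^2$ annihilates $\T_p(B)$ and hence $B[p^r]$, giving {\bf E2}. The $\Sigma$-favorable condition $\gf_N(A_i)=\gf_N(E_i)$ is precisely the input to Lemma \ref{Level1}, and additivity of $\gf_N$ over the product then yields {\bf E3} for $B[p^r]$. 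Thus $B[p^r]\in\un{E}$, and Lemma \ref{E3properties} propagates membership to all subquotients.

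\textbf{Converse: reduction and isotypic splitting.} Now assume $B$ semistable with $B[p]^{ss}=\oplus_i n_i\cE_i$ and $\gf_N(B)=\sum_i n_i\gf_N(E_i)$, and assume {\bf E4}. Super-additivity of conductors gives $\gf_N(B[p]) \ge \gf_N(B[p]^{ss}) = \gf_N(B) \ge \gf_N(B[p])$, so $\gf_N(B[p])=\gf_N(B)$ and Lemma \ref{Level1} applies to $B$; together with semistability this shows $B[p^n]\in\un{E}$ for every $n$. Next I would split the $p$-divisible group $B[p^\infty]$ into isotypic pieces. For $i\ne j$ one has $\Hom_{\un{E}}(\cE_i,\cE_j)=0$ by Schur, and any extension $\cW$ of $\cE_i$ by $\cE_j$ is killed by $p$ because the map $p\colon \cW/\cE_j=\cE_i\to\cE_j$ it induces lies in $\Hom(\cE_i,\cE_j)=0$; hence $\Ext^1_{\un{E}}(\cE_i,\cE_j)=\Ext^1_{[p],\un{E}}(\cE_i,\cE_j)$, which vanishes by {\bf E4} (using Cartier self-duality of the $\cE_i$, which holds by the Weil pairing and absolute simplicity, to reduce the case $i>j$ to $i<j$). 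Orthogonality of the distinct simples forces each $B[p^n]$ to split as $\oplus_i \cV_i^{(n)}$ with $\cV_i^{(n)}$ isotypic of type $\cE_i$; naturality assembles these into a product decomposition $B[p^\infty]=\prod_i H_i$ of $p$-divisible groups with $H_i[p]\cong\cE_i^{\,n_i}$.

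\textbf{Applying Schoof.} To each $H_i$ I would apply Theorem \ref{pdiv} with $G=A_i[p^\infty]$ and $\un{C}=\un{E}_i$ the isotypic subcategory. Hypothesis (i) holds with $R=\End(G)=\Z_p$, $\pi=p$, $k=\F_p$, since $\End_{\un{E}}(\cE_i)=\F_p$ by absolute simplicity; hypothesis (iii) holds by construction, as $\cV_i^{(n)}$ has all composition factors $\cong G[p]$ and Lemma \ref{CoRay} realizes its composition series by flat closed subgroup schemes. The decisive step is hypothesis (ii): $\Hom_O(G[p],G[p])=\F_p \xrightarrow{\delta} \Ext^1_{\un{E}_i}(\cE_i,\cE_i)$ must be an isomorphism of one-dimensional $k$-spaces, where $\delta(\mathrm{id})=[A_i[p^2]]$ is non-split of exponent $p^2$. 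By Proposition \ref{Ext}, $\Ext^1_{\un{E}}(\cE_i,\cE_i)$ is the sum of its exponent-$p$ part $\Ext^1_{[p],\un{E}}(\cE_i,\cE_i)$ and the line spanned by $[A_i[p^2]]$; hence {\bf E4} forces this group to equal $\langle\delta(\mathrm{id})\rangle$, one-dimensional, and $\delta$ to be an isomorphism. Schoof's theorem then gives $H_i\cong A_i[p^\infty]^{r_i}$, and comparing $[p]$-ranks forces $r_i=n_i$. Assembling over $i$ yields $B[p^\infty]\cong(\prod_i A_i^{n_i})[p^\infty]$, and Faltings' isogeny theorem upgrades this isomorphism of $p$-divisible groups (equivalently, of $\T_p\otimes\Q_p$ as Galois modules) to the desired isogeny $B\sim\prod_i A_i^{n_i}$.

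\textbf{Main obstacle.} The crux is hypothesis (ii): converting {\bf E4}, which only controls exponent-$p$ extensions, into Schoof's requirement that the full group $\Ext^1_{\un{E}}(\cE_i,\cE_i)$ be one-dimensional and generated by the canonical class $[A_i[p^2]]$. This rests entirely on isolating, via Proposition \ref{Ext}, the single extra dimension contributed by the genuinely exponent-$p^2$ extension $A_i[p^2]$ from the exponent-$p$ extensions annihilated by {\bf E4}.
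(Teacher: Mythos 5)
Your proposal is correct and follows essentially the same route as the paper's proof: establish {\bf E1}--{\bf E3} for $B[p^r]$ via super-additivity of conductors and Lemma \ref{Level1}, decompose the $p$-divisible group of $B$ into isotypic pieces using {\bf E4} and the isotypic-decomposition lemma, verify Schoof's hypothesis (ii) by combining the vanishing of $\Ext^1_{[p],\un{E}}(\cE_i,\cE_i)$ with Proposition \ref{Ext} and the non-split class of $A_i[p^2]$ so that $\Ext^1_{\un{E}}(\cE_i,\cE_i)=\F_p$, and conclude with Theorem \ref{pdiv} and Faltings' isogeny theorem. The only difference is that you make explicit some details the paper leaves implicit, e.g.\ that $\Hom_{\Gal}(E_i,E_j)=0$ for $i\ne j$ forces $\Ext^1_{\un{E}}(\cE_i,\cE_j)=\Ext^1_{[p],\un{E}}(\cE_i,\cE_j)$, which is exactly what the isotypic lemma requires.
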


\begin{proof} 
Lemmas \ref{Level1} and \ref{E3properties} imply the first claim.   For the converse, it suffices by Lemma \ref{E3properties}, to show that $B[p^r]$ belongs to $\un{E}$.   Property {\bf E1} is clear and {\bf E2} follows from semistability.  By super-additivity of conductors, 
$$
\sum n_i \gf_N(E_i) = \gf_N(B[p]^{ss}) \le \gf_N(B[p]) \le \gf_N(B) = \sum n_i \gf_N(E_i).
$$
Thus each weak inequality above is an equality and so  
$$
\gf_N(B[p^r]) = r \, \gf_N(B[p]) = \sum r \, n_i \gf(E_i)
$$ 
by Lemma \ref{Level1}.  Hence {\bf E3} holds and $B[p^r]$ is in $\un{E}$.

Assuming {\bf E4}, the Lemma below enables us to define isotypic decompositions of the finite flat group schemes in $\un{E}$.  Thus the $p$-divisible group of $B$ is the product of its isotypic $p$-divisible subgroups $H^{(i)}$.   If $G^{(i)}$ is the $p$-divisible group of $A_i$, then $\End(G^{(i)})=\Z_p$ by the theorem of Faltings proving Tate's conjecture.   Vanishing of $\Ext^1_{[p], \un{E}}(\cE_i,\cE_i)$ and Proposition \ref{Ext} imply that  $\Ext^1_{\un{E}}(\cE_i,\cE_i) = \F_p$ thanks to the existence of the extension $0 \to \cE_i \to A_i[p^2] \to \cE_i \to 0$.  Theorem \ref{pdiv} now gives $H_i\simeq G_i^{n_i}$ and so the $p$-divisible group of $B$ is isomorphic to that of $\prod A_i^{n_i}$.  Conclude by Faltings' theorem on isogenies \cite[\S5]{Falt1}.
\end{proof}

\begin{lem}
Let $M$ be a finite length module over the ring $R$ and $E_1,\dots, E_s$ its non-isomorphic simple constituents. Let $M_i$ be the maximal $R$-submodule all of whose composition factors are isomorphic to $E_i.$   If $\Ext^1_R(E_i,E_j)=0$ for $i\ne j,$  then $M=\bigoplus M_i,$ i.e. $M$ is the sum of its isotypic components. 
\end{lem}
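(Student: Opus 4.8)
The plan is to reduce everything to a dévissage for $\Ext^1$ together with a maximality argument, after first disposing of the easy half. Note that the family of submodules of $M$ all of whose composition factors are isomorphic to $E_i$ is closed under sums, since $P+Q$ is a quotient of $P\oplus Q$; hence $M_i$ is genuinely well defined as the largest such submodule. The sum $\sum_i M_i$ is moreover automatically \emph{direct}, independently of the $\Ext$ hypothesis: any submodule of $M_i \cap \sum_{j\ne i} M_j$ has all its composition factors isomorphic to $E_i$ (being inside $M_i$) and simultaneously lying among $\{E_j : j\ne i\}$ (being inside a quotient of $\bigoplus_{j\ne i} M_j$), so it has length $0$. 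Thus the only real content is the \emph{spanning} statement $M=\sum_i M_i$, and this is where the vanishing of $\Ext^1_R(E_i,E_j)$ for $i\ne j$ must enter.

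First I would record the dévissage consequence of the hypothesis: if $k$ is fixed and $Y$ is any finite length module all of whose composition factors lie among $\{E_j : j\ne k\}$, then $\Ext^1_R(E_k,Y)=0$. This follows by induction on $\len_R Y$ from the long exact sequence
$$
\Ext^1_R(E_k,Y_1)\to \Ext^1_R(E_k,Y)\to \Ext^1_R(E_k,Y_2)
$$
attached to a short exact sequence $0\to Y_1\to Y\to Y_2\to 0$ with $Y_1,Y_2$ shorter, the base case $Y\cong E_j$ $(j\ne k)$ being exactly the hypothesis.

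With this in hand I would argue by contradiction. Set $N=\bigoplus_i M_i$ and suppose $N\ne M$. Then $M/N\ne 0$ contains a simple submodule, say isomorphic to $E_k$; let $S\subseteq M$ be its preimage, so that $0\to N\to S\to E_k\to 0$. Writing $N=M_k\oplus N'$ with $N'=\bigoplus_{j\ne k}M_j$, I pass to the quotient by $M_k$ to obtain
$$
0\to N'\to S/M_k\to E_k\to 0,
$$
whose class is the image of $[S]$ under the projection $\Ext^1_R(E_k,N)\to \Ext^1_R(E_k,N')$. Since $N'$ has composition factors only among $\{E_j:j\ne k\}$, the dévissage step gives $\Ext^1_R(E_k,N')=0$, so this sequence splits. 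A splitting provides a submodule $\bar C\cong E_k$ of $S/M_k$; its preimage $C\subseteq S$ satisfies $C\supsetneq M_k$ and has all composition factors isomorphic to $E_k$. This contradicts the maximality of $M_k$. Hence $N=M$, which together with directness yields $M=\bigoplus_i M_i$.

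The routine parts are the two stability facts (closure of the isotypic condition under sums and under passage to subquotients) and the elementary long exact sequence. The one step that genuinely uses the hypothesis, and which I regard as the crux, is the passage $0\to N\to S\to E_k\to 0 \rightsquigarrow 0\to N'\to S/M_k\to E_k\to 0$: the hard part is to check that quotienting by $M_k$ realizes the covariant map $\Ext^1_R(E_k,N)\to\Ext^1_R(E_k,N')$ as a pushout, so that the vanishing of the target group forces a splitting, and then to translate that splitting back into a submodule of $S$ strictly larger than $M_k$ with the correct composition factors. Everything else is formal.
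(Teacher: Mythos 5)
Your proof is correct and follows essentially the same route as the paper's: the same dévissage argument showing $\Ext^1_R(E_k,Y)=0$ when $Y$ avoids $E_k$, followed by the same contradiction in which one passes from $0\to N\to S\to E_k\to 0$ to the pushed-out extension $0\to N'\to S/M_k\to E_k\to 0$, splits it, and pulls the splitting back to violate the maximality of $M_k$ (the paper phrases the choice of $S$ as a minimal submodule containing $\bigoplus M_i$, which is the same as taking the preimage of a simple submodule of the quotient).
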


\begin{proof} 
If all composition factors of the $R$-modules $N$ and $N'$ are isomorphic to $E_i$, the same is true of $N+N'$ as a quotient of $N\oplus N'$, so the definition of $M_i$ makes sense. The sum of the $M_i$ is direct, since no simple module occurs in the intersection of $M_j$ with the sum of the other isotypics. By the long exact sequence of $\Ext$ and induction, $\Ext^1_R(E_i,P)=0$ if $P$ does not involve $E_i.$  Let $M'=\bigoplus_{i=1}^s M_i \subsetneq M$ and let $N$ be a minimal submodule of $M$ containing $M'$.   Then, after relabeling, we have $N/M'\simeq E_s$.  The exact sequence
$
0\to M'/M_s\to N/M_s\to E_s\to 0
$
splits, so there is a submodule $N'$ of $N$ with $N'/M_s\simeq E_s,$  contradicting  maximality of $M_s.$
\end{proof}

\begin{Rem}  
In his work on deformations, Ploner \cite{Plon} considered conditions {\bf E1, E2} and {\bf E4} for two-dimensional group schemes.
\end{Rem}

\section{Some Honda systems} \label{Honda}  \numberwithin{equation}{section}
Recall that $\W$ is the ring of Witt vectors over a finite field  $k$ of characteristic $p$ and let $K$ be the quotient field of $\W$.   Suppose that $\cE = A[p]$ is an  absolutely simple finite flat group scheme of order $p^4$ where $A$ is an abelian surface over $K$ with biconnected good reduction.  In this section, we classify the Honda systems of such $\cE$'s and those of extensions of $\cE$ by itself annihilated by $p$.

\begin{prop} \label{simple} 
Let $(\rM,\rL)$ be the Honda system for a group scheme $\cE$ as above.  Then there is a $k$-basis $x_1,x_2,x_3,x_4$ for $\rM$ such that $\rL = \spn\{x_1,x_2\}$,
\begin{equation} \label{VFsimple}
\rV=\left[\begin{smallmatrix}0&0&0&0\\ 1&0&0&0\\0&\lambda&0&0\\ 0&0&0&0\end{smallmatrix}\right] \quad \text{and} \hspace{12 pt} \rF=\left[\begin{smallmatrix}0&0&0&0\\ 0&0&0&0\\0&0&0&1\\ 1&0&0&0\end{smallmatrix}\right]
\end{equation}
for some $\lambda$ in $k^{\times}$.  Furthermore $x_1', \dots, x_4'$ is another such basis if and only if $x_1' = r^{p^2} x_1$ and $\lambda' = r^{1-p^4} \lambda$ with $r$ in $k^\times$.
\end{prop}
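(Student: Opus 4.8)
The plan is to read the two matrices off the $D_k$-module structure that Lemma \ref{pM} and absolute simplicity force on $(\rM,\rL)$, and then to obtain the normalization of $x_1$ and $\lambda$ by elementary semilinear bookkeeping. Since $\cE$ has order $p^4$ and is killed by $p$, the module $\rM$ is a $4$-dimensional $k$-vector space of exponent $p$, so Lemma \ref{pM} applies. By Theorem \ref{FoHon}(iii) the quotient $\rM/\rF\rM$ is the cotangent space of the reduction of $A$, hence $\dim_k(\rM/\rF\rM)=\dim A=2$; as $\rL\xrightarrow{\sim}\rM/\rF\rM$ this gives $\dim\rL=2$. Biconnectedness makes both $\rF$ and $\rV$ nilpotent, and exponent $p$ gives $\rF\rV=\rV\rF=0$, so Lemma \ref{pM} yields $\rM=\rL\oplus\rF\rM$, $\ker\rF=\rV\rL=\rV\rM$ and $\ker\rV=\rF\rM$, with all four of $\rL,\rF\rM,\ker\rF,\ker\rV$ of dimension $2$.

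The heart of the argument is to use absolute simplicity to pin down the position of $\rL$ relative to the $\rF$- and $\rV$-flags; I expect this to be the main obstacle, and it is exactly where the hypothesis that $\cE=A[p]$ comes from an abelian surface is used. First I claim $\rF^2\neq0$ (conceptually: the necessarily supersingular reduction has $a$-number one, the superspecial case being decomposable). Indeed, if $\rF^2=0$ then symmetrically $\rV^2=0$ and $\rF\rM=\ker\rF=\ker\rV=\rV\rM=:T$, with $\rF,\rV$ restricting to isomorphisms $\rL\xrightarrow{\sim}T$; over $\ov{k}$ the $\sigma^2$-semilinear automorphism $(\rV|_\rL)^{-1}\rF|_\rL$ of $\rL$ has an eigenvector $v$, whence $\rF v\in\spn\{\rV v\}$ and $\spn\{v,\rF v\}$ is a proper sub-object of $(\rM,\rL)$, a contradiction. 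Hence $\rF^3=0$, the line $P:=\rF^2\rM$ equals $\rV^2\rM=\ker\rF\cap\ker\rV$, and $\rF^2\rL=\rF^2\rM=P$. Second I claim $\rL\cap\ker\rF\neq0$: if it were $0$, choose $0\neq y$ in the line $\ker(\rF^2|_\rL)$; then $\rF y\in P\setminus\{0\}$, and a short computation of $\rV^2\rM=\rV(\ker\rF)$ from a basis of $\ker\rF$ shows $\rV y\in P$ as well, so $\spn\{y\}\oplus P$ is again a proper sub-object — a contradiction. Thus $\rL\cap\ker\rF$ is a line, equal to $\ker(\rF^2|_\rL)$ since both are one-dimensional.

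Now the basis falls out. As $\rV|_\rL$ is injective with image $\ker\rF\supseteq\rL\cap\ker\rF$, the preimage $(\rV|_\rL)^{-1}(\rL\cap\ker\rF)$ is a line $\ell\subseteq\rL$, and $\ell\neq\rL\cap\ker\rF$: otherwise a generator $x_2$ of $\rL\cap\ker\rF$ would satisfy $\rV x_2\in\spn\{x_2\}$, and nilpotency of $\rV$ would force $\rV x_2=0$, i.e.\ $x_2\in P\cap\rL=0$. Pick $0\neq x_1\in\ell$; then $x_2:=\rV x_1$ is a nonzero element of $\rL\cap\ker\rF$, while $x_1\notin\ker(\rF^2|_\rL)$ gives $\rF^2 x_1\neq0$. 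Setting $x_4:=\rF x_1$ and $x_3:=\rF^2 x_1$, the vectors $\{x_1,x_2\}$ span $\rL$, $\{x_3,x_4\}$ span $\rF\rM$, and together they form a basis of $\rM$. Reading $\rF,\rV$ off this basis, using $x_3,x_4\in\ker\rV$ and $x_2,x_3\in\ker\rF$, gives exactly \eqref{VFsimple}, with $\rV x_2=\rV^2 x_1\in\rV^2\rM=P=\spn\{x_3\}$, say $\rV x_2=\lambda x_3$; here $\lambda\neq0$ because $\rV|_\rL$ is injective.

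For uniqueness, note that any admissible basis is determined by its first vector, since $x_2'=\rV x_1'$, $x_4'=\rF x_1'$, $x_3'=\rF^2 x_1'$. Writing $x_1'=\alpha x_1+\beta x_2$, the requirement $x_2'=\rV x_1'\in\rL$ together with $\rV x_2=\lambda x_3\notin\rL$ forces the $x_3$-coefficient $\beta^{1/p}\lambda$ to vanish, so $\beta=0$ and $\alpha\in k^\times$. Applying $\rF$ and $\rV$ with $\sigma(\alpha)=\alpha^{p}$ gives $x_4'=\alpha^{p}x_4$, $x_3'=\alpha^{p^2}x_3$ and $x_2'=\alpha^{1/p}x_2$, and comparing $\rV x_2'=\alpha^{1/p^2}\lambda x_3$ with $\lambda' x_3'=\lambda'\alpha^{p^2}x_3$ yields $\lambda'=\alpha^{\,p^{-2}-p^2}\lambda$. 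Putting $\alpha=r^{p^2}$ with $r\in k^\times$ gives $x_1'=r^{p^2}x_1$ and $\lambda'=r^{1-p^4}\lambda$; conversely each $r\in k^\times$ plainly produces an admissible basis, which is the asserted description.
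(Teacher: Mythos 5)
Your proof is correct, and it reaches the key structural facts by a genuinely different route than the paper. The paper invokes Raynaud's description of the absolutely simple $\cE$ over $\W(\ov{k})$ as an $\F_{p^4}$-module scheme and then Berthelot's Lemme 2.5, which supplies a basis $\{\xi_i\}_{i \in \Z/4}$ of $\rM \otimes_k \ov{k}$ permuted by $\rF$ and $\rV$ with $\rL \otimes_k \ov{k}$ spanned by two of the $\xi_i$; a short case analysis (the case $\rL' = \spn\{\xi_1,\xi_3\}$ being excluded by the sub-Honda system generated by $\eta = \xi_1+\xi_3$ and $\rF\eta$) then shows that $\rL \cap \rV\rL$ is a line, from which the basis and $\lambda$ are built. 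You avoid the Raynaud--Berthelot input entirely: using only Lemma \ref{pM}, you rule out $\rF^2 = 0$ by producing a sub-Honda system over $\ov{k}$ from a fixed vector of the bijective $\sigma^2$-semilinear map $(\rV|_\rL)^{-1}\rF|_\rL$ (the standard fact that such a map has a nonzero fixed vector plays the role of Berthelot's normal form), and you rule out $\rL \cap \ker\rF = 0$ by a second sub-object construction. Since $\rV\rL = \ker\rF$ by Lemma \ref{pM}, your second claim is precisely the paper's statement that $\rL \cap \rV\rL$ is one-dimensional, and from that point on the two arguments coincide, including the uniqueness computation (your bookkeeping starting from $x_1' = \alpha x_1 + \beta x_2$ rather than from $x_2'$ is equivalent to the paper's). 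The paper's route is shorter given the citations and identifies $\cE_\lambda$ as a Raynaud module scheme, a fact it reuses later (e.g.\ Remark \ref{Pa}); your route is self-contained and isolates exactly the two configurations that absolute simplicity forbids. One step you leave terse, that $\rV y \in P$ in your second claim, does hold and deserves the two lines: $\rF y \in P = \rF(\rF\rM)$ gives $y \in \ker\rF + \rF\rM = \ker\rF + \ker\rV$, whence $\rV y \in \rV(\ker\rF) = \rV^2\rM = P$.
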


\begin{proof} 
Let $\gE = (\rM,\rL)$ be the Honda system for $\cE.$   Refer to Lemma \ref{pM} as needed.  Theorem \ref{FoHon}, applied to the $p$-divisible group of $A$ implies that $\dim L = 2$.  By absolute simplicity, $\cE$ becomes a Raynaud $\F_{p^4}$-module scheme over the Witt vectors $\W(\ov{k})$ \cite{Ray1}, \cite[\S4]{Tat2}.  Berthelot \cite[Lemme 2.5]{Ber} shows that $\rM'=\rM\otimes_{k} \ov{k}$ admits a basis  $\{\xi_i\,|\, i\text{ in }\Z/4\Z\}$ such that $\rF(\xi_i) = \xi_{i+1}$ or $\rV(\xi_{i+1}) = \xi_i,$ with $\rL'$ spanned by a subset of that basis.  

Suppose that $\rL'$ does not contain two successive basis vectors.  Then we may assume that $\rL' = \spn\{\xi_1, \xi_3\}.$   By injectivity of $\rV$ on $\rL$, we have $\rV\xi_1 = \xi_0$ and $\rV\xi_3 = \xi_2$.  Since $\rF(\rM') = \spn\{\rF(\xi_1),\rF(\xi_3)\}$ is 2-dimensional, $\rV(\xi_2) \ne \xi_1$, so $\rF(\xi_1) = \xi_2$ and similarly $\rF(\xi_3) = \xi_0$.  If $\eta = \xi_1 + \xi_3$, then $\rF\eta = \rV\eta = \xi_2+\xi_0$. Thus  there is a sub-Honda system $(\rM'', \rL'')$ of $\gE$ with $\rM'' = \spn\{\eta,\rF\eta\}$ and $\rL'' = \spn\{\eta\}$, contradicting absolute simplicity of $\cE$.

Therefore, we may assume that $\rL' = \spn \{\xi_1, \xi_2\}$.  Since $\rV$ is injective on $\rL'$, we cannot have $\rF(\xi_1) = \xi_2$, so $\xi_1 = \rV\xi_2 \in \rL' \cap \rV\rL'$ and $\dim_k(\rL\cap \rV\rL)=1$ over the original ground field $k$.  Write $x_2 = \rV x_1 \ne 0$ in $\rL \cap \rV\rL$ with $x_1$ in $\rL$ and so $\rL = \spn \{x_1,x_2 \}$.  Set $x_4 = \rF x_1$ and $x_3 = \rF^2 x_1$.  Since $\dim_k \ker \rF = 2$ and $\rF$ is nilpotent, $\rF^3 = 0$.  By iterating $\rF$ on $\rM = \rL + \rF\rM$ to find that $\rF\rM = \rF\rL + \rF^2\rL = \spn\{x_3,x_4\}$.  Thus $x_1, x_2, x_3, x_4$ is a basis for $\rM$.   Injectivity of $\rV$ on $\rL$ implies that $\rV x_2 \ne 0$.  But $\rV x_2$ is in $\ker \rF = \rV\rL = \spn \{x_2, x_3\}$ and $\rV$ is nilpotent.  Hence $\rV x_2 = \lambda x_3$ for some $\lambda \in k^{\times}$, resulting in matrix representations of the form \eqref{VFsimple}.

For another such basis, $x_2'$ generates $\rL \cap \rV\rL$, so $x_2' = r^p x_2$ with $r \in k^\times$.  Then $x_1' = r^{p^2} x_1$ and $x_3' = \rF^2 x_1' = r^{p^4} x_3$.  Thus
$
\lambda' x_3' = \rV x_2' = r \rV x_2 = r \lambda x_3 = r^{1-p^4} \lambda x_3' 
$
and so $\lambda' = r^{1-p^4} \lambda$ in $k^\times$. 
\end{proof}

\begin{Not}   \label{Ebasis}
For $\lambda \in k^\times$, let $\gE_\lambda  = (\rM_0,\rL_0)$ be the Honda system in the Proposition and call $x_1,x_2,x_3,x_4$ a {\em standard basis} for $\gE_\lambda$.   Denote the corresponding group scheme, Galois module and representation by $\cE_\lambda$, $E_\lambda$ and $\rho_{E_{\lambda}}$ respectively.  
\end{Not}

Let $\Ext^1(\gE_\lambda, \gE_\lambda)$ be the group of classes of extensions of Honda systems:
\begin{equation} \label{ExtPSeq}
0 \to \gE_\lambda \xrightarrow{\iota} (\rM,\rL) \xrightarrow{\pi} \gE_\lambda \to 0
\end{equation}
under Baer sum \cite[Ch.III,Thm.2.1]{Mac} and let $\Ext^1_{[p]}(\gE_\lambda, \gE_\lambda)$ be the subgroup such that $p\rM = 0$.

\begin{prop} \label{Hexpp}
If $(\rM,\rL)$ represents a class in $\Ext^1_{[p]}(\gE_\lambda, \gE_\lambda)$, there is a $k$-basis $e_1, \dots, e_8$ for $\rM$ such that $\iota(x_1) = e_1$, $\pi(e_5) = x_1$, $\rL = \spn\{ e_1,e_2,e_5,e_6\}$,   
\vspace{2 pt}
{\small 
$$
\rV=\left[\begin{array}{cccc|cccc}
              0&0&0&0&0&\lambda s_2&0&0 \\
              1&0&0&0&0&\lambda s_3&0&0 \\
              0&\lambda&0&0&0&\lambda s_4 &0&0  \\
              0&0&0&0&s_1&\lambda s_5&0&0 \\
                \hline
              0&0&0&0&0&0&0 &0   \\ 
              0&0&0&0&1&0&0&0        \\ 
              0&0&0&0&0&\lambda&0&0    \\ 
              0&0&0&0&0&0&0&0                                                                                
                                             \end{array}\right] \text{and } \,
\rF= \left[\begin{array}{cccc|cccc}0&0&0&0&0&0&0&0
                                            \\ 0&0&0&0&0&0&0&0
                                             \\ 0&0&0&1&0&-s_1^p&-s_5^p&0
                                             \\ 1&0&0&0&0&0&-s_2^p&0
                                             \\ \hline 0&0&0&0&0&0&0&0
                                             \\ 0&0&0&0&0&0&0&0
                                             \\ 0&0&0&0&0&0&0&1
                                             \\ 0&0&0&0&1&0&0&0   \end{array}\right] 
$$}

\noindent    with $s_1, s_2, s_3, s_4, s_5$ in $k$.   For $\tilde{k} = k/(\sigma^4-1)(k)$, the map $(\rM,\rL) \leadsto (s_1, \dots, s_5)$ induces an isomorphism of additive groups
$
\bfs\!: \, \Ext^1_{[p]}(\gE_\lambda, \gE_\lambda) \xrightarrow{\sim} k \oplus k  \oplus k \oplus  \tilde{k}  \oplus k.
$ 
\end{prop}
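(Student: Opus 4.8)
The plan is to build the required basis $e_1,\dots,e_8$ essentially by hand, starting from the standard basis of $\gE_\lambda$ and a single well-chosen lift of $x_1$, and then to read off the classification from the residual freedom in that lift. First I would set $e_i=\iota(x_i)$ for $1\le i\le 4$, so that on $\spn\{e_1,\dots,e_4\}$ the operators $\rF,\rV$ act exactly as in \eqref{VFsimple} and $\iota(\rL_0)=\spn\{e_1,e_2\}$. Since $p\rM=0$, Lemma \ref{pM} gives $\rM=\rL\oplus\rF\rM$ with $\dim\rL=\dim\rF\rM=4$, together with $\ker\rF=\rV\rL=\rV\rM$ and $\ker\rV=\rF\rM$; applying $\pi$ to $\rF\rM\twoheadrightarrow\rF\rM_0$ and counting dimensions identifies $\iota(\rM_0)\cap\rF\rM=\spn\{e_3,e_4\}$. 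I then pick $e_5\in\rL$ with $\pi(e_5)=x_1$, decompose $\rV e_5$ along $\rL\oplus\rF\rM$, and kill the $e_3$-component $\mu_3$ of its $\rF\rM$-part by replacing $e_5$ with $e_5+be_2$ for the unique $b$ solving $\lambda\sigma^{-1}(b)=-\mu_3$; this leaves $\rF e_5$ untouched because $\rF e_2=0$. The outcome is $\rV e_5=e_6+s_1e_4$ with $e_6\in\rL$ lifting $x_2$, whence $\rL=\spn\{e_1,e_2,e_5,e_6\}$. Setting $e_8=\rF e_5$ and $e_7=\rF e_8=\rF^2e_5$ (lifts of $x_4,x_3$ lying in $\rF\rM$), the remaining parameters appear through $\rV e_6=\lambda s_2e_1+\lambda s_3e_2+\lambda s_4e_3+\lambda s_5e_4+\lambda e_7$, which is legitimate since $\pi(\rV e_6-\lambda e_7)=0$ forces the difference into $\spn\{e_1,\dots,e_4\}$. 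Crucially, the $\rF$-action on $e_6,e_7$ is \emph{forced, not chosen}: applying $\rF$ to $\rV e_5$ and to $\rV e_6$ and using $\rF\rV=0$ yields $\rF e_6=-s_1^pe_3$ and $\rF e_7=-s_5^pe_3-s_2^pe_4$, while $\rV e_7=\rV e_8=0$ as $e_7,e_8\in\rF\rM=\ker\rV$. This reproduces the displayed matrices.

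The heart of the statement is the map $\bfs$ and the emergence of $\tilde k$. Two normalized lifts of $x_1$ differ by $ae_1+be_2$, but the normalization ($e_3$-component of $\rV e_5$ equal to $0$) forces $b=0$, so the only freedom is $e_5\mapsto e_5+ae_1$. Carrying this through the semilinear formulas gives $e_8\mapsto e_8+a^pe_4$, $e_7=\rF^2e_5\mapsto e_7+\sigma^2(a)e_3$ and $e_6\mapsto e_6+\sigma^{-1}(a)e_2$; re-expressing $\rV e_6$ in the new basis then shows $s_1,s_2,s_3,s_5$ are unchanged while
$$
s_4\ \longmapsto\ s_4+\sigma^{-2}(a)-\sigma^2(a)=s_4-(\sigma^4-1)\bigl(\sigma^{-2}(a)\bigr).
$$
As $a$ runs over $k$, so does $\sigma^{-2}(a)$, so $s_4$ is well-defined exactly modulo $(\sigma^4-1)(k)$, which is the factor $\tilde k$; the other four coordinates are honest invariants. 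Running the same bookkeeping between two extensions instead of within one produces an explicit isomorphism of extensions precisely when $s_1,s_2,s_3,s_5$ agree and the $s_4$ differ by an element of $(\sigma^4-1)(k)$, which gives both well-definedness of $\bfs$ on classes and its injectivity.

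For surjectivity I would verify directly that, for arbitrary $(s_1,\dots,s_5)\in k^5$, the displayed matrices satisfy the finite-Honda-system axioms: the identities $\rF\rV=\rV\rF=0$ hold by the characteristic-$p$ cancellations already visible above, $\rF\rM=\spn\{e_3,e_4,e_7,e_8\}$ is complementary to $\rL$, $\rV$ is injective on $\rL$, and both operators are nilpotent, so this is a class in $\Ext^1_{[p]}(\gE_\lambda,\gE_\lambda)$ mapping to $(s_i)$. For additivity under the Baer sum \eqref{ExtPSeq} I would work in the fixed model $\rM=\spn\{e_1,\dots,e_4\}\oplus\spn\{e_5,\dots,e_8\}$: the extension class is encoded by the off-diagonal blocks of $\rV$ and $\rF$, the Baer sum adds these cocycles, the $\rV$-block is $k$-linear in the $s_i$, and the $\rF$-block involves only the $s_i^p$, so the identity $(s_i+s_i')^p=s_i^p+s_i'^p$ shows that the Baer sum carries $(s_i)$ and $(s_i')$ to $(s_i+s_i')$. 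Hence $\bfs$ is an isomorphism of additive groups onto $k\oplus k\oplus k\oplus\tilde k\oplus k$.

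The step I expect to be the main obstacle is the semilinear bookkeeping in the second paragraph: propagating a change of lift through $e_7=\rF^2e_5$ and the Verschiebung relation, and checking that the net effect is to shift $s_4$ by exactly $(\sigma^4-1)(k)$ while leaving $s_1,s_2,s_3,s_5$ invariant. This single computation is what pins down the factor $\tilde k$ and simultaneously delivers both well-definedness and injectivity, so getting the Frobenius twists and the degree-$4$ structure of $\gE_\lambda$ to interact correctly is the delicate point.
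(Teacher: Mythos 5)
Your proposal is correct and follows essentially the same route as the paper's proof: the same normalized basis $e_1,\dots,e_8$ (the paper adjusts $\tilde e_5,\tilde e_6$ by elements of $\spn\{e_1,e_2\}$ where you invoke the splitting $\rM=\rL\oplus\rF\rM$ from Lemma \ref{pM}, to the same effect), the same identification of the residual freedom $e_5\mapsto e_5+ae_1$ producing the shift $s_4\mapsto s_4+\sigma^{-2}(a)-\sigma^{2}(a)$, hence the factor $\tilde k$, and additivity via Baer sum, which the paper computes explicitly in the fiber product in exact agreement with your cocycle-addition argument including the use of $(s_i+s_i')^p=s_i^p+s_i'^p$. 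Your explicit check of surjectivity (verifying the finite-Honda-system axioms for the displayed matrices) fills in a step the paper leaves implicit, but the overall approach is the same.
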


\begin{proof} 
Let $\{x_j \, \vert \, 1 \le j \le 4\}$ be a standard basis for $\gE_\lambda$ and define $e_j = \iota(x_j)$ in \eqref{ExtPSeq}.  Since $0 \to \rL_0  \xrightarrow{\iota} \rL  \xrightarrow{\pi} \rL_0 \to 0$ is exact, we can extend $e_1, e_2$ to a basis for $\rL$ by adjoining elements $\tilde{e}_5, \tilde{e}_6$ of $\rL$ such that $\pi(\tilde{e}_5) = x_1$ and $\pi(\tilde{e}_6) = x_2$.   

From $\rV(\pi(\tilde{e}_5)) = \pi(\tilde{e}_6)$, we have $\rV\tilde{e}_5 = \tilde{e}_6 + r_1e_1+r_2e_2+r_3e_3+s_1e_4$ with $s_1$ and all $r_i$ in $k$.  Replace $\tilde{e}_5$ by $e_5 = \tilde{e}_5 + \sigma^2(a_1) e_1 +\sigma(a_2) e_2$ and $\tilde{e}_6$ by $e_6 = \tilde{e}_6+b_1 e_1 + b_2 e_2$ with $a_i, b_i$ in $k$. Then 
\begin{eqnarray*}
\rV e_5 &=& \rV\tilde{e}_5 + \sigma(a_1)e_2 + \lambda a_2 e_3  \\
          &=& \tilde{e}_6 + r_1 e_1+(r_2+\sigma(a_1))e_2+(r_3+\lambda a_2) e_3+s_1e_4 \\
          &=& e_6 + (r_1-b_1) e_1+(r_2+\sigma(a_1)-b_2)e_2+(r_3+\lambda a_2) e_3+s_1e_4.
\end{eqnarray*}
Now choose $a_i,b_i$ so that $\rV(e_5) - e_6 = s_1 e_4$.  Finally, let $e_8 = \rF e_5$ and $e_7 = \rF e_8$. Since $\rV(\pi(e_6)) = \lambda \pi(e_7)$, there we may choose elements $s_i$ of $k$ such that 
\begin{equation} \label{Ve6}
\rV e_6 = \lambda (e_7 + s_2e_1+s_3e_2+s_4e_3+s_5e_4).
\end{equation}
This verifies the matrix representation of $\rV$. From $0 = \rF\rV e_5 = \rF e_6 + \sigma(s_1) e_3$, we get $\rF e_6 = -\sigma(s_1)e_3$.  Apply $\rF$ to \eqref{Ve6} to find $\rF e_7$ and obtain the matrix of $\rF$.  

The only ambiguity left is that $e_5$ might be replaced by $e_5 + \sigma^2(a_1)e_1$, in which case $s_4$ becomes $s_4+a_1-\sigma^4(a_1)$ while $s_1, s_2, s_3, s_5$ remain unchanged.

Another extension $(\rM',\rL')$ is equivalent to $(\rM,\rL)$ if and only if there is an isomorphism $h$ in the commutative diagram
 \begin{equation} \label{IsomExtP}
\begin{CD} 
0 @>>> \gE_\lambda @>\iota'>> (\rM',\rL') @>\pi'>> \gE_\lambda @>>> 0  \\
  &  & @VV{\rm ident} V        @VV h V         @VV{\rm ident} V \\
0 @>>> \gE_\lambda @>\iota>> (\rM,\rL) @>\pi>> \gE_\lambda @>>> 0 \, . 
\end{CD}  \vspace{2 pt}
\end{equation}
Let $e_1', \dots e'_8$ be a basis for $(\rM',\rL')$ constructed as above.  Since $h(e_1'), \dots, h(e'_8)$ must be another such basis, the isomorphism $h$ exists if and only if $h(e_1') = e_1$ and $h(e_5') = e_5 + \sigma^2(a_1)e_1$ with $a_1$ in $k$.  It follows that $\bfs$ is a well-defined bijection.

To verify the additivity of $\bfs$, let $(\rM,\rL)$ and $(\rM',\rL')$ represent two classes in $\Ext^1_{[p]}(\gE_\lambda, \gE_\lambda)$ and let $0 \to \gE_\lambda \xrightarrow{\iota''} (\rM'',\rL'') \xrightarrow{\pi''} \gE_\lambda \to 0$ represent their Baer sum. To obtain a $k$-basis for $\rM''$ let $\gamma_i = (e_i, 0)$ in $\rM \times \rM'$ for $1 \le i \le 4$ and $\gamma_i = (e_i,e_i')$ for $5 \le i \le 8$, each of which satisfies the fiber product condition that $\pi''(\gamma_i) = \pi(e_i) = \pi'(e_i')$.  The relations are given by $\iota''(a) = (\iota(a),0) = (0,\iota'(a))$ for all $a$ in $\gE_\lambda$.  We have
\begin{eqnarray*}
\rV\gamma_5&=& (\rV e_5,\rV e_5') = (e_6 + s_1 e_4,e_6' + s_1' e_4') = \gamma_6 + (s_1e_4,0)+(0,s_1'e_4')  \\
                   &=& \gamma_6  + (s_1e_4,0) + (s_1'e_4,0) = \gamma_6 + (s_1+s_1')\gamma_4,\\
\rV\gamma_6 &= &(\rV e_6, \rV e_6') = \lambda (e_7,e_7') +  \sum_{1 \le i \le 4} \lambda(s_i e_i, s_i'e_i') = \lambda \gamma_7 + \sum_{1 \le i \le 4} \lambda(s_i+s_i') \gamma_i, \\
\rF\gamma_6 &=& (\rF e_6, \rF e_6') = -(s_1^p e_3, (s_1')^p e_3') = -(s_1^p e_3,0) - (0,(s_1')^p e_3') \\ 
                    &=& -(s_1^p e_3,0)-((s_1')^p e_3,0) = -(s_1+s_1')^p \gamma_3, \\
\rF\gamma_7 &=& (\rF e_7, \rF e_7') = -(s_5^p e_3, (s_5')^p e_3') -(s_2^p e_4,(s_2')^p e_4')  \\
&=& -(s_5 + s_5')^p\gamma_3 - (s_2+s_2')^p \gamma_4.
\end{eqnarray*} 
By completing the matrices for $\rV$ and $\rF$, we find that $s_i'' = s_i + s_i'$ for $1 \le i \le 5$. 
\end{proof}

\section{The local theory} \label{FP}

\numberwithin{equation}{subsection}
In this section, we study the fields of points of extensions of exponent $p$ whose Honda systems were described above.  In particular, we obtain good conductor bounds. We use freely the notation of \S\ref{GpScheme}.  Let $K$ be the quotient field of $\W$ and let $\dot{a}$ be the Teichm\"uller lift to $\W$ of $a$ in $k$, with $\dot{0} = 0$.   Assume that $w$ is in $\cO_{\ov{K}}$ and $\ord_p(w) > 0$.  For $a$ in $\ov{K}/w \cO_{\ov{K}}$, let $\tilde{a}$ be an arbitrary lift to $\ov{K}$.  Assertions requiring lifts are made only when the result is independent of the choices, as in the following examples.  If $a$ is  not in $w \cO_{\ov{K}}$, let $\ord_p(a) = \ord_p(\tilde{a})$.  For $w'$ in $\cO_{\ov{K}}$ such that $0 < \ord_p(w') \le \ord_p(w)$, let $a \equiv b \pmod{w'}$ mean that $\tilde{a} - \tilde{b}$ is in $w' \, \cO_{\ov{K}}$.   If $f(X)$ is in $\ov{K}[X]$, we write $f(a) \equiv 0 \pmod{w'' \, \cO_{\ov{K}}}$ only if $f(\tilde{a})$ is in $w'' \, \cO_{\ov{K}}$, for all lifts $\tilde{a}$ of $a$.  For this section, we write $x \sim y$ when  $\ord_p(\frac{x}{y}-1) > 0$ and $x=y + O(w)$ if $\ord_p(x-y) \ge \ord_p(w).$

\subsection{The irreducible case} 

Let $\cE_\lambda$ be the group scheme and $x_1, \dots, x_4$ a standard basis for the corresponding  Honda system $\gE_\lambda = (\rM_0,\rL_0)$ from Notation \ref{Ebasis}.  The Galois module structure of $E_\lambda$ is well-known, but a description of $E_\lambda$ by Witt covectors is required for our analysis of extensions of $\cE_\lambda$ by $\cE_\lambda$.   Let $F = K(E_\lambda)$, reserving Roman F and V for the Honda system Frobenius and Verschiebung operators in this section.  Recall that points of the Galois module $E_\lambda$ correspond to $D_k$-homomorphisms $\psi\!: \, \rM_0 \to  \widehat{CW}_k(\cO_{\ov{K}}/p\cO_{\ov{K}})$ such that $\xi(\psi(\rL_0)) = 0$, cf.\! \S\ref{GpScheme}.
 
\begin{prop}  \label{FieldForE} 
Let $\gR_\lambda = \{ a \in \cO_{\ov{K}}/p \cO_{\ov{K}} \hspace{2 pt} \vert \hspace{2 pt} \lambda^{p^2} a^{p^4} \equiv  (-p)^{p+1}a \pmod{p^{p+2} \cO_{\ov{K}}} \}$.  Given $a$ in $\gR_\lambda$, define $b$ and $c$ in  $\cO_{\ov{K}}/p \cO_{\ov{K}}$ by  
$$
 b \equiv - \textstyle{\frac{1}{p}}\lambda^p a^{p^3} \pmod{p\cO_{\ov{K}}} \qquad \text{and} \qquad  c \equiv \lambda a^{p^2}   \pmod{p\cO_{\ov{K}}}.  
$$
\begin{enumerate}[{\rm i)}]
\item A $D_k$-map $\psi = \psi_a$ belongs to a point $P_a$ of $E_\lambda$ if and only if $\psi(x_1) =  (\0,c,b,a)$ with $a$ in $\gR_\lambda$.  If so, $\psi(x_2)=(\0,c,b)$, $\psi(x_3)=(\0,\lambda^{-1} c)$ and $\psi(x_4)=(\0,a^p)$.   \vspace{2 pt}

\item $F = K(E_\lambda)$ is the splitting field of $f_\lambda(x) = \dot{\lambda}^{p^2}x^{{p^4}-1} - (-p)^{p+1}$ over $K.$  The maximal subfield of $F$ unramified over $K$ is $F_0 = K(\Mu_{{p^4}-1}, \eta)$, where $\eta$ is any root of $x^{p+1} - \dot{\lambda}$.  Moreover $F/F_0$ is tamely ramified of degree $t = (p^2+1)(p-1)$.  For $a \ne 0$ we have
\begin{equation} \label{ordabc}
\textstyle{\ord_p(a) = \frac{1}{t}, \quad \ord_p(b) = \frac{p^2-p+1}{t}, \quad \ord_p(c) = \frac{p^2}{t}.}
\end{equation}

\item   $\gR_\lambda$ is an $\F_{p^4}$-vector space under the usual operations in $\cO_{\ov{K}}/p \cO_{\ov{K}}$ and $a \mapsto P_a$ defines an $\F_p[G_K]$-isomorphism $\gR_\lambda \xrightarrow{\sim} E_\lambda$.     
\end{enumerate}
\end{prop}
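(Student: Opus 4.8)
The plan is to use the dictionary of \S\ref{GpScheme}: points of $E_\lambda$ are exactly the $D_k$-homomorphisms $\psi\colon \rM_0 \to \widehat{\CW}_k(\cO_{\ov K}/p\cO_{\ov K})$ with $\xi(\psi(\rL_0)) = 0$, and I would solve for them explicitly. Since the standard basis of Notation \ref{Ebasis} satisfies $x_2 = \rV x_1$, $x_4 = \rF x_1$, $x_3 = \rF^2 x_1$, the element $x_1$ generates $\rM_0$ as a $D_k$-module, so $\psi$ is determined by the covector $\bfa = \psi(x_1)$. First I would record the two defining relations of $x_1$ in $\rM_0$, namely $\rV^2 x_1 = \lambda\,\rF^2 x_1$ and $\rF^3 x_1 = 0$ (with $px_1=0$), and translate them through the explicit formulas for $\rF$, $\rV$ and the Teichm\"uller scalar action on covectors. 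Writing $\bfa = (\dots, a_{-1}, a_0)$, the first relation becomes the recursion $a_{-n-2} = \lambda^{p^{-n}} a_{-n}^{p^2}$, which expresses every entry in terms of $a_0$ and $a_{-1}$ and in particular gives $a_{-2} = \lambda a_0^{p^2}$; the second, together with the formal-group condition, forces $\ord_p(a_{-n}) \ge 1/p^3 > 0$. Because the recursion multiplies valuations by $p^2$ at each second step, the entries $a_{-n}$ with $n \ge 3$ acquire $p$-valuation $>1$ and hence vanish in $\cO_{\ov K}/p\cO_{\ov K}$; this collapses $\bfa$ to the stated shape $(\0,c,b,a)$ with $a=a_0$, $b=a_{-1}$, $c=a_{-2}=\lambda a^{p^2}$, and makes $\psi(x_2)=\rV\bfa$, $\psi(x_3)=\rF^2\bfa$, $\psi(x_4)=\rF\bfa$ reduce to the formulas in (i).

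Next I would impose $\xi(\bfa)=0$ and $\xi(\rV\bfa)=0$, coming from $\rL_0=\spn\{x_1,x_2\}$. Evaluating the Hasse--Witt exponential as a finite sum modulo $p\cO_{\ov K}$, the condition $\xi(\rV\bfa)=0$ reads $b + \tfrac1p c^p + (\text{higher}) \equiv 0$, whose dominant balance yields $b \equiv -\tfrac1p \lambda^p a^{p^3} \pmod p$, while $\xi(\bfa)=0$ reads $a + \tfrac1p b^p + (\text{higher}) \equiv 0$; substituting the value of $b$, clearing the denominator $p^{p+1}$, and tracking the sign through $(-1)^{p+1}$ produces exactly the governing congruence $\lambda^{p^2} a^{p^4} \equiv (-p)^{p+1} a \pmod{p^{p+2}}$, i.e. $a \in \gR_\lambda$. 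I expect the heart of the proof to lie here: one must show that in each $\xi$-identity precisely two terms are dominant and of equal valuation, while all remaining terms (and the correction incurred when raising the mod-$p$ value of $b$ to the $p$-th power) lie in $p^{p+2}\cO_{\ov K}$ after clearing denominators. The delicate point is that these inequalities are essentially tight --- for instance $\ord_p(a)$ and $\ord_p(\tfrac1p b^p)$ must coincide and equal $1/t$ --- so the bookkeeping, carried out with Lemma \ref{ppower}, has almost no slack and simultaneously pins down both the modulus $p^{p+2}$ and the sign $(-p)^{p+1}$. Reversing these steps shows every $a\in\gR_\lambda$ gives a valid $\psi$, completing (i).

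For (ii), the nonzero points correspond under $a\mapsto \psi_a(x_1)$ to the solutions of $\dot\lambda^{p^2} a^{p^4-1} = (-p)^{p+1}$, so $F=K(E_\lambda)$ is the splitting field of $f_\lambda$. This is a pure tame equation of degree $p^4-1$, prime to $p$; its Newton polygon gives $\ord_p(a) = (p+1)/(p^4-1) = 1/t$ with $t=(p^2+1)(p-1)$, whence $\ord_p(c)=p^2/t$ and $\ord_p(b) = -1+p^3/t = (p^2-p+1)/t$, establishing \eqref{ordabc}. Standard tame/Kummer theory then identifies the unramified part: adjoining $\Mu_{p^4-1}$ and a root $\eta$ of $x^{p+1}-\dot\lambda$ (separable modulo $p$ since $p\nmid p+1$) gives an unramified $F_0/K$ over which $f_\lambda$ becomes a pure equation of degree $t$ with unit-free right-hand side, so $F/F_0$ is tamely totally ramified of degree $t$ and $F_0$ is the maximal unramified subfield.

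Finally, for (iii) I would verify directly that $\gR_\lambda$ is stable under the usual operations of $\cO_{\ov K}/p\cO_{\ov K}$: additivity uses $(a+a')^{p^4} \equiv a^{p^4}+a'^{p^4} \pmod{p^{p+2}}$, which holds because Lemma \ref{ppower} bounds the cross terms by $\ord_p \ge 1 + p^4/t > p+2$ (the identity $(p+1)t = p^4-1$ makes this precise), while $\F_{p^4}$-scaling uses $\dot\zeta^{p^4}=\dot\zeta$ for Teichm\"uller lifts of $\zeta\in\F_{p^4}$; since the $p^4-1$ nonzero roots of $f_\lambda$ form a single $\F_{p^4}^\times$-orbit, $\gR_\lambda$ is one-dimensional over $\F_{p^4}$. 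That $a\mapsto P_a$ is $\F_p$-linear amounts to showing the $0$-th Witt-covector entry of $\psi_a(x_1)\CWplus\psi_{a'}(x_1)$ equals $a+a'$; expanding by Lemma \ref{Wittadd} and grouping $\tfrac1p(b^p+b'^p) - \tfrac1p(b+b')^p$, all the $\Phi_p$ and $\Phi_{p^2}$ corrections are shown via Lemma \ref{ppower} to lie in $p\cO_{\ov K}$, so the map respects addition. Galois-equivariance is immediate, since $G_K$ acts entrywise on covectors and fixes the Teichm\"uller constants $\lambda^{p^{-n}}$, giving $\sigma P_a = P_{\sigma a}$; bijectivity then follows by comparing $|\gR_\lambda| = p^4 = |E_\lambda|$.
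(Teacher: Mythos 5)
Your proposal is, in substance, the paper's own proof. Part (i) is the same reduction: use the $D_k$-relations of the generator $x_1$ to pin down the shape of $\psi(x_1)$, then impose the two Hasse--Witt conditions $\xi(\psi(x_1))=\xi(\psi(x_2))=0$ and extract the congruences by a tight valuation analysis (your ``dominant balance,'' including the observation that $\ord_p(a)=\ord_p(\tfrac1p b^p)=1/t$, is exactly what the paper's computation does). Part (ii) is the paper's splitting-field argument; the paper packages your Newton-polygon step as an explicit Eisenstein equation $\eta^{p^2}x^t+\omega p=0$ over $F_0$. In part (iii), your additivity argument via Lemma \ref{Wittadd} is literally the alternative the paper itself records (its primary argument instead compares Teichm\"uller lifts $\omega_j\theta$ inside $\Q_p(\Mu_{p^4-1})$), and your closure and counting arguments match. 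One gloss in (ii) deserves a line: to know that $F_0=K(\Mu_{p^4-1},\eta)$ is the maximal unramified subfield \emph{of $F$} you must check $\eta\in F$; the paper does this by noting $\dot{\lambda}^{p^2}=\bigl((-p)/\theta^{t}\bigr)^{p+1}$, so that $\dot{\lambda}$, being a $(p+1)$-th power prime to $p^2$, is itself a $(p+1)$-th power in $K(\theta)$.

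There is one step that fails as written: the collapse of the covector $\bfa=\psi(x_1)$ to $(\0,c,b,a)$. From $\rF^3x_1=0$ you get only $\ord_p(a_{-n})\ge 1/p^3$, and one application of your recursion $a_{-n-2}=\lambda^{p^{-n}}a_{-n}^{p^2}$ then gives $\ord_p(a_{-3})=p^2\,\ord_p(a_{-1})\ge p^2\cdot\tfrac{1}{p^3}=\tfrac1p$, which does \emph{not} put $a_{-3}$ in $p\cO_{\ov{K}}$; so the claim that all entries with $n\ge 3$ vanish is unjustified for $n=3$. The missing input is the relation $\rV\rF x_1=px_1=0$ (which you listed but never used): applied to $\bfa$ it gives $a_{-n}^p=0$, i.e.\ $\ord_p(a_{-n})\ge 1/p$, for all $n\ge1$, whence $\ord_p(a_{-3})\ge p>1$ and the collapse goes through --- this is the paper's step ``$0=\rV\rF(x_1)=\rV x_4$ gives $c^p=b^p=0$.'' Even more directly, $\rV^3x_1=\sigma^{-1}(\dot{\lambda})\,\rV\rF^2x_1=\sigma^{-1}(\dot{\lambda})\,\rF(px_1)=0$ is a formal consequence of your two relations and kills every entry $a_{-n}$, $n\ge3$, at once, which is how the paper phrases it. Note also that the bound $\ord_p(b),\ord_p(c)\ge 1/p$ obtained this way is needed again in your $\xi$-bookkeeping (it is what discards the term $\tfrac{1}{p^2}\tilde{c}^{p^2}$ from the first condition), so it should be established before, not after, the dominant-balance analysis.
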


\begin{proof}   
i)  If $\psi$ belongs to a point in $E_\lambda$, then $\psi(x_1) = (\0,c,b,a)$, since $V^3 = 0$.  We obtain $\psi(x_2)$ and $\psi(x_3)$ by applying $V$, while $\psi(x_4)= \psi({\rm F} x_1) = (\0,c^p,b^p,a^p)$.  Use $0 = {\rm VF}(x_1) = {\rm V}x_4$ to find that $c^p = b^p = 0$, so $\ord_p(b), \ord_p(c) \ge 1/p$. In addition, ${\rm F}(x_4)=x_3$ implies that $c = \lambda a^{p^2}.$   Let $\tilde{a}, \tilde{b}, \tilde{c}$ denote lifts to $\cO_{\ov{K}}$.  Vanishing of $\xi(\psi(L))$ provides the additional congruences modulo $p \cO_{\ov{K}}$:
\begin{equation} \label{e1e2}
  \tilde{a}+ \frac{1}{p} \, \tilde{b}^p+ \frac{1}{p^2} \, \tilde{c}^{p^2} \equiv 0  \quad \text{ and } \quad   \tilde{b} + \frac{1}{p} \, \tilde{c}^p \equiv 0.
\end{equation}
Thus $p \ord_p(\tilde{c}) = \ord_p(p\tilde{b}) \ge 1+\frac{1}{p}$ and so $\frac{1}{p^2} \, \tilde{c}^{p^2} \equiv 0$.  With this simplification, the required congruences follow from (\ref{e1e2}).   Furthermore, these congruences are sufficient to imply that $\psi$ belongs to $\cE_\lambda$ when $\psi(x_1) = (\0,c,b,a)$.    \vspace{2 pt}

 ii) If $f_\lambda(\theta) = 0$ and $\zeta$ generates $\Mu_{p^4-1}$, then the roots of $f_\lambda$ have the form $\theta_j = \zeta^j \theta$ while their reductions modulo $p$ give all non-zero elements of $\gR_\lambda$.  For the converse, let $\tilde{a}$ be a lift of $a \in \gR_\lambda$ and $g(x) = x^{p^4}-x$.  Then $g(\tilde{a}/\theta) \equiv 0 \pmod{\frac{p}{\theta} \, \cO_{\ov{K}}}$ and so $\tilde{a} \equiv 0$ or $\tilde{a} \equiv \theta_j \pmod{p \cO_{\ov{K}}}$ for some $j$ by Hensel's Lemma.    Hence $F = K(\Mu_{{p^4}-1},\theta)$ is the splitting field of $f_\lambda$.   Let $F_0$ be the maximal subfield of $F$ unramified over $K$.  Since $\dot{\lambda}^{p^2}$ and therefore also $\dot{\lambda}$ is a $(p+1)$ power in $K(\theta)$, each root $\eta$ of $x^{p+1} - \dot{\lambda}$ is in $F_0$.  Furthermore, $\theta$ satisfies an Eisenstein polynomial  of the form $\eta^{p^2}x^t + \omega p = 0$ over $F_0$ for some $\omega$ in $\Mu_{p+1}$.  Hence $K/F_0$ is tamely ramified of degree $t$ and we obtain the desired ordinals of $a,b,c$.
 
\vspace{2 pt} 

iii) The embedding $\F_{p^4} = \W(\F_{p^4})/p \hspace{.5 pt} \W(\F_{p^4}) \hookrightarrow \cO_{\ov{K}}/p \cO_{\ov{K}}$ defines the scalar multiplication by $\F_{p^4}$.  Closure of $\gR_\lambda$ under this operation and under the usual addition in $ \cO_{\ov{K}}/p \cO_{\ov{K}}$  is clear.  The asserted Galois isomorphism follows from the correspondence between $D_k$-homomorphisms belonging to $\cE_\lambda$ and points of $E_\lambda$ once we check that $a \mapsto P_a$ is additive.  If $a_1$ and $a_2$ are in $\gR_\lambda$, then there is some $a$ in $\gR_\lambda$ such that $\psi_{a_1}(x_1) \CWplus \psi_{a_2}(x_1) = \psi_{a}(x_1)$.  Denote this equation of Witt covectors by
$
(\0,c_1,b_1,a_1) \CWplus (\0,c_2,b_2,a_2) = (\0,c,b,a).
$
Then $c = c_1+c_2$, so $a^{p^2} = a_1^{p^2} + a_2^{p^2}$ in $\cO_{\ov{K}}/p\cO_{\ov{K}}$.  By using lifts of $a$, $a_1$ and $a_2$ of the form $\omega_0 \theta$, $\omega_1 \theta$ and $\omega_2 \theta$, with each $\omega_j$ in $\Mu_{p^4-1} \cup \{0\}$, we find that $$\omega_0^{p^2} \equiv \omega_1^{p^2} + \omega_2^{p^2} \equiv (\omega_1+\omega_2)^{p^2} \pmod{\frac{p}{\theta^{p^2}} \cO_{\ov{K}}}.$$  Since the $\omega$'s lie in the absolutely unramified field $\Q_p(\Mu_{p^4-1})$ and $\ord_p(p/\theta^{p^2}) > 0$, we obtain $\omega_0 \equiv \omega_1 + \omega_2 \pmod{p}$ and thus $a = a_1+a_2$ in $\cO_{\ov{K}}/p\cO_{\ov{K}}$.  Alternatively, $\ord_p(a-a_1-a_2) \ge 1$ by the covector addition formulas in Lemma \ref{Wittadd}.
\end{proof}

\begin{Rem} \label{Pa}  \hfill
\begin{enumerate}[i)]
\item By (ii) above, the lifts of all $a \ne 0$ in $\gR_\lambda$ to $\cO_{\ov{K}}$ comprise the cosets $\zeta^j \theta + p\cO_{\ov{K}}$.  Thus $\gR_\lambda$ descends to an $\F_{p^4}$-vector subspace  of $\cO_F/p \cO_F$ and we write 
$$
\gR_\lambda(F) = \{ a \in \cO_F/p \cO_F \hspace{3 pt} \vert \hspace{3 pt} \dot{\lambda}^{p^2} a^{p^4} \equiv  (-p)^{p+1}a\pmod{p^{p+2} \cO_F} \}.
$$
For $\alpha$ in $\F_{p^4}$ and $a$ in $\gR_\lambda$, we write $\alpha P_a = P_{\alpha a}$, in agreement with multiplication on Witt covectors.  In fact, $\alpha \psi_a  = \psi_{\alpha a}$, since evaluating on $x_1$ gives
$$
[\alpha] (\0,c_a,b_a, a) = (0, \alpha^{\frac{1}{p^2}} c_a,  \alpha^{\frac{1}{p}} b_a, \alpha a)= (\0,c_{\alpha a},b_{\alpha a}, \alpha a).
$$   

\item If $h$ is in the ramification subgroup of $\Gal(F/K)$, then $h$ acts on $\gR_\lambda(F)$ by $h(a) = \alpha a$, where $\alpha \in \Mu_t$ depends on $h$.  The structure of $\cE_\lambda$ as a Raynaud $\F_{p^4}$-module scheme is reflected by $h(P_a) = P_{h(a)} = P_{\alpha a} = \alpha P_a$.  However, Frobenius in $\Gal(F/K)$ acts on the scalars.   \vspace{3 pt}

\item By Proposition \ref{FieldForE}, we have $b^p \equiv -pa \, (\text{mod } p^2)$,  $c^p  \equiv \lambda^p a^{p^3}  \equiv -pb \, (\text{mod } p^2)$ and $c^{p^2}  \equiv (-p)^{p+1}a \, (\text{mod } p^{p+2})$.  These congruences are independent of the choices of lifts to $\cO_{\ov{K}}$.
\end{enumerate}
\end{Rem}

Using the local structure above, we next obtain a group scheme $\cE$ over $\Z[\frac{1}{N}]$ fulfilling the hypotheses of Definition \ref{ourcat} for a $\Sigma$-category $\un{E}$ with $\Sigma = \{\cE\}$. We also determine the image of the Galois representation provided by $E$.  

\begin{cor} \label{EScheme} 
Let $E$ be a four-dimensional symplectic module over $\F_p$ and let $\rho\!: \, G_\Q \to \GS(E)$ be unramified outside $\{p,N,\infty\}$ and tamely ramified at the prime $N\ne p$.  Suppose that: 
\begin{enumerate}[{\rm \hspace{3 pt} i)}]
\item $\rho$ restricted to a decomposition group at $p$ is isomorphic to a local representation of the form $\rho_{E_\lambda}$ as in {\rm Notation \ref{Ebasis}};  \vspace{2 pt}
\item     inertia at $v \vert N$ acts on $E$ via a cyclic quotient $\langle \sigma_v \rangle$ with $(\sigma_v-1)^2 = 0$ and $\rk(\sigma_v - 1) = 1$ as a matrix;

 \vspace{2 pt}

\item the fixed field of $\rho^{-1}(\SP(E))$ is $\Q(\mu_p)$ when $p$ is odd.
\end{enumerate} 
Then there is a unique finite flat group scheme $\cE$ over $\Z[\frac{1}{N}]$ whose associated Galois representation is $\rho$.  Moreover, the Galois image $G=\rho(G_\Q)$ is $\GS_4(\F_p)$ for $p\ge 2$ or possibly  ${\rm O}^-_4(\F_2) \simeq \cS_5$ when $p=2$.
\end{cor}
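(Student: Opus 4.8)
The plan is to construct $\cE$ by Schoof's gluing equivalence and then to pin down the image from the interplay of local irreducibility at $p$, the transvection coming from $N$, and surjectivity of the similitude character.

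\textbf{Construction and uniqueness of $\cE$.} First I would assemble the triple $(\cV_1,\cV_2,\theta)$ corresponding to $\cE$ under the equivalence $\un{Gr}\xrightarrow{\sim}\un{C}$ recalled in \S\ref{GpScheme}. Since $\rho$ is unramified outside $\{p,N,\infty\}$, the Galois module $E$ is unramified outside $\{p,N\}$ and so is the generic fiber of a finite \'etale group scheme $\cV_2$ over $R'=\Z[\frac{1}{pN}]$. For the datum at $p$, hypothesis (i) gives $\rho|_{\cD_p}\cong\rho_{E_\lambda}$, which by Notation \ref{Ebasis} is the generic fiber of the biconnected group scheme $\cE_\lambda$ over $\Z_p=\W(\F_p)$; set $\cV_1=\cE_\lambda$. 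Over $\Q_p$ the two generic fibers coincide, yielding the canonical $\theta$, and the equivalence produces a finite flat $\cE$ over $\Z[\frac1N]$ with $\cE(\ov\Q)=E$ and $\cE\otimes\Z_p$ biconnected. Uniqueness is then immediate: any realization of $\rho$ over $\Z[\frac1N]$ is biconnected over $\Z_p$ (its restriction has generic fiber $E_\lambda$, forced biconnected by Fontaine's anti-equivalence in Theorem \ref{FoHon} together with the absolute simplicity of Proposition \ref{simple}), and the Corollary to Lemma \ref{CoRay} makes the generic-fiber functor fully faithful on such schemes, so an isomorphism $E\xrightarrow{\sim}E'$ of Galois modules lifts to an isomorphism of group schemes.

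\textbf{Setting up the image.} Write $G=\rho(G_\Q)\le\GS_4(\F_p)$. Three inputs constrain $G$. For absolute irreducibility, hypothesis (i) already makes $\rho|_{\cD_p}\cong\rho_{E_\lambda}$ absolutely irreducible, since $\cE_\lambda$ is absolutely simple (Proposition \ref{simple}); hence $G$ acts absolutely irreducibly on $E$. For a transvection, note that the similitude character is unramified at $N\ne p$, so inertia at $v\mid N$ lands in $\SP_4(\F_p)$; by (ii) a generator $\sigma_v$ satisfies $(\sigma_v-1)^2=0$ and $\rk(\sigma_v-1)=1$, so $\sigma_v$ is a symplectic transvection in $G$. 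For the similitude, hypothesis (iii) identifies the kernel of $G\to\F_p^\times$ with $G_{\Q(\mu_p)}$ when $p$ is odd, so this character is onto; for $p=2$ one has $\GS_4(\F_2)=\SP_4(\F_2)$ and surjectivity is automatic.

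\textbf{The classification step.} Let $S=G\cap\SP_4(\F_p)$, normal with $G/S\hookrightarrow\F_p^\times$, and let $T\trianglelefteq G$ be generated by all transvections of $G$; by the previous paragraph $T\ne 1$. Because $T$ is normal in the absolutely irreducible $G$, the $G$-stable subspace $E^T\subseteq E^{\sigma_v}$ is proper, hence $0$, and dually $[T,E]=E$; Clifford theory together with the rank-one condition on $\sigma_v$ then reduces the possible imprimitive decompositions and shows $T$ acts irreducibly. I would then invoke the classification of irreducible subgroups of $\SP_4(\F_p)$ generated by transvections (Mitchell's subgroup theory for $\mathrm{PSp}_4$, or the transvection classifications of McLaughlin and Kantor): for $p$ odd any such group is all of $\SP_4(\F_p)$, so $T=S=\SP_4(\F_p)$ and, with surjective similitude, $G=\GS_4(\F_p)$; for $p=2$ the only absolutely irreducible options are $\SP_4(\F_2)\cong\cS_6$ and its index-$6$ transvection subgroup ${\rm O}^-_4(\F_2)\cong\cS_5$, the group ${\rm O}^+_4(\F_2)$ being reducible, giving the stated dichotomy.

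\textbf{Main obstacle.} The construction and uniqueness of $\cE$ are routine once the gluing equivalence and full faithfulness are in hand. The crux is the image computation, and I expect the genuine difficulty to be the group-theoretic step: verifying that the transvection subgroup $T$ truly acts irreducibly (excluding the imprimitive Clifford patterns compatible with a rank-one transvection) and then applying the $\SP_4$ classification so as to exclude every intermediate subgroup while isolating precisely the minus-type orthogonal exception $\cS_5$ at $p=2$.
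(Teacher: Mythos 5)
Your patching half (Schoof's gluing equivalence for existence, full faithfulness on group schemes biconnected over $\Z_p$ for uniqueness) is exactly the paper's argument and is fine. The genuine gap is in the image computation, precisely at the point you flag as the ``main obstacle'': the claim that Clifford theory together with the rank-one condition on $\sigma_v$ ``shows $T$ acts irreducibly'' is false as a purely group-theoretic statement, and the assertion you use to cover the $p=2$ case --- that ${\rm O}^+_4(\F_2)$ is reducible --- is wrong. The group ${\rm O}^+_4(\F_2)\cong \cS_3\wr\cS_2\subset \SP_4(\F_2)$ acts \emph{absolutely irreducibly} on $\F_2^4\cong \F_2^2\otimes\F_2^2$ (tensor product of two copies of the natural absolutely irreducible $\cS_3$-module). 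What is true is that it is \emph{not generated by its transvections}: its six transvections (the orthogonal reflections, all conjugate to the factor swap) generate a proper subgroup of order $36$ which is reducible, preserving two complementary $2$-dimensional subspaces. So the configuration ``$G$ absolutely irreducible, every transvection of rank one, yet $T$ proper and reducible with two $2$-dimensional blocks'' genuinely occurs inside $\SP_4(\F_2)$, and no amount of Clifford theory fed only with absolute irreducibility of $G$ and $\rk(\sigma_v-1)=1$ can exclude $G\cong \cS_3\wr\cS_2$. Your observations $E^T=0$ and $[T,E]=E$ are correct but do not rule this case out either, since reducibility of $T$ does not require a nonzero fixed space.

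What eliminates this case is arithmetic content of hypothesis (i) that you never use: (i) pins down the image of the decomposition group at $p$ as $\Gal(K(E_\lambda)/K)$, whose (tame, cyclic) inertia has order $(p^2+1)(p-1)$ --- for $p=2$ this is the Frobenius group of order $20$ with inertia of order $5$. Hence $5$ divides $|G|$ when $p=2$, while $|\cS_3\wr\cS_2|=72$ is prime to $5$; this is exactly how the paper disposes of it (``Since $5$ must divide $|G|$, we rule out $\cS_3\wr\cS_2$''), after following the proof of [BK3, Prop.\ 2.8] for the Clifford analysis. For odd $p$ the same mechanism works: $p^2+1$ divides $|G|$ but does not divide $2\,|\GL_2(\F_p)|^2$, so the imprimitive two-block case dies before one invokes the transvection classification of [KM]. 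Once $T$ is known to be irreducible, your remaining steps (McLaughlin's list at $p=2$, the odd-$p$ classification, the self-normalizing/normalizer argument, and surjectivity of the similitude character from (iii)) do go through and coincide with the paper's proof; the missing idea is that you must import the order of tame inertia at $p$ from (i), not merely irreducibility.
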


\begin{proof} 
By (i), the local representation is irreducible and so is $E.$ We  patch  as described before \eqref{MV} to get the uniqueness. 

Since $\sigma_v$ is a transvection by (ii),  the normal subgroup $P$  generated by transvections is non-trivial. Follow the proof of \cite[Proposition 2.8]{BK3}, using $\dim E=4$ and the fact that $N$ is square-free, to conclude that $E$  is irreducible for the group $P$ generated by transvections. If $p=2$, we find that $G$ is isomorphic to $\SP_4(\F_2) \simeq \cS_6$ or ${\rm O}^-_4(\F_2) \simeq \cS_5$. Since 5 must divide $|G|,$ we rule out $\cS_3\wr \cS_2.$     When $p$ is odd, $G$ contains $\SP_4(\F_p)$ by \cite{KM} and thus is isomorphic to  $\GS_4(\F_p)$ by (iii).  
\end{proof}

When $p=2$ and $A$ is a favorable abelian surface, $\cE = A[2]$ provides a representation $\rho$ as in the Corollary.

\subsection{Extensions of exponent $p$} \label{ptsEEp}
Let $0 \to \cE_\lambda \xrightarrow{\iota} \cW \xrightarrow{\pi} \cE_\lambda \to 0$ be an extension of $\cE_\lambda$ by $\cE_\lambda$ killed by $p$ with parameters $\bfs(\cW)=[s_1 \cdots s_5]$ from Proposition \ref{Hexpp}.  Let $P_a$ denote the point of $E_\lambda$ corresponding to $a$ in $\gR_\lambda(F)$, cf. Proposition \ref{FieldForE}(iii) and Remark \ref{Pa}.  Then the fiber over $P_a$ has the form $Q+\iota(E_\lambda)$ for any fixed $Q$ in $W$ such that $\pi(Q) = P_a$.  We write $F_a = F(Q)$ for the {\em fiber field} generated over $F$ by the coordinates of $Q$. 

\begin{Not}
Write $R_u = \ov{K}/\frac{p}{u} \, \cO_{\ov{K}}$, provided that $u$ is in $\cO_{\ov{K}}$ and $\ord_p(u) < 1$. 
\end{Not}

\begin{prop}  \label{EqsExpP}
For $\varphi$ to correspond to a point of $W$ in the fiber over $P_a \ne 0$, it is necessary and sufficient that $\varphi(e_1) = (\0,c,b,a)$ as in {\rm Proposition \ref{FieldForE}} and 
$$
\varphi(e_5) = (\0, (\lambda s_2)^\frac{1}{p^2}c, \, (\lambda s_2)^\frac{1}{p}b+(\lambda s_3)^\frac{1}{p}c, \, cz, \, by, \, ax)
$$ 
where $x,y,z$ in $\ov{K}$ satisfy all the following congruences:
\begin{equation}
\begin{array}{r c  r  c  l}
{\rm i)} & \quad &  x - y^p + p^{p-1} z^{p^2} &=& 0 \, \text{ in } \, R_a, \\  
 \label{Newxs} {\rm ii)} &  & y-z^p +  p  \lambda^{-p} \epsilon_p a^{p-p^3} &=& 0  \, \text{ in } \,  R_b,  \\
{\rm iii)} & & x^{p^2} - z +  w a^{-p^2}&=& 0  \, \text{ in } \, R_c,    
\end{array}
\end{equation}
with $w=s_2a+s_3b+s_4\lambda^{-1}c+s_5a^p$,  $\epsilon_p = s_1$ if $p \ge 3$ and $\epsilon_2 = s_1-(\lambda s_2)^2$  if $p =2$.  Equivalently, $z$ in $\ov{K}$ satisfies $f_a(z) = 0$ in $R_c$, where  
\begin{equation} \label{firstfa}
f_a(Z) = \left[\left(Z^p - p  \lambda^{-p} \epsilon_p a^{p-p^3}\right)^p - p^{p-1} Z^{p^2}\right]^{p^2}-Z+  w a^{-p^2}
\end{equation} 
and the classes of $x$ in $R_a$ and $y$ in $R_b$ are determined by {\rm \eqref{Newxs}(i)} and {\rm (ii)}.  When $\epsilon_p = 0$, we may instead use $f_a(Z) =  Z^{p^4} - Z+  w a^{-p^2}$. 
\end{prop}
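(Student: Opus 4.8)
The plan is to compute directly with Witt covectors, using the fact recalled in \S\ref{GpScheme} that points of $W$ correspond to $D_k$-homomorphisms $\varphi\colon \rM \to \widehat{CW}_k(\cO_{\ov K}/p\cO_{\ov K})$ with $\xi(\varphi(\rL)) = 0$, where $(\rM,\rL)$ is the Honda system of $\cW$ with the standard basis $e_1,\dots,e_8$ and the matrices for $\rF,\rV$ of Proposition \ref{Hexpp}, so that $\rL = \spn\{e_1,e_2,e_5,e_6\}$. Since $\iota(\gE_\lambda) = \spn\{e_1,\dots,e_4\}$ is a sub-Honda system, restricting $\varphi$ along $\iota$ (equivalently, applying the quotient $\pi\colon W \to E_\lambda$ on points) returns a point of $E_\lambda$; demanding that this be the given $P_a \ne 0$ forces $\varphi(e_1) = (\0,c,b,a)$ with $a \in \gR_\lambda(F)$, and then $\varphi(e_2) = (\0,c,b)$, $\varphi(e_3) = (\0,\lambda^{-1}c)$, $\varphi(e_4) = (\0,a^p)$ by Proposition \ref{FieldForE}(i). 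Throughout I would use the congruences $b^p \equiv -pa$, $c^p \equiv -pb$, $c^{p^2}\equiv(-p)^{p+1}a$ and the ordinals \eqref{ordabc} from Remark \ref{Pa}(iii).

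Next I would pin down $\varphi(e_5)$. Writing it as a covector with unknown coordinates, the $D_k$-relations determine the rest: $\varphi(e_8) = \rF\varphi(e_5)$, $\varphi(e_7) = \rF\varphi(e_8)$, and $\varphi(e_6) = \rV\varphi(e_5)\CWplus(-s_1\varphi(e_4))$ from $\rV e_5 = e_6 + s_1 e_4$. The remaining relation $\rV e_6 = \lambda(e_7 + s_2 e_1 + s_3 e_2 + s_4 e_3 + s_5 e_4)$, together with $e_7 = \rF^2 e_5$ and $\rV\varphi(e_4)=0$, becomes the single identity
\[
\rV^2\varphi(e_5) = \lambda\,\rF^2\varphi(e_5) \CWplus \lambda s_2\varphi(e_1) \CWplus \lambda s_3\varphi(e_2) \CWplus \lambda s_4\varphi(e_3) \CWplus \lambda s_5\varphi(e_4).
\]
Matching covector coordinates in positions $-2$ and $-1$, where the $\rF^2$-term has large ordinal, determines the two top coordinates of $\varphi(e_5)$ as $(\lambda s_2)^{1/p^2}c$ and $(\lambda s_2)^{1/p}b + (\lambda s_3)^{1/p}c$; the three lowest coordinates stay free and I normalize them against $a,b,c$ by writing $ax,by,cz$, yielding the asserted shape of $\varphi(e_5)$.

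The three congruences then come from three distinct conditions. Matching the $0$-th coordinate of the displayed identity gives $cz \equiv \lambda a^{p^2}x^{p^2} + \lambda w$; dividing by $c = \lambda a^{p^2}$ produces (iii) in $R_c$, with $w = s_2 a + s_3 b + s_4\lambda^{-1}c + s_5 a^p$. The condition $\xi(\varphi(e_5)) = 0$, after substituting $c^p\equiv -pb$ and $c^{p^2}\equiv(-p)^{p+1}a$ and discarding the top two coordinates (whose $\xi$-contributions have large ordinal), reduces to $a\bigl(x - y^p + (-1)^{p+1}p^{p-1}z^{p^2}\bigr) \equiv 0 \pmod{p}$, i.e. (i) in $R_a$; the sign is irrelevant because $p^{p-1}z^{p^2}$ and $-p^{p-1}z^{p^2}$ agree in $R_a$. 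Finally $\xi(\varphi(e_6)) = 0$ gives (ii) in $R_b$ after dividing by $b$ and using $b^{-1}\equiv -p\lambda^{-p}a^{-p^3}$; here the term $\tfrac1{p^2}\bigl((\lambda s_2)^{1/p}b + (\lambda s_3)^{1/p}c\bigr)^{p^2}$ contributes $(\lambda s_2)^2 a^p$ exactly when $p = 2$ (for $p\ge 3$ its ordinal exceeds $1$), which is precisely the source of $\epsilon_2 = s_1 - (\lambda s_2)^2$ versus $\epsilon_p = s_1$.

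The equivalence with $f_a(z) = 0$ is then pure elimination: (ii) expresses $y$ through $z$, (i) expresses $x$ through $y,z$, and substituting both into (iii) produces \eqref{firstfa}; when $\epsilon_p = 0$ one gets $y = z^p$ and $x = (1 - p^{p-1})z^{p^2}$, and since $(1-p^{p-1})^{p^2}\equiv 1$ modulo the relevant ideal, $x^{p^2}\equiv z^{p^4}$, giving $f_a(Z) = Z^{p^4} - Z + wa^{-p^2}$. I expect the main obstacle to be the covector bookkeeping: correctly applying the nonlinear addition formula of Lemma \ref{Wittadd} (with its $\Phi_p,\Phi_{p^2}$ corrections) and then, via Lemma \ref{ppower} and the ordinals \eqref{ordabc}, deciding precisely which terms survive modulo each of $\tfrac{p}{a}\cO_{\ov K}$, $\tfrac{p}{b}\cO_{\ov K}$, $\tfrac{p}{c}\cO_{\ov K}$ — in particular isolating the single $p = 2$ term that distinguishes $\epsilon_2$ from $s_1$.
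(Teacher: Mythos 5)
Your proposal follows essentially the same route as the paper's proof: you identify points of $W$ with $D_k$-homomorphisms killed by $\xi$ on $\rL$, use the sub-Honda system to fix $\varphi(e_j)$ for $j\le 4$, extract the top coordinates of $\varphi(e_5)$ and congruence (iii) from the relation $\rV e_6 = \lambda(e_7+s_2e_1+\dots+s_5e_4)$, obtain (i) and (ii) from $\xi(\varphi(e_5))=0$ and $\xi(\varphi(e_6))=0$ (correctly locating the $p=2$ term $\tfrac{1}{p^2}d_3^{p^2}$ as the source of $\epsilon_2 = s_1-(\lambda s_2)^2$), and then eliminate $x,y$ to reach $f_a$. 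The only differences are cosmetic — packaging the relations as $\rV^2\varphi(e_5)=\lambda\rF^2\varphi(e_5)\CWplus\cdots$ and phrasing the $\epsilon_p=0$ simplification via $(1-p^{p-1})^{p^2}\equiv 1$ rather than dropping binomial middle terms — so the argument is correct and matches the paper.
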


\begin{proof}
Let $\varphi$ in 
$
\Hom_{D_k}({\rm M},\widehat{CW}_k(\cO_{\ov{K}}/p\cO_{\ov{K}}))
$
be an element of $\cW.$  Since $M$ is generated by $e_1$ and $e_5$ as a $D_k$-module, $\varphi$ is determined by $\varphi(e_1)$ and $\varphi(e_5).$  The injection of $\gE_\lambda$ to $M$ yields $\varphi(e_j) = \psi(x_j)$ for $1 \le j \le 4$, as in Proposition \ref{FieldForE}(i).  Set $\varphi(e_5)=(\0,d_4,d_3,d_2,d_1,d_0)$, with only the five rightmost coordinates significant,  since ${\rm V}^5 = 0$.   Applying ${\rm FV} = 0$ to $e_5$ gives $d_4^p=d_3^p=d_2^p=d_1^p = 0$.

From the matrix representation of ${\rm V}$, we have
$$
\varphi(e_6) = {\rm V}(\varphi(e_5)) \CWplus [-s_1]\varphi(e_4) = (\0,d_4,d_3,d_2,d_1-s_1a^p)
$$
and so $\varphi(\lambda^{-1}{\rm V}e_6) = [\lambda^{-1}](\0,d_4,d_3,d_2)$.  We also have 
$$
\begin{array}{l}
\varphi(\lambda^{-1}{\rm V}e_6) = \varphi(e_7) \CWplus\varphi(s_2e_1) \CWplus\varphi(s_3e_2) \CWplus \varphi(s_4e_3) \CWplus \varphi(s_5e_4) \vspace{5 pt} \\
= {\rm F}^2\varphi(e_5) \CWplus (\0,\sigma^{-2}(s_2)c,\sigma^{-1}(s_2)b,s_2a) \CWplus   (\0, \sigma^{-1}(s_3)c, s_3b+s_4\lambda^{-1}c+s_5a^p) \vspace{3 pt} \\
= (\0,d_0^{p^2}) \CWplus (\0,s_2^\frac{1}{p^2}c,s_2^\frac{1}{p}b+s_3^\frac{1}{p}c,s_2a+s_3b+s_4\lambda^{-1}c+s_5a^p-\Phi_p(s_2^\frac{1}{p}b,s_3^\frac{1}{p}c)) \vspace{3 pt}\\
= (\0,s_2^\frac{1}{p^2}c,s_2^\frac{1}{p}b+s_3^\frac{1}{p}c,s_2a+s_3b+s_4\lambda^{-1}c+s_5a^p+d_0^{p^2}).
\end{array}
$$
since $\Phi_p(s_2^\frac{1}{p}b,s_3^\frac{1}{p}c) = 0$ by \eqref{ordabc} and Lemma \ref{ppower}.  Modulo $p\cO_{\ov{K}}$, this gives: 
\begin{equation}  \label{d2d3d4} 
\begin{array}{lll}
d_4 \equiv (\lambda s_2)^\frac{1}{p^2}c, \quad &d_3 \equiv (\lambda s_2)^\frac{1}{p}b+(\lambda s_3)^\frac{1}{p}c, \quad &d_2 \equiv \lambda( d_0^{p^2} + w).
\end{array}
\end{equation} 
 
Vanishing of the Hasse-Witt map on $\varphi(\rL)$ gives the following additional relations:
\begin{equation} \label{deqs}
\begin{array}{llll}
 \xi(\varphi(e_5)) &=&\frac{d_4^{p^4}}{p^4}+\frac{d_3^{p^3}}{p^3}+\frac{d_2^{p^2}}{p^2}+\frac{d_1^{p}}{p}+d_0 \equiv 0 &\pmod{p \cO_{\ov{K}}},  \vspace{3 pt}\\
 \xi(\varphi(e_6))&=&\frac{d_4^{p^3}}{p^3}+\frac{d_3^{p^2}}{p^2}+\frac{d_2^{p}}{p}+d_1-s_1a^p \equiv 0 &\pmod{p\cO_{\ov{K}}}. 
\end{array}
\end{equation}
Since 
$
p^2 \ord_p(d_4) \ge p^2 \ord_p(c) > p+1,
$ 
we have $p^{-3}d_4^{p^3} \equiv 0$ and $p^{-4}d_4^{p^4} \equiv 0 \pmod{p}$.  Thus the $d_4$-terms drop out of (\ref{deqs}).  By \eqref{d2d3d4}, we have
$$
\begin{array}{ll}
d_3^p \equiv \lambda s_2 b^p + \lambda s_3 c^p  \hspace{3 pt} (\text{mod }p^2),  &\hspace{10 pt} d_3^{p^2} \equiv (\lambda s_2)^p b^{p^2} + (\lambda s_3)^p c^{p^2} \hspace{3 pt} (\text{mod }p^3),  \vspace{3 pt}  \\
\multicolumn{2}{c}{d_3^{p^3} \equiv (\lambda s_2 )^{p^2}b^{p^3} +  (\lambda s_3 )^{p^2}c^{p^3} \hspace{3 pt} (\text{mod }p^4).} 
\end{array}
$$
In addition, 
{\small $$
\ord_p\left(\frac{c^{p^j}}{p^j}\right) > \ord_p\left(\frac{b^{p^j}}{p^j}\right) = \frac{p^j(p^2-p+1)}{(p-1)(p^2+1)} - j = p^{j-1}\left(1+  \frac{1}{(p-1)(p^2+1)}\right) - j  
$$}

\noindent is greater than 1 if:  (i) $j=3$ and all $p$ \, or \, (ii) $j=2$ and $p \ge 3$.  If $j=2$ and $p=2$, we also have $\ord_2(c^4/4) > 1$ and so (\ref{deqs}) simplifies to 
\begin{equation} \label{d0d1d2}
p^{-2}d_2^{p^2} +  p^{-1}d_1^p + d_0 \equiv 0\quad \text{and} \quad 
p^{-1}d_2^p + d_1 - \epsilon_p a^p \equiv 0.
\end{equation}

Let $x = d_0/a$ in $R_a$, $y = d_1/b$ in $R_b$ and $z = d_2/c$ in $R_c$.  Then \eqref{d2d3d4} and \eqref{d0d1d2} give \eqref{Newxs}, using the equations for $a,b,c$ in Remark \ref{Pa}(iii).  It follows that $f_a(z) = 0$ in $R_c$ for $f_a$ given by \eqref{firstfa}.  When $\epsilon_p = 0$, we have 
$$
\ord_p(z) = \frac{1}{p^4}\ord_p(w a^{-p^2}) \ge -\frac{(p^2-1)}{p^4}\ord_p(a) = -\frac{p+1}{p^4(p^2+1)}
$$   
and thus
$$
\ord_p \left(\bino{p^2}{j} p^{(p-1)j} z^{p^4} \right) = (p-1)j + 2 - \ord_p(j) + p^4 \ord_p(z) \ge 1,
$$
i.e., the middle terms  of the binomial expansion for $f_a(z)$ drop out.   

Conversely, if $f_a(z) = 0$ in $R_c$ and $x$ and $y$ are defined by \eqref{Newxs}(i) and (ii), then \eqref{Newxs}(iii) holds and we obtain a $D_k$-homomorphism belonging to a point of $W$ in the fiber over $P$.
\end{proof}

\begin{Not}  \label{zNot}
If $\lambda$ in $k^\times$ is fixed, then $a$ in 
$$
\gR_\lambda(F) = \{ a \in \cO_F/p \cO_F \hspace{3 pt} \vert \hspace{3 pt} \dot{\lambda}^{p^2} a^{p^4} \equiv  (-p)^{p+1}a\pmod{p^{p+2} \cO_F} \}
$$
determines $b$ and $c$ in $\cO_F/p \cO_F$ by the congruences in Proposition \ref{FieldForE}.   If $z$ in $R_c$ satisfies the resulting congruence $f_a(z) = 0$ in $R_c$, then $z$ determines $x$ in $R_a$ and $y$ in $R_b$ by \eqref{Newxs}.  Using the congruences in \eqref{d2d3d4}, set $\bfd_a(z) = (\0,d_4,d_3, c z, b y, ax)$.  Let $\varphi_z$ be the $D_k$-homomorphism such that $\varphi_z(e_1) = (\0,c,b,a)$ and $\varphi_z(e_5) = \bfd_a(z)$ and let $Q_z$ be the corresponding point in $W$.  The {\em fiber field} generated by the point of $W$ lying over the point $P_a$ of $E$ is $F_a = F(Q_z)$. 
\end{Not}

We next examine the effect of various choices of lifts on constructing a generator for the extension $F_a/F$.  Under the assumptions of Notation \ref{zNot}, choose lifts to $\cO_K$ of $\lambda$ and the entries in $\bfs$.   By Remark \ref{Pa}(i), $a$ has a lift $\tilde{a}$ in $\cO_F$.  Using the congruences in Proposition \ref{FieldForE} as equations, $\tilde{a}$ determines lifts $\tilde{b}$ and $\tilde{c}$ in $\cO_F$ of $b$ and $c$.  Let $\tilde{f}$ be the polynomial with coefficients in $F$ obtained by using the respective lifts to replace the corresponding coefficients of $f_a(Z)$.   

\begin{cor}  \label{theta}
Construct $\tilde{f}(Z)$ in $F[Z]$ by choosing the lifts described above.   If $\theta$ is any root of $\tilde{f}$ in $\ov{K}$, then $F_a = F(\theta)$.  If $\epsilon_p = 0$ then $h(X) = X^p - X + \tilde{w} \tilde{a}^{-p^2}$ splits completely in $F_a$.
\end{cor}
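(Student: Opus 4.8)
The plan is to identify the honest roots of $\tilde f$ with the fiber points $Q_z$ via reduction modulo $\frac{p}{c}$, and then compare stabilizers in $G_F$. By Proposition \ref{EqsExpP} and Notation \ref{zNot}, the points of $W$ over $P_a$ correspond to the solutions $z \in R_c$ of $f_a(z) = 0$, and $F_a = F(Q_z)$. Since $\tilde f$ is a coefficientwise lift of $f_a$, any root $\theta$ of $\tilde f$ reduces to a solution $z = \theta \bmod \frac{p}{c}$ of $f_a(z)=0$ in $R_c$, singling out a point $Q_z$. The coordinates $d_3, d_4$ of $\varphi_z(e_5)$ lie in $F$, while the classes of $y \in R_b$ and $x \in R_a$ are polynomial in $z$ by \eqref{Newxs}; as any $g \in G_F$ fixes $a,b,c$, one checks that $g$ fixes $Q_z$ if and only if $g$ fixes $z$ in $R_c$, that is $g(\theta) \equiv \theta \pmod{\frac{p}{c}}$. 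In particular $\mathrm{Stab}(\theta) \subseteq \mathrm{Stab}(Q_z)$, so $F_a = F(Q_z) \subseteq F(\theta)$.

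For the reverse inclusion it suffices to show $\mathrm{Stab}(Q_z) \subseteq \mathrm{Stab}(\theta)$, i.e. that whenever $g(\theta) \equiv \theta \pmod{\frac{p}{c}}$ with $g \in G_F$, in fact $g(\theta) = \theta$. Since $g(\theta)$ is again a root of $\tilde f$, this reduces to the separation statement that distinct roots of $\tilde f$ are incongruent modulo $\frac{p}{c}$. I would establish this by counting: $\tilde f$ has degree $p^4$ with unit leading coefficient, while the fiber over $P_a$ is a torsor under $E_\lambda$ and so has exactly $p^4$ points. Because $\ord_p(p/c) > 0$ by Proposition \ref{FieldForE} and $\tilde f'(Z) = -1 + p^2(\cdots)$ is a unit at each approximate solution (using the ordinals of $a,b,c$), Hensel's lemma lifts each of the $p^4$ solutions $z \in R_c$ to a root of $\tilde f$ in its congruence class; matching the two counts of $p^4$ forces reduction to be a bijection, whence distinct roots occupy distinct classes modulo $\frac{p}{c}$ and $F(\theta) \subseteq F_a$.

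For the last assertion, take $\epsilon_p = 0$, so $\tilde f(Z) = Z^{p^4} - Z + c_0$ with $c_0 = \tilde w \tilde a^{-p^2}$, and put $T(Z) = Z + Z^p + Z^{p^2} + Z^{p^3}$. As a polynomial identity over $\Z$ one has $T(Z)^p - T(Z) = Z^{p^4} - Z + p\,g(Z)$ with $g \in \Z[Z]$, so $h(T(\theta)) = T(\theta)^p - T(\theta) + c_0 = p\,g(\theta)$. The valuation bound $\ord_p(z) \ge -\tfrac{p+1}{p^4(p^2+1)}$ from the proof of Proposition \ref{EqsExpP} keeps $p\,g(\theta)$ in the maximal ideal, so $X_0 = T(\theta) \in F(\theta) = F_a$ is an approximate root of $h$, and Hensel's lemma over the complete field $F_a$ produces a genuine root $\alpha \in F_a$. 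Translating, $h(X + \alpha)$ reduces modulo the maximal ideal to $Y^p - Y = \prod_{j \in \F_p}(Y - j)$, which is separable over the residue field of $F_a$; lifting its roots by Hensel places all $p$ roots of $h$ in $F_a$, so $h$ splits completely.

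I expect the main obstacle to be the separation step of the second paragraph: verifying, from the ordinals of $a, b, c$ in Proposition \ref{FieldForE}, that $\frac{p}{c}$ is a fine enough modulus for $\tilde f'$ to be a unit at each approximate solution and for Hensel to apply, so that the $p^4$ roots of $\tilde f$ genuinely fall into distinct classes. The Hensel estimate in the $\epsilon_p = 0$ case is of the same nature and should follow once these valuations are controlled.
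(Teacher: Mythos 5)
Your argument is correct and follows the paper's proof in both halves: you establish $F_a = F(\theta)$ by matching roots of $\tilde f$ with the $p^4$ fiber points through reduction to $R_c$ and comparing stabilizers in $G_F$, and you split $h$ by evaluating it at the trace $\theta+\theta^p+\theta^{p^2}+\theta^{p^3}$ and applying Hensel's lemma; your bound on $p\,g(\theta)$ is precisely the paper's estimate $1+(p^4-p^3+p^2)\ord_p(\theta)\ge 1-p^2\ord_p(a)$ for the worst middle binomial term. The only real divergence is the separation step that you flag as the main obstacle. You propose to lift each solution $z$ in $R_c$ to a root of $\tilde f$ by Hensel's lemma, which obliges you to verify that $\tilde f'$ is a unit at each approximate solution; this is true, but when $\epsilon_p \neq 0$ it requires valuation estimates on the derivative of the nested expression \eqref{firstfa}, since the solutions have negative ordinal. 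The paper's terse justification (distinctness of the $p^4$ fiber points) can be unpacked without any derivative computation: since $\tilde f$ has unit leading coefficient $u_0$, write $\tilde f(\tilde z)=u_0\prod_i(\tilde z-\theta_i)$ for a lift $\tilde z$ of a solution $z$; by Lemma \ref{zz'} distinct solutions differ by elements of $\Mu_{p^4-1}$, hence by units of $\cO_{\ov{K}}$, so every factor $\tilde z-\theta_i$ whose root reduces to a solution other than $z$ has $\ord_p = 0$, and then $\ord_p(\tilde f(\tilde z))\ge \ord_p(p/c)>0$ forces some root to reduce to $z$. This gives surjectivity of reduction for free; your counting argument then yields the bijection, the distinctness of roots modulo $\frac{p}{c}\,\cO_{\ov{K}}$, and the separability of $\tilde f$, and the obstacle you anticipated disappears.
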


\begin{proof}
Let $M$ be the splitting field of $\tilde{f}$ over $F$.  Since the $p^4$ solutions to the congruence $f_a(Z) = 0$ in $R_c$ correspond to the distinct points of $W$ in the fiber over $P_a$, the roots of $\tilde{f}$ in $\ov{K}$ remain distinct when reduced to $R_c$.   If $\theta$ is any root of $\tilde{f}$, its reduction $z$ in $R_c$ determines the point $Q_z$.  Thus $F_a$ is contained in $M$.   If $\gamma$ is in $\Gal(M/F_a)$, then $Q_z = \gamma(Q_z) = Q_{\gamma(z)}$, so $\gamma(z) = z$ in $R_c$.  But then  $\gamma(\theta) = \theta$, since the roots of $\tilde{f}$ are distinct modulo $\frac{p}{c} \, \cO_{\ov{K}}$.   Hence $F_a = F(\theta) = M$ is independent of the various choices of lifts.

When $\epsilon_p = 0$, we have 
$
p^4 \ord_p(\theta) = \ord_p(w/a^{p^2}) \ge (1-p^2) \ord_p(a)$.  We find that
$\alpha = \theta^{p^3} + \theta^{p^2} + \theta^p+\theta$ satisfies
$$
\alpha^p - \alpha \equiv \theta^{p^4} - \theta \equiv - w a^{-p^2}\pmod{ \frac{p}{a^{p^2}} \cO_L},
$$ 
since the worst case middle term in the binomial expansion of $\alpha^p$ leads to
$$
\ord_p \left( p \theta^{p^3(p-1)} \theta^{p^2} \right) = 1 + (p^4-p^3+p^2) \ord_p(\theta) \ge 1 - p^2 \ord_p(a).
$$
Hence $h(\alpha) \equiv 0 \pmod{ pa^{-p^2} \cO_L}.$  Upon clearing denominators, Hensel's Lemma \cite[\!II,\S2]{SL} implies that $h$ has a root in $F_a$ and  the other roots come by refining $\alpha + j$ with $1 \le j \le p-1$. 
\end{proof}

A polynomial $g_a$ of degree $p^4$, analogous to $f_a$, but such that $y$ in $R_b$ satisfies $g_a(y) = 0$ in $R_b$, can also be derived from Proposition \ref{EqsExpP} as in the Corollary below.  Then $y$ determines $x$ in $R_a$ and $z$ in $R_b$ and thus $Q_z$.  Choosing appropriate lifts leads to $\tilde{g}(Y)$ in $F[Y]$, such that a root of $\tilde{g}$ also generates the extension $F_a/F$.  Similar considerations apply to $x$.

\begin{cor} \label{vartheta}
Let $\bfs = [s_10000]$ and choose lifts $\tilde{\lambda}$, $\tilde{s}_1$ in $\cO_K$ and $\tilde{a}$ in $\cO_F.$   Then $F_a = F(\vartheta)$ for any root $\vartheta$ in $\ov{K}$ of $\tilde{g}(Y) = Y^{p^4} - Y - p \tilde{\lambda}^{-p} \tilde{s}_1 \tilde{a}^{p-p^3}.$  \, In addition, $h(X) = X^p - X - p \tilde{\lambda}^{-p} \tilde{s}_1 \tilde{a}^{p-p^3}$ splits completely in $F_a$.   \end{cor}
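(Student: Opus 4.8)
The plan is to mimic the derivation of $f_a$ in Proposition~\ref{EqsExpP}, singling out $y$ rather than $z$ to produce a degree $p^4$ polynomial $g_a$ with $g_a(y)\equiv 0$ in $R_b$. Putting $\bfs=[s_1\,0\,0\,0\,0]$ in \eqref{Newxs} and \eqref{firstfa} gives $w=0$ and $\epsilon_p=s_1$, so the three congruences collapse to $x\equiv y^p-p^{p-1}z^{p^2}$ in $R_a$, to $z^p\equiv y+p\lambda^{-p}s_1a^{p-p^3}$ in $R_b$, and to $z\equiv x^{p^2}$ in $R_c$. First I would eliminate $x$ and $z$: raising the last congruence to the $p$-th power and inserting the first gives $z^p\equiv x^{p^3}\equiv(y^p-p^{p-1}z^{p^2})^{p^3}$, and comparison with the second produces a relation whose leading term is $y^{p^4}$, namely $(y^p-p^{p-1}z^{p^2})^{p^3}\equiv y+p\lambda^{-p}s_1a^{p-p^3}$. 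The step I expect to be most delicate is then showing that every correction coming from the factor $p^{p-1}z^{p^2}$ in the binomial expansion of the left-hand side is negligible in $R_b$, exactly as the middle binomial terms dropped out of $f_a$ when $\epsilon_p=0$ in Corollary~\ref{theta}. To carry this out I would record $\ord_p(y)$ and $\ord_p(z)$ from the leading balance $\vartheta^{p^4}\sim p\lambda^{-p}s_1a^{p-p^3}$ and the ordinals in \eqref{ordabc}, then use Lemma~\ref{ppower} to push each cross term past the precision $\ord_p(p/b)$ of $R_b$. What survives is $g_a(Y)=Y^{p^4}-Y-p\lambda^{-p}s_1a^{p-p^3}$, and its root $y$ in $R_b$ determines $x$ in $R_a$, $z$ in $R_c$, and hence the point $Q_z$, as in Notation~\ref{zNot}.

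With $g_a$ in hand, $F_a=F(\vartheta)$ would follow by the argument of Corollary~\ref{theta}, now applied to $y\in R_b$ in place of $z\in R_c$. After fixing the lifts $\tilde\lambda,\tilde s_1,\tilde a$ to form $\tilde g(Y)$, the $p^4$ solutions of $g_a(Y)\equiv 0$ in $R_b$ correspond bijectively to the points of $W$ in the fiber over $P_a$, so the roots of $\tilde g$ stay pairwise distinct modulo $\frac{p}{b}\,\cO_{\ov{K}}$; each root $\vartheta$ reduces to some $y$ that pins down $Q_z$, giving $F_a\subseteq F(\vartheta)$, while any $\gamma$ in $\Gal(F(\vartheta)/F_a)$ fixes $y$ in $R_b$ and therefore fixes $\vartheta$, yielding the reverse inclusion. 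Independence of $F_a$ from the choice of lifts is automatic from this description.

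Finally, for the complete splitting of $h(X)=X^p-X-p\tilde\lambda^{-p}\tilde s_1\tilde a^{p-p^3}$ in $F_a$, I would set $\alpha=\vartheta^{p^3}+\vartheta^{p^2}+\vartheta^p+\vartheta$ and verify, by the same binomial estimate used for $\alpha$ in Corollary~\ref{theta}, that $\alpha^p-\alpha\equiv\vartheta^{p^4}-\vartheta\equiv p\lambda^{-p}s_1a^{p-p^3}$ to within the needed power of $p$, so that $h(\alpha)\equiv 0$. Clearing denominators and applying Hensel's Lemma then produces a genuine root of $h$ in $F_a$, and the remaining roots arise from $\alpha+j$ with $1\le j\le p-1$; hence $h$ splits completely. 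The one genuine obstacle throughout is the order bookkeeping of the first paragraph, but it is constrained tightly by \eqref{ordabc} and runs parallel to the vanishing already established for $f_a$.
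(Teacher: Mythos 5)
Your overall strategy matches the paper's: derive a degree $p^4$ polynomial $g_a(Y)=Y^{p^4}-Y-p\lambda^{-p}s_1a^{p-p^3}$ annihilating $y$ in $R_b$, then repeat the field-generation and Hensel arguments of Corollary \ref{theta} with $y$ in place of $z$. But there is a genuine gap in the step you yourself flag as delicate. To show that the corrections coming from $p^{p-1}z^{p^2}$ are negligible, you propose to read off $\ord_p(y)$ and $\ord_p(z)$ ``from the leading balance $\vartheta^{p^4}\sim p\lambda^{-p}s_1a^{p-p^3}$.'' That balance is a property of the polynomial $g_a$ you are in the middle of deriving: for the forward direction (showing that the coordinates of an actual point of $W$ satisfy $g_a$), you cannot assume that $y^{p^4}$ balances $pa^{p-p^3}$ -- a priori, congruence \eqref{Newxs}(ii) could instead be resolved with $y$ itself balancing $p\lambda^{-p}s_1a^{p-p^3}$ and $z^p$ negligible, which would lead to a completely different ordinal structure. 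So the proposed bookkeeping is circular exactly where the work is.

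The paper breaks this circularity with an input your proposal never uses: in the proof of Proposition \ref{EqsExpP} it was shown (from ${\rm FV}=0$ applied to $e_5$) that $d_1^p=0$ in $\cO_{\ov{K}}/p\cO_{\ov{K}}$, i.e.\ $\ord_p(d_1)\ge 1/p$, whence $\ord_p(y)=\ord_p(d_1/b)\ge \tfrac1p-(p^2-p+1)\ord_p(a)=-\tfrac1p\ord_p(a)$. This a priori estimate is strictly larger than $\ord_p(pa^{p-p^3})=\tfrac{1-p}{p^2+1}$, so the term $y$ cannot balance $p\lambda^{-p}s_1a^{p-p^3}$ in \eqref{Newxs}(ii); the balance $\ord_p(z^p)=\ord_p(pa^{p-p^3})$ is forced, and \eqref{yval} and the estimate $\ord_p(p^{p-1}z^{p^2})=p-2+\tfrac{p+1}{p^2+1}>0$ follow. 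Note that mere integrality of the covector coordinate $d_1$ would not do: it only gives $\ord_p(y)\ge-\ord_p(b)=-\tfrac{p^2-p+1}{t}$, which is more negative than $\ord_p(pa^{p-p^3})=-\tfrac{(p-1)^2}{t}$, so the bad balance is not excluded without the extra $1/p$ coming from $d_1^p=0$. Two further points of comparison: (a) with the correct ordinals in hand, the paper's elimination is cleaner than yours -- for $p\ge 3$ the term $p^{p-1}z^{p^2}$ already drops out of \eqref{Newxs}(i) at the precision of $R_a$, and only for $p=2$ is Lemma \ref{ppower} needed (to get $x^4=y^8$ in $R_c$); (b) a non-circular rescue of your plan would be to run only the converse direction (where the Newton polygon of $\tilde g$ legitimately supplies the ordinals) and then identify all $p^4$ points of the fiber with the $p^4$ distinct reductions of roots by a counting argument, but your write-up instead presents the bijection as a consequence of the forward elimination, which is the part resting on the circular estimate.
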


\begin{proof}
By assumption, $w = 0$ and $\epsilon_p = s_1$.  It suffices to treat $s_1 \ne  0$.  In the proof of Proposition \ref{EqsExpP}, we showed that $d_1^p = 0$ in $\cO_{\ov{K}}/p\cO_{\ov{K}}$.  Hence
$$
{ \ord_p(y) = \ord_p\left(\frac{d_1}{b}\right) \ge \frac{1}{p} - (p^2-p+1) \ord_p(a) = -\frac{1}{p} \ord_p(a)}.
$$
Then $\ord_p(y) >  \ord_p(pa^{p-p^3})$ and so $\ord_p(z^p) = \ord_p(pa^{p-p^3}) = \frac{1-p}{p^2+1}$ by \eqref{Newxs}(ii).  It follows that 
\begin{equation} \label{ErrorTerm}
\ord_p(p^{p-1} z^{p^2}) = p-2+\frac{p+1}{p^2+1}.
\end{equation}
Since \eqref{ErrorTerm} is positive, \eqref{Newxs}(i) and \eqref{Newxs}(iii) imply that 
\begin{equation} \label{yval}
\ord_p(y) = \frac{1}{p} \ord_p(x) = \frac{1}{p^3} \ord_p(z) = -\frac{1}{p^4}\left( \frac{p-1}{p^2+1} \right).  
\end{equation}
By \eqref{ErrorTerm}, if $p \ge 3$, the term $p^{p-1} z^{p^2}$ drops out of \eqref{Newxs}(i) and then we deduce from \eqref{Newxs} that $g_a(y) = y^{p^4} - y + p \lambda^{-p} s_1 a^{p-p^3}$ is 0 in $R_b$.  If $p=2$, apply Lemma \ref{ppower} to \eqref{Newxs}(ii) to obtain $x^4 = y^8$ in $R_c$.   Thus $z = y^{16}$ in $R_c$ and it again follows that $g_a(y) = 0$ in $R_b$.  Conversely, from $y$ satisfying $g_a(y) = 0$ in $R_b$, we can find $x$ and $z$ such that \eqref{Newxs} holds.  The concluding arguments are analogous to those in the proof of Corollary \ref{theta}.
\end{proof}

\vspace{5 pt}
 
We have focused on $x,y,z$ in Proposition \ref{EqsExpP} because, as we show next, distinct solutions  to $f_a(Z) = 0$ in $R_c$ differ by elements of $\Mu_{p^4-1}$.  

\begin{lem}  \label{zz'}
Let $Q_z$ lie in the fiber over $P_a \ne 0.$ Then every other point in the same fiber has the form $Q_{z'}$ with $z' = z + \omega$ in $R_c$ as $\omega$ ranges over $\Mu_{p^4-1}$.  If so, 
\begin{equation} \label{y'x'}
y' = y +\omega^p \text{ in } R_b, \quad x' = x +\omega^{p^2} \text{ in } R_a
\end{equation}
and $Q_{z'} = Q_z + \iota(P_{a'})$ with $a' = \omega^{p^2} a$ in $\gR_\lambda$.
\end{lem}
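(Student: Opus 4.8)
The plan is to match the torsor structure of the fiber over $P_a$ with its parametrization by the solutions of $f_a(Z)=0$ in $R_c$. By Proposition \ref{EqsExpP} and Corollary \ref{theta}, these $p^4$ solutions are in bijection with the $p^4$ points of the fiber, which is a coset $Q_z + \iota(E_\lambda)$. By Proposition \ref{FieldForE}(iii) and the $\F_{p^4}$-structure recorded in Remark \ref{Pa}(i), every element of $\iota(E_\lambda)$ is $\iota(P_{a'})$ for a unique $a' = \alpha a$ with $\alpha \in \F_{p^4}$; writing $\alpha = \omega^{p^2}$ with $\omega \in \Mu_{p^4-1} \cup \{0\}$ (permissible since $t \mapsto t^{p^2}$ is an automorphism of $\F_{p^4}$), I would compute the covector of $Q_z + \iota(P_{a'})$ and read off its $z$-coordinate. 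This simultaneously yields all three assertions.

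First I would pin down the covector representing $\iota(P_{a'})$. Because points of $W$ are $D_k$-homomorphisms out of $\rM$, the inclusion $\iota\colon E_\lambda \to W$ is induced by the surjection $\pi\colon \rM \to \rM_0$ of Proposition \ref{Hexpp}, with $\pi(e_5) = x_1$ and $\pi(e_j) = 0$ for $1 \le j \le 4$: it carries $P_{a'}$, i.e. $\psi_{a'}\colon \rM_0 \to \widehat{CW}_k$, to $\psi_{a'} \circ \pi$. Thus $\iota(P_{a'})$ is represented by $\varphi'$ with $\varphi'(e_1) = 0$ and $\varphi'(e_5) = \psi_{a'}(x_1)$. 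By Remark \ref{Pa}(i) and $\omega^{p^4} = \omega$, we have $\psi_{a'}(x_1) = [\alpha](\0, c, b, a) = (\0, \omega c, \omega^p b, \omega^{p^2} a)$. Adding this to $\varphi_z(e_5) = \bfd_a(z) = (\0, d_4, d_3, cz, by, ax)$ by Lemma \ref{Wittadd}, the position $-2$ coordinate is $cz + \omega c = c(z+\omega)$, with no correction term, so $Q_z + \iota(P_{a'}) = Q_{z'}$ with $z' = z + \omega$ in $R_c$. As $\alpha$, equivalently $\omega$, runs through $\F_{p^4}$, the distinct points $Q_{z+\omega}$ fill out the whole fiber; discarding $\omega = 0$ gives the first assertion, and $a' = \omega^{p^2} a$ gives the last.

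It remains to extract $y' = y + \omega^p$ in $R_b$ and $x' = x + \omega^{p^2}$ in $R_a$. Since $Q_{z'}$ and $Q_z$ lie over the same $P_a$, they satisfy \eqref{Newxs} with identical $a, b, c, w$ and $\epsilon_p$-term. Subtracting \eqref{Newxs}(ii) gives $y' - y \equiv z'^p - z^p = (z+\omega)^p - z^p$ in $R_b$, and then subtracting \eqref{Newxs}(i) gives $x' - x \equiv (y'^p - y^p) - p^{p-1}(z'^{p^2} - z^{p^2})$ in $R_a$; the stated formulas follow once the binomial cross-terms and the $p^{p-1}$-term are seen to vanish modulo $\frac{p}{b}\cO_{\ov{K}}$ and $\frac{p}{a}\cO_{\ov{K}}$ respectively. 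Equivalently, these are the position $-1$ and position $0$ coordinates of the covector sum above, where one must check $\Phi_p(cz, \omega c) \equiv 0$ in $R_b$ and that the corresponding position-$0$ correction vanishes in $R_a$.

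The main obstacle is precisely these vanishing statements, which are the only genuinely computational part. They are delicate because in the general case $\ord_p(z)$ is negative, so the cross-terms cannot be bounded by their nominal powers of $p$ alone. I would dispatch them using Lemma \ref{ppower} together with the precise ordinals $\ord_p(a) = 1/t$, $\ord_p(b) = (p^2-p+1)/t$, $\ord_p(c) = p^2/t$ from \eqref{ordabc} and the lower bound on $\ord_p(z)$ forced by \eqref{Newxs}(iii) through $z \equiv x^{p^2} + w a^{-p^2}$, in the same spirit as the estimates already carried out in Corollaries \ref{theta} and \ref{vartheta}. Granting these, all three assertions hold.
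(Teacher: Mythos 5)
Your proposal is correct and takes essentially the same route as the paper's proof: both rest on the torsor structure of the fiber, the fact that Witt covector addition is exact in position $-2$ (Lemma \ref{Wittadd}), and Remark \ref{Pa} to tie $a'$ to $\omega^{p^2}a$ --- you simply run the computation forwards (from $a'=\omega^{p^2}a$ to $z'=z+\omega$ via Remark \ref{Pa}(i), then extracting \eqref{y'x'} from \eqref{Newxs}), while the paper defines $y',x'$ by \eqref{y'x'}, asserts the shifted triple again solves \eqref{Newxs}, and recovers $a'$ backwards from $c'=\omega c$ via Remark \ref{Pa}(iii). The binomial estimates you flag as the remaining obstacle are precisely what the paper's unproved assertion ``gives another solution to the congruences'' also needs, and they do close: using $\ord_p(z)\ge \tfrac{1}{p}-\ord_p(c)$, $\ord_p(y)\ge\tfrac1p-\ord_p(b)$ and the ordinals in \eqref{ordabc}, Lemma \ref{ppower} gives exactly the required thresholds (with equality in the worst case, since $t=(p-1)(p^2+1)=p^3-p^2+p-1$), so there is no gap.
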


\begin{proof}
We have $f_a(z) = 0$ in $R_c$ and we use \eqref{Newxs}(i) and (ii) to find $y$ and $x$.   Putting $z' = z+\omega$ and using \eqref{y'x'} to define $y'$ and $x'$ gives another solution to the congruences \eqref{Newxs}, thereby accounting for the additional $p^4-1$ points $Q_{z'}$ in the fiber over $P_a$.  

Let $Q_{z'} = Q_z + \iota(P_{a'})$ and evaluate the corresponding $D_k$-homomorphisms at $e_5$ to find the equation of Witt covectors
$\bfd_a(z') = \bfd_a(z) \CWplus (\0,c',b',a')$.  This sum reduces to ordinary addition on coordinates in $k$.  Indeed, apply Verschiebung twice and use Lemma \ref{Wittadd} to get $cz' = cz+c'$ and so $c' = \omega c$ in $k$.  By Remark \ref{Pa}(iii), $c'$ determines $b'$ and $a'$.  In particular, the various lifts satisfy
$$
(-p)^{p+1} a' \equiv (c')^{p^2} \equiv \omega^{p^2} c^{p^2} \equiv (-p)^{p+1} \omega^{p^2} a \pmod{p^{p+2} \cO_{\ov{K}}}. 
$$
Hence $a' = \omega^{p^2} a$ in $k$ and  similarly  $b' = \omega^{p}b$ in $k.$
\end{proof}

The next lemmas treats special cases used in the following subsection to describe Kummer generators  when $p=2$.

\begin{lem}  \label{fiberchange}
If $P_a \ne 0$, then the field $F_a$ of points of the fiber over $P_a$ equals the full field of points $K(W)$ for the Honda parameters in {\eqref{roote}}.
\end{lem}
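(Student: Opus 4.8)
The plan is to present $K(W)$ as the compositum over $F=K(E_\lambda)$ of all the fiber fields and then to show that one nonzero fiber already generates them all. Every point of $W$ lies over some $P_{a'}$ in $E_\lambda$: the points over $P_0=0$ generate exactly $F$, while for $a'\ne 0$ the fiber over $P_{a'}$ generates $F_{a'}$ over $F$. Thus $K(W)$ is the compositum of $F$ with all the $F_{a'}$, and since $F\subseteq F_a$ by construction, it suffices to prove $F_{a'}=F_a$ for all nonzero $a,a'$ in $\gR_\lambda(F)$. By Lemma \ref{zz'} the fiber over a fixed $P_a$ is $\{Q_{z+\omega}\}$ with $\omega$ in $\Mu_{p^4-1}\cup\{0\}$, and $\Mu_{p^4-1}$ lies in the unramified subfield $F_0\subseteq F$; hence $F_a$ is already generated over $F$ by the single point $Q_z$, as built into Notation \ref{zNot}.

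To compare fibers over genuinely different base points I would use that $\gR_\lambda\simeq E_\lambda$ is one--dimensional over $\F_{p^4}$, so any nonzero $a'$ equals $\alpha a$ for a unique $\alpha$ in $\F_{p^4}^\times$, and I may choose lifts with $\tilde a'=\dot{\alpha}\,\tilde a$ for the Teichm\"uller representative $\dot{\alpha}\in\Mu_{p^4-1}\subseteq F$. With the Artin--Schreier generator of Corollary \ref{vartheta} matching the parameters of \eqref{roote}, we have $F_a=F(\vartheta_a)$ with $\vartheta_a^{p^4}-\vartheta_a=p\,\tilde\lambda^{-p}\tilde s_1\,\tilde a^{\,p-p^3}$, and similarly for $a'$. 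Put $\mu=\dot{\alpha}^{\,p-p^3}\in\Mu_{p^4-1}\subseteq F$; since $\mu^{p^4}=\mu$, the element $\mu\vartheta_a$ satisfies $(\mu\vartheta_a)^{p^4}-\mu\vartheta_a=\mu(\vartheta_a^{p^4}-\vartheta_a)=p\,\tilde\lambda^{-p}\tilde s_1\,(\tilde a')^{p-p^3}$, so it is a root of the polynomial defining $F_{a'}$. As $\mu\vartheta_a$ lies in $F_a$ and any such root generates $F_{a'}$ over $F$ by Corollary \ref{vartheta}, we get $F_{a'}\subseteq F_a$; symmetry in $a\leftrightarrow a'$ yields equality. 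Therefore all nonzero fiber fields coincide and $K(W)=F_a$.

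The only substantive step is the scaling identity above, and the delicate point is that passing from $a$ to $\alpha a$ multiplies the constant term of the generating polynomial by a genuine $(p^4-1)$st root of unity that already lies in $F$, rather than merely in $\ov K$. This is exactly where the $\F_{p^4}$--module structure of $E_\lambda$ from Proposition \ref{FieldForE}(iii) and the inclusion $\Mu_{p^4-1}\subseteq F_0\subseteq F$ are indispensable, for they are what make $\mu\vartheta_a$ visibly an element of $F_a$. Should \eqref{roote} be expressed through the $z$--generator $\theta$ of Corollary \ref{theta} instead, the same computation goes through after tracking the powers $\omega,\omega^p,\omega^{p^2}$ of Lemma \ref{zz'} attached to the coordinates $z,y,x$, so the argument is insensitive to which coordinate one uses to cut out the fiber field.
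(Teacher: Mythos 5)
Your overall strategy coincides with the paper's: reduce to showing that all nonzero fiber fields agree, then prove this by scaling roots by elements of $\Mu_{p^4-1}\subseteq F$, which is legitimate precisely because these roots of unity lie in the unramified part of $F$. The compositum reduction and the use of the $\F_{p^4}$-structure of $\gR_\lambda$ are fine. The gap is in the key computation: it is carried out only for $\bfs=[s_1 0 0 0 0]$, the one situation in which Corollary \ref{vartheta} supplies the Artin--Schreier generator $\vartheta_a$ with $\vartheta_a^{p^4}-\vartheta_a=p\,\tilde\lambda^{-p}\tilde s_1\,\tilde a^{\,p-p^3}$. Your claim that this generator ``match[es] the parameters of \eqref{roote}'' is not correct: Corollary \ref{vartheta} applies only when $s_2=s_3=s_4=s_5=0$, whereas \eqref{roote} concerns $[s_1s_2000]$, $[00s_300]$, $[0000s_5]$ and $[000s_40]$. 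So your detailed argument covers only the $s_2=0$ sub-case of the first column, and every remaining case is deferred to the unproved assertion that ``the same computation goes through.''

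That assertion is where the actual content of the lemma lives, and it is not uniform across the cases. For $[00s_300]$, $[0000s_5]$ and $[000s_40]$ one has $\epsilon_p=0$ and $f_a(Z)=Z^{p^4}-Z+wa^{-p^2}$, so your scaling pattern does apply; but the substance of \eqref{roote} is the exponent $e$, and to get it one must use $b\sim a^{p^3}$ and $c\sim\lambda a^{p^2}$ from Remark \ref{Pa}(iii) to see that $wa^{-p^2}$ scales by $\eta^{p^3-p^2}$, $\eta^{p-p^2}$ and $\eta^{0}$ respectively under $a\mapsto\eta a$ --- none of which appears in your writeup, and none of which equals the exponent $p-p^3$ you computed. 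More seriously, for $[s_1s_2000]$ with $s_2\neq 0$ there is no Artin--Schreier generator available (and $\epsilon_p$ need not vanish), so one must work with the nested polynomial \eqref{firstfa} with $w=s_2a$ and verify $f_{\eta a}(\eta^{1-p^2}Z)=\eta^{1-p^2}f_a(Z)$ layer by layer: the inner bracket scales by $\eta^{p-p^3}$, its $p$-th power by $\eta^{p^2-p^4}$, and the outer $p^2$-th power by $\eta^{p^4-p^6}=\eta^{1-p^2}$, using $\eta^{p^4}=\eta$. This is exactly what the paper's proof does, compressed into the identity $f_{\eta a}(\eta^{e}Z)=\eta^{e}f_a(Z)$ with $e$ read off from \eqref{roote}; it is the step your proposal never establishes for the parameters the lemma actually asserts, and tracking ``the powers $\omega,\omega^p,\omega^{p^2}$'' of Lemma \ref{zz'} does not substitute for it, since those powers describe motion within a single fiber, not the comparison between fibers over different base points.
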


\begin{proof}
If $\bfs = [s_1  s_2  0  0  0]$, use the first form of $f_a(Z)$ in Proposition \ref{EqsExpP} with $w = s_2 a$.  In the remaining cases below, $\epsilon_p = 0$ and the simpler equation for $f_a(Z)$ holds.  Note that   $f_{\eta a}(\eta^e Z)  = \eta^e f_a(Z)$ for all $\eta$ in $\Mu_{p^4-1}$, with $e$ given by:  \begin{equation} \label{roote}
\begin{array}{ c || c | c | c | c }
\bfs & [s_1s_2000] & [0  0  s_3  0  0] & [0  0  0  0  s_5]
 & [0  0  0  s_4  0]  \\
\hline
e & 1-p^2 & p^3-p^2 &  p-p^2 & 0
\end{array}
\hspace{3 pt} .
\end{equation}
The correspondence between the roots of $f_a(Z)$ and those of $f_{\eta a}(Z)$ induced by $z \leftrightarrow \eta^e z$ shows that $F_{\eta a}= F_a$ and so each of these fields equals $K(W)$. 
\end{proof}

\begin{prop}  \label{CondExpP}
If $\cW$ is an extension of $\cE_\lambda$ by $\cE_\lambda$ killed by $p$ and  $L = K(W)$, then its abelian conductor exponent satisfies $\gf(L/F) \le p^2$.  Moreover, $\gf(F'/F) \le p^2$ for every intermediate field $F'$ of $L/F$.
\end{prop}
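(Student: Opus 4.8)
The plan is to reduce the statement to an elementary–abelian ramification computation carried out on the explicit Artin--Schreier generators produced in Corollaries \ref{theta} and \ref{vartheta}.

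First I would record that $L/F$ is elementary abelian of exponent $p$. For $\gamma$ in $G_F=\Gal(\ov{K}/F)$ and any point $Q$ of $W$ with $\pi(Q)=P$, the group $G_F$ fixes $\iota(E_\lambda)$ pointwise, so $\pi(\gamma Q - Q)=0$ and $\gamma\mapsto \gamma Q-Q$ is a homomorphism $c\colon G_F\to E_\lambda$ (independent of the chosen $Q$) with kernel $G_L$. Hence $\Gal(L/F)\cong \operatorname{im} c\subseteq E_\lambda\cong(\Z/p)^4$ is killed by $p$. In particular $L/F$ is abelian, so $\gf(L/F)$ is defined; since the conductor is monotone under subextensions of an abelian extension, $\gf(F'/F)\le \gf(L/F)$ for every intermediate $F'$, and the \emph{Moreover} clause follows once the main bound is proved. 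By local class field theory $\gf(L/F)=\max_{F'}\gf(F'/F)$ as $F'$ runs over the degree-$p$ subextensions, so it suffices to bound each of these by $p^2$.

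Next I would cut the problem down to the five coordinate directions. By Proposition \ref{Hexpp} the parameter map $\bfs$ is additive, so I may write $[\cW]=\sum_i[\cW_i]$ as a Baer sum of the axis classes with parameter $[0\cdots s_i\cdots 0]$. Then $W$ is a subquotient of $\bigoplus_i W_i$, whence $L=K(W)\subseteq \prod_i K(W_i)$, and because the conductor of a compositum of abelian extensions is the maximum of the conductors, $\gf(L/F)\le \max_i \gf(K(W_i)/F)$. By Lemma \ref{fiberchange} each $K(W_i)$ is a fiber field $F_a$, so only the axis cases remain.

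In each axis case the relevant degree-$p$ subextension is generated by a root $\alpha$ of $h(X)=X^p-X-\beta$, with $\beta=-\tilde w\tilde a^{-p^2}$ from Corollary \ref{theta} (when $\epsilon_p=0$) or $\beta=p\tilde\lambda^{-p}\tilde s_1\tilde a^{p-p^3}$ from Corollary \ref{vartheta} (for the $s_1$-axis). Using $t=(p^2+1)(p-1)$ and the ordinals $\ord_F(a)=1$, $\ord_F(b)=p^2-p+1$, $\ord_F(c)=p^2$ read off from \eqref{ordabc}, the value $v:=\ord_F(\beta)$ equals $-(p-1)^2,\,-(p^2-1),\,-(p-1),\,0,\,-p(p-1)$ on the $s_1,s_2,s_3,s_4,s_5$ axes respectively. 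Since the conjugate roots of $h$ differ by the integers $1\le j\le p-1$ (Corollaries \ref{theta} and \ref{vartheta}), writing $\alpha=u\,\pi'^{\,v}$ for a uniformizer $\pi'$ of $F'$ shows that $\sigma(\alpha)-\alpha$ is a unit, forcing the unique ramification break of $F'/F$ to be $|v|$ and hence $\gf(F'/F)=1+|v|$ whenever $p\nmid v$; when $p\mid v$ (the $s_4$ and $s_5$ axes) one first reduces $\beta$ modulo $\wp(\cO_F)$ and $p$-th powers, which only lowers $|v|$. The maximum of $1+|v|$ is $1+(p^2-1)=p^2$, attained on the $s_2$-axis, giving $\gf(L/F)\le p^2$.

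The main obstacle is the ramification computation in the last paragraph: justifying rigorously that the mixed-characteristic relation $\alpha^p-\alpha=\beta$ defines a genuine degree-$p$ extension whose single break is exactly $|\ord_F(\beta)|$, controlling the higher-order error terms in the congruences \eqref{Newxs} and in the splitting of $h$. In particular one must handle the $p=2$ coupling $\epsilon_2=s_1-(\lambda s_2)^2$ occurring in the $[s_1s_2000]$ case of Lemma \ref{fiberchange}: there the Artin--Schreier datum is $wa^{-p^2}=s_2a^{1-p^2}$, and the $\epsilon_2$-term enters only at higher order, so the break is still $p^2-1$. Finally I would confirm that no degree-$p$ subextension buried in the degree-$p^4$ fiber field $F_a$ has a larger break than the exhibited generator; the $\F_{p^4}$-homogeneity of $\gR_\lambda$ (Remark \ref{Pa}) together with Lemma \ref{zz'} shows the breaks are constant across the module, so the generator above is extremal and the bound $p^2$ is sharp.
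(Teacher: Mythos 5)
Your overall architecture is the paper's: decompose $[\cW]$ by additivity of the Honda parameters, bound the conductor of each summand's fiber field using the explicit generators of Corollaries \ref{theta} and \ref{vartheta} together with the fact that conjugate roots differ by units (this is exactly the mechanism of Proposition \ref{cond1}), and finish with the compositum bound (Lemma \ref{cond3}) and good behavior of upper numbering in quotients. Your valuations $-(p-1)^2$, $-(p^2-1)$, $-(p-1)$, $0$, $-p(p-1)$ reproduce the paper's Lemmas \ref{s4cond} and \ref{BasicConds}. However, there is a genuine gap, and it occurs precisely in the case the paper needs for its global applications, namely $p=2$. You decompose into the \emph{pure} axes $[0\cdots s_i\cdots 0]$, but the paper's decomposition \eqref{special} is $[\epsilon_p0000]+[\delta_ps_2000]+[00s_300]+[000s_40]+[0000s_5]$ with $\delta_2=(\lambda s_2)^2$, and this regrouping is not cosmetic: for $p=2$ the pure $s_2$-axis $[0\,s_2\,000]$ has $\epsilon_2=-(\lambda s_2)^2\ne 0$, so it falls under neither Corollary \ref{theta}'s Artin--Schreier statement (which requires $\epsilon_p=0$) nor Corollary \ref{vartheta} (which requires $\bfs=[s_10000]$).

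Your proposed fix for this coupled case --- that ``the $\epsilon_2$-term enters only at higher order, so the break is still $p^2-1$'' --- is false. With $p=2$, $\ord_p(a)=1/5$, the term $p\lambda^{-p}\epsilon_2a^{p-p^3}=2\lambda^{-2}\epsilon_2a^{-6}$ in \eqref{Newxs}(ii) has $\ord_p=1-\tfrac{6}{5}=-\tfrac15$, whereas under your claimed balance $z^{16}\sim wa^{-4}=s_2a^{-3}$ one would have $\ord_p(z)=-\tfrac{3}{80}$ and $\ord_p(z^2)=-\tfrac{3}{40}$; since $-\tfrac15<-\tfrac{3}{40}$, the $\epsilon_2$-term \emph{dominates} $z^2$, forcing $\ord_p(y)=-\tfrac15$, then $\ord_p(x^4)=-\tfrac85$ in \eqref{Newxs}(iii), which cannot be cancelled against $z$ or $wa^{-4}$ modulo $\tfrac{p}{c}\cO_{\ov{K}}$. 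So no root with $\ord_p(z)=-\tfrac{3}{80}$ exists; the consistent balance instead forces $z^2\approx 2\lambda^{-2}\epsilon_2a^{-6}$, i.e.\ $\ord_p(z)=-\tfrac1{10}$, and the simple Artin--Schreier picture with break $p^2-1$ is destroyed. (The conclusion $\gf\le 4$ for this summand is still true, but only because $[0\,s_2\,000]=[\delta_2 s_2000]+[\delta_20000]$, both of which the paper analyzes: the first has $\epsilon_2=0$ and conductor $p^2$ by Lemma \ref{BasicConds}(ii), the second conductor $p^2-2p+2$ by Lemma \ref{BasicConds}(i). Adopting that regrouping is the repair.) A secondary, smaller soft spot: for the $s_5$-axis, where $p\mid\ord_F(\beta)$, your appeal to reduction of $\beta$ modulo $\wp(\cO_F)$ is a characteristic-$p$ manipulation that does not transfer verbatim to mixed characteristic; the paper instead exhibits the explicit element $t=\theta^{p^3}+\beta a^{1-p}$ in Lemma \ref{BasicConds}(iv) and verifies the unit-difference hypothesis for it directly.
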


\begin{proof}
Let $\bfs(\cW) = [s_1s_2s_3s_4s_5]$ and write $s_1 = \epsilon_p+\delta_p$, with $\delta_p = 0$ for odd primes $p$ and $\delta_2 = (\lambda s_2)^2$.  Then $\cW = \cW_1 + \dotsc + \cW_5$ is a Baer sum of group schemes corresponding to the sum of Honda parameters:
\begin{equation}\label{special}  
[\epsilon_p0000]+[\delta_ps_2000]  + [00 s_3 0 0 ] + [000s_40] + [0000s_5],
\end{equation}
some of which may be trivial.   For the fiber fields  $F_a^{(j)}$ of each of these $W_j$, we show that $\gf(F_a^{(j)}/F) \le p^2$ in the next lemmas.  Since $F_a$ is contained in the compositum of all $F_a^{(j)}$, we then have $\gf(F_a/F) \le p^2$ by Lemma \ref{cond3}.  Furthermore, $L$ is the compositum of all $F_a$ as $P_a$ varies over $E_\lambda$, so $\gf(L/F) \le p^2$.  Finally $\gf(F'/F) \le p^2$ because the upper ramification numbering behaves well for quotients.
\end{proof}

\begin{Rem} \label{FB}
In contrast to the Proposition, Fontaine's higher ramification bound leads to $\gf(L/F) \le p^2+2$ by Proposition \ref{FontCond}, since Proposition \ref{FieldForE}(ii) gives $e_{F/K} = e_F = (p^2+1)(p-1)$.  In particular, when $p=2$, the sharper bound is essential for our applications.  
\end{Rem}

We next verify the lemmas needed for the proof of the Proposition.  For $P_a \ne 0$ and $f_a$ as in Proposition \ref{EqsExpP}, recall that $F_a = F(Q_z)$, where $f_a(z) = 0$ in $R_c$.  Let $\pi_a$ be a uniformizer of $F_a$.

\begin{lem} \label{s4cond}
If $\bfs = [0 0 0 \hspace{.4 pt} s_4 \hspace{.4 pt}  0]$, then $F_a/F$ is unramified of degree $1$ or $p$.
\end{lem}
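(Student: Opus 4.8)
My plan is to pin down $F_a$ as the field generated by a root of one explicit polynomial, show that this extension is unramified because that polynomial reduces to a separable polynomial with unit constant term, and then bound the residue degree by an additive‑polynomial argument carried out \emph{in the residue field}. First I would record that the hypothesis $\bfs = [0\,0\,0\,s_4\,0]$ forces $s_1 = s_2 = 0$, so the error term $\epsilon_p$ of Proposition \ref{EqsExpP} vanishes for every $p$. Hence I may use the simpler form $f_a(Z) = Z^{p^4} - Z + w a^{-p^2}$ with $w = s_4 \lambda^{-1} c$. Combining $c \equiv \lambda a^{p^2}$ from Proposition \ref{FieldForE}(i) with the ordinals in \eqref{ordabc}, I would check that $w a^{-p^2}$ is a \emph{unit} reducing to $s_4$ in the residue field $k_F$ of $F$: indeed $\ord_p(c\,a^{-p^2}) = \ord_p(c) - p^2\,\ord_p(a) = 0$ and $c\,a^{-p^2} \equiv \lambda$, whence $w a^{-p^2} \equiv \lambda^{-1}\lambda\, s_4 = s_4$. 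By Corollary \ref{theta}, $F_a = F(\theta)$ for any root $\theta$ of the corresponding lift $\tilde f(Z) = Z^{p^4} - Z + \tilde w\,\tilde a^{-p^2}$.

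Next I would establish unramifiedness. Since the constant term is a unit, the Newton polygon of $\tilde f$ has a single slope‑$0$ segment, so $\theta$ is a unit and $\tilde f \in \cO_F[Z]$ is monic with reduction $Z^{p^4} - Z + s_4$; this reduction has derivative $-1$ and is therefore separable. Consequently $F_a/F$ is unramified and $k_{F_a} = k_F(\bar\theta)$ with $\bar\theta^{p^4} - \bar\theta + s_4 = 0$, so that $[F_a:F] = [k_F(\bar\theta):k_F]$. This reduces the whole lemma to computing a residue degree.

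To bound that degree by $1$ or $p$, I would now argue in characteristic $p$, where $Z \mapsto Z^{p^4} - Z$ is additive with kernel $\F_{p^4} \subseteq k_F$ (recall $F \supseteq \Mu_{p^4-1}$, cf. Proposition \ref{FieldForE}(ii)). Any two roots of $Z^{p^4} - Z + s_4$ then differ by an element of $\F_{p^4}$, so $k_F(\bar\theta)/k_F$ is Galois and $g \mapsto g(\bar\theta) - \bar\theta$ embeds $\Gal(k_F(\bar\theta)/k_F)$ into $(\F_{p^4},+) \simeq (\Z/p)^4$. Being an extension of finite fields, this Galois group is cyclic, and a cyclic subgroup of $(\Z/p)^4$ has order $1$ or $p$; hence $[F_a:F] \in \{1,p\}$ and $F_a/F$ is unramified, as claimed. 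The step requiring the most care — and the one I expect to be the genuine obstacle — is the transition from the characteristic‑zero congruence defining $f_a$ to this additive‑polynomial argument: the "differences of roots lie in $\F_{p^4}$'' reasoning is false in $\ov{K}$ itself and becomes valid only after reducing modulo the maximal ideal, which is precisely why establishing the unramifiedness (so that the residue field controls the degree) must come first.
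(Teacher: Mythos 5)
Your proof is correct and follows essentially the same route as the paper, whose entire argument is the one-line observation that $f_a(Z) = Z^{p^4}-Z+s_4$ is separable over the residue field $k$; you have simply made explicit the steps the paper leaves implicit (that $wa^{-p^2}$ is a unit reducing to $s_4$, that separability of the reduction gives unramifiedness, and that the degree bound follows from the roots differing by elements of $\F_{p^4}$ together with cyclicity of Galois groups of finite fields). Nothing in your write-up deviates from or adds to the paper's intended argument in substance.
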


\begin{proof}
The claim follows from separability of $f_a(Z) = Z^{p^4}-Z+s_4$ over $k$.  
\end{proof}

\begin{lem} \label{BasicConds}
For the parameters $\bfs$ below, $F_a/F$ is totally ramified of degree $p^4$.
\begin{enumerate}[{\rm i)}]        
\item If $\bfs = [s_10000]$ with $s_1\ne 0$, then $\gf(F_a/F) = p^2-2p+2$.  \vspace{2 pt}
\item Let $\bfs = [s_1 \hspace{.4 pt} s_2 \hspace{.4 pt} s_3 \hspace{.4 pt}  s_4 \hspace{.4 pt} s_5]$, with $s_2 \ne 0$.  Set $s_1 = 0$ for odd $p$ and $s_1 = (\lambda s_2)^2$ for $p = 2$.  Then $\epsilon_p = 0$ for all $p$ and $\gf(F_a/F)= p^2$.   \vspace{2 pt}
     \item If $\bfs = [0 0  s_3  s_4 0]$ and $s_3 \ne 0$, then $\gf(F_a/F) = p$.   \vspace{2 pt}
     \item If $\bfs = [0 0 0  s_4  s_5]$ and $s_5 \ne 0$, then $\gf(F_a/F) = p$.
\end{enumerate}
\end{lem}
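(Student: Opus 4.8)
The plan is to realize each extension $F_a/F$ explicitly as a field generated by an Artin--Schreier type element, read off the controlling pole order from the valuations of Proposition \ref{FieldForE}(ii), and then apply the standard conductor formula for wildly ramified $\Z/p$-extensions of $p$-adic fields. Throughout I use the normalized valuation $v_F = e_F\cdot\ord_p$ with $e_F = t = (p^2+1)(p-1)$, so that Proposition \ref{FieldForE}(ii) gives $v_F(a)=1$, $v_F(b)=p^2-p+1$, $v_F(c)=p^2$ and $v_F(a^p)=p$; in particular $a$ is a uniformizer of $F$. Since $W$ is killed by $p$ and $F=K(E_\lambda)$ fixes both copies of $E_\lambda$, the group $\Gal(F_a/F)$ embeds into the cocycle group $\Hom(E_\lambda,E_\lambda)$, so $F_a/F$ is elementary abelian of exponent $p$; and by Lemma \ref{fiberchange} each $F_a$ is already the full field of points $K(W)$ for the relevant parameters.

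First I would reduce to a single generator. For case (i), Corollary \ref{vartheta} gives $F_a=F(\vartheta)$ with $\vartheta^{p^4}-\vartheta = p\lambda^{-p}s_1 a^{p-p^3}$, while in the cases with $\epsilon_p=0$ (namely (ii), (iii), (iv)) Corollary \ref{theta} gives $F_a=F(\theta)$ with $\theta^{p^4}-\theta = -w a^{-p^2}$ and $w = s_2a+s_3b+s_4\lambda^{-1}c+s_5a^p$. In every case the controlling quantity is the constant term $\beta$, and I would compute $m:=-v_F(\beta)$: in (i), $v_F(\beta)=e_F+(p-p^3)=-(p-1)^2$, so $m=(p-1)^2$; in (ii) the dominant term of $w$ is $s_2a$, whence $v_F(w)=1$ and $m=p^2-1$; in (iii) it is $s_3b$, whence $v_F(w)=p^2-p+1$ and $m=p-1$; in (iv) it is $s_5a^p$, whence $v_F(w)=p$ and $m=p^2-p$. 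Total ramification of degree $p^4$ follows because the Newton polygon of $Z^{p^4}-Z-\beta$ has the single slope $-m/p^4$, forcing $p^4\mid e_{F_a/F}$; combined with $[F_a:F]\le p^4$ and Lemma \ref{fiberchange} this gives $e_{F_a/F}=p^4$.

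Next I would compute the conductor from $m$. Writing $Z^{p^4}-Z = \wp\circ T$ with $T(Z)=Z+Z^p+Z^{p^2}+Z^{p^3}$ and $\wp(Z)=Z^p-Z$ as in the proof of Corollary \ref{theta}, the element $\alpha=T(\theta)$ satisfies $\wp(\alpha)=\beta$, so $F(\alpha)/F$ is the degree-$p$ Artin--Schreier quotient inside $F_a$. The pole orders $m$ in (i), (ii), (iii) are prime to $p$ and all lie below the boundary $pe_F/(p-1)=p(p^2+1)$, so $F(\alpha)/F$ is totally ramified of degree $p$ with a single ramification break at $m$, hence conductor $m+1$; this yields $(p-1)^2+1=p^2-2p+2$ in (i), $p^2$ in (ii), and $p$ in (iii). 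In case (iv) the pole order $m=p(p-1)$ is divisible by $p$, so I would first reduce $\beta$ modulo $\wp(F)$ by subtracting $\wp(\gamma)$ for a suitable $\gamma$ with $v_F(\gamma)=-(p-1)$: the term $\gamma^p$ cancels the leading term of $\beta$, leaving the residual $-\gamma$ at the prime-to-$p$ value $v_F=-(p-1)$, and so the conductor is $(p-1)+1=p$.

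The main obstacle is passing from the conductor of the degree-$p$ quotient $F(\alpha)/F$ to that of the full degree-$p^4$ extension $F_a/F$. For this I would exploit the Raynaud $\F_{p^4}$-module structure: by Remark \ref{Pa}(ii) and Lemma \ref{zz'} the torsor of points over $P_a$ is an $\F_{p^4}$-space on which inertia and the scalar action are intertwined, so the lower-numbering ramification filtration of $\Gal(F_a/F)\cong\F_{p^4}$ is $\F_{p^4}$-stable. As $\F_{p^4}$ has no proper nonzero $\F_{p^4}$-subspaces, the filtration has a single jump; hence the unique upper break of $F_a/F$ coincides with the break of its quotient $F(\alpha)/F$, and $\gf(F_a/F)=m_{\mathrm{red}}+1$. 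These conductor facts, together with the verification that every $m$ lies below the boundary $p(p^2+1)$ so that the mixed-characteristic Artin--Schreier extension behaves as in the equal-characteristic case, are exactly the kind of statements collected in Appendix \ref{condapp}, which I would invoke to keep the ramification bookkeeping uniform across the four cases.
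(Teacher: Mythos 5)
Your valuation bookkeeping is correct --- in every case your pole order $m$ coincides with the quantity $n=-\ord_{\pi_a}(t)$ that the paper feeds into Proposition \ref{cond1} --- but the load-bearing step of your architecture, the passage from the conductor of the degree-$p$ quotient $F(\alpha)/F$ to that of $F_a/F$, rests on a claim that is unjustified and, as stated, not available. The fiber over $P_a$ is only a torsor under $E_\lambda$: the extension $\cW$ is \emph{not} a Raynaud $\F_{p^4}$-module scheme (the Honda parameters destroy that structure; Remark \ref{Pa}(ii) concerns $\cE_\lambda$ alone), and a torsor carries no canonical scalar multiplication, so there is no ``scalar action intertwined with inertia'' making the ramification filtration of $\Gal(F_a/F)$ a filtration by $\F_{p^4}$-subspaces. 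The natural substitute --- conjugation inside a Galois closure over $K$ --- is also problematic: it requires knowing $F_a$ is Galois over $K$, which Lemma \ref{fiberchange} supplies only for the special parameter shapes in \eqref{roote} and not for the general $\bfs$ of your case (ii) (for mixed parameters the homogeneity $f_{\eta a}(\eta^eZ)=\eta^e f_a(Z)$ fails, so $F_a$ may depend on $a$); and even then conjugation acts through $\psi\mapsto\rho_E(h)\psi\rho_E(h)^{-1}$ evaluated along \emph{moving} fibers, which is not multiplication by a scalar. The single-jump property you want is in fact true, but its efficient proof is Lemma \ref{zz'} itself: every nontrivial $g\in\Gal(F_a/F)$ satisfies $g(t)-t\equiv\omega$ with $\omega$ a Teichm\"uller unit for a suitable generator $t$, so all indices $i_{F_a/F}(g)$ are equal --- and once you know that, Proposition \ref{cond1} computes $\gf(F_a/F)=n+1$ for the \emph{full} degree-$p^4$ extension directly. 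That is exactly the paper's proof, and it makes your detour through the Artin--Schreier quotient unnecessary (a quotient whose mixed-characteristic conductor formula is, moreover, not among the statements of Appendix \ref{condapp}: Lemma \ref{cond2} is Kummer-theoretic, and applying Proposition \ref{cond1} to $F(\alpha)/F$ requires verifying that $g(\alpha)-\alpha$ is a unit when $m<pe_F/(p-1)$, a check your proposal omits).

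There is a second, independent failure in case (iv). There $m=p^2-p$, so the Newton polygon of $Z^{p^4}-Z-\beta$ has slope $-(p-1)/p^3$, which forces only $p^3\mid e_{F_a/F}$; your blanket assertion that the single slope forces $p^4\mid e_{F_a/F}$ is false in this case, and total ramification of degree $p^4$ --- which is part of the statement being proved --- is left unestablished. The paper gets around this by introducing the auxiliary element $t=\theta^{p^3}+\beta a^{1-p}$ with $\beta^p\equiv s_5 \pmod{p\W}$, computing $t^p=\theta-s_4+O(\pi_a)$ and hence $\ord_p(t)=-\tfrac{1}{p^4(p^2+1)}$, whose denominator forces $e_{F_a/F}=p^4$ and simultaneously produces a generator with $\ord_{\pi_a}(t)=1-p$ prime to $p$, to which Proposition \ref{cond1} applies; your reduction of $\beta$ modulo $\wp(F)$ plays an analogous role for the quotient but does nothing for the degree and ramification of $F_a$ itself. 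Some such auxiliary construction is unavoidable in case (iv), and your proposal has none.
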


\begin{proof}
To find the conductors, we determine $t$ in $F_a$ to which Proposition \ref{cond1} applies.  In all cases below, $g(t) - t$ is in $\Mu_{p^4-1}$ for all $g \ne 1$ in $\Gal(F_a/F)$ by Lemma \ref{zz'} and $F_a = F(t)$.  

In case (i),  let $F_a = F(\vartheta)$ as in Corollary \ref{vartheta} and let $y$ be the image of $\vartheta$ in $R_b$.  Observe that by \eqref{yval}, $F_a/F$ is totally ramified of degree $p^4$ and we have
$
\ord_{\pi_a}(y) = \ord_p(y) \, \ord_{\pi_a}(p) = -(p-1)^2.
$ 
Using $t = y$ gives $\gf(F_a/F) = p^2-2p+2$.

In the remaining cases, $\epsilon_p = 0$ and $F_a = F(\theta)$ as in Corollary \ref{theta}, with $\theta$ a root of $\tilde{f}(Z) = 0$ in $\cO_{\ov{K}}$ and $\tilde{f}$ a lift of the simpler version of $f_a$ in Proposition \ref{EqsExpP}.  If $z$ is the image of $\theta$ in $R_c$, then $p^4 \ord_p(z) = \ord_p(w) - p^2 \ord_p(a)$ and so we have:
$$
\begin{array}{ c || c | c | c }
\text{case}  &     {\rm (ii)} & {\rm (iii)}  & {\rm (iv)}      \\
\hline  
\ord_p(z) & -\frac{p+1}{p^4(p^2+1)} & -\frac{1}{p^4(p^2+1)} &- \frac{1}{p^3(p^2+1)}
\end{array} \hspace{4 pt}.
$$
In cases (ii) and (iii), observe that $F_a$ is totally ramified of degree $p^4$ over $F$, with $\ord_{\pi_a}(z) = 1-p^2$ and $1-p$ respectively.  We use $t = z$ to determine $\gf(F_a/F)$.

In case (iv), $w = s_5 a^p + s_4 a^{p^2}$.  Choose $\beta \in \W^\times$ such that $\beta^p \equiv s_5  \! \pmod{p \W}$ and let $t = \theta^{p^3} +  \beta a^{1-p}$.  By Lemma \ref{ppower}, with $O$-notation from the start of \S\ref{FP}, 
$$
t^p = \theta^{p^4} + s_5 a^{p-p^2} + O(\pi_a) = \theta - s_4 + O(\pi_a), 
$$
so $\ord_p(t) = \frac{1}{p} \ord_p(\theta) = \frac{1}{p^4(p^2+1)}$.  Hence the ramification index of $F(t)/F$ is at least $p^4$.  Since $F(t) \subseteq F_a$ and $\fdeg{F_a}{F} \le p^4$, we have $F_a = F(t)$, totally ramified over $F$.  If $g(z) = z + \omega$ as in Lemma \ref{zz'}, then 
$$
g(t) - t = g(\theta)^{p^3} - \theta^{p^3} \in (\theta+\omega + \pi_a\cO_{\ov{K}})^{p^3} - \theta^{p^3} \subseteq \omega^{p^3} + \pi_a\cO_{\ov{K}}.
$$
Proposition \ref{cond1} therefore applies with $\ord_{\pi_a}(t) = 1-p$ to give $\gf(F_a/F) = p$.  
\end{proof}

\subsection{Local corners}  \label{CornerSection}
For this subsection, $p = 2$ and $K = \Q_2$.  Let $\cE$ be the simple group scheme $\cE_\lambda$ of Notation \ref{Ebasis}, with $\lambda = 1$ necessarily.  Let $E$ be the Galois module of $\cE$, $F = \Q_2(E)$ and $\Delta = \Gal(F/\Q_2)$.   By Proposition \ref{FieldForE}, $F = \Q_2(\Mu_{15}, \varpi)$, with uniformizer $\varpi$ satisfying $\varpi^5 = 2$.  Fix a generator $\sigma$ of the inertia subgroup of $\Delta$ and a Frobenius $\tau$ generating $\Gal(F/\Q(\varpi))$ with $\tau \sigma \tau^{-1} = \sigma^2.$ Then $\Delta = \langle \sigma, \tau \rangle$ is isomorphic to the Frobenius group of order 20 and  $E$ is the unique non-trivial irreducible  module over $\rR = \F_2[\Delta]$.

Let $W$ represent a class in $\Ext^1_{[2],\Q_2}(E,E)$, $L = \Q_2(W)$ and $\gh = \Hom_{\F_2}(E,E)$. Then $[W]$ corresponds to a cohomology class $[\psi]$ in $H^1(\Gal(L/\Q_2),\gh)$ such that 
\begin{equation}  \label{RepViaCocycle}
\rho_W(g) = \left[\begin{smallmatrix} \rho_E(g) & \psi(g) \, \rho_E(g)  \\ 0 & \rho_E(g) \end{smallmatrix} \right]  \quad \text{for all } g \in \Gal(L/\Q_2), 
\end{equation}
as in \eqref{RepFromPsi}.  We introduce {\em corners} to rigidify $\psi$ and facilitate comparison with the cocycles arising from global extensions.

Suppose that $V$ is any finitely generated $\rR$-module and let $T_\sigma = \sigma^4 + \sigma^3 + \sigma^2 + \sigma + 1$ in $\rR$ be the trace with respect to $\sigma$.   Since $\sigma$ has odd order, $V = V_0 \oplus V'$, where $V_0$ is the submodule on which $\sigma$ acts trivially and $V' = \ker T_\sigma = (\sigma-1)(V)$.   The {\em corner subgroup} of $V,$  which depends on the choice of $\tau$, is defined as
$$
\Cor(V) = \{ v \in V \, \vert \, \tau(v) = v \text{ and } T_\sigma(v) = 0 \}.
$$
If $v_1, \dots, v_n$ is an $\F_2$-basis for $\Cor(V)$, then $\rR v_i \simeq E$ and $V' = \bigoplus_{i=1}^n \rR v_i$.

We consistently write $P$ for the unique non-zero element of $\Cor(E)$, so $P = P_\varpi$ as in Proposition \ref{FieldForE}(iii) and $P$, $\sigma(P)$, $\sigma^2(P)$, $\sigma^3(P)$ is an $\F_2$-basis for $E$ affording the matrix representations 
\begin{equation} \label{stEnd}
s = \rho_E(\sigma) = \left[\begin{smallmatrix}0&0&0&1\\ 1&0&0&1\\0&1&0&1\\0&0&1&1\end{smallmatrix}\right] \quad \text{and} \quad t = \rho_E(\tau) = \left[\begin{smallmatrix} 1&0&1&0\\ 0 &0&1&1\\0&1&1&0\\0&0&1&0\end{smallmatrix}\right].
\end{equation}
We will also use the twisted action of $\F_{16}$ on $E$ described in Remark \ref{Pa}.  If  a primitive fifth root of unity $\zeta$ in $\cO_F$ is defined by $\sigma(\varpi) = \zeta \varpi$, then $\sigma(\alpha P) = \alpha \zeta P$ and $\tau(\alpha P) = \tau(\alpha) P$ for all $\alpha$ in $\F_{16}$. 

The endomorphisms $s$ and $t$ belong to $\gh$, with respective minimal polynomials $s^4+s^3 +s^2+s+1 = 0$ and $t^4-1 = 0$.  We next describe $\gh$ as an $\rR$-module.

\begin{lem} \label{gh}
An $\F_2$-basis for $\gh_0 = \ker((\sigma-1) \, \vert \, \gh)$ is $1, s, s^2, s^3,$ with $\tau$ acting on $\gh_0$ as one Jordan block.   An $\F_2$-basis for $\Cor(\gh)$ is  $t, \,t^2, \, t^3.$   We have $\gh \simeq \gh_0 \oplus_{j=1}^3 \rR t^j$, with each $\rR t^j \simeq E$.  The cohomology group $H^1(\Delta,\gh)$ vanishes.  
\end{lem}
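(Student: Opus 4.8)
The plan is to handle the four assertions in turn, extracting the first and the last from the module structure of $\gh_0$ and reducing the two middle statements to one conjugation computation in $\Delta$ followed by the general corner decomposition recorded just before the lemma.

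First I would pin down $\gh_0$. The minimal polynomial $s^4+s^3+s^2+s+1$ of $s=\rho_E(\sigma)$ is irreducible over $\F_2$ (the order of $2$ modulo $5$ is $4$), so $\F_2[s]$ is a copy of $\F_{16}$ and $E$ is one-dimensional over it; hence the centralizer of $s$ in $\gh$, which is exactly $\gh_0=\ker((\sigma-1)\,\vert\,\gh)$, equals $\F_2[s]=\spn\{1,s,s^2,s^3\}$. From $\tau\sigma\tau^{-1}=\sigma^2$, the element $\tau$ acts on $\gh_0$ by $s\mapsto tst^{-1}=s^2$, i.e.\ as the Frobenius of $\F_{16}/\F_2$. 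To see that this is a single Jordan block I exhibit a cyclic vector: the $\tau$-orbit of $s$ is $s,s^2,s^4,s^3$, the four primitive fifth roots of unity, and these form an $\F_2$-basis of $\F_{16}$ since their sum is $1$. Thus $\gh_0$ is a cyclic $\F_2[\langle\tau\rangle]$-module of dimension $4=|\langle\tau\rangle|$, hence a free $\F_2[\langle\tau\rangle]$-module of rank one; in particular $\tau$ acts as one Jordan block.

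Next, since $t=\rho_E(\tau)$ is regular unipotent (minimal polynomial $(X-1)^4$), its centralizer in $\gh$ is $\F_2[t]$, so $\Cor(\gh)=\F_2[t]\cap\ker T_\sigma$. The crux is the conjugation identity $\sigma^i\tau^j\sigma^{-i}=\tau^j\sigma^{c_j i}$ in $\Delta$, with $c_1=2,\ c_2=3,\ c_3=1$ all nonzero modulo $5$, obtained by iterating $\sigma\tau\sigma^{-1}=\tau\sigma^2$. Applying $\rho_E$ gives $T_\sigma(t^j)=\sum_{i=0}^4 s^i t^j s^{-i}=t^j\sum_{i=0}^4 s^{c_j i}=t^j(1+s+s^2+s^3+s^4)=0$ for $j=1,2,3$, while $T_\sigma(1)=5\cdot 1=1$. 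Since $1,t,t^2,t^3$ are $\F_2$-independent, this shows $\F_2[t]\cap\ker T_\sigma=\spn\{t,t^2,t^3\}$, which is the second assertion. The general corner decomposition (stated before the lemma), applied to the basis $t,t^2,t^3$ of $\Cor(\gh)$, then yields $\gh'=\bigoplus_{j=1}^3\rR t^j$ with each $\rR t^j\simeq E$, together with $\gh=\gh_0\oplus\gh'$, which is the third assertion.

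Finally, for $H^1(\Delta,\gh)$ I would use Hochschild--Serre for the normal subgroup $\langle\sigma\rangle$ of order $5$. As $5$ is prime to $p=2$, the cohomology of $\langle\sigma\rangle$ vanishes in positive degrees, so the spectral sequence collapses to $H^1(\Delta,\gh)\simeq H^1(\langle\tau\rangle,\gh^{\langle\sigma\rangle})=H^1(\langle\tau\rangle,\gh_0)$. By the first paragraph $\gh_0$ is free over $\F_2[\langle\tau\rangle]$, so its positive-degree cohomology vanishes by Shapiro's lemma; equivalently, for the cyclic group $\langle\tau\rangle$ one checks directly that the kernel of the norm $1+\tau+\tau^2+\tau^3$ and the image of $\tau-1$ coincide (both of $\F_2$-dimension $3$). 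Hence $H^1(\Delta,\gh)=0$. I expect the only real obstacle to be the middle step, namely extracting the exponents $c_j$ from the group law and observing that the resulting element of $\rR$ is annihilated precisely because $1+s+s^2+s^3+s^4=0$; the outer two steps are a standard centralizer computation and a formal cohomological reduction resting on the freeness of $\gh_0$.
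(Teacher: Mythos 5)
Your proof is correct and follows essentially the same route as the paper's: identify $\gh_0$ with $\F_2[s]\simeq\F_{16}$ and $\Cor(\gh)$ inside $\F_2[t]$ via centralizer arguments, invoke the corner decomposition stated before the lemma, and kill $H^1(\Delta,\gh)$ by inflation-restriction along $1\to\langle\sigma\rangle\to\Delta\to\langle\tau\rangle\to 1$ using that $\sigma$ has odd order. The only difference is one of detail: where the paper cites the additive Hilbert Theorem 90 for $H^1(\langle\tau\rangle,\F_{16})=0$ and simply asserts which elements of $\F_2[t]$ are killed by $T_\sigma$, you verify these explicitly (normal basis $s,s^2,s^4,s^3$ giving freeness over $\F_2[\langle\tau\rangle]$, and the conjugation identity $\sigma^i\tau^j\sigma^{-i}=\tau^j\sigma^{c_ji}$ yielding $T_\sigma(t^j)=0$), which amounts to the same argument.
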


\begin{proof}
The elements of $\gh_0$ are precisely the $\F_2[s]$-endomorphisms of $E$.  Since $E$ is a cyclic $\F_2[s]$-module, $\End_{\F_2[s]}(E) = \F_2[s] \simeq \F_{16}$.  The action of $\tau$ on $\gh_0$ is the action of Frobenius on $\F_{16}$ and thus has one Jordan block.  Similarly, the elements of $\Cor(\gh)$ are $\F_2[t]$-endomorphisms of $E$, so contained in $\F_2[t]$.  But only the linear combinations of $t, \, t^2, \, t^3$ are annihilated by the action of $T_\sigma$ on $\gh$. 

We have $H^1(\langle \tau \rangle, \gh_0) =  H^1(\langle \tau \rangle, \F_{16}) = 0$ by the additive Hilbert Theorem 90 and $H^1(\langle \sigma \rangle, \gh) = 0$ because $\sigma$ has odd order.  Applying inflation-restriction with respect to the exact sequence $1 \to \langle \sigma \rangle \to \Delta \to \langle \tau \rangle \to 1$ shows that $H^1(\Delta, \gh) = 0$.
\end{proof} 

\begin{Not}\label{cors}
For $t$ as in \eqref{stEnd}, the following elements comprise $\Cor(\gh)$.  Their labels are consistent with Notation \ref{Ca}.
\begin{equation} \label{corners}
\begin{array}{l l l l}
\gamma_0 = 0, & \gamma_4 = t+t^2+t^3, & \gamma_5 = t + t^3, & \gamma_9 = t^2,   \vspace{4 pt} \\
\gamma_{11} = t+t^2, & \gamma_{11}'  = t^2 + t^3, & 
\gamma_{15} = t^3,  & \gamma_{15}'  = t. 
\end{array}
\end{equation}
All occur as values of extension cocycles for $E$ by $E$ when we range over Honda parameters, cf.\! Proposition \ref{dim48} below. 
\end{Not}

Motivated by the conductor bound in Proposition \ref{CondExpP}, we assume from now on that $\gf_\gp(L/F) \le 4$.  If $T$ is the maximal elementary 2-extension of $F$ with ray class conductor exponent 4, then $T$  is Galois over $\Q_2$ and we denote the action of $\delta $ in $\Delta$ on elements $h$ of $\Gamma = \Gal(T/F)$ by  
$
\lexp{\delta}{h} = \tilde{\delta} h \tilde{\delta}^{-1} 
$
independent of the choice of lift $\tilde{\delta}$ of $\delta$ to $\Gal(T/\Q_2)$.  We also write $\sigma$ for an element of order 5 in $\Gal(T/\Q_2)$ projecting to $\sigma$ in $\Delta$.   We have the following diagram of fields and Galois groups, 
\begin{center}
\begin{tikzpicture}[scale=1, thin, baseline=(current  bounding  box.center)]	
    \draw (0,0) -- (0,2);
    \draw (0,0) node {$\bullet$};   \draw (.35,0) node {$\Q_2$};
    \draw (-.2,.4) node {$\Delta$};
    \draw (0,.7) node {$\bullet$};  \draw (1,.7) node {$F = \Q_2(E)$};  
    \draw(.6,1.1) node {unram};
    \draw (0,1.3) node {$\bullet$};     
    \draw (1.35,1.59) node {$\Gamma_1 = \rR g_1 \oplus \rR g_2$};
    \draw (0,2) node {$\bullet$};   \draw (.3,2.05) node {$T$};
    \draw (0,.7) -- (-.6,1.7) -- (0,2);
    \draw (-.6,1.7) node {$\bullet$};  \draw (-1.6,1.7) node {$L = \Q_2(W)$};  
    \draw (2.6,2.05) -- (2.9,2.05) -- (2.9,.7) -- (2.6,.7); \draw  (3.15,1.4) node {$\Gamma$};
\end{tikzpicture}
\end{center}
where $\Gamma_1$ is the wild ramification subgroup (see Appendix \ref{condapp}) of $\Gamma$.  We next describe the complete lower ramification filtration on $\Gamma$ and its structure as a module for $\rR = \F_2[\Delta]$.

\begin{prop} \label{Max2}
Let  $g_0 = \Artin (\varpi, T/F)$, $g_1 = \Artin (1+\varpi+\varpi^3, T/F)$ and $g_2 = \Artin(1+\varpi^3, T/F)$.  Then 
$
\Gamma = \rR g_0 \oplus \rR g_1 \oplus \rR g_2 \simeq \F_2 \oplus E \oplus E
$
and
$$
\Gamma_1 \rhd \Gamma_2 = \Gamma_3  \rhd  \Gamma_4 = \{1\},
$$
with $\Gamma_1 = \rR g_1 \oplus \rR g_2$ and $\Gamma_3 = \rR g_2$.  There is a Frobenius $\Phi$ of order $8$ in $\Gal(T/\Q_2)$ projecting to $\tau$ in $\Delta$ and satisfying $\Phi \sigma \Phi^{-1} = \sigma^2$.  In addition, $\Gal(T/\Q_2)= \Gamma_1 \rtimes H$ with $H = \langle \sigma, \Phi \rangle$. 
\end{prop}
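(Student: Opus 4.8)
My plan is to read off $\Gamma=\Gal(T/F)$ from local class field theory as an $\rR$-module and then build the Frobenius by a trace computation. Since the modulus $\gp^4$ is $\Delta$-stable, $T/\Q_2$ is Galois, and local reciprocity for $F$ gives a $\Delta$-equivariant isomorphism $\Artin\colon F^\times/(F^\times)^2 U^{(4)} \xrightarrow{\sim}\Gamma$, where $U^{(i)}=1+\varpi^i\cO_F$ and $\Delta$ acts through its action on $F^\times$. Writing $F^\times=\varpi^{\Z}\times\Mu_{15}\times U^{(1)}$, I note $\Mu_{15}=\Mu_{15}^2$ (odd order) and, because $2=\varpi^5$, squaring sends $1+a\varpi\mapsto 1+a^2\varpi^2\pmod{\gp^4}$; as $a\mapsto a^2$ is bijective on $\F_{16}$ this shows $(F^\times)^2U^{(4)}$ absorbs the level-$2$ layer $U^{(2)}/U^{(3)}$ entirely while leaving the layers at levels $1$ and $3$ intact. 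Hence $\dim_{\F_2}\Gamma=1+4+4=9$. The class of $\varpi$ spans a one-dimensional summand on which $\Delta$ acts trivially (indeed $\sigma\varpi=\zeta\varpi\in\varpi(F^\times)^2$ and $\tau\varpi=\varpi$), and the unit part $U^{(1)}/((U^{(1)})^2U^{(4)})$ is an extension of the level-$3$ by the level-$1$ layer. On each layer $U^{(n)}/U^{(n+1)}\cong\F_{16}$ the generator $\sigma$ acts by multiplication by $\zeta^n$ and $\tau$ by the Frobenius of $\F_{16}$, so each layer is the unique nontrivial irreducible $\rR$-module $E$. By Lemma~\ref{gh}, $\Ext^1_\rR(E,E)=H^1(\Delta,\gh)=0$, so the extension splits and $\Gamma\simeq\F_2\oplus E\oplus E$.

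Next I would check that the three displayed elements realize this decomposition. The uniformizer gives $\rR g_0\cong\F_2$; the principal units $1+\varpi^3$ and $1+\varpi+\varpi^3$ are $\tau$-fixed and killed by $T_\sigma$ (since $\sum_j\zeta^{3j}=0$ and the relevant symmetric functions of the fifth roots of unity vanish), so they are corner elements, and their level-$3$, respectively level-$1$, leading terms show that $\rR g_2$ and $\rR g_1$ are complementary copies of $E$, giving $\Gamma_1=\rR g_1\oplus\rR g_2$. For the ramification filtration I use $\Gamma_i=\Artin(U^{(i)})$: from the first paragraph, $\Artin(U^{(1)})$ is the full unit image $\rR g_1\oplus\rR g_2$, namely the wild inertia $\Gal(T/T_0)$ with $T_0$ the unramified quadratic of $F$; $\Artin(U^{(2)})=\Artin(U^{(3)})=\rR g_2$ because the level-$2$ layer dies; and $\Artin(U^{(4)})=\{1\}$. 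This yields exactly $\Gamma_1\rhd\Gamma_2=\Gamma_3\rhd\Gamma_4=\{1\}$ with $\Gamma_3=\rR g_2$, consistent with the conductor bound $\gf(L/F)\le 4$ of Proposition~\ref{CondExpP}.

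For the Frobenius and the semidirect product I would split $1\to\Gamma_1\to\Gal(T/\Q_2)\to\Gal(T_0/\Q_2)\to 1$, where $G=\Gal(T_0/\Q_2)$ has order $40$. Since $\gcd(5,|\Gamma|)=1$, Schur--Zassenhaus yields an order-$5$ lift $\sigma$ of inertia. Because $T_0/\Q_2$ is tame, $g_0=\Artin(\varpi)$ is the Frobenius of $\Gal(T_0/F)$ and $G=\langle\sigma\rangle\rtimes\langle\phi\rangle$ with $\phi$ of order $8$ and $\phi^4=g_0$. Taking any lift $\Phi_0$ of $\phi$, and using that conjugation on the abelian group $\Gamma$ factors through $\Delta$ with $\Phi_0$ projecting to $\tau$, one gets $(\Phi_0\gamma)^4=\Phi_0^4\cdot T_\tau(\gamma)$ for $\gamma\in\Gamma$, where $T_\tau=1+\tau+\tau^2+\tau^3$. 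The image of $T_\tau$ on $\Gamma$ is precisely $\Cor(\Gamma_1)=\Gamma_1^{\tau}$ (the trace $\F_{16}\to\F_2$ on each $E$), and since $\Phi_0^4$ commutes with $\Phi_0$ while $g_0$ is $\tau$-fixed, the element $\Phi_0^4+g_0$, which lies in $\Gamma_1$ as $\Phi_0^4\equiv\phi^4=g_0\pmod{\Gamma_1}$, is $\tau$-fixed and hence in the image of $T_\tau$. Solving $T_\tau(\gamma)=\Phi_0^4+g_0$ gives $\Phi=\Phi_0\gamma$ with $\Phi^4=g_0$, so $\Phi^8=1$ and $\Phi$ has order $8$, projecting to $\tau$. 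After replacing $\sigma$ by a suitable $\Gamma$-conjugate to enforce $\Phi\sigma\Phi^{-1}=\sigma^2$ (Schur--Zassenhaus together with $\Gamma^{\sigma}=\F_2 g_0$), the subgroup $H=\langle\sigma,\Phi\rangle$ surjects onto $G$ with order dividing $40$, hence has order $40$ and meets $\Gamma_1$ trivially, giving $\Gal(T/\Q_2)=\Gamma_1\rtimes H$.

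The delicate point is the construction of the order-$8$ Frobenius. Because $\Gamma_1$ is a $2$-group and $\phi$ has $2$-power order, Schur--Zassenhaus does not apply to the Frobenius, and the relevant trace is degenerate (indeed $\sum_{i=0}^{7}\tau^i=0$ on $\Gamma$, so $\Phi^8$ is independent of the lift). The existence of an order-$8$ lift therefore hinges on the exact identification of the image of $T_\tau$ with the corner subspace $\Cor(\Gamma_1)$ and on the fact that $\Phi_0^4$ represents $g_0$ there; once this is in hand, arranging the commutation relation with $\sigma$ and reading off the order count is routine.
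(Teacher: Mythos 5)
Your first two paragraphs are correct and essentially coincide with the paper's own proof: the paper likewise reads off the $\rR$-module structure of $\Gamma$ from the Artin isomorphism $F^\times/U_F^{(4)}F^{\times 2}\xrightarrow{\sim}\Gamma$, takes $\Gamma_1$ to be the image of the units, and gets $\Gamma_2=\Gamma_3=\rR g_2$ from $U_F^{(2)}\subset U_F^{(3)}F^{\times 2}$; your extra appeal to $H^1(\Delta,\gh)=0$ to split the unit part is harmless but unnecessary once the corner generators $g_0,g_1,g_2$ are exhibited.

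The construction of $\Phi$ is where you genuinely diverge, and there the write-up has a gap — located exactly at the step you call routine. You first normalize $\Phi^4=g_0$ (this part is correct: $\Phi_0^4+g_0$ lies in $\Gamma_1$, is $\tau$-fixed, and the image of $T_\tau=1+\tau+\tau^2+\tau^3$ on $\Gamma_1$ is $\Gamma_1^\tau=\Cor(\Gamma_1)$; one should take the solution $\gamma$ in $\Gamma_1$, which is possible since $T_\tau$ kills $g_0$), and only afterwards try to enforce $\Phi\sigma\Phi^{-1}=\sigma^2$ by moving $\sigma$. But that final adjustment is an obstructed problem, not a formal consequence of Schur--Zassenhaus and $\Gamma^\sigma=\F_2 g_0$: by S--Z conjugacy one can write $\Phi\langle\sigma\rangle\Phi^{-1}=x\langle\sigma\rangle x^{-1}$ with $x\in\Gamma_1$, and a conjugate $\sigma'=y\sigma y^{-1}$ satisfies $\Phi\sigma'\Phi^{-1}=\sigma'^2$ if and only if $x=y+\lexp{\tau}{y}$, i.e. if and only if $T_\tau(x)=0$ — a codimension-two condition, since $(1+\tau)\Gamma_1$ is only the trace-zero subspace. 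So you must prove $T_\tau(x)=0$, and this is precisely where the hypothesis $\Phi^4=g_0$ (central) must be used: conjugating $\langle\sigma\rangle$ by $\Phi$ four times gives $\langle\sigma\rangle=\Phi^4\langle\sigma\rangle\Phi^{-4}=T_\tau(x)\langle\sigma\rangle T_\tau(x)^{-1}$, and since the normalizer of $\langle\sigma\rangle$ in $\Gamma_1$ is $\Gamma_1^\sigma=0$, this forces $T_\tau(x)=0$; then $\ker T_\tau=(1+\tau)\Gamma_1$ (equal dimensions, or Herbrand quotient $1$ combined with your identity $\Gamma_1^\tau=T_\tau(\Gamma_1)$) produces $y$, and $H=\langle\sigma',\Phi\rangle$ closes the argument as you indicate. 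With this supplement your route is valid. Note that the paper avoids the obstruction entirely by reversing the order of operations: it first repairs the commutation relation by adjusting $\Phi\mapsto x\Phi$, which is unobstructed because $T_\sigma$ vanishes identically on $\Gamma_1$ and $\sigma-1$ is invertible there (Hilbert 90 for the odd-order group $\langle\sigma\rangle$ acting on a $2$-group), and only then deduces $\Phi^4=g_0$ for free: $\Phi^4$ is $\Delta$-fixed, hence lies in $\{1,g_0\}$, and is nontrivial because adjusting by inertia elements preserves the Frobenius property.
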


\begin{proof}
We use the standard filtration $U_F^{(n)}$ on local units, see \eqref{nUnits}.  The $\rR$-module structure of $\Gamma$ follows from the class field theory isomorphism   
$$
\Artin( - , T/F)\!: \,  F^\times/U_F^{(4)} F^{\times 2} \, \xrightarrow{\sim} \, \Gamma.
$$
In particular, $\rR$ acts trivially on the Frobenius $g_0$ of $\Gamma$, while $\rR g_1$ and $\rR g_2$ are isomorphic to $E$ as $\rR$-modules.  Since $\Gamma_1 = \Artin(U_F,T/F)$, we have $\Gamma_1 =  \rR g_1 \oplus \rR g_2$ and similarly for $\Gamma_2$, using $U_F^{(2)} \subset U_F^{(3)} F^{\times 2}$.   Note that
$$
\Gamma_1 = \ker (T_\sigma \vert \Gamma) = \Image ((\sigma-1) \vert \Gamma).  
$$
There is a residue extension of degree 2 for $T/F$, so Frobenius $\Phi$ projecting to $\tau$ has order 8.  Set $\Phi \sigma^3 \Phi^{-1} = h \sigma$ for some $h$ in $\Gamma_1$.  By direct computation, $T_\sigma(h) = (h \sigma)^5 = (\Phi \sigma^3 \Phi^{-1})^5 = 1$.  Hence $h = \lexp{\sigma}{x}/x$ for some $x$ in $\Gamma_1$ and so $(x \Phi) \sigma^3 (x \Phi)^{-1} = \sigma$.    Replace $\Phi$ by $x \Phi$ to guarantee that $\Phi \sigma \Phi^{-1} = \sigma^2$.   Then $\Delta$ acts trivially on $\Phi^4$, so $\Phi^4 = g_0$.  Since $H = \langle \sigma, \Phi \rangle$ is isomorphic to the Galois group of the maximal tame extension of $F$ in $T$, we find that $\Gal(T/\Q_2)$ is a semi-direct product of $H$ by the normal subgroup $\Gamma_1$.
\end{proof} 

Let $r_{T/L}: \Gal(T/\Q_2) \twoheadrightarrow \Gal(L/\Q_2)$ be the natural projection.  Note that the inertia group $\Gal(L/F)_1$ of $\Gal(L/F)$ is the wild ramification subgroup $\Gal(L/\Q_2)_1$ of $\Gal(L/\Q_2)$. 

\vspace{5 pt}

\begin{cor} \label{classfield}
The subgroup $\ov{H} = r_{T/L}(\langle \sigma, \Phi \rangle)$ of $\Gal(L/\Q_2)$ projects onto $\Delta$ in $\Gal(F/\Q_2)$. As $\rR$-modules, $\Gal(L/F)_1 \simeq E^b$, with $0 \le b \le 2$.   
\begin{enumerate}[{\rm i)}]
\item If $L/F$ is totally ramified, then $\Gal(L/F) = \Gal(L/F)_1$ and $\vert \ov{H} \vert = 20$.   

\vspace{3 pt}

\item Otherwise, $L/F$ has residue degree $2$, $\Gal(L/F) \simeq \Gal(L/F)_1 \oplus \F_2$ and $ \ov{H} $ has order and exponent $40.$  
\end{enumerate}
\end{cor}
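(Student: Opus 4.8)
The plan is to carry out every computation inside the field $T$ of Proposition \ref{Max2}, transporting the ramification dichotomy for $L/F$ into the position of the Frobenius $g_0 = \Phi^4$ relative to the normal subgroup $N := \Gal(T/L)$ of $\Gamma$.

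First I would fix the ambient picture. Because $\cW$ is killed by $2$, the block form \eqref{RepViaCocycle} shows that $\Gal(L/F)$ embeds into $\gh$ via $\psi$, so $L/F$ is an elementary abelian $2$-extension; together with the standing bound $\gf_\gp(L/F)\le 4$ this forces $L\subseteq T$, so that $r_{T/L}$ is honest restriction and $\ov H$ is defined. Since $W$ is a $G_{\Q_2}$-module, $L/\Q_2$ is Galois, whence $N$ is normal in $\Gal(T/\Q_2)$ and in particular stable under conjugation by $H=\langle\sigma,\Phi\rangle$; thus $N$ is an $\rR$-submodule of $\Gamma$. The first assertion is then formal: the composite of $r_{T/L}$ with $\Gal(L/\Q_2)\to\Gal(F/\Q_2)=\Delta$ is restriction to $F$, carrying $\sigma\mapsto\sigma$ and $\Phi\mapsto\tau$, so $\ov H$ surjects onto $\Delta=\langle\sigma,\tau\rangle$.

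Next I would extract the $\rR$-module type of $\Gal(L/F)_1$. By Proposition \ref{Max2}, $\Gamma\simeq\F_2\oplus E\oplus E$ is a sum of simple $\rR$-modules, hence semisimple, and $\Gamma_1\simeq E^2$ is $E$-isotypic. As $\Gamma$ is an elementary abelian $2$-group, inertia coincides with wild inertia; since inertia passes to quotients, $\Gal(L/F)_1=r_{T/L}(\Gamma_0)=r_{T/L}(\Gamma_1)=\Gamma_1/(N\cap\Gamma_1)$. A quotient of the isotypic module $E^2$ by an $\rR$-submodule is again a direct summand isomorphic to some $E^b$ with $0\le b\le 2$, giving the middle claim.

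Finally I would separate the two cases by tracking $g_0$. Writing $n=n_1+\varepsilon g_0$ with $n_1\in\Gamma_1=\ker(T_\sigma\mid\Gamma)$ and $\varepsilon\in\F_2$, one computes $T_\sigma(n)=\varepsilon g_0$, so $N\not\subseteq\Gamma_1$ iff $g_0\in N$. Since $\Gamma/\Gamma_1\simeq\F_2$ records the unramified part, $L/F$ is totally ramified precisely when $g_0\in N$, and otherwise $N\subseteq\Gamma_1$ and $L/F$ has residue degree $2$; in the latter case $\Gamma/N=(\Gamma_1/N)\oplus\F_2$ exhibits $\Gal(L/F)\simeq\Gal(L/F)_1\oplus\F_2$, while in the totally ramified case $\Gal(L/F)$ is a $2$-group equal to its own inertia, so $\Gal(L/F)=\Gal(L/F)_1$. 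For the size of $\ov H$ I would use that $H\cap\Gamma$ is the kernel of $H\to\Delta$, namely $\langle\Phi^4\rangle=\langle g_0\rangle$ of order $2$; hence $H\cap N=\langle g_0\rangle\cap N$ equals $\langle g_0\rangle$ in case (i), giving $|\ov H|=40/2=20$, and is trivial in case (ii), giving $\ov H\simeq H$ of order $40$, whose exponent is $\operatorname{lcm}(5,8)=40$ because $H=\langle\sigma\rangle\rtimes\langle\Phi\rangle$ contains elements of orders $5$ and $8$. The only non-formal points, and hence the main obstacle, are the identification $\Gal(L/F)_1=r_{T/L}(\Gamma_1)$ (resting on ``inertia passes to quotients'' together with $\Gamma$ being a $2$-group, so that wild inertia equals inertia) and the translation of ``totally ramified'' into the membership $g_0\in N$ via the trace operator $T_\sigma$; once these are secured, the order and exponent of $\ov H$ are pure bookkeeping inside $\Gal(T/\Q_2)=\Gamma_1\rtimes H$.
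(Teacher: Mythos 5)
Your proposal is correct and follows essentially the same route as the paper's (very terse) proof: work inside $\Gal(T/\Q_2) = \Gamma_1 \rtimes H$ from Proposition \ref{Max2}, identify $\Gal(L/F)_1$ as $r_{T/L}(\Gamma_1)$, and reduce the dichotomy to whether $g_0 = \Phi^4$ lies in $\ker r_{T/L}$, from which the orders $20$ and $40$ follow. Your additional details — semisimplicity of $\Gamma_1 \simeq E^2$ for the $E^b$ claim, the trace operator $T_\sigma$ detecting $g_0 \in N$, and $H \cap \Gamma = \langle g_0 \rangle$ for the order count — are exactly the steps the paper declares ``immediate.''
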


\begin{proof}
That $\ov{H}$ projects onto $\Delta$ and that $\Gal(L/F)_1 = r_{T/L}(\Gamma_1)$ is the direct sum of at most 2 copies of $E$ is immediate.  Moreover, $L/F$ is totally ramified if and only if $g_0 = \Phi^4$ is in $\ker r_{T/L}$.  Thus $\vert \ov{H} \vert = 20$ in case (i) and 40 in case (ii).
 \end{proof}

Since $T$ contains $L = \Q_2(W)$, the cocycle $\psi$ in \eqref{RepViaCocycle} inflates to $\Gal(T/\Q_2)$.   We may arrange for $\psi(\sigma) = 0$, since $\sigma$ has odd order.   Lemma \ref{gh} and \eqref{InfRes} give injectivity of the restriction map:
\begin{equation} \label{ToToverF}
0 \to H^1(\Gal(T/\Q_2),\gh) \xrightarrow{\res} H^1(\Gamma,\gh)^\Delta = \Hom_\rR(\Gamma,\gh)
\end{equation} 
and we say that $\chi = \res([\psi])$ in $\Hom_\rR(\Gamma,\gh)$ {\em belongs to} $W$.   Note that $\chi$ is determined by its values on $g_0$, $g_1$, $g_2$, as defined in Proposition \ref{Max2}.   

\begin{lem} \label{chiLem}
The field $L = \Q_2(W)$ is the fixed field of $\ker \chi$.  Moreover:
\begin{enumerate}[{\rm i)}] 
\item  $\chi(g_i)$ is in $\Cor(\gh)$ for $i = 1,2$ and $\chi(g_0)$ is in $\{0,I_4\}$.  \vspace{2 pt}
\item $L/F$ is unramified if and only if $\chi(g_1) = \chi(g_2) = 0$. \vspace{2 pt}
\item $\gf(L/F) = 4$ if and only if $\chi(g_2) \ne 0$.  If $\chi(g_2) = 0$, then $\gf(L/F) = 0$ or $2$.   \vspace{2 pt}
\item The residue degree of $L/F$ is $1$ or $2$, according to whether $\chi(g_0) = 0$ or $I_4$. 
\end{enumerate}
\end{lem}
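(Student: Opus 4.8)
The plan is to read off every assertion from the identification of $L$ as a fixed field inside $T$, combined with the explicit $\rR$-module and ramification structure of $\Gamma = \Gal(T/F)$ supplied by Proposition \ref{Max2}. First I would note that, since $W$ is killed by $2$, the extension $L/F$ is elementary abelian, and that \eqref{RepViaCocycle} shows that for $g \in \Gamma$ (which fixes $F$, so $\rho_E(g) = I_4$) one has $\rho_W(g) = \left[\begin{smallmatrix} I_4 & \psi(g) \\ 0 & I_4 \end{smallmatrix}\right]$, which is trivial exactly when $\chi(g) = \psi(g) = 0$. Because $F \subseteq L \subseteq T$ — the last inclusion holding since $\gf_\gp(L/F) \le 4$ by Proposition \ref{CondExpP} and $T$ is maximal with that property — we get $\Gal(T/L) = \ker\chi$, that is, $L = T^{\ker\chi}$.

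For part (i) I would exploit that $\chi \in \Hom_\rR(\Gamma,\gh)$ is $\Delta$-equivariant, so it carries $\Delta$-fixed elements into $\gh^\Delta$ and corner elements into $\Cor(\gh)$. Since $\tau$ fixes $\varpi$ and $\zeta \in \Mu_{15} \subset F^{\times 2}$, the behaviour of the Artin symbols gives $\lexp{\tau}{g_i} = g_i$ for $i=0,1,2$ and $\lexp{\sigma}{g_0} = \Artin(\zeta\varpi) = \Artin(\varpi) = g_0$; together with $T_\sigma g_i = 0$ on the $E$-summands $\rR g_i$, this yields $g_1, g_2 \in \Cor(\Gamma)$ and $g_0 \in \Gamma^\Delta$. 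Hence $\chi(g_i) \in \Cor(\gh)$ for $i=1,2$, while $\chi(g_0) \in \gh^\Delta$, which is $\{0,I_4\}$ by Lemma \ref{gh} (the $\tau$-fixed points of $\gh_0 \simeq \F_{16}$).

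The ramification assertions (ii)–(iv) then follow by transporting the filtration of $\Gamma$ through the quotient $r_{T/L}\!: \Gamma \twoheadrightarrow \Gal(L/F) = \Gamma/\ker\chi$, using that the upper-numbering filtration is compatible with quotients. The unramified direction is $\rR g_0$ and the inertia subgroup is $\Gamma_1 = \rR g_1 \oplus \rR g_2$; thus $L/F$ has residue degree $2$ iff $\ker\chi \subseteq \Gamma_1$ iff $\chi(g_0) \ne 0$, giving (iv), and $L/F$ is unramified iff $\Gamma_1 \subseteq \ker\chi$ iff $\chi(g_1)=\chi(g_2)=0$, giving (ii). For (iii) the deepest ramification layer is $\Gamma_3 = \rR g_2$: it survives in $\Gal(L/F)$ precisely when $\chi(g_2) \ne 0$, and its survival forces the conductor up to the maximal value $4$ allowed by Proposition \ref{CondExpP}; when $\chi(g_2) = 0$ the layer $\rR g_2$ dies and the conductor drops to $\le 2$, which is $0$ in the unramified case of (ii) and $2$ otherwise.

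The main obstacle is the conductor computation in (iii): I must match the abelian conductor exponent of the quotient $L/F$ to the exact depth of $\ker\chi$ within the ramification filtration of $\Gamma$. This requires the explicit lower filtration $\Gamma_1 \rhd \Gamma_2 = \Gamma_3 \rhd \Gamma_4 = \{1\}$ of Proposition \ref{Max2} together with the Herbrand and upper-numbering bookkeeping of Appendix \ref{condapp}, in order to verify that the two surviving ramification levels contribute exactly the conductor exponents $2$ and $4$ (and in particular that no value $1$ or $3$ can occur). The remaining points are essentially formal consequences of $L = T^{\ker\chi}$ and the $\Delta$-equivariance of $\chi$.
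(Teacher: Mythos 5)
Your proof is correct and follows essentially the same route as the paper: identify $L$ as the fixed field of $\ker\chi$ via the matrix representation \eqref{RepViaCocycle}, deduce (i) from the $\Delta$-equivariance of $\chi$ together with the corner/fixed structure of $g_0,g_1,g_2$, and obtain (ii)--(iv) by transporting the ramification filtration of Proposition \ref{Max2} to the quotient $\Gal(L/F)$. The paper compresses all of this into one line (``items (i)--(iv) immediately follow from Proposition \ref{Max2}''); your write-up simply supplies the equivariance and upper-numbering bookkeeping that the paper leaves implicit.
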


\begin{proof}
The matrix representation \eqref{RepViaCocycle} shows that $g$ in $\Gal(T/\Q_2)$ acts trivially on $W$ if and only if $g$ is in $\Gamma = \Gal(T/F)$ and $\chi(g) = 0$.  Then items (i)--(iv) immediately follow from Proposition \ref{Max2}.  In particular, (i) holds by considering the action of $\Delta$ on $g_0$, $g_1$ and $g_2$.
\end{proof}

Write $\cW_{\bfs}$ for the extension of $\cE$ by $\cE$ of exponent 2 with Honda parameter $\bfs$ and $W_{\bfs}$ for its Galois module.  Belonging to $W_\bfs$ are the cohomology class $[\psi_\bfs]$ in $H^1(\Gal(T/\Q_2),\gh)$ and its restriction $\chi_\bfs$ in $\Hom_{\F_2[\Delta]}(\Gamma, \gh)$, as described above.  The rest of this section is devoted to evaluating $\chi_{\bfs}$ as $\bfs$ varies. 

If $h$ is in $\Gamma = \Gal(T/F)$ and $Q_{z_j}$ is any point in the fiber over $\sigma^{j}(P)$, cf.\! Notation \ref{zNot}, any basis of the form
\begin{equation} \label{BetterBasis}
P, \,  \sigma(P), \, \sigma^2(P), \,  \sigma^3(P), \, Q_{z_0}, \,  Q_{z_1},  \, Q_{z_2}, \,  Q_{z_3}
\end{equation}
yields the same matrix $\rho_{W_{\bfs}}(h)$ in \eqref{RepViaCocycle}.  Moreover,  $h(Q_{z_j}) = Q_{z_j} + \chi_{\bfs}(h) \, \sigma^j(P)$.

Let $M/F$ be a finite elementary 2-extension.  Define its {\em Kummer group} by
$$
\Kappa(M/F) =  F^\times \cap M^{\times 2} \quad \text{and let} \quad \ov{\Kappa}(M/F) = \Kappa(M/F)/F^{\times 2}.
$$   
By definition, $F^{\times 2} \subseteq\Kappa(M/F)$ and we have
$
M = F(\{\sqrt{\theta} \, \vert \, \theta \in \Kappa(M/F) \}).
$  
Kummer theory gives a perfect pairing: 
$$
\Gal(M/F) \times \ov{\Kappa}(M/F) \to \Mu_2 \quad \text{by} \quad (g,\theta) \mapsto g(\sqrt{\theta})/\theta.
$$
 
\begin{lem}  \label{FirstKummerCor}
Let $P = P_\varpi$ and let $F_\varpi$ be the subfield of $L$ generated by the points of $W_{\bfs}$ in the fiber over $P$.   If $\bfs = [10000]$, then $\Kappa(F_\varpi/F)$ contains $1+2\varpi^4$.  If $s_1 = s_2$, then $\Kappa(F_\varpi/F)$ contains $1+ 2s_2 \varpi^2 + 2(s_3+s_5)\varpi^4$.  
\end{lem}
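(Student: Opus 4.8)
The plan is to realize each claimed element as the discriminant of the quadratic resolvent furnished by Corollaries \ref{vartheta} and \ref{theta}, and then to rewrite that discriminant modulo squares in $F^\times$. Throughout I would use the lifts $\tilde a = \varpi$, $\tilde b = -\varpi^3$, $\tilde c = \varpi^4$ attached to $P = P_\varpi$ by Proposition \ref{FieldForE}, together with $2 = \varpi^5$ and $e_F = 5$, so that $v(\varpi) = 1$ and $v(2) = 5$.

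For the first statement, $\bfs = [10000]$ gives $\epsilon_2 = s_1 = 1 \neq 0$, so Corollary \ref{vartheta} applies and $h(X) = X^2 - X - 2\varpi^{-6}$ splits completely in $F_\varpi$. Its discriminant is $1 + 8\varpi^{-6}$, and since $8\varpi^{-6} = \varpi^{15}\varpi^{-6} = \varpi^9 = 2\varpi^4$, this discriminant equals $1 + 2\varpi^4$. Splitting of $h$ means $\sqrt{1 + 2\varpi^4} \in F_\varpi$, hence $1 + 2\varpi^4 \in F^\times \cap F_\varpi^{\times 2} = \Kappa(F_\varpi/F)$.

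For the second statement, $s_1 = s_2$ forces $\epsilon_2 = s_1 - (\lambda s_2)^2 = s_1 - s_1^2 = 0$ because the $s_i$ lie in $\F_2$, so Corollary \ref{theta} shows that $h(X) = X^2 - X + \tilde w\tilde a^{-4}$ splits completely in $F_\varpi$, where $\tilde w = s_2\varpi - s_3\varpi^3 + s_4\varpi^4 + s_5\varpi^2$. Thus $\alpha := 1 - 4\tilde w\varpi^{-4} \in \Kappa(F_\varpi/F)$, and a direct substitution using $4 = \varpi^{10}$ gives $\alpha = 1 - s_2\varpi^7 - s_5\varpi^8 + s_3\varpi^9 - s_4\varpi^{10}$. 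Writing $\beta = 1 + 2s_2\varpi^2 + 2(s_3+s_5)\varpi^4 = 1 + s_2\varpi^7 + (s_3+s_5)\varpi^9$, it suffices to prove $\alpha\beta \in (F^\times)^2$, since then $\beta = (\alpha\beta)\cdot\alpha^{-1}$ lies in the group $\Kappa(F_\varpi/F)$.

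The remaining work is a square-class computation near the critical valuation $2e_F = 10$, and this is the main obstacle. Forming the product, the valuation-$7$ terms cancel and $\alpha\beta = 1 - s_5\varpi^8 + (2s_3+s_5)\varpi^9 - s_4\varpi^{10} + O(\varpi^{14})$. Multiplying by the square $(1 + s_5\varpi^4)^2 = 1 + s_5\varpi^8 + s_5\varpi^9$ (using $s_5^2 = s_5$) cancels the $\varpi^8$ term and sends the surviving $\varpi^9$ contribution to valuation $14$, leaving $1 - s_4\varpi^{10} + O(\varpi^{14}) = (1 - 4s_4)\bigl(1 + O(\varpi^{14})\bigr)$. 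Here $1 - 4s_4$ is a square in the unramified subfield $\Q_2(\Mu_{15}) \subset F$ with residue field $\F_{16}$: for $s_4 = 1$ it equals $-3 = 1 + 4(-1)$, and $1 + 4c$ is a square exactly when $\Tr_{\F_{16}/\F_2}(\bar c) = 0$, which holds since $\bar c = 1$ and $\Tr_{\F_{16}/\F_2}(1) = 0$. Finally $O(\varpi^{14}) \subset U_F^{(11)} \subseteq (F^\times)^2$ because $11 > 2e_F$. Hence $\alpha\beta$ is a product of squares, as required. I expect the delicate points to be the precise squaring identity $(1 + c\varpi^4)^2 = 1 + c^2\varpi^8 + c\varpi^9$ and the trace criterion identifying $-3$ as a square, both of which govern $U_F^{(n)}/(U_F^{(n)}\cap F^{\times 2})$ for $8 \le n \le 10$.
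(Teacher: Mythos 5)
Your proof is correct and takes essentially the same route as the paper: the paper's own proof consists precisely of taking the square class of the discriminant of $h(X)$ from Corollary \ref{vartheta} when $\bfs = [10000]$ and from Corollary \ref{theta} when $s_1 = s_2$ (where $\epsilon_2 = 0$), which is exactly your strategy. The explicit square-class verification you supply --- the rewriting $8\varpi^{-6} = 2\varpi^4$, the cancellation against $(1+s_5\varpi^4)^2$, the observation that $1-4s_4$ is a square because $\Q_2(\sqrt{-3}) = \Q_2(\Mu_3) \subset \Q_2(\Mu_{15}) \subset F$, and $U_F^{(11)} \subseteq F^{\times 2}$ --- is the computation the paper leaves implicit in the phrase ``take the square class,'' and it is carried out correctly.
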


\begin{proof}
Refer to Proposition \ref{EqsExpP}.  Since $p=2$ and $\lambda = 1$, we have $\epsilon_2 = 0$ when $s_1 = s_2$.  Then take the square class of the discriminant of the polynomial $h(X)$ in Corollary \ref{theta}.    Similarly, use Corollary \ref{vartheta} when $s = [10000]$.
\end{proof}

We first determine $\chi_{\bfs}$ when $L/F$ is a non-trivial totally ramified extension.   For compatibilty with the notation for decomposition groups in \S \ref{Punchline}, where we consider global Galois module extensions of $E$ by $E$, set $\cD_\gp(L/F) = \Gal(L/F)$.

\begin{prop}  \label{dim48}
If $L/F$ is totally ramified, then $\chi_{\bfs}(g_0) = 0$.  Depending on the conductor exponent $\gf(L/F)$, we have:
\begin{enumerate}[{\rm i)}]
\item \fbox{$\gf(L/F) = 2$.}  Then $\vert \cD_\gp(L/F) \vert = 16$,  $\chi_s(g_2) = 0$ and 
$$
 \begin{array}{ c || c | c | c | c | c | c | c }
\bfs & [00001] & [00100] & [10000] & [10101] & [00101] & [10001] & [10100]  \\
\hline
\chi_{\bfs}(g_1) &  \gamma_{15} &  \gamma_{15}'  &  \gamma_9 & \gamma_4 &  \gamma_5 & \gamma_{11}' &  \gamma_{11}      \\
\end{array}
\hspace{2 pt}  .
$$

\vspace{3 pt}

\item \fbox{$\gf(L/F) = 4$ and $\vert \cD_\gp(L/F) \vert = 16$}  Then $\chi_{\bfs}(g_2) = \gamma_9$ and $\chi_\bfs(g_1) = 0$ or $\gamma_9$ according to whether $\bfs = [11000]$ or $[01000]$.

\vspace{8 pt}

\item \fbox{$\gf(L/F) = 4$ and $\vert \cD_\gp(L/F) \vert = 256$.}  Then $\chi_s(g_2) = \gamma_9$ and 
$$
\begin{array}{ c || c | c | c | c | c | c  } 
\bfs & [11001] & [11100] & [01101] & [11101] & [01001] & [01100]    \\
\hline
\chi_{\bfs}(g_1)  & \gamma_{15} & \gamma_{15}' & \gamma_4  & \gamma_5 & \gamma_{11}'   & \gamma_{11} \\
\end{array}
\hspace{2 pt} .
$$
\end{enumerate}
\end{prop}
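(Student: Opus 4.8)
The plan is to reduce everything to a short list of single-parameter extensions, compute $\chi_\bfs$ there by local class field theory, and recover the tables by additivity.

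First I would dispose of $g_0$. Under the standing hypothesis that $L/F$ is totally ramified, Lemma \ref{chiLem}(iv) forces the residue degree to be $1$, hence $\chi_\bfs(g_0) = 0$ in every case. Next, since the Honda parametrization $\bfs \mapsto [\cW_\bfs]$ is additive under Baer sum (Proposition \ref{Hexpp}) and $\chi_\bfs = \res([\psi_\bfs])$ is obtained by inflation followed by the injective restriction \eqref{ToToverF}, the assignment $\bfs \mapsto \chi_\bfs$ is $\F_2$-linear. It therefore suffices to evaluate $\chi_\bfs(g_1)$ and $\chi_\bfs(g_2)$ on the single-parameter vectors $[10000]$, $[01000]$, $[00100]$, $[00001]$ (the $s_4$-slot produces residue degree $2$ by Lemma \ref{s4cond} and is excluded here). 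Every row of the three tables is then a fixed $\F_2$-combination of these, and matching the stated corners is a direct check in $\Cor(\gh) = \F_2 t \oplus \F_2 t^2 \oplus \F_2 t^3$; for instance $[10101] = [10000]+[00100]+[00001]$ gives $\gamma_9 + \gamma_{15}' + \gamma_{15} = t^2 + t + t^3 = \gamma_4$.

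The heart of the argument is the single-parameter computation. For each such $\bfs$ I would use the explicit fiber description of \S\ref{ptsEEp}: Corollaries \ref{theta} and \ref{vartheta} produce a generator of the fiber field over $P = P_\varpi$, and Lemma \ref{FirstKummerCor} records the resulting Kummer classes, e.g. $\kappa = 1 + 2\varpi^4$ for $[10000]$. By Lemma \ref{fiberchange} this single fiber already generates all of $L$, so $\ov{\Kappa}(L/F)$ is the $\rR$-span of $\kappa$ together with its $\sigma$-conjugates. Since $g_1 = \Artin(1+\varpi+\varpi^3,T/F)$ and $g_2 = \Artin(1+\varpi^3,T/F)$, the action of each $g_i$ on the fiber points is computed from the quadratic Hilbert symbols of these units against $\sigma^j\kappa$ over $F = \Q_2(\Mu_{15},\varpi)$, via the Kummer pairing. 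To convert these symbols into an element of $\Cor(\gh)$ I would exploit that a corner $\gamma = c_1 t + c_2 t^2 + c_3 t^3$ acts $\F_{16}$-semilinearly through Frobenius, $\gamma(\alpha P) = (c_1\alpha^2 + c_2\alpha^4 + c_3\alpha^8)P$; evaluating $\chi_\bfs(g_i)(\sigma^j P) = g_i(Q_{z_j}) - Q_{z_j}$ for $j = 0,1,2,3$ thus pins down $(c_1,c_2,c_3)$ and hence the corner. The conductor regime then follows from Lemma \ref{chiLem}(iii): $\chi_\bfs(g_2) = 0$ puts us in case (i) with $\gf(L/F) = 2$, while $\chi_\bfs(g_2) \ne 0$ (forced by the $s_2$-slot, i.e. $[01000]$, which contributes $\gamma_9$) yields $\gf(L/F) = 4$.

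Finally I would read off $\vert \cD_\gp(L/F)\vert$. For totally ramified $L/F$, Corollary \ref{classfield} gives $\Gal(L/F) \simeq E^b$, where $b$ is the number of independent copies of $E$ spanned by $\chi_\bfs(g_1),\chi_\bfs(g_2)$ inside $\gh$; because distinct nonzero corners generate $\rR$-independent copies of $E$ (one has $\Cor(\rR\gamma) = \F_2\gamma$), we get $b = \dim_{\F_2}\spn\{\chi_\bfs(g_1),\chi_\bfs(g_2)\}$ and hence $\vert\cD_\gp(L/F)\vert = 16^b$. This is $16$ exactly when $\chi_\bfs(g_1) \in \F_2\chi_\bfs(g_2) = \F_2\gamma_9$ (case (ii)) and $256$ otherwise (case (iii)). \textbf{The main obstacle} is the block of wild quadratic Hilbert-symbol computations at $p=2$ over the degree-$20$ ramified field $F$, together with the $\F_{16}$-semilinear bookkeeping needed to separate corners of equal parity such as $\gamma_9$, $\gamma_{15}$ and $\gamma_{15}'$: evaluation on $P$ alone only detects $c_1+c_2+c_3 \bmod 2$, so tracking the full fiber action over $P,\sigma(P),\sigma^2(P),\sigma^3(P)$ is essential.
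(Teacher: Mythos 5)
Your overall architecture — killing $g_0$ by Lemma \ref{chiLem}(iv), computing a handful of basic parameters, and propagating to the full tables by additivity of $\bfs \mapsto \chi_{\bfs}$ under Baer sum — is the same as the paper's, and your bookkeeping in $\Cor(\gh)$ (e.g.\ $[10101]$) is correct. The gap is in the heart of the argument: the claim that ``the action of each $g_i$ on the fiber points is computed from the quadratic Hilbert symbols of these units against $\sigma^j\kappa$.'' It is not. The classes $\kappa$ of Lemma \ref{FirstKummerCor} are discriminants of the quadratic resolvents $h$ of Corollaries \ref{theta} and \ref{vartheta}, so the symbol of $u_i$ against $\sigma^j\kappa$ only records whether $g_i$ swaps the two roots of the resolvent over the fiber $\sigma^j(P)$, i.e.\ it computes $\Tr_{\F_{16}/\F_2}(\omega_i^{(j)})$, where $g_i(z_j) = z_j + \omega_i^{(j)}$ — not $\omega_i^{(j)}$ itself. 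If $\chi_{\bfs}(g_i)$ is the corner $c_1t+c_2t^2+c_3t^3$, then $(\omega_i^{(j)})^4 = c_1\zeta^{j}+c_2\zeta^{3j}+c_3\zeta^{2j}$, so $\Tr(\omega_i^{(j)}) = 0$ for $j=0$ and $\Tr(\omega_i^{(j)}) = c_1+c_2+c_3$ for $j=1,2,3$ (each $\zeta^{mj}$ occurring is a primitive fifth root of unity, of trace $1$). Thus every symbol you propose to compute carries the same single bit, the parity $c_1+c_2+c_3$: they can never separate $\gamma_9$, $\gamma_{15}$, $\gamma_{15}'$, $\gamma_4$ from one another, nor $\gamma_5$, $\gamma_{11}$, $\gamma_{11}'$ — and exactly these separations are the content of the tables. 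Your closing sentence senses this parity obstruction, but the proposed cure (the same symbols over all four fibers) is subject to it as well.

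What is missing is the mechanism the paper actually uses to evaluate $g_i(Q_{z_j})-Q_{z_j}$ for all $j$ at once: the $\Mu_{15}$-equivariance of the Honda polynomials, $f_{\eta a}(\eta^e Z) = \eta^e f_a(Z)$, with the slot-dependent exponent $e$ of \eqref{roote} — this is the real content of Lemma \ref{fiberchange}, which you invoke only for ``one fiber generates $L$.'' Choosing $z_j = \zeta^{je}z_0$, a corner element $h$ satisfies $h(z_0)=z_0+\alpha_0$ with $\alpha_0 \in \F_2$ (since $\chi(h)(P) \in \F_2 P$), nontriviality forces $\alpha_0=1$, and then $h(z_j)=z_j+\zeta^{je}$, so Lemma \ref{zz'} gives $h(Q_{z_j}) = Q_{z_j}+\zeta^{(1-e)j}P = Q_{z_j}+\sigma^{(1-e)j}(P)$; the values $e = 1-p^2$, $p^3-p^2$, $p-p^2$ are what distinguish the slots $s_1/s_2$, $s_3$, $s_5$ and hence pin down the corners. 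In the paper, Hilbert symbols appear exactly once, for the yes/no question of whether $\chi_{[11000]}(g_1)=0$, which is within their power. A secondary point: you take $[01000]$ as a basic case, but Lemmas \ref{BasicConds} and \ref{FirstKummerCor} do not apply to it directly (there $\epsilon_2 = 1 \ne 0$ and $s_1 \ne s_2$); the paper instead treats $[11000]$, where $\epsilon_2=0$, and recovers $[01000] = [11000]+[10000]$ by Baer sum.
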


\begin{proof}
We begin with some basic Honda parameters, from which the others can be generated by Baer sum.  Recall that $F_a$ denotes the extension of $F$ obtained by adjoining the coordinates of the points in the fiber of $W_{\bfs}$ above one point $P_a$ of order 2 in $E$. \vspace{3 pt}

\noindent {\bf Basic Cases}: (1) $\bfs = [00001]$, $[00100]$ or $[10000]$.   By Lemma \ref{BasicConds}, $F_a/F$ is totally ramified of degree 16 and $\gf(F_a/F) = 2$.  Thus $\chi_{\bfs}(g_0) = \chi_{\bfs}(g_2) = 0$ by Lemma \ref{chiLem} and so $L =  F_a$ is the subfield of $T$ fixed by $\rR g_0 \oplus \rR g_2$ independent of $a$.

\hspace{48 pt} (2)  $\bfs = [11000]$.  Lemma \ref{fiberchange} indicates that $L = F_a$ does not depend on $a$.  Now $L/F$ is totally ramified of degree 16 and $\gf(L/F) = 4$ by Lemma \ref{BasicConds}, so $\chi_{\bfs}(g_0) = 0$ but $\chi_{\bfs}(g_2) \ne 0$.  By Lemma \ref{FirstKummerCor}, the Kummer group $\ov{\Kappa}(L/F)$ contains the coset $\kappa = (1+2\varpi^2)F^{\times 2}$ and therefore equals $\rm R \, \kappa$.  By evaluating the pairing of Kummer theory and class field theory given by Hilbert symbols, we find that $g_1$ acts trivially on the square roots of elements of $\ov{\Kappa}(L/F)$, so $\chi_{\bfs}(g_1) = 0$.

\vspace{2 pt}

Set $h = g_1$ in the basic case (1) and $h = g_2$ in (2).  Recall that the primitive fifth root of unity $\zeta$ is defined by $\sigma(\varpi) = \zeta \varpi$.  To find the matrix $\chi_{\bfs}(h)$, we use a basis for $W_{\bfs}$ of the form
$$
P, \, \sigma(P), \, \sigma^2(P), \, \sigma^3(P), \, Q_{z_0}, \,  Q_{z_1},  \, Q_{z_2}, \,  Q_{z_3},
$$
where $z_j$ is a root of the Honda polynomial $f_{\zeta^j \varpi}$, cf. Notation \ref{zNot}.  The action of $\Delta = \Gal(F/\Q_2)$ puts $h$ in the corner group of $\cD_\gp(L/F)$, so $\chi_{\bfs}(h)$ is in $\Cor(\gh)$ and therefore equals one of the matrices in \eqref{corners}.   In particular, $\chi_{\bfs}(h)(P) = \alpha_0 P$, with $\alpha_0 = 0$ or 1.  Write $h(Q_{z_j}) = Q_{z_j} + \alpha_j P$, where $\alpha_0 = 0$ or 1 and
$$
\alpha_j = c_{0j} + c_{1j} \zeta + c_{2j} \zeta^2 + c_{3j} \zeta^3 \, \text{ in } \, \Z[\zeta] \, \text{ for } 1 \le j \le 3.
$$ 
Then the $(j+1)$-column of the matrix $\chi_{\bfs}(h)$ is $[c_{0j},c_{1j},c_{2j},c_{3j}]^T \bmod 2$ by \eqref{BetterBasis}. 

From $h(Q_{z_0}) = Q_{z_0} + \alpha_0 P$, we get $h(z_0) = z_0+\alpha_0$ by Lemma \ref{zz'}.  In the proof of Lemma \ref{fiberchange}, we showed that there is a correspondence between roots of $f_\varpi$ and $f_{\zeta^j \varpi}$, allowing us to choose $z_j = \zeta^{je}z_0$, with $e$ given by \eqref{roote} and $j=1,2,3$.  Then $h(z_j) = z_j + \alpha_0 \zeta^{je}$ in $R_c$.  Since $h$ is not trivial on $L$, we have $\alpha_0 = 1$.  Further use of Lemma \ref{zz'} gives
$$
h(Q_{z_j}) = Q_{z_j} + \zeta^{4je} P_{\zeta^j \varpi} =  Q_{z_j} + \zeta^{(1-e)j} P.
$$    
This determines $\chi_{\bfs}(h)$ for all $\bfs$ in the Basic Cases.  

\vspace{2 pt}

\noindent {\bf Remaining Cases}.  Write $\bfs = \bft + \bfu$, choosing Honda parameters $\bft$ and $\bfu$ already treated above.  Then $W_{\bfs}$ is the Baer sum of $W_\bft$ and $W_\bfu$ and $\chi_{\bfs} = \chi_\bft + \chi_\bfu$.  

In (ii), use $ [01000] = [11000] + [10000]$.  In (i), the last three entries follow by varying  $\bft$ and $\bfu$ among first three entries. Use $[10101] = [10000]+[00101]$ to complete (i).  For (iii), let $\bft = [11000]$ and let $\bfu$ run over the Honda parameters in (i), omitting $[10000]$.  Since $g_1$ and $g_2$ are independent and non-trivial on $L$, we have $\Gal(L/F) = Rg_1 \oplus Rg_2$ of order 256.
\end{proof}

We briefly treat the  remaining 16 non-trivial Honda parameters, even though Lemma \ref{GlobalAt2} shows that they are not needed for our  global  applications.

\begin{prop}  \label{dim59}
If $L/F$ is not totally ramified, then $\bfs = \bft + \bfu$, where $\bft$ ranges over $[00000]$ and the $15$ Honda parameters in {\rm Proposition \ref{dim48}}, while $\bfu = [00010]$.   Then $\chi_{\bfs}(g_0) = I_4$, $\chi_{\bfs}(g_j) = \chi_\bft(g_j)$ for $j=1,2$ and $\Q_2(W_\bfs)$ is the compositum of $\Q_2(W_\bft)$ and the unramified quadratic extension of $F$. 
\end{prop}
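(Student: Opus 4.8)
The plan is to reduce everything to the single unramified parameter $\bfu = [00010]$ together with the additivity of Baer sum. First I would record the bookkeeping: over $K = \Q_2$ the residue field is $\F_2$, so $\sigma$ acts trivially and $\tilde{k} = \F_2$; hence every Honda parameter lies in $\F_2^5$ and there are exactly $32$ of them. The sixteen with $s_4 = 0$ are $[00000]$ together with the fifteen parameters of Proposition \ref{dim48}. I would then argue that $L/F$ is totally ramified precisely when $s_4 = 0$: decomposing $\cW$ as the Baer sum \eqref{special}, each summand with $s_4 = 0$ is totally ramified by Lemma \ref{BasicConds}, whereas the summand $[000\,s_4\,0]$ is unramified by Lemma \ref{s4cond}. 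Thus the not-totally-ramified classes are exactly the sixteen with $s_4 = 1$, each of which is $\bft + \bfu$ with $\bft$ one of the sixteen $s_4 = 0$ parameters and $\bfu = [00010]$.

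Next I would pin down $\chi_\bfu$. Since $\bfu \ne 0$, the class $\cW_\bfu$ is nonzero by Proposition \ref{Hexpp}, so $\Q_2(W_\bfu) \ne F$; by Lemma \ref{s4cond} this extension is unramified, hence equals the unique unramified quadratic extension of $F$. Concretely, for $\bfu = [00010]$ one has $w = a^4$ and $\epsilon_2 = 0$, so the polynomial of Proposition \ref{EqsExpP} is $f_a(Z) = Z^{16} - Z + 1$, independent of $a$, separable, and defining that unramified quadratic extension. Lemma \ref{chiLem} then gives $\chi_\bfu(g_1) = \chi_\bfu(g_2) = 0$ and $\chi_\bfu(g_0) = I_4$.

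Now I would invoke additivity. By Proposition \ref{Hexpp} the parameter map is additive under Baer sum, and $\chi$ is the restriction of a cohomology class, so $\chi_\bfs = \chi_\bft + \chi_\bfu$. Because $\bft$ is totally ramified or trivial, $\chi_\bft(g_0) = 0$ by Lemma \ref{chiLem}; combining with the values of $\chi_\bfu$ gives immediately $\chi_\bfs(g_0) = I_4$ and $\chi_\bfs(g_j) = \chi_\bft(g_j)$ for $j = 1, 2$.

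The main point is the field identification, for which I would show $\ker\chi_\bfs = \ker\chi_\bft \cap \ker\chi_\bfu$; the nontrivial inclusion uses the splitting $\gh = \gh_0 \oplus \gh'$ of Lemma \ref{gh}, where $\gh' = \ker T_\sigma$ is an $\rR$-submodule. The image of $\chi_\bft$ lies in $\gh'$ since its values $\chi_\bft(g_1), \chi_\bft(g_2)$ lie in $\Cor(\gh) \subseteq \gh'$ and $\chi_\bft(g_0) = 0$, while the image of $\chi_\bfu$ is $\F_2 I_4 \subseteq \gh_0$. Hence $\chi_\bfs(h) = 0$ forces $\chi_\bft(h) = \chi_\bfu(h) \in \gh' \cap \gh_0 = 0$, so both vanish. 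Therefore $\Q_2(W_\bfs)$, the fixed field of $\ker\chi_\bfs$, is the compositum of $\Q_2(W_\bft)$ and $\Q_2(W_\bfu)$, the latter being the unramified quadratic extension of $F$. I expect this orthogonality of the corner-valued $\chi_\bft$ against the scalar $\chi_\bfu$ to be the crux; the remaining steps are bookkeeping and direct citations of the earlier lemmas.
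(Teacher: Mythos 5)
Your proposal is correct and follows essentially the same route as the paper: identify $\Q_2(W_\bfu)$ for $\bfu=[00010]$ as the unramified quadratic extension of $F$ via Lemma \ref{s4cond} (the polynomial $Z^{16}-Z+1$), read off $\chi_\bfu(g_0)=I_4$, $\chi_\bfu(g_1)=\chi_\bfu(g_2)=0$ from Lemma \ref{chiLem}, and conclude by additivity $\chi_\bfs=\chi_\bft+\chi_\bfu$. Your kernel-intersection argument using $\gh=\gh_0\oplus\gh'$ simply makes explicit what the paper compresses into ``the rest follows from $\chi_{\bfs} = \chi_\bft+\chi_\bfu$.''
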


\begin{proof}
By Lemma \ref{s4cond}, $F(W_{\bfu})$ is the splitting field of $Z^{16}-Z-1$, namely the unramified quadratic extension of $F$.  Thus $\chi_\bfu(g_0) = I_4$ and $\chi_\bfu(g_1) = \chi_\bfu(g_2) = 0$ by Lemma \ref{chiLem}.  The rest follows from $\chi_{\bfs} = \chi_\bft+\chi_\bfu$.
\end{proof}

\section{Global conclusions}  \label{Punchline}
\numberwithin{equation}{subsection}

\subsection{Favorable abelian surfaces}
There are two irreducible $\cS_5$-representations of dimension 4 over $\F_2$.  Denote the one taking transpositions to transvections by $\iota\!: \, \cS_5 \to \SL_4(\F_2)$ and fix it by sending $(12) \mapsto r$ and $(12345) \mapsto s$, where
\begin{equation}  \label{stMatrices}
\begin{array}{l l l l}  
r = \left[\begin{smallmatrix}0&1&0&0\\ 1&0&0&0\\0&0&1&0\\0&0&0&1\end{smallmatrix}\right]  \text{ and } \,  s = \left[\begin{smallmatrix}0&0&0&1\\ 1&0&0&1\\0&1&0&1\\0&0&1&1\end{smallmatrix}\right].  \quad \text{Let } t =  \left[\begin{smallmatrix}1&0&1&0\\ 0&0&1&1\\0&1&1&0\\0&0&1&0\end{smallmatrix}\right]. 
\end{array}
\end{equation}
The image of $\iota$ is isomorphic to the odd orthogonal group ${\rm O}^-_4(\F_2) \subset \SP_4(\F_2)$.  In addition, $\iota((2354)) = t$ and $\Delta = \langle s, t \rangle$ is the Frobenius group of order 20.

Fix a favorable quintic field $F_0$ with discriminant $d_{F_0/\Q} = \pm 16 N$ and Galois closure $F$.  By Proposition \ref{Fproperties}(i), the inertia group $\cI_v(F/\Q)$ at each place $v \vert N$ is generated by a transposition $\sigma_v$ when we identify $\Gal(F/\Q)$ with $\cS_5$.   In this section, $E$ is the Galois module giving $\rho_E\!: \, \Gal(F/\Q) = \cS_5 \xrightarrow{\iota} \SL_4(\F_2)$.  Using the matrices $r,s,t$ in \eqref{stMatrices}, $\sigma_v$ is conjugate to $\rho_E^{-1}(r)$, inertia at a some $\gp \vert 2$ is generated by $\sigma = \rho_E^{-1}(s)$ and $\tau = \rho_E^{-1}(t)$ is a Frobenius in the decomposition group $\cD_\gp(F/\Q) = \langle \sigma, \tau \rangle$.   Hence the restriction of $\rho_E$ to $\cD_{\gp}(F/\Q)$ agrees with the representation $\rho_{E_\lambda}$ of Definiton \ref{simple}, as normalized in \eqref{stEnd}.  By Corollary \ref{EScheme}, $E$ extends to a group scheme $\cE$ over $\Z[\frac{1}{N}]$.  Let $\un{E}$ be the $\Sigma$-category introduced in Definition \ref{ourcat} with $\Sigma = \{\cE\}$.  This subsection is devoted to criteria for the validity of axiom {\bf E4} in Theorem \ref{mypdiv}, needed to prove Theorem \ref{main}.

To treat extensions $W$ of $E$ by $E$ of exponent 2, let $\cP = \cP_{E,E}$ be the parabolic group as in \eqref{defP}.  We describe subgroups of $\cP$ in which the relevant representations $\rho_W$ take their values.

\begin{Not} \label{Ca} Let $c\!: \, \Mat_4(\F_2) \to \cP$ by $c(m)=\left[\begin{smallmatrix}1&m\\0&1\end{smallmatrix}\right]$ and  $d\!: \, \cS_5 \to \cP$ by $d(g)=\left[\begin{smallmatrix}\iota(g)&0\\0&\iota(g)\end{smallmatrix}\right]$.
Let $G_0$ be the image of $d$.  With $\gamma_a$ as in \eqref{cors} and  ${\rm S} = \F_2[\cS_5],$ we define ${\rm S}$-submodules of $\Mat_4(\F_2) = \End(E)$ with adjoint action of $\cS_5$:
\begin{equation} \label{Gammas}
\Gamma_4 = {\rm S} \, \gamma_4 \quad \Gamma_5 = {\rm S} \, \gamma_5, \quad \Gamma_9 = {\rm S} \, \gamma_9, \quad \Gamma_{11} = {\rm S} \, \gamma_{11}, \quad \Gamma_{15} = {\rm S} \,\gamma_{15}.
\end{equation} 
Let $G_a = \,  <G_0,c(\gamma_a)> \, =\,  c(\Gamma_a) \rtimes G_0$.     
\end{Not}

The radical of $G_a$ equals $ c(\Gamma_a)$ and has size   $ 2^a.$  The abelianization of $G_a$ is cyclic of order 2 and so defines the character $\epsilon_0\!: \, G_a \to \F_2$, generalizing the additive signature on $\cS_5$.   If $a = 0$ or $4$, all automorphisms of $G_a$ are inner. The center of the other $G_a$'s  is generated by $c(1)$ and there is an automorphism
\begin{equation} \label{EpRep}
\epsilon\!: \, G_a \to G_a \quad \text{ by } \quad \epsilon(g) = g \, c(1)^{\epsilon_0(g)}.
\end{equation}
When $a = 5$ or 9, $\Aut(G_a)$ is generated by $\epsilon$, modulo automorphisms induced from conjugation by elements of the normalizer of $G_a$ in $\cP$.   

The corner group of an $\F_2[\Delta]$-module consists of the elements fixed by $t$ and annihilated by the trace $T_s$.  Using Magma, we find the non-zero corners of $\Gamma_a$:
\begin{equation} \label{CorGa}
\begin{array}{| c || c | c | c | c | c |}
\hline
a & 4 & 5 & 9 & 11 & 15 \\
\hline
\Cor(\Gamma_a) - \{\gamma_0\} &  \{\gamma_4\} & \{\gamma_5\} & \{\gamma_4, \gamma_5, \gamma_9\} & \{\gamma_5, \gamma_{11}, \gamma_{11}'\} & \{\text{all}\,\gamma_i\} \\
\hline
\end{array}
\end{equation}
Inclusions among the groups $G_a$ follow from this table and are indicated  in the Hasse diagram by ascending lines: 
\begin{equation}  \label{GaDiagram}
\begin{tikzpicture}[scale=1, thin, baseline=(current  bounding  box.center)]	
       \draw (0,1.4) node {$\bullet$};  \draw (-.6,.7) node {$\bullet$}; \draw (.6,.7) node {$\bullet$};
       \draw (-1.2,0) node {$\bullet$}; \draw (0,0) node {$\bullet$};
       \draw (-.6,-.7) node {$\bullet$};
       \draw (-1.2,0) -- (-.6,.7)  -- (0,1.4);  \draw (.6,.7) -- (0,0)  -- (-.6,.7);  \draw (.6,.7)  -- (0,1.4);
       \draw  (-1.2,0) -- (-.6,-.7) -- (0,0); 
       \draw (-0.4,1.5) node {$G_{15}$};  \draw (-.95,.8) node {$G_9$}; \draw (1,.7) node {$G_{11}$}; \draw (0.3,-.1) node {$G_{5}$};  \draw (-1.55,.1) node {$G_4$};
       \draw (-.25,-.7) node {$G_0$};
\end{tikzpicture}
\end{equation}
Moreover, $G_9$ is isomorphic to the fiber product of $G_4$ and $G_5$ over $G_0$ and similarly for the other parallelograms.  When an inclusion $G_b \subset G_a$ exists,  Magma extends the identity  on $G_0$ to a surjection $f_{a,b}\!: G_a \twoheadrightarrow G_b$ sending $\gamma_a$  to  $\gamma_b$.   

\begin{Def} 
An involution $g$ in a group $H$ is {\em good} if its conjugates generate $H$.  If $g$ is good in $H \subseteq \cP$ and  $\rk \, (g-1) = 2$, then $g$ is {\em very good}.  
\end{Def}

\begin{Rem} \label{OneVG}
A Magma verification shows that  each $G_a$  has a unique conjugacy class of very good involutions, represented by $d(r)$ with $r$ as in  \eqref{stMatrices}. 
\end{Rem}

\begin{prop}  \label{LProperties}
Let $L$ be an elementary $2$-extension of $F = \Q(E)$, Galois over $\Q$, with $L/F$ unramified outside $\{2,\infty\}$ and $\gf_\gp(L/F) \le 4$ for all $\gp \vert 2$.  Then: 
\begin{enumerate}[{\rm i)}]
\item  The maximal subfield of $L$ abelian over $\Q$ is $\Q(\sqrt{N^*})$, with $N^* = \pm N \equiv 5 \, (8)$.

\vspace{2 pt}

\item For $v \vert N$, inertia $\cI_v(L/\Q)$ is generated by a good involution in $\Gal(L/\Q)$. 
\end{enumerate}
\end{prop}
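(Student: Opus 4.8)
The plan is to read both parts off the extension $1 \to V \to G \to \cS_5 \to 1$, where $G = \Gal(L/\Q)$ and $V = \Gal(L/F)$ is an elementary abelian $2$-group carrying an action of $\cS_5 = \Gal(F/\Q)$, using the local description of $T/\Q_2$ from Proposition~\ref{Max2}. I would prove (i) first and derive (ii) from it. Since $G$ surjects onto $\cS_5$, the abelianization $G^{ab}$ is a $2$-group surjecting onto $\cS_5^{ab} = \F_2$ through the sign character, whose fixed field is the discriminant field $\Q(\sqrt{16N^*}) = \Q(\sqrt{N^*})$ by Proposition~\ref{Fproperties}. As $\Q(\sqrt{N^*}) \subseteq F \subseteq L$ is abelian over $\Q$, part (i) amounts to showing that the maximal subfield $M \subseteq L$ abelian over $\Q$ is no larger, equivalently that $M/\Q$ is unramified at $2$.

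The crux is the local analysis at $2$. For each $\gp \mid 2$ the hypothesis $\gf_\gp(L/F) \le 4$ forces the completion $L_\gp$ into the maximal conductor-$4$ field $T$ of Proposition~\ref{Max2}, so $M_\gp$ lies in the largest subextension of $T/\Q_2$ that is abelian over $\Q_2$. I claim this subextension is unramified. Indeed, writing $\Gal(T/\Q_2) = \Gamma_1 \rtimes H$ with $H = \langle \sigma, \Phi\rangle$ and $\Phi\sigma\Phi^{-1} = \sigma^2$, the entire inertia group dies in the abelianization: the wild inertia $\Gamma_1 \cong E \oplus E$ maps onto its $\langle\sigma\rangle$-coinvariants, which vanish because $\sigma - 1$ is invertible on $E$ (the minimal polynomial of $s = \rho_E(\sigma)$ is $s^4+s^3+s^2+s+1$), while the tame generator satisfies $\sigma = \Phi\sigma\Phi^{-1}\sigma^{-1} \in [H,H]$. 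Hence $M_\gp/\Q_2$ is unramified, and $M/\Q$ is unramified at $2$.

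A cyclotomic argument then finishes (i). Because $L/F$ is unramified outside $\{2,\infty\}$ and $\cI_v(F/\Q)$ has order $2$ at each $v \mid N$ by Proposition~\ref{Fproperties}(i), inertia at $N$ in $L/\Q$, hence in $M/\Q$, is tame of order at most $2$. Together with unramifiedness at $2$ this makes $M/\Q$ an abelian extension unramified outside $\{N,\infty\}$ and tame at $N$, so $M \subseteq \Q(\Mu_N)$ and $\Gal(M/\Q)$ is a quotient of the totally ramified group $(\Z/N)^\times$. The order-$\le 2$ inertia condition then forces $[M:\Q] \le 2$, whence $M = \Q(\sqrt{N^*})$; the congruence $N^* \equiv 5 \pmod 8$ is Proposition~\ref{Fproperties}(ii).

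For (ii), fix $v \mid N$; since $L/F$ is unramified above $v$, the group $\cI_v(L/\Q) \cong \cI_v(F/\Q)$ is generated by an involution $\tilde\sigma_v$ mapping to a transposition in $\cS_5$. Let $H_0 \trianglelefteq G$ be generated by the conjugates of $\tilde\sigma_v$. As transpositions generate $\cS_5$, we have $H_0 V = G$, so $G/H_0 \cong V/(V \cap H_0)$ is abelian and therefore a quotient of $G^{ab} = \F_2$; but $\tilde\sigma_v$ is odd, so it generates $G^{ab}$ while lying in $H_0$, which is possible only if $G/H_0$ is trivial. Thus $H_0 = G$ and $\tilde\sigma_v$ is good. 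The main obstacle is the local step: proving that every $\Q_2$-abelian subextension of the conductor-$4$ field $T$ is unramified. This is exactly where Proposition~\ref{Max2} is indispensable, since the semidirect decomposition $\Gamma_1 \rtimes H$ and the relation $\Phi\sigma\Phi^{-1} = \sigma^2$ are precisely what kills inertia in the abelianization.
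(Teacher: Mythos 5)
Your proof is correct, but it follows a genuinely different route from the paper's. For (i), the paper never analyzes the abelianization of $\Gal(T/\Q_2)$: it observes that every intermediate field $F'$ of $L/F$ inherits the bound $\gf_\gp(F'/F) \le 4$, while Lemma \ref{cond2} gives $\gf_\gp(F(i)/F) = 6$ and $\gf_\gp(F(\sqrt{\pm 2})/F) = 11$, so $L \cap F(i,\sqrt{2}) = F$, and Kronecker--Weber finishes. You instead prove the stronger local statement that the maximal subextension of $T/\Q_2$ abelian over $\Q_2$ is unramified, by killing the whole inertia subgroup $\Gamma_1 \rtimes \langle \sigma \rangle$ of $\Gal(T/\Q_2)$ in the abelianization: the coinvariants of $\Gamma_1 \simeq E \oplus E$ vanish because $\sigma - 1$ is invertible on $E$, and $\sigma = \Phi\sigma\Phi^{-1}\sigma^{-1}$ is a commutator. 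This is a legitimate use of Proposition \ref{Max2}, whose proof is independent of the present statement, and it subsumes the paper's exclusion of $i$ and $\sqrt{\pm 2}$; the trade-off is that you invoke the full semidirect-product structure where the paper needs only two Kummer conductor computations. For (ii) the divergence is sharper: the paper takes the fixed field of the normal closure of $\sigma_v$, notes it is unramified outside $\{2,\infty\}$, and cites \cite{BK1} to place it inside $\Q(i)$, whence it equals $\Q$ because $i \notin L$. You avoid the external citation entirely: since transpositions generate $\cS_5$, the normal closure $H_0$ satisfies $H_0 V = G$, so $G/H_0$ is abelian, and part (i) identifies $G^{\mathrm{ab}}$ with $\F_2$, generated by the image of the odd involution $\tilde\sigma_v \in H_0$, forcing $H_0 = G$. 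Thus your (ii) becomes a purely group-theoretic corollary of (i) --- self-contained and arguably cleaner --- whereas the paper's (ii) relies on the intermediate fact $i \notin L$ from the proof of (i) together with the result quoted from \cite{BK1}.
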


\begin{proof}  
By Proposition \ref{Fproperties}, $F$ contains $\sqrt{N^*}$.  For $v \vert N$, the inertia group $\cI_v(F/\Q)$ has order 2.  Since $L/F$ is unramified, $\cI_v(L/\Q)$ is generated by an involution $\sigma_v$.  Intermediate fields $L\supseteq F' \supseteq F$ satisfy $\gf_\gp(F'/F) \le \gf_\gp(L/F) \le 4$.  But Lemma \ref{cond2} implies that $\gf_\gp(F(i)/F) = 6$ and $\gf_\gp(F(\sqrt{\pm 2})/F) = 11$, so $L \cap F(i, \sqrt{2}) = F$.  Since $L/\Q$ is unramified outside $\{2,N,\infty\}$, item (i) follows from Kronecker-Weber.  The subfield of $L$ fixed by the normal closure of $\sigma_v$ is unramified outside $\{2, \infty\}$ and is contained in $\Q(i)$ by \cite{BK1}, so equals $\Q$.  Thus (ii) holds.  
\end{proof}

\begin{cor}  \label{LPropCor} 
For $[W]$ in $\Ext^1_{[2],\Q}(E,E)$, assume that $L = \Q(W)$ satisfies the hypotheses in the Proposition and $\rk\rho_W(\sigma_v-1) = 2$.  Then $\rho_W(\Gal(L/\Q))$ is one of the groups $G_a$, up to conjugation in $\cP$.  If $[\cW]$ is in $\Ext^1_{[2],\un{E}}(\cE,\cE)$, then $\Gal(\Q(\cW)/\Q)$ is conjugate to some $G_a$. 
\end{cor}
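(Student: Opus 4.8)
The plan is to transfer the statement into a question about subgroups of the parabolic group $\cP$ and then read off the answer from the $\cS_5$-module data encoded in \eqref{CorGa} and the Hasse diagram \eqref{GaDiagram}. Write $G = \rho_W(\Gal(L/\Q)) \subseteq \cP$. Since $L \supseteq F = \Q(E)$, reduction modulo the unipotent radical $c(\End(E))$ carries $G$ onto $G_0 \cong \cS_5$, identified with $\Gal(F/\Q)$; by \eqref{RepViaCocycle} this reduction sends $\rho_W(g)$ to $d(\bar g)$. Setting $c(M) = G \cap c(\End(E)) = \rho_W(\Gal(L/F))$, normality of $\Gal(L/F)$ in $\Gal(L/\Q)$ together with the identity $d(g) c(m) d(g)^{-1} = c(\iota(g) m \iota(g)^{-1})$ shows that $M$ is an ${\rm S}$-submodule of $\End(E)$, where ${\rm S} = \F_2[\cS_5]$. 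Thus $G$ is an extension of $\cS_5$ by $c(M)$ inside $\cP$, and it remains to pin down $M$ and the way $G$ sits in $\cP$.

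First I would use inertia at $N$ to move $G$ into standard position. By Proposition \ref{LProperties}(ii), $\sigma_v$ is a good involution, so $g_v = \rho_W(\sigma_v)$ is good in $G$: its $G$-conjugates generate $G$. The hypothesis $\rk(\rho_W(\sigma_v)-1) = 2$ makes $g_v$ very good. Writing $g_v = c(m_v) d(r)$, the relation $g_v^2 = 1$ forces $m_v$ to commute with $r$, while the rank condition forces $m_v(E^r) \subseteq (r-1)E$; a short computation with the transvection $r$ places $m_v$ in the image of $1 + \operatorname{Ad}(r)$ on $\End(E)$, so $g_v$ is $c(\End(E))$-conjugate to $d(r)$. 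After replacing $G$ by a $\cP$-conjugate I may thus assume $d(r) \in G$ and that $G$ is generated by $G$-conjugates of $d(r)$, all of which are very good involutions lying over transpositions of $\cS_5$.

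The crux, which I expect to be the main obstacle and to require a Magma verification, is the following classification. A subgroup $G \subseteq \cP$ surjecting onto $G_0$, meeting $c(\End(E))$ in $c(M)$ for an ${\rm S}$-submodule $M$, and generated by very good involutions over the transpositions must have $M \in \{0, \Gamma_4, \Gamma_5, \Gamma_9, \Gamma_{11}, \Gamma_{15}\}$, and for each such $M$ the group $G$ is forced to be the split extension $c(M) \rtimes G_0 = G_a$ up to conjugation by $c(\End(E))$. Concretely, very good involutions over a fixed transposition differ by corners, so $M$ is generated as an ${\rm S}$-module by the corners realized by the chosen generators of $G$; the corner data \eqref{CorGa}, the uniqueness in Remark \ref{OneVG}, and the vanishing of $H^1(\cS_5, \End(E)/\Gamma_a)$ (which makes the embedding of $G$ in $\cP$ unique up to $c(\End(E))$-conjugacy) leave exactly the six possibilities displayed in \eqref{GaDiagram}. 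This yields the first assertion: $\rho_W(\Gal(L/\Q))$ is $\cP$-conjugate to some $G_a$.

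For the second assertion, let $\cW$ represent a class in $\Ext^1_{[2],\un{E}}(\cE,\cE)$, put $W = \cW(\ov{\Q})$ and $L = \Q(\cW) = \Q(W)$, and identify $\Gal(\Q(\cW)/\Q)$ with $G$ via the faithful representation $\rho_W$. It then suffices to check the hypotheses of the first part. Since $\cW$ is killed by $2$, $L/F$ is an elementary $2$-extension, Galois over $\Q$. As $\cW$ is \'etale over $\Z[\tfrac{1}{2N}]$ and the inertia group $\cI_v(L/\Q) = \langle g_v \rangle$ has order $2$ and surjects onto $\cI_v(F/\Q)$, we get $\cI_v(L/F) = 1$, so $L/F$ is unramified outside $\{2,\infty\}$; Proposition \ref{CondExpP} gives $\gf_\gp(L/F) \le 4$. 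Finally, axiom {\bf E2} gives $(\sigma_v-1)^2 = 0$, and tameness at $N$ with axiom {\bf E3} gives $\gf_N(W) = 2\,\gf_N(E) = 2 = \rk(\rho_W(\sigma_v)-1)$, the required rank hypothesis. Applying the first part completes the proof.
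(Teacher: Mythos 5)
Your proposal follows essentially the same route as the paper. The paper's proof is exactly: (a) Proposition \ref{LProperties}(ii) plus the rank hypothesis makes $\rho_W(\sigma_v)$ a very good involution; (b) a Magma verification that the $G_a$ exhaust the conjugacy classes of subgroups of $\cP$ projecting onto $\cS_5$ and admitting very good involutions; (c) for the group-scheme case, the hypotheses of the Proposition hold because $\gf_\gp(L/F)\le 4$ by Proposition \ref{CondExpP} and $\rk\rho_W(\sigma_v-1)=2$ by {\bf E3}. Your steps match this, you correctly defer the group-theoretic crux to a machine check just as the paper does, and your verification of the hypotheses in the second part (including the unramifiedness outside $\{2,\infty\}$, which the paper leaves implicit) is sound; your normalization of $g_v$ to $d(r)$ via $\operatorname{im}(1+\operatorname{Ad}(r))$ is also correct.

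One ingredient of your supplementary sketch is, however, genuinely false: $H^1(\cS_5,\End(E)/\Gamma_a)$ does \emph{not} vanish in general, so it cannot be what ``makes the embedding of $G$ in $\cP$ unique up to $c(\End(E))$-conjugacy.'' Already for $a=0$ (and likewise $a=15$, where the quotient is the trivial module $\F_2$), the paper's own Remark \ref{obstrep} exhibits the nontrivial class $g\mapsto \sign^+(g)I_4$ in $H^1(\cS_5,\End(E))$; the corresponding complement $\{c(\sign^+(g)I_4)\,d(g)\}$ projects onto $\cS_5$ and is not conjugate to $G_0$. What excludes such ``obstreperous'' subgroups is not cohomology but precisely the very-good-involution condition: their elements over transpositions have the form $c(I_4)d(r)$, for which $\rk(\rho(g)-1)=4$, so they contain no very good involutions. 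Thus the cohomological shortcut you propose would fail, and the classification really does have to be carried by the Magma computation (or by an argument that uses the rank condition at transpositions, not just the submodule $M$); since you explicitly rest the step on that verification, your proof stands, but the parenthetical justification should be deleted or replaced by this rank argument.
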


\begin{proof}
By the Proposition $\rho_W(\sigma_v)$ is good and so is very good by assumption.  Magma  verifies that the $G_a$ represent the six conjugacy classes of subgroups of $\cP$ that project onto $\cS_5$ and admit very good involutions. If $[\cW]$ is a class in $\Ext^1_{[2],\un{E}}(\cE,\cE)$, then the Proposition applies to $L = \Q(W)$, since $\gf_\gp(L/F) \le 4$ by Proposition \ref{CondExpP} and  $\rk\rho_W(\sigma_v-1) = 2$ by {\bf E3} of Definition \ref{ourcat}.
\end{proof}

\begin{Def} \label{GaClass}
A class $[W]$ in $\Ext^1_{[2],\Q}(E,E)$ with $L = \Q(W)$ is a $G_a$-{\em class} if $L/F$ is unramified outside $\{2,\infty\}$, $\gf_\gp(L/F) \le 4$ for $\gp \vert 2$ and $\rk\rho_W(\sigma_v-1) = 2$, so that $\rho_W(\Gal(L/\Q)) = G_a$ for some $a$ by the Corollary. 
\end{Def}

\begin{lem} \label{ab}
Let $[W]$ be a $G_a$-class with $L = \Q(W)$.
\begin{enumerate}[{\rm i)}]
\item If $[W']$ is a $G_{a'}$-class, with $L' = \Q(W')$, then the Baer sum  $[W''] = [W] + [W']$ is a $G_b$-class for some $b$.    \vspace{2 pt}
\item If $f_{a,b} \!: G_a \twoheadrightarrow G_b$ exists in \eqref{GaDiagram}, then the Galois module for $f_{a,b} \, \rho_W$ represents a $G_b$-class.
\end{enumerate}
\end{lem}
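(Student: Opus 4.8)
The plan is to check, for each of the two constructions, the three defining conditions of a $G_b$-class in Definition \ref{GaClass} and then read off the Galois image. The key observation that makes both parts manageable is that the new field sits inside a field already under control. For the Baer sum, the cocycle of $W''$ is $\psi+\psi'$, so $\ker\rho_{W''}\supseteq\ker\rho_W\cap\ker\rho_{W'}$ and hence $L''=\Q(W'')\subseteq LL'$; for (ii), the module $W_b$ attached to $f_{a,b}\circ\rho_W$ has field $L_b=\Q(W_b)\subseteq L$, being the fixed field of $\ker(f_{a,b}\,\rho_W)\supseteq\ker\rho_W$. Since $f_{a,b}$ restricts to the identity on $G_0$, it preserves the diagonal $\cS_5$-quotient, so $W_b$ is again an exponent-$2$ extension of $E$ by $E$ projecting onto $\rho_E$; likewise $W''$ has exponent $2$ (both take values in $\cP\subseteq\GL_8(\F_2)$, acting on the $\F_2$-space $E\oplus E$).

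The ramification and conductor conditions are then routine. Unramifiedness outside $\{2,\infty\}$ of $L''/F$ (resp.\ $L_b/F$) is inherited from $LL'$ (resp.\ $L$). For (i), both $L/F$ and $L'/F$ are elementary $2$-extensions with $\gf_\gp\le 4$, so $\gf_\gp(LL'/F)\le 4$ by the behaviour of conductors of composita of abelian extensions (Lemma \ref{cond3}), and the intermediate field $L''$ inherits $\gf_\gp(L''/F)\le 4$. In (ii), $L_b\subseteq L$ gives $\gf_\gp(L_b/F)\le\gf_\gp(L/F)\le 4$, since the upper ramification numbering passes to quotients. Goodness of the inertia generator is free: Proposition \ref{LProperties}(ii) applies to any such $L$ and shows that $\sigma_v$ maps to a good involution of the Galois image.

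Everything thus reduces to the rank condition $\rk\bigl(\rho(\sigma_v)-1\bigr)=2$, which I regard as the heart of the matter. Write $r=\rho_E(\sigma_v)=I+w\ell^{T}$ with $\ell^{T}w=0$, a transvection. As $L''/F$ (resp.\ $L_b/F$) is unramified at $v\mid N$, the image $\rho(\sigma_v)$ is an involution of the block shape in \eqref{RepViaCocycle} with off-diagonal block $A\,r$, where $A=\psi(\sigma_v)+\psi'(\sigma_v)$ for the Baer sum. A direct computation shows that such an involution satisfies $\rk\bigl(\rho(\sigma_v)-1\bigr)=2$ if and only if $A(\ker\ell^{T})\subseteq\langle w\rangle$, a condition that is \emph{linear} in $A$. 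For (i) this is decisive: $\psi(\sigma_v)$ and $\psi'(\sigma_v)$ each satisfy it, since $W$ and $W'$ are $G_a$- and $G_{a'}$-classes of rank $2$, so their sum does as well, yielding the rank-$2$ involution for $W''$; Corollary \ref{LPropCor} then forces $\rho_{W''}(\Gal(L''/\Q))=G_b$ for some $b$. For (ii) the argument is more structural: by Remark \ref{OneVG} the element $\rho_W(\sigma_v)$ is conjugate in $G_a$ to the diagonal very good involution $d(r)\in G_0$, and since $f_{a,b}$ fixes $G_0$ pointwise it carries $\rho_W(\sigma_v)$ to a $G_b$-conjugate of $d(r)$, again of rank $2$; as $f_{a,b}$ is surjective and $\rho_W$ surjects onto $G_a$, the image of $f_{a,b}\,\rho_W$ is exactly $G_b$, so $W_b$ is a $G_b$-class.

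The main obstacle is precisely the stability of the very good (rank $2$) condition under Baer sum; once it is recast as the linear condition $A(\ker\ell^{T})\subseteq\langle w\rangle$ on the off-diagonal cocycle value, additivity makes it transparent, and the remaining verifications (ramification, the conductor bound via Lemma \ref{cond3}, and goodness via Proposition \ref{LProperties}) are standard.
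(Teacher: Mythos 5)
Your proof is correct, and its overall skeleton matches the paper's: verify the three conditions of Definition \ref{GaClass} for the new module (ramification outside $\{2,\infty\}$ and the conductor bound via $L''\subseteq LL'$ and Lemma \ref{cond3}, resp.\ via $L_b\subseteq L$; goodness via Proposition \ref{LProperties}), then conclude with Corollary \ref{LPropCor}; your part (ii) is essentially identical to the paper's argument. Where you genuinely diverge is the key step of part (i), the stability of the very good (rank $2$) condition under Baer sum. The paper argues group-theoretically: since $\rho_W(\sigma_v)$ and $\rho_{W'}(\sigma_v)$ are very good, Remark \ref{OneVG} makes each conjugate to $d(r)$, so the cocycle representatives can be normalized so that $\psi(\sigma_v)=\psi'(\sigma_v)=0$, whence $\psi''(\sigma_v)=0$ and the rank is $2$. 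You instead linearize: writing $r=I+w\ell^T$, you characterize $\rk\bigl(\rho(\sigma_v)-1\bigr)=2$ by the condition $A(\ker\ell^T)\subseteq\langle w\rangle$ on the off-diagonal cocycle value, which is manifestly additive in $A$. Your claimed computation is indeed correct: the image of $\rho(\sigma_v)-1$ equals $\langle(w,0)\rangle+\bigl(A(\ker\ell^T)\times\{0\}\bigr)+\langle(Ary_0,w)\rangle$ with $\ell^Ty_0=1$, of dimension $1+\dim\bigl(\langle w\rangle+A(\ker\ell^T)\bigr)$, so rank $2$ holds exactly when $A(\ker\ell^T)\subseteq\langle w\rangle$. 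The two approaches are equivalent --- a cocycle value $A$ at $\sigma_v$ satisfies your linear condition precisely when it is a coboundary for $\langle\sigma_v\rangle$, i.e.\ precisely when the representative can be normalized to vanish at $\sigma_v$ --- but your route buys something: it replaces the appeal to the Magma verification of Remark \ref{OneVG} in part (i) by an elementary hand computation, and it substantiates the paper's rather terse claim that the normalization $\psi(\sigma_v)=0$ can be achieved without altering the cohomology class. The cost is only that you still invoke Remark \ref{OneVG} for part (ii), so the computational input is not removed entirely.
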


\begin{proof} 
In (i), $[W]$ and $[W']$ correspond to classes $[\psi]$ and $[\psi']$ in $H^1(G_\Q,\gh)$ as in \eqref{RepFromPsi} and $[W'']$ belongs to the class of $\psi'' = \psi + \psi'$.  Since $L'' = \Q(W'')$ is a subfield of the compositum $LL'$, the ramification properties required of $L''$ in Definition \ref{GaClass} hold.   Proposition \ref{LProperties} shows that  $\rho(\sigma_v)$ is a good involution in $G_a$ and so is very good, conjugate to $d(r)$ by Remark \ref{OneVG}.  Similarly for $\rho_{W'}(\sigma_v)$ in $G_{a'}$.  Hence the representatives $\psi$ and $\psi'$ can be chosen to satisfy $\psi(\sigma_v) = \psi'(\sigma_v) = 0$.  We now have $\psi''(\sigma_v) = 0$ and so $\rk \rho_{W''} (\sigma_v-1) = 2$.  By Corollary \ref{LPropCor}, $[W'']$ is a $G_b$-class for some $b$.

For (ii), let $L'$ be the subfield of $L$ fixed by $\rho_W^{-1}(\ker f_{a,b})$.  Then $f_{a,b} \, \rho_W$ induces an isomorphism $\rho'\!: \, \Gal(L'/\Q) \to G_b$.  The required ramification conditions hold for the subfield $L'$ of $L$.  As above, $\rho'(\sigma_v \vert L')$ is a good involution in $G_b$.   Since $f_{a,b}$ is the identity on $G_0$ and $\rho_W(\sigma_v)$  is conjugate to $d(r)$ in $G_0$ so is $\rho'(\sigma_v \vert L' ).$ 
\end{proof}

Let $K = \Q(r_1+r_2)$ be a pair-resolvent field for $F = \Q(E)$, as defined before Theorem \ref{IntroThm}, namely the fixed field of $\Sym\{1,2\} \times \Sym\{3,4,5\}$.  Let $\Omega_K = \Omega_K^{(4)}$ be the maximal elementary 2-extension of $K$ of modulus $\gp^4 \, \infty$, where $\gp$ is the unique prime over 2 in $K$ and $\infty$ allows ramification at all archimedean places.   Refer to the following diagram of fields and Galois groups.   

\vspace{2 pt}

\begin{center}
\begin{tikzpicture}[scale=1, thin, baseline=(current  bounding  box.center)]	
    \draw (0,0) -- (0,2);
    \draw (0,0) node {$\bullet$};   \draw (.3,0) node {$\Q$};
    \draw (0,.6) node {$\bullet$};     \draw (.3,.6) node {$K$};
    \draw (0,1.1) node {$\bullet$};   \draw (.95,1.1) node {$F = \Q(E)$};
    \draw (.3,1.55) node {$\Gamma_a$};
    \draw (0,2) node {$\bullet$};   \draw (.95,2) node {$L = \Q(W)$};
    \draw (0,.6) -- (-.5,.9) -- (0,2);    \draw (-.5,.9) node {$\bullet$};     \draw (-.55,.65) node {$K'$}; 
    \draw (-.5,.9) -- (-1.25,1.35); \draw (-1.25,1.35) node {$\bullet$};   \draw (-1.6,1.35) node {$\Omega_K$};
    \draw (2.5,2) -- (2.9,2) -- (2.9,.6) -- (2.5,.6);   \draw (2.65,1.4) node {$H$}; 
    \draw (1.9,1.1) -- (2.3,1.1) -- (2.3,-.1) -- (1.9,-.1);   \draw (1.65,.5) node {$\Delta \simeq \cS_5$};
   \draw (-.4,1.5) node {$J$};
     \draw (3.2,2) -- (3.6,2) -- (3.6,-.1) -- (3.2,-.1);   \draw (4,.9) node {$G_a$}; 
\end{tikzpicture}
\end{center}

To simplify notation, also write $\gp$ for a place over 2 in $L$ and for the restrictions of $\gp$ to subfields of $L$.  Note that primes over 2 are unramified in $F/K$.  Suppose that $L$ is the Galois closure of $K'/\Q$.  By Lemma \ref{condK'L} with $M$, $F$, $K'$, $K_1$ and $K$ there equal to the respective $\gp$-adic completions of $L$, $F$, $K'$, $K$ and $\Q$ here, $\gf_\gp(K'/K) = \gf_\gp(L/F)$.

\begin{prop}   \label{MakeGaFields} 
Let $K$ be a pair-resolvent of $F$.  There is a bijection 

\vspace{2 pt}

\centerline{\rm \{$G_a$-classes $[W]$ with $a \in \{4, 5, 9\}\} \longleftrightarrow$ \{subfields $K' \subseteq \Omega_K$ quadratic over $K$\} } 

\vspace{2 pt}
 
\noindent such that $\Q(W)$ is the Galois closure of $K'/\Q$.
\end{prop}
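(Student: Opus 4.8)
The plan is to build the two maps explicitly and check they are mutually inverse, the forward map sending a quadratic $K'/K$ to its Galois closure and the backward map sending a $G_a$-class to a distinguished quadratic subfield of $L$ over $K$.

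First I would run the forward direction. Given $K' \subseteq \Omega_K$ quadratic over $K$, set $L$ to be the Galois closure of $K'/\Q$. Since $K$ is a stem field for $F$, the field $F$ is the Galois closure of $K/\Q$; as $K \subseteq K'$, every $\cS_5 = \Gal(F/\Q)$-conjugate of $K$ lies in $L$, so $F \subseteq L$. Because $K'/K$ is Galois and $F/\Q$ is Galois, each $G_\Q$-conjugate of the quadratic extension $K'F/F$ is again such an extension, so $\Gal(L/F)$ is an elementary abelian $2$-group and $\Gal(L/F)^\vee$ is the $\F_2[\cS_5]$-module generated by the single class $[K'F/F]$. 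Moreover that class is fixed by $U = \Gal(F/K)$, since every element of $G_K$ preserves both $K'$ (as $K'/K$ is Galois) and $F$ (as $F/\Q$ is Galois). The hypotheses on $K'$ transfer to $L/F$: it is unramified outside $\{2,\infty\}$, and $\gf_\gp(L/F) = \gf_\gp(K'/K) \le 4$ by Lemma \ref{condK'L}. Proposition \ref{LProperties} then makes inertia at $v \mid N$ a good involution; after verifying $\rk\rho_W(\sigma_v-1) = 2$ — which holds because $L/F$ is unramified at $N$, so $\cI_v(L/\Q)$ has order $2$ and projects to a transposition, and such a good involution is very good in the relevant $G_a$ — Corollary \ref{LPropCor} identifies $\Gal(L/\Q)$ with some $G_a$ and yields a $G_a$-class $[W]$.

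Next I would pin down $a \in \{4,5,9\}$ and define the backward map. The paragraph above shows $\Gal(L/F)^\vee \cong \Gamma_a^\vee$ is a cyclic $\F_2[\cS_5]$-module admitting a $U$-invariant generator. The core computation is to determine, for each $a$ occurring in the Hasse diagram \eqref{GaDiagram}, whether $\Gamma_a$ carries such a generator and, if so, whether it is unique up to the $\F_2$-structure. I expect exactly $\Gamma_4, \Gamma_5, \Gamma_9$ to qualify, each with a one-dimensional space of admissible generators; the case $a = 0$ is excluded because every quadratic subextension of $F/K$ is ramified at $N$ (the inertia transpositions at the primes over $N$ generate $U = \Sym\{1,2\}\times\Sym\{3,4,5\}$) and hence lies outside $\Omega_K$, while $\Gamma_{11}$ and $\Gamma_{15}$ are ruled out by the same invariant computation. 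Granting this, for a $G_a$-class $[W]$ with $a \in \{4,5,9\}$ the unique $U$-invariant generator determines a unique character $\chi\colon H = \Gal(L/K) \to \F_2$ whose kernel has trivial core in $G_a$; then $K' = L^{\ker\chi}$ is quadratic over $K$ with Galois closure $L$, and $K' \subseteq \Omega_K$ because $\gf_\gp(K'/K) = \gf_\gp(L/F) \le 4$ by Proposition \ref{CondExpP} and Lemma \ref{condK'L}. Finally I would check the two constructions are mutually inverse: the Galois closure of $L^{\ker\chi}$ is $L$ since $\ker\chi$ has trivial core, and the class $[K'F/F]$ extracted from the Galois closure recovers the generator from which $\chi$ was built; bijectivity follows.

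The main obstacle is the group-theoretic heart: proving that among the modules $\Gamma_a$ exactly $\Gamma_4,\Gamma_5,\Gamma_9$ carry a unique $U$-invariant cyclic $\cS_5$-generator, while $\Gamma_{11}$ and $\Gamma_{15}$ carry none compatible with descent to $K$. This is precisely where the special role of the pair-resolvent $K$ enters, and it is most naturally settled by an explicit computation with $\F_2[\cS_5]$-module invariants under $U$, parallel to the corner computation recorded in \eqref{CorGa}. A secondary technical point is the careful transfer of local conductor and ramification data between $K'/K$ and $L/F$ through Lemma \ref{condK'L}, together with the verification of the rank-$2$ condition at $N$ needed to invoke Corollary \ref{LPropCor}.
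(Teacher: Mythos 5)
Your proposal has two genuine gaps, both at places where the paper needs computational group theory that your sketch replaces with assertions.

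First, in your forward direction you claim that $\rk\rho_W(\sigma_v-1)=2$ holds because inertia at $v\mid N$ "projects to a transposition, and such a good involution is very good in the relevant $G_a$." This is false as stated: in $G_5$, $G_9$ (and $G_{11}$, $G_{15}$) there are good involutions projecting to $(12)$ that are \emph{not} very good, e.g.\ $\epsilon(d(r)) = d(r)c(1)$, where $\epsilon$ is the automorphism \eqref{EpRep}; for this element $\rho(g)-1$ has rank larger than $2$. Moreover, before you can even speak of $\rho_W$ you must \emph{choose} an identification $\rho\colon \Gal(L/\Q)\to G_a$, and whether inertia lands on a very good involution depends on that choice. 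The paper's proof confronts exactly this: it checks computationally that the relevant good involution is conjugate to $d(r)$ or to $d(r)\epsilon(r)$, and in the latter case replaces $\rho$ by $\epsilon\circ\rho$. Without this twisting step, the Galois module $W$ you build need not satisfy the rank-$2$ condition of Definition \ref{GaClass}, so it need not be a $G_a$-class at all.

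Second, the "group-theoretic heart" you defer — that exactly $\Gamma_4,\Gamma_5,\Gamma_9$ admit a suitable $U$-invariant generator — is not only unproven ("I expect\ldots"), it is not the right criterion, because the module structure of $\Gal(L/F)$ alone cannot see ramification at $N$. Your backward map exposes this: you pick $K' = L^{\ker\chi}$ with $\ker\chi$ of trivial core, but $K'\subseteq\Omega_K$ also requires $K'/K$ to be unramified at every prime over $N$, which you never verify (and note also that a $U$-invariant character of $\Gal(L/F)$ has up to four extensions to $\Gal(L/K)$, differing by the characters of $U$, giving up to four candidate fields $K'$). The paper's choice of $J$ is pinned down by \emph{two} conditions — faithfulness of the action on $G_a/J$ \emph{and} the requirement that $\sigma_v$ have exactly $8$ fixed points on $G_a/J$ — and it is the fixed-point condition, combined with the factorization $(N)\cO_K = \ga\gb^2$, that forces $K'/K$ to be unramified over $N$; the determination of the admissible groups is likewise done by searching the transitive groups of degree $20$ for properties that mix the module structure with this ramification datum. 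Your purely module-theoretic route would need to be supplemented by precisely these arguments to close the proof.
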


\begin{proof}

For $v \vert N$, $\cI_v(F/K)$ acts on the left cosets of $\Gal(F/K)$ in $\Gal(F/\Q)$ with four fixed points and three orbits of size 2.  Thus $(N)\cO_K = \ga \gb^2$ where $\ga$ and $\gb$ are square-free, relatively prime ideals of $\cO_K$ of absolute norms $N^4$ and $N^3$ respectively. 

Let $[W]$ be a $G_a$-class with $a$ in $\{4,5,9\}$, $L = \Q(W)$ and $\rho_W\!: \, \Gal(L/\Q) \xrightarrow{\sim} G_a$.  Then $H = \Gal(L/K)$ is the inverse image under $\pi\!: G_a \twoheadrightarrow \cS_5$ of $\Gal(F/K)$.  Choose  $v \vert N$ so that if $\sigma_v$ generates $\cI_v(L/\Q)$, then $\pi(\rho_W(\sigma_v)) = (12)$.  By assumption $g = \rho_W(\sigma_v)$ is very good in $G_a$.  Magma shows that among the subgroups of index 2 in $H$,  exactly one, say $J,$ has the property that the action of $G_a$ on $G_a/J$ is faithful and $g$ has exactly 8 fixed points in this action.   Hence $K' = L^J$ is a stem field for $L$ and in view of the factorization of $(N)\cO_K$, no prime over $N$ ramifies in $K'/K$.  If $v' \vert N$ is any other choice such that $\pi(\rho_W(\sigma_{v'})) =  (12)$, then $\sigma_{v'}$ is conjugate to $\sigma_v$ in $H$ and therefore gives the same $J$, so also the same $K'$.  Since $\gf_\gp(K'/K) = \gf_\gp(L/F) \le 4$ by definition of a $G_a$-class, $K'$ is contained in $\Omega_K$.  

Conversely, let $K'$ be a subfield of $\Omega_K$ quadratic over $K$, $L$ the Galois closure of $K'/\Q$, $G = \Gal(L/\Q)$, $H = \Gal(L/K)$ and $J = \Gal(L/K')$.  Then $L$ properly contains $F$, since each quadratic extension of $K$ in $F$ ramifies at some prime over $N$.   By Proposition \ref{LProperties}(ii), $\sigma_v$ is a good involution in $G$.  Since no prime over $N$ ramifies in $K'/K$, the action of $\sigma_v$ on $G/J$ has eight fixed points.   The following group-theoretic properties of $G$ have been established:
\begin{enumerate}[\hspace{3 pt} i)]
\item There is  a surjection $\pi\!: G \to \cS_5$ whose kernel  has exponent 2 and is the radical of $G$.  \vspace{2 pt}

\item The abelianization of $G$ has order 2.  \vspace{2 pt}

\item If $H$ is the inverse image under $\pi$ of the centralizer of a transposition in $\cS_5$, then there is a subgroup $J$ of index 2 in $H$ such that the action of $G$ on $G/J$ is faithful.    \vspace{2 pt} 
\item There is a good involution $g$ in $G$ whose action on $G/J$ has 8 fixed points. 
\end{enumerate}
We have (i) since the radical of $\Gal(L/\Q)$ is $\Gal(L/F)$ and (ii) by Lemma \ref{LProperties}(i).  

In the Magma database of 1117 transitive groups of degree 20 only three satisfy (i)--(iv), namely $G_a$ with $a$ in $\{4,5,9\}$.   Furthermore, if $J$ is the stabilizer in $\cS_{20}$ of any letter, then there is a unique conjugacy class of good involutions $g$ in $G$ such that $g$ acts on $G/J$ with exactly 8 fixed points. 
By applying this construction to $G = \Gal(L/\Q)$, there is an isomorphism $\rho\!:  \Gal(L/\Q) \to G_a$   such that $\rho(\sigma_v)$ is conjugate to $g$ and has 8 fixed points when acting on $G_a/\rho(J)$.
Computation now shows the  following.  If $a = 4$, then $\rho(\sigma_v)$ is conjugate to $d(r)$.  If $a$ is in $\{5,9\}$, then  $\rho(\sigma_v)$ is conjugate to $d(r)$ or $d(r) \epsilon(r)$ where $\epsilon$ is the automorphism of \eqref{EpRep}.  In the latter case,  replace $\rho$ by $\epsilon \circ \rho$. 
If $W$ is the associated Galois module, then its class is a $G_a$-class. Because any automorphism of $G_a$ preserving the conjugacy class of $d(r)$ is conjugation by an element of $\cP$, the class $[W]$ is unique.  
\end{proof}

Unless otherwise stated, $[W]$ now denotes a $G_a$-class and $L = \Q(W)$. Thus $W$ represents a class in $\Ext^1_{R'}(\cE,\cE)$, where $R' = \Z[\frac{1}{2N}]$.  By the Mayer-Vietoris sequence \eqref{MV}, $W$ prolongs to a group scheme $\cW$ over $R = \Z[\frac{1}{N}]$ if and only if the image of $[W]$ in $\Ext^1_{\Q_2}(\cE,\cE)$ agrees with that of a class from $\Ext^1_{\Z_2}(\cE,\cE)$.  If so, the other conditions in Definition \ref{GaClass} guarantee that $[\cW]$ is in $\Ext^1_{\un{E}}(\cE,\cE)$.  Recall that $\gh = \Hom_{\F_2}(E,E)$ and let $\psi\!: \, G_\Q \to \gh$ represent the class in $H^1(G_\Q,\gh)$ associated to $[W]$, as in \eqref{RepFromPsi}.   Recall that at $\gp \vert 2$, the decomposition group $\cD_\gp(F/\Q)$ is isomorphic to $\Delta = \langle s,t \rangle$ .  

\begin{lem} \label{GlobalAt2}  
As a $\Delta$-module, $\cD_\gp(L/F)$ is isomorphic to $E^b$ with $b \le 2$.
\end{lem}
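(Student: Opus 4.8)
The plan is to pass to the completion at $\gp \mid 2$ and reduce everything to the local classification of \S\ref{CornerSection}. Writing $L_\gp$, $F_\gp$ for the completions, we have $\cD_\gp(L/F) = \Gal(L_\gp/F_\gp)$, with $\Delta = \cD_\gp(F/\Q)$ acting by conjugation inside $\cD_\gp(L/\Q)$. First I would invoke Proposition \ref{CondExpP}, which gives $\gf_\gp(L/F) \le 4$, so that $L_\gp$ is contained in the field $T$ of \S\ref{CornerSection}; Corollary \ref{classfield} then yields, as $\rR$-modules, either $\cD_\gp(L/F) \simeq E^b$ with $b \le 2$ (when $L_\gp/F_\gp$ is totally ramified) or $\cD_\gp(L/F) \simeq E^b \oplus \F_2$ (when the residue degree is $2$). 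So the entire content of the lemma is to rule out the residue-degree-$2$ case.

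To do this I would first pin down what the spurious $\F_2$-summand would have to be. If it occurred, it would be a trivial $\Delta$-submodule of $\gh = \End(E)$; since $E$ is absolutely irreducible over $\rR$, Lemma \ref{gh} forces $\gh^\Delta = \F_2 I_4$, so the summand is spanned by $c(I_4)$ and in particular $I_4 \in \Gamma_a$. Next, applying Lemma \ref{chiLem}(iv) to $L_\gp/F_\gp$, residue degree $2$ is equivalent to $\chi(g_0) = I_4$, i.e.\ to a Frobenius $\Phi_\gp$ at $\gp$ having order $8$ in $G_a = \rho_W(\Gal(L/\Q))$ with $\rho_W(\Phi_\gp)^4 = c(I_4)$. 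By Proposition \ref{Fproperties}(ii) the image of $\Phi_\gp$ in $\cS_5$ is a $4$-cycle, so after conjugating in $\cP$ I may take it to be the $4$-cycle $\tau'$ with $\iota(\tau') = t$ of \eqref{stMatrices}. Writing $\rho_W(\Phi_\gp) = c(m)\,d(\tau')$ with $m \in \Gamma_a$, a direct computation in the semidirect product $G_a = c(\Gamma_a) \rtimes G_0$, using $t^4 = I_4$, gives
\[
\rho_W(\Phi_\gp)^4 = c\Bigl( \textstyle\sum_{i=0}^{3} t^i m\, t^{-i} \Bigr).
\]
Thus residue degree $2$ would force $I_4$ to lie in the image of the $\langle t\rangle$-trace $\nu\colon \Gamma_a \to \Gamma_a$, $\nu(m) = \sum_{i=0}^{3} t^i m\, t^{-i}$.

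Finally I would rule this out. For $a = 4$ it is immediate, since $I_4 \notin \Gamma_4$. For the remaining values of $a$ the crux is the claim $I_4 \notin \nu(\Gamma_a)$ — equivalently, that no element of $G_a$ lying over a $4$-cycle has fourth power $c(I_4)$, hence none has order $8$ with that fourth power. I expect this to be the main obstacle: unlike the merely algebraic reductions above, it genuinely depends on the $\langle t\rangle$-module structure of each $\Gamma_a$ (the operator $\nu = (1+t)^3$ has image supported on the size-$4$ Jordan blocks of $t$, and one must check $I_4$ misses their socles), and I would settle it by a Magma computation on the finitely many groups $G_a$, in the same spirit as Remark \ref{OneVG} and \eqref{CorGa}. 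This is consistent with the fact, recorded in the remark preceding Proposition \ref{dim59}, that the residue-degree-$2$ (i.e.\ $s_4 = 1$) Honda parameters do not arise globally. Once this is verified, $\chi(g_0) = 0$ in every case, $L_\gp/F_\gp$ is totally ramified, and $\cD_\gp(L/F) \simeq E^b$ with $b \le 2$, as claimed.
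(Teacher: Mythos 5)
Your proof is correct, and its skeleton matches the paper's own: both reduce, via Proposition \ref{Max2} and Corollary \ref{classfield}, to excluding the case where $L_\gp/F_\gp$ has residue degree $2$, and both then exclude it by a finite verification inside the groups $G_a$. Where you differ is in \emph{what} gets verified. The paper's computation is subgroup-level: no $G_a$ contains a subgroup of order \emph{and} exponent $40$ whose projection to $\cS_5$ has order $20$ (this is exactly the shape of $\ov{H}$ produced by Corollary \ref{classfield}(ii)). Your check is element-level: no element of $G_a$ over a $4$-cycle has fourth power $c(I_4)$, i.e.\ $I_4 \notin \nu(\Gamma_a)$ for $\nu(m)=\sum_{i=0}^{3}t^i m t^{-i}$. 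Your criterion implies the paper's: in such a subgroup, an element of order $8$ lies over a $4$-cycle, and its fourth power generates the order-$2$ kernel of the projection, which the whole subgroup centralizes, so by Lemma \ref{gh} that kernel is generated by $c(I_4)$. More importantly, your criterion is true, so the Magma run you defer to would succeed — and in fact no machine is needed. Each $\gamma_a$ is a sum of the unipotent matrices $t^k$, so every $\Gamma_a$ consists of trace-zero matrices; on the other hand, every solution of $\nu(m)=I_4$ has trace $1$: trace is constant on $\nu^{-1}(I_4)$, since for $k\in\ker\nu$ one has $\operatorname{tr}(k)=\operatorname{tr}(\nu(m_0)k)=\operatorname{tr}(m_0\,\nu(k))=0$ by cyclicity of trace, and, taking $t$ in Jordan form $I_4+n$ with $n$ the regular nilpotent shift, the particular matrix $m_0$ whose only nonzero row is a bottom row of $1$'s satisfies $\nu(m_0)=I_4$ and $\operatorname{tr}(m_0)=1$. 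So your approach buys a hand-checkable obstruction where the paper relies on a subgroup search. Two small corrections: the bound $\gf_\gp(L/F)\le 4$ that places $L_\gp$ inside $T$ is part of Definition \ref{GaClass} (the hypothesis that $[W]$ is a $G_a$-class), not a consequence of Proposition \ref{CondExpP}, which applies only to extensions already known to come from group schemes over $\Z_2$ — precisely what is unknown at this point; and the remark preceding Proposition \ref{dim59} cites Lemma \ref{GlobalAt2} itself, so it can serve only as the consistency check you intend, not as independent evidence.
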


\begin{proof}
We may  assume $\cD_\gp(L/F) \neq 1$.  Computation shows that $G_a$ contains no subgroup of order {\em and} exponent $40$ whose projection to $\cS_5$ has order $20$.  Conclude by using Proposition \ref{Max2} and its Corollary \ref{classfield}.
\end{proof}

\begin{Rem} \label{Compat}
Let $[\psi]$ in $H^1(G_\Q,\gh)$ correspond to the $G_a$-class $[W]$ and write $\psi_{\vert \cD_\gp}$ for the restriction to the decomposition group $\cD_\gp$ in $G_\Q$ at a fixed place $\gp$ over $2$.  The classes $[\cW_{\bfs}]$ in $\Ext^1_{\Z_2}(\cE,\cE)$ are classified by  their Honda parameters $\bfs$ in $(\F_2)^5$.  Let $[\psi_{\bfs}]$ in $H^1(G_{\Q_2},\gh)$ correspond to $[W_{\bfs}]$.   Then $[W]$ is compatible with $[W_\bfs]$ if and only if:
\begin{equation} \label{compateq}
[\psi_{\vert \cD_\gp}] = [\psi_{\bfs}] \text{ in } H^1(G_{\Q_2},\gh) \text{ for some Honda parameter } \bfs.
\end{equation}
Let $F_\gp$ be the completion of $F$ at $\gp$ and $T$ the maximal elementary 2-extension of $F_\gp$ having conductor exponent 4.  By Proposition \ref{CondExpP}, $\Q_2(W_{\bfs})$ is contained in $T$, while the completion $L_\gp$ is contained in $T$ by definition of a $G_a$-class.  In the diagram below, inflation is injective   and restriction is injective by \eqref{ToToverF}: 
 \begin{equation} 
\begin{CD} 
&  & H^1(\cD_\gp, \gh) \\
  &   &   @VV \inf V         \\
0 @>>> H^1(\Gal(T/\Q_2),\gh) @>\res>> \Hom_{\F_2[\Delta]}(\Gal(T/F_\gp,\gh) \, .
\end{CD}  \vspace{2 pt}
\end{equation}
Hence, it suffices to compare the image $\chi$ of $[\psi_{\vert \cD_\gp}]$ with the image $\chi_{\bfs}$ of $[\psi_{\bfs}]$ in $\Hom_{\F_2[\Delta]}(\Gal(T/F_\gp),\gh)$. Note that the values of $\chi$ and $\chi_{\bfs}$ are corners in $\gh$.  See Proposition \ref{Max2} for specific generators $g_0,g_1,g_2$ of $\Gamma$ as an $\F_2[\Delta]$-module.   In particular, $\chi(g_0) = 0$ by Lemmas \ref{GlobalAt2} and \ref{chiLem}(iii).   Thus $W$ prolongs to a group scheme over $R = \Z[\frac{1}{N}]$ exactly if there is a Honda parameter $\bfs$ in Proposition \ref{dim48} satisfying $\chi(g_j) = \chi_{\bfs}(g_j)$ for $j =1,2$.   
\end{Rem}

\begin{lem}  \label{ProCon}
Let $[W]$ be a $G_a$-class and $L = \Q(W)$. 
\begin{enumerate}[{\rm i)}]
\item If $\gf_\gp(L/F) \le 2$ for $\gp \vert 2$, then $W$ prolongs to a group scheme $\cW$ over $R$.  \vspace{2 pt}
\item If $a \in \{4, 5, 11\}$ and  $W$ prolongs to a group scheme over $R$, then $\gf_\gp(L/F) \le 2$.\end{enumerate}
\end{lem}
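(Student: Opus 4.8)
The plan is to reduce both halves of Lemma \ref{ProCon} to the compatibility criterion recorded in Remark \ref{Compat}. Since $\chi(g_0) = 0$ in our situation (by Lemmas \ref{GlobalAt2} and \ref{chiLem}(iii)), that remark says $W$ prolongs to a group scheme over $R$ exactly when there is a Honda parameter $\bfs$ occurring in Proposition \ref{dim48} with $\chi(g_1) = \chi_\bfs(g_1)$ and $\chi(g_2) = \chi_\bfs(g_2)$. Everything then becomes bookkeeping about the corner values that $\chi(g_1)$ and $\chi(g_2)$ may assume, governed on one side by Lemma \ref{chiLem} (where the vanishing of $\chi(g_2)$ decides whether $\gf_\gp(L/F) \le 2$) and on the other by the corner tables \eqref{corners} and \eqref{CorGa}.

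For part (i), I would start from $\gf_\gp(L/F) \le 2$, which by Lemma \ref{chiLem}(iii) forces $\chi(g_2) = 0$, while Lemma \ref{chiLem}(i) places $\chi(g_1)$ among the eight corners in \eqref{corners}. If $\chi(g_1) = 0$ as well, then $L/F$ is unramified by Lemma \ref{chiLem}(ii), hence $L = F$ and $W$ already prolongs as $\cE \oplus \cE$. Otherwise the first block of Proposition \ref{dim48}(i) realizes each of the seven nonzero corners as $\chi_\bfs(g_1)$ with $\chi_\bfs(g_2) = 0$, so a compatible $\bfs$ exists and $W$ prolongs. I expect this direction to be routine surjectivity bookkeeping.

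For part (ii), the key structural input is that $\chi(g_2)$ lies not merely in $\Cor(\gh)$ but in the smaller set $\Cor(\Gamma_a)$. I would argue this by observing that $\rho_W$ identifies $\Gal(L/F)$ with the radical $c(\Gamma_a)$ of $G_a = \rho_W(\Gal(L/\Q))$, so the cocycle $\psi$ restricts to a homomorphism on $\Gal(L/F)$ valued in $\Gamma_a$; as $\chi(g_2)$ is such a value and also lies in $\Cor(\gh)$ by Lemma \ref{chiLem}(i), it lands in $\Cor(\gh) \cap \Gamma_a = \Cor(\Gamma_a)$. Now suppose $\gf_\gp(L/F) = 4$; then $\chi(g_2) \ne 0$ by Lemma \ref{chiLem}(iii), and prolongation forces $\chi(g_2) = \chi_\bfs(g_2)$ for some $\bfs$ in Proposition \ref{dim48}, whose only nonzero value is $\gamma_9$. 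But the table \eqref{CorGa} shows $\gamma_9 \notin \Cor(\Gamma_a)$ for $a \in \{4, 5, 11\}$, a contradiction; hence $\chi(g_2) = 0$ and $\gf_\gp(L/F) \le 2$.

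The main obstacle is exactly the containment $\chi(g_2) \in \Cor(\Gamma_a)$: one must pass carefully between the global representation $\rho_W \colon \Gal(L/\Q) \xrightarrow{\sim} G_a$ and the local cocycle data $\chi$ on $\Gamma = \Gal(T/F_\gp)$, using that $\psi$ restricted to $\Gal(L/F)$ takes values in the radical $\Gamma_a$. Once that is secured, the conclusion is immediate from the corner table \eqref{CorGa} together with the observation that $\gamma_9$ is the unique nonzero value available to $\chi_\bfs(g_2)$ in Proposition \ref{dim48}.
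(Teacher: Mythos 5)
Your proposal follows essentially the same route as the paper's proof: both reduce the lemma to the matching criterion of Remark \ref{Compat}, translate the conductor condition into the vanishing of $\chi(g_2)$ via Lemma \ref{chiLem}(iii), prove part (i) by noting that $\chi_{\bfs}(g_1)$ runs over all corners of $\gh$ as $\bfs$ varies in Proposition \ref{dim48}(i) (together with $\bfs = 0$), and prove part (ii) by observing that $\chi(g_2)$ must lie in $\Cor(\Gamma_a)$, which excludes the only available nonzero local value $\gamma_9$ when $a \in \{4,5,11\}$ by \eqref{CorGa}. Your write-up of (ii) actually makes explicit a step the paper leaves implicit, namely why the values of $\chi$ land in $\Gamma_a$ (via $\rho_W(\Gal(L/F)) = c(\Gamma_a)$ and the fact that $\psi$ restricts to a homomorphism there); that is the right justification.

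One sub-claim in your part (i) is incorrect, though harmlessly so. From $\chi(g_1) = \chi(g_2) = 0$ you conclude ``$L/F$ is unramified, hence $L = F$ and $W$ already prolongs as $\cE \oplus \cE$.'' Triviality of $\chi$ only says the completion $L_\gp$ equals $F_\gp$; it does not force the global equality $L = F$, since a $G_a$-class can be locally trivial at $\gp$ while $L \ne F$ (e.g.\ with ramification only at infinity), and the prolonged group scheme $\cW$ need not be split globally. The repair is immediate and is exactly what the paper does: when $\chi = 0$, the injectivity in \eqref{ToToverF} shows the local class $[\psi_{\vert \cD_\gp}]$ is trivial, so it matches $\chi_{\bfs}$ with $\bfs = 0$ and $W$ prolongs; no separate case is needed.
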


\begin{proof}
Refer to Remark \ref{Compat} for notation.   In item (i), we have $\chi(g_2) = 0$ by Lemma \ref{chiLem}(ii).  To match $\chi$ with $\chi_{\bfs}$ for some local Honda parameter $\bfs$, we therefore consider $\bfs$ in Proposition \ref{dim48}(i), also allowing $\bfs = 0$.  As $\bfs$ varies,  $\chi_{\bfs}(g_1)$ ranges over all possible corners of $\gh$ and we can find a unique $\bfs$ such that $\chi_{\bfs}(g_1) = \chi(g_1)$.  Hence $W$ prolongs to a group scheme $\cW$ over $R$. 

In item (ii), $G_a$ does not contain $\gamma_9$ by \eqref{CorGa}.  Then $\chi(g_2) = 0$, to match $\chi_{\bfs}(g_2)$ for some Honda parameter $\bfs$ in Proposition \ref{dim48}.  Hence $\gf_\gp(L/F) \le 2$.
\end{proof}

\begin{Def}\label{amiable}
Let $K$ be a pair-resolvent of $F$ and $\Omega_K$ the maximal elementary 2-extension of $K$ unramified outside $\{2,\infty\}$ such that $\gf_\gp(\Omega_K/K) \le 4$ for $\gp\, \vert\, 2$.  We say $F$ is {\em amiable} if either:  i) $\Omega_K = K$ \, or \, ii) $\fdeg{\Omega_K}{K}=2$ and $\gf_\gp(\Omega_K/K) =4.$
\end{Def}

\begin{Rem}  \label{amRem} 
For $F$ to be amiable, all the following conditions are necessary: (i) The narrow class number of $K$ is odd.  (ii) If $a \in (1+ \gp^9) K_\gp^{\times 2}$, then $a \in K^{\times 2}$, since $f_\gp(K(\sqrt{a})/K) \le 2$ by Lemma \ref{cond2}. (iii) $K$ is not totally real; otherwise $\rk U_K/U_K^2 = 10$, but $\rk U_\gp/(1+ \gp^9) U_\gp^2 = 8$.  
\end{Rem}

\begin{prop}\label{ext2} 
Let $\cE$ be the group scheme introduced at the beginning of this section. Then $\Ext^1_{[2],\un{E}}(\cE,\cE)=0$ if and only if  $F=\Q(E)$ is amiable.
\end{prop}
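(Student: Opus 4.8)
The plan is to identify $\Ext^1_{[2],\un{E}}(\cE,\cE)$ with a subspace of the group $\cG$ of $G_a$-classes (a group under Baer sum by Lemma \ref{ab}(i)) cut out by a single linear condition at $2$, and then to read that condition off from the ray class data over $K$. By Corollary \ref{LPropCor} every element of $\Ext^1_{[2],\un{E}}(\cE,\cE)$ is a $G_a$-class, and by the discussion following Proposition \ref{MakeGaFields} a $G_a$-class lies in $\Ext^1_{[2],\un{E}}(\cE,\cE)$ precisely when it prolongs to a group scheme over $R=\Z[\frac1N]$. Since restriction of cocycles is additive, the assignment $\beta([W])=\chi(g_2)$ of Remark \ref{Compat} is an $\F_2$-linear map $\cG\to\Cor(\gh)$. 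I would first record the prolongation criterion: as noted in Remark \ref{Compat}, $\chi(g_0)=0$, so $[W]$ prolongs iff $\chi$ matches a totally ramified Honda parameter; by Proposition \ref{dim48}, among these the only values of $\chi_{\bfs}(g_2)$ are $0$ and $\gamma_9$, with $\chi_{\bfs}(g_1)$ running over all corners in each case. Hence $[W]$ prolongs iff $\chi(g_2)\in\{0,\gamma_9\}$, so that
$$
\Ext^1_{[2],\un{E}}(\cE,\cE)=\beta^{-1}(\F_2\gamma_9),
$$
and $\Ext^1_{[2],\un{E}}(\cE,\cE)=0$ iff $\ker\beta=0$ and $\gamma_9\notin\Image\beta$.

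Next I would identify $\ker\beta$ with the conductor-$\le 2$ classes. By Lemma \ref{chiLem}(iii), $\beta([W])=0$ iff $\gf_\gp(L/F)\le 2$. If a nonzero such class has image $G_{11}$ or $G_{15}$, the surjections of Lemma \ref{ab}(ii) push it to a nonzero $G_5$-class of the same conductor, so in every case Proposition \ref{MakeGaFields} attaches a quadratic subfield $K'\subseteq\Omega_K$ with $\gf_\gp(K'/K)=\gf_\gp(L/F)\le 2$, i.e.\ $K'\subseteq\Omega_K^{(2)}$; conversely each such $K'$ gives a nonzero class in $\ker\beta$ by Lemma \ref{ProCon}(i). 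Thus $\ker\beta=0$ iff $rk_2=0$.

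The heart of the matter is the clause $\gamma_9\in\Image\beta$, which I would control through the fiber product $G_9=G_4\times_{G_0}G_5$ and the resulting identity $\gamma_4+\gamma_5=\gamma_9$ in $\Cor(\gh)$; note that $\Cor(\Gamma_9)=\{0,\gamma_4,\gamma_5,\gamma_9\}$ is a two-dimensional $\F_2$-space. For the amiable direction, suppose $rk_4\le 1$ while some class satisfies $\chi(g_2)=\gamma_9$. As $\gamma_9\in\Cor(\Gamma_a)$ forces $a\in\{9,15\}$ by \eqref{CorGa}, composing with $f_{15,9}$ when needed yields a genuine $G_9$-class; then $f_{9,4}$ and $f_{9,5}$ of Lemma \ref{ab}(ii) produce classes of distinct image groups $G_4$ and $G_5$, hence two distinct quadratic subfields of $\Omega_K$ by Proposition \ref{MakeGaFields}, contradicting $rk_4\le 1$. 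Thus $rk_4\le 1$ forces $\gamma_9\notin\Image\beta$. Conversely, if $rk_2=0$ and $rk_4\ge 2$, then $\ker\beta=0$, so $\beta$ is injective; two independent quadratic subfields of $\Omega_K$ give two single-extension classes whose $\beta$-values are distinct and, being of conductor $4$, lie in $\{\gamma_4,\gamma_5,\gamma_9\}$ by \eqref{CorGa}. Two distinct nonzero vectors there span $\Cor(\Gamma_9)$, so $\gamma_9\in\Image\beta$.

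Assembling these facts, $\Ext^1_{[2],\un{E}}(\cE,\cE)=0$ iff $rk_2=0$ and $\gamma_9\notin\Image\beta$, iff $rk_2=0$ and $rk_4\le 1$, which is exactly the amiability of $F$. The step I expect to be the main obstacle is the prolongation criterion $\chi(g_2)\in\{0,\gamma_9\}$: it rests on the precise corner computations of Proposition \ref{dim48}, in particular on $\gamma_9$ being the unique nonzero corner realized by $\chi_{\bfs}(g_2)$, together with the bookkeeping that identifies the single-extension $\beta$-values with the plane $\Cor(\Gamma_9)$ through $\gamma_4+\gamma_5=\gamma_9$.
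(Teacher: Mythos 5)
Your proposal is correct and follows essentially the same route as the paper's proof: both rest on Corollary \ref{LPropCor}, the bijection of Proposition \ref{MakeGaFields}, the corner/Honda-parameter matching of Remark \ref{Compat} and Proposition \ref{dim48}, the conductor dictionary of Lemma \ref{chiLem}, and the identity $\gamma_4+\gamma_5=\gamma_9$. The only difference is organizational: you package the argument as kernel and image conditions on the linear map $\beta([W])=\chi(g_2)$, with $\Ext^1_{[2],\un{E}}(\cE,\cE)=\beta^{-1}(\F_2\gamma_9)$, where the paper runs the same steps as a case analysis on the image group $G_a$ together with explicit Baer sums.
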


\begin{proof} 
Suppose that $F$ is amiable and let $[\cW]$ be a non-trivial class in $\Ext^1_{[2],\un{E}}(\cE,\cE)$.  By Corollary \ref{LPropCor}, $[W]$ is $G_a$-class with $a \ne 0$.  If $a=11$, then $\gf_\gp(L/F) \le 2$ by Lemma \ref{ProCon}(ii).  By diagram \eqref{GaDiagram} and Lemma \ref{ab}(ii), there is a $G_5$-class $[W']$ with $L' = \Q(W')$ contained in $L$. Lemma \ref{MakeGaFields} provides a quadratic extension $K'$ of $K$ contained in $\Omega_K$ with $\gf_\gp(K'/K) = \gf_\gp(L'/F) \le \gf_\gp(L/F) \le 2$, contradicting the amiability of $F$.   The same argument applies when $a = 4$ or $5$.  If $a = 15$ or 9, then $[W]$ gives rise to both a $G_4$-class and a $G_5$-class.  Then Lemma \ref{MakeGaFields} provides two distinct quadratic extensions of $K$ contained in $\Omega_K$, again contradicting the amiability of $F$. 

Suppose that $F$ is not amiable.  Assume first that $\fdeg{\Omega_K}{K} = 2$ and let $[W]$ be the $G_a$-class corresponding to $\Omega_K/K$ by Proposition \ref{MakeGaFields}.  By amiability, $\gf_\gp(\Omega_K/K) \le 2$ and so $\gf_\gp(L/F) = \gf_\gp(\Omega_K/K) \le 2$.  Then Lemma \ref{ProCon}(i) implies that $W$ prolongs to a non-trivial class in $\Ext^1_{[2],\un{E}}(\cE,\cE)$.  Next, assume that there is a $G_a$-class $[W]$ with $L = \Q(W)$ and a $G_{a'}$-class $[W']$ with $L' = \Q(W')$, coming from distinct quadratic extensions of $K$ in $\Omega_K$ and satisfying $a, a' \in \{4,5,9\}$.  Since a $G_9$-class gives rise to a $G_4$-class and a $G_5$-class, we need only consider the pairs $(a,a')$ in $\{(4,4), (5,5), (4,5)\}$.  In the notation of Remark \ref{Compat}, let $\chi$ and $\chi'$ in $ \Hom_{\F_2[\Delta]}(\Gal(T/F_\gp),\gh)$ belong to $W$ and $W'$ respectively.   Then the Baer sum $W'' = W + W'$ represents a $G_b$-class by Lemma \ref{ab} and  $\chi'' = \chi + \chi'$ belongs to $W''$.  By Lemma \ref{ProCon}(i) and Lemma \ref{condK'L}, we may assume that  $\gf_\gp(L/F) = \gf_\gp(L'/F) = 4$ and so $\chi(g_2)$ and $\chi'(g_2)$ are non-trivial, by Proposition \ref{chiLem}(ii).  In all these cases, only one non-trivial corner is available in \eqref{CorGa}, namely $\chi(g_2) = \gamma_a$ and $\chi'(g_2) = \gamma_{a'}$.  If $a = a' = 4$ or 5, then $\chi''(g_2) = 0$ and so $\gf(L''/F) \le 2$.  Thus $W''$ prolongs to a group scheme over $\Z[\frac{1}{N}]$.  If $(a,a') = (4,5)$, then $\chi''(g_2) = \gamma_4 + \gamma_5 = \gamma_9$, so $\chi''$ is compatible with $\chi_{\bfs}$ for some $\bfs$ in Proposition \ref{dim48}(i) or (ii) and the corresponding group scheme exists.
\end{proof}

\begin{theo}  \label{main}
Let $A$ be a favorable abelian surface of prime conductor $N$ such that $F=\Q(A[2])$ is amiable. If $B$ is a semistable abelian variety of dimension $2d$ and conductor $N^d,$ with $B[2]$  filtered by $A[2]$, then $B$ is isogenous to $A^d.$ 
\end{theo}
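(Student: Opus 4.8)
The plan is to deduce the theorem as a direct consequence of Theorem \ref{mypdiv} and Proposition \ref{ext2}, so that the substance of the argument is the verification that every hypothesis of the converse half of Theorem \ref{mypdiv} is met. First I would fix $p = 2$, set $\cE = A[2]$ and $\Sigma = \{\cE\}$, and take $\un{E}$ to be the $\Sigma$-category constructed at the start of \S\ref{Punchline}. Since $A$ is favorable, the remark following Definition \ref{ourcat} shows that $\{A\}$ is a $\Sigma$-favorable collection; in particular $\cE$ is biconnected over $\Z_2$ with $E$ absolutely simple by Proposition \ref{Fproperties}(iv), $\End A = \Z$, and $\gf_N(A) = \gf_N(E)$. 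This supplies the first hypothesis of Theorem \ref{mypdiv}.

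Next I would check the numerical conditions. By Proposition \ref{Fproperties}(i) the inertia group $\cI_\gN(F/\Q)$ is generated by a transposition, which acts on $E$ as a rank-one transvection, so $E^{\cI_\gN}$ has $\F_2$-dimension $3$ and $\gf_N(E) = \dim_{\F_2} E/E^{\cI_\gN} = 1$. The hypothesis that $B[2]$ is filtered by copies of $\cE$ forces $B[2]^{ss} = d\,\cE$, since $\dim_{\F_2} B[2] = 4d = d\,\dim_{\F_2} E$; hence the multiplicity is $n_1 = d$. As $B$ has conductor $N^d$ we have $\gf_N(B) = d$, and therefore $\gf_N(B) = d = d\,\gf_N(E) = \sum n_i \gf_N(E_i)$, which is exactly the conductor equality demanded by Theorem \ref{mypdiv}. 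Semistability of $B$ is part of the hypothesis.

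It then remains to verify axiom {\bf E4}, namely $\Ext^1_{[2],\un{E}}(\cE,\cE) = 0$, and this is precisely the point at which amiability enters. Because $F = \Q(E)$ is amiable by assumption, Proposition \ref{ext2} yields the vanishing immediately; with $s = 1$ the single condition $i = j = 1$ is all that {\bf E4} requires. Having assembled the three hypotheses, Theorem \ref{mypdiv} gives that $B$ is isogenous to $A^d$, which is the assertion. Since all the genuine difficulty—Schoof's $p$-divisible group criterion packaged as Theorem \ref{mypdiv}, the Honda-system classification and the sharp conductor bound of Proposition \ref{CondExpP}, and the class field theoretic translation culminating in Proposition \ref{ext2}—has already been carried out, the only real care needed here is in the hypothesis-matching: one must confirm that \emph{filtered by copies of} $A[2]$ produces $B[2]^{ss} = d\,\cE$ as group schemes, so that Raynaud's equivalence and Lemma \ref{E3properties} apply, and that the conductor $N^d$ translates into the exact equality $\gf_N(B) = \sum n_i \gf_N(E_i)$ and not merely an inequality. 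Once these identifications are secured, the conclusion follows at once.
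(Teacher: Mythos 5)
Your proposal is correct and follows exactly the paper's route: verify via Proposition \ref{Fproperties} that $\cE = A[2]$ satisfies Definition \ref{ourcat} (with $\End A = \Z$ from prime conductor), invoke Proposition \ref{ext2} to get {\bf E4} from amiability, and conclude by Theorem \ref{mypdiv}. Your version simply spells out the hypothesis-matching (the computation $\gf_N(E)=1$, the identification $B[2]^{ss} = d\,\cE$, and the conductor equality $\gf_N(B) = d$) that the paper leaves implicit in its two-line proof.
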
 

\begin{proof} 
By Proposition \ref{Fproperties}, $\cE =  A[2]$ satisfies the conditions in Definition \ref{ourcat} for a $\Sigma$-category $\un{E}$ with $\Sigma = \{\cE\}$.  Then Theorem \ref{mypdiv} applies, since $\Ext_{[2],\un{E}}(\cE,\cE)=0$ by Proposition \ref{ext2} and $\End(A)=\Z$ because $A$ has prime conductor \cite{BK4}.
\end{proof}

\subsection{Elliptic curves of prime conductor, supersingular at 2} 
We briefly note how Theorem \ref{mypdiv} applies to elliptic curves.  Let $A$ be an elliptic curve of prime conductor $N$ with supersingular reduction at 2 and $\cE= A[2].$ Then $F=\Q(E)$ is an $\cS_3$-extension and $E$ is an irreducible Galois module even locally over $\Q_2$.  The only two irreducible $\F_2[\cS_3]$ modules are the trivial one and $E.$ 

\begin{prop}\label{ell} 
Let $K$ be a cubic subfield of $F=\Q(E)$ and let $\gp$ be the prime in $K$ above $2.$ A necessary and sufficient condition for $\Ext^1_{[2],\un{E}}(\cE,\cE)=0$ is that there be no quadratic extension of $K$ of dividing conductor $\gp^2\!\cdot\!\infty.$ 
\end{prop}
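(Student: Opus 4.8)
The plan is to treat this as the $\cS_3$-analogue of Proposition \ref{ext2}, reducing the vanishing of $\Ext^1_{[2],\un{E}}(\cE,\cE)$ to a class field theoretic condition on the cubic field $K$. Here $\Gal(F/\Q)\cong\cS_3\cong\GL_2(\F_2)$ acts on $E=\F_2^2$ as the natural module, and I set $\gh=\End(E)=\Mat_2(\F_2)$ with the adjoint action. As in \eqref{RepViaCocycle} and Remark \ref{Compat}, a class $[\cW]$ in $\Ext^1_{[2],\un{E}}(\cE,\cE)$ is recorded by a cocycle $[\psi]$ in $H^1(G_\Q,\gh)$ with $L=\Q(W)$, subject to three constraints: (a) $\rho_W(\sigma_v)$ is a very good involution, i.e.\ $(\rho_W(\sigma_v)-1)^2=0$ with $\rk(\rho_W(\sigma_v)-1)=2$ at $v\mid N$, forced by {\bf E2} and {\bf E3} since $\dim E=2=2\gf_N(E)$ (cf.\ Remark \ref{E2E3}); (b) $L/F$ is unramified outside $\{2,\infty\}$ with a conductor bound at $2$; and (c) $W$ prolongs to a finite flat group scheme over $\Z[\tfrac{1}{N}]$, which by the Mayer--Vietoris sequence \eqref{MV} is exactly the condition that $\psi|_{G_{\Q_2}}$ be the class of a local exponent-$2$ extension of $\cE$ by $\cE$.

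First I would pin down the local picture at $2$. Supersingularity makes $\cE=A[2]$ biconnected over $\Z_2$, so Fontaine's anti-equivalence and the Corollary following \eqref{MV} apply. Since $E$ is locally irreducible, the decomposition group $\cD_\gp(F/\Q)$ is all of $\cS_3$, with tame inertia $A_3$ of order prime to $2$; thus $F_\gp=\Q_2(E)$ is tamely ramified of ramification index $3$, the prime $\gp$ of $K$ is totally (tamely) ramified over $\Q_2$, and $F_\gp/K_\gp$ is the unramified quadratic extension. Running the rank-$2$ analogues of \S\ref{Honda} and \S\ref{FP} (Propositions \ref{simple}, \ref{Hexpp}, \ref{FieldForE} and \ref{EqsExpP} with a rank-$2$ Honda system), I would classify the exponent-$2$ local extensions and their fiber fields, obtaining the analogue of the conductor bound in Proposition \ref{CondExpP}, namely $\gf_\gp(L/F)\le 2$, together with the admissible corner values in $\gh$ as in Proposition \ref{dim48}. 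The decisive feature is that here the bound is \emph{exactly} $2$, with no conductor-$4$ extensions occurring.

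Next I would carry out the global-to-local matching and translate it to $K$. The adjoint module $\gh=\Mat_2(\F_2)$ contains a copy of $E$ as an $\cS_3$-submodule (a complement to the scalars inside $\mathfrak{sl}_2$), and for a nontrivial ramified class $\Gal(L/F)\cong E$, so $\rho_W(\Gal(L/\Q))\cong E\rtimes\cS_3$ with $\rho_W(\sigma_v)$ conjugate to a fixed very good involution. By the injectivity in \eqref{ToToverF} and Remark \ref{Compat}, $W$ prolongs precisely when its local corner data match those of some Honda parameter; since every local class already satisfies $\gf_\gp\le 2$, the analogue of Lemma \ref{ProCon}(i) shows that \emph{every} such class of conductor $\le 2$ prolongs. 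Because $F/K$ is unramified at $2$, the analogue of Lemma \ref{condK'L} gives $\gf_\gp(K'/K)=\gf_\gp(L/F)$, and the analogue of Proposition \ref{MakeGaFields} sets up a bijection between these nontrivial classes and the quadratic extensions $K'\subseteq\Omega_K^{(2)}$ of conductor dividing $\gp^2\cdot\infty$. A nontrivial class thus produces such a $K'$, and conversely the Galois closure of any such $K'/\Q$ yields a module $W$ that prolongs to a nontrivial group scheme; combining the two directions gives the stated equivalence.

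The main obstacle will be the local step: verifying by the rank-$2$ Honda computation that the conductor bound is sharp at $\gf_\gp(L/F)\le 2$ and that \emph{no} exponent-$2$ extension reaches conductor $4$. This is precisely what removes the ``$rk_4\le 1$'' slack present in the amiable condition of the surface case and makes the clean criterion ``no quadratic extension of conductor dividing $\gp^2\cdot\infty$'' correct. A secondary difficulty is the characteristic-$2$ pathology of $\gh=\Mat_2(\F_2)$, where the scalar $I$ lies inside $\mathfrak{sl}_2$ and $\gh$ is not semisimple; one must check that the copy of $E$ realized as $\Gal(L/F)$ is the intended submodule and that the very good involution condition pins $\rho_W$ down up to conjugacy in $\cP$, so that the correspondence with quadratic extensions of $K$ is well defined.
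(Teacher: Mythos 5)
Your proposal is correct in outline and its two central claims check out: the rank-$2$ Honda classes over $\Z_2$ all have conductor exponent at most $2$ over $F_\gp$ (none of conductor $4$), and they are in bijection with the local Galois classes of conductor $\le 2$, which is exactly why the clean modulus $\gp^2\!\cdot\!\infty$ replaces the amiability condition of the surface case. However, your route is genuinely different from the paper's. The paper does not redo any Honda-system computation here: it quotes Schoof \cite[Prop.~6.4]{Sch1} for the local conductor bound at $2$, and it handles the global step by a direct group-theoretic classification: only two subgroups of the parabolic group $\cP_{E,E}$ admit good involutions, namely $\cS_3$ (which corresponds to the split extension because $H^1(\cS_3,\End(E))=0$) and $\cS_4$, and in the $\cS_4$ case $M=\Q(W)$ is the Galois closure of a quadratic extension of $K$ unramified over $N$; Theorem \ref{mypdiv} then applies. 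Your approach buys self-containedness and uniformity with the paper's own genus-$2$ machinery (analogues of Propositions \ref{CondExpP}, \ref{dim48}, Lemma \ref{ProCon} and Proposition \ref{MakeGaFields}), and it makes the converse direction (a quadratic $K'$ of conductor dividing $\gp^2\!\cdot\!\infty$ produces a nonsplit class) fully explicit via prolongation; the paper's approach buys brevity by outsourcing the local input and by exploiting that in rank $2$ the subgroup classification is trivial to verify. The one point you handle less cleanly, and which the paper's classification settles in one stroke, is the a priori structure of $\Gal(L/F)$: you assert it is the copy of $E$ inside $\gh$, but $\gh=\gh_0\oplus E$ has other nonzero $\cS_3$-submodules ($\F_2 I$, $\gh_0$, $\F_2 I\oplus E$, $\gh$), and one must also exclude a nontrivial class with $L=F$ (the phenomenon of Remark \ref{obstrep}); both are eliminated at once by $H^1(\cS_3,\End(E))=0$ together with the good-involution condition at $v\mid N$. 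You flag this as a secondary difficulty, and it is fillable exactly this way, so it is a deferred verification rather than a gap; note only that goodness of $\rho_W(\sigma_v)$ does not follow from {\bf E2}/{\bf E3} alone, as your constraint (a) suggests, but from the normal-closure ramification argument as in Proposition \ref{LProperties}(ii).
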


\begin{proof}
Only two subgroups of the parabolic group $\cP_{E,E}$ admit good involutions.  One is  isomorphic to $\cS_3$ and corresponds to the split extension of $\cE$ by itself because $H^1(\cS_3,\End(E))=0$ while the second is isomorphic to $\cS_4.$  If $M$ is the field of points of an extension of $\cE$ by $\cE$ annihilated by 2 and $\Gal(M/\Q) \simeq \cS_4$, then $M$ is the Galois closure of a quadratic extension of $K$ unramified at primes over $p$.   The bound for the local conductor over 2 is  given in \cite[Proposition 6.4]{Sch1} and Theorem \ref{mypdiv} applies.  A related proof is in \cite{Sch2} for $A=J_0(N)$ with $N=11$ and $19.$
\end{proof}

In the Cremona Database, we find 2037 isogeny classes of elliptic curves supersingular at 2 and of prime conductor $N< 350000.$ From the Brumer-McGuinness Database \cite{BM}, we extract an additional 2422 isogeny classes for a total of 4459 such classes with $N \le 10^8.$   Applying the Proposition above, we find 847 elliptic curves $A$ to which Theorem \ref{mypdiv} applies. 

Let $A_1$ and $A_2$ be elliptic curves of prime conductor $N$ with each  $\cE_i=A_i[2]$ biconnected over $\Z_2$ and satisfying $\Ext^1_{[2],\un{E}}(\cE_i,\cE_i)=0$. Suppose that the cubic subfields $K_i$ of $\Q(E_i)$ are non-isomorphic.  Then $2 \cO_{K_1K_2}$ has the prime factorization $(\gp_1 \gp_2 \gp_3)^3$.  If $K_1 K_2$ admits no quadratic extension of conductor dividing $(\gp_1\gp_2\gp_3)^2 \infty$, then $\Ext^1_{\un{E}}(\cE_1,\cE_2)=0$.  We found 42 conductors $N$ with multiple $A_i$ to which our results apply.  

As  an entertaining example, Cremona's Database lists four elliptic curves of conductor 307, with $A_1=307A1$, $A_2=307C1$ and $A_3=307D1$ supersingular at 2. Their 2-division fields  correspond to the three subfields of the ray class field of $k=\Q(\sqrt{-307})$ of modulus $2 \hspace{1 pt} O_k.$

Theorem \ref{mypdiv} implies the following.  Let $B$ be a semistable  abelian variety, good outside $N = 307$, with $B[2]^{ss}=A_1[2]^{n_1}\oplus A_2[2]^{n_2}\oplus A_3[2]^{n_3}$ for some $n_i$. Then $B$ is isogenous to $A_1^{n_1}\times A_2^{n_2}\times A_3^{n_3}$. Note that we need not impose the conductor $f_N(B) = \sum n_i f_N(A_i) = \sum n_i$, thanks to Remark  \ref{E2E3}.

\appendix \numberwithin{equation}{section}

\section{A cohomology computation in the old style}  \label{coda}  Let $T = \Lambda[G]$ be the group ring of a finite group $G$ over a discrete valuation ring $\Lambda$ with prime element $\pi$ and finite residue field $k$ of characteristic $p$.  We consider a cocycle approach to $\Ext_{\Lambda[G]}^1(E,E)$.   Let $V$ and $W$ be finitely generated $T$-modules such that  $\pi V=\pi W=0.$ A {\it symmetric cocycle} is a function $f:V\times V\to W$ satisfying
$$ f(v_1,v_2)=f(v_2,v_1) \text{ and } f(v_1,v_2)+ f(v_1+v_2,v_3)= f(v_1,v_2+v_3)+ f(v_2,v_3)$$ for  $v$'s in $V,$ as in \cite[Theorem\! 7.1]{EiMa}. 
Coboundaries are symmetric cocycles such that $$f(v_1,v_2)=g(v_1)+g(v_2)-g(v_1+v_2)$$ for some function $g:V\to W$. The  symmetric cocycle $f$ is {\it enhanced} if there is a function $h:T\times V\to W$ satisfying the following for $v$'s in $V$ and $r,s$ in $T:$
\begin{enumerate}[{\rm i)}]\item $rf(v_1,v_2)-f(rv_1,rv_2)= h(r,v_1)+h(r,v_2)-h(r,v_1+v_2)$;
\item $h(rs,v)=rh(s,v)+h(r,sv)$;
\item $f(rv,sv)=h(r+s,v)-h(r,v)-h(s,v)$.
\end{enumerate}
The cohomology classes of enhanced cocycles form a $k$-vector space $\mathcal{D}(V,W).$

\begin{lem} \label{sc1} 
The functor  from $T$-modules to abelian groups induces an exact sequence
$$0\to\Ext^1_{[\pi],T}(V,W)\to \Ext^1_T(V,W)=\mathcal{D}(V,W)\to \Hom_T(V,W),$$
where $\Ext^1_{[\pi],T}(V,W)$ consists of classes of extensions annihilated by $\pi$. 
\end{lem}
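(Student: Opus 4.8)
The plan is to set up the classical factor-set dictionary between $T$-module extensions of $V$ by $W$ and enhanced symmetric cocycles, and then to read the exact sequence off directly from the explicit description of multiplication by $\pi$. First I would fix an extension $0\to W\to X\xrightarrow{p}V\to 0$ of $T$-modules and choose a set-theoretic section $\ell\colon V\to X$ of $p$ normalized by $\ell(0)=0$. Setting $f(v_1,v_2)=\ell(v_1)+\ell(v_2)-\ell(v_1+v_2)$ and $h(r,v)=r\,\ell(v)-\ell(rv)$ lands both values in $W=\ker p$, since $p$ is $T$-linear. Because $X$ is abelian, $f$ is a symmetric $2$-cocycle, and a direct substitution shows that the three enhanced-cocycle identities are exactly the transcriptions through $\ell$ of $r(x_1+x_2)=rx_1+rx_2$ (this is (i)), $(rs)x=r(sx)$ (this is (ii)), and $(r+s)x=rx+sx$ (this is (iii)). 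Conversely, from an enhanced cocycle $(f,h)$ one builds a $T$-module on the set $W\times V$ by $(w_1,v_1)+(w_2,v_2)=(w_1+w_2+f(v_1,v_2),\,v_1+v_2)$ and $r\cdot(w,v)=(rw+h(r,v),\,rv)$, where conditions (i)--(iii) are precisely the module axioms and $h(1,\cdot)=0$ is forced by (ii) so that $1$ acts as the identity. These are mutually inverse constructions between extensions and enhanced cocycles.

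Next I would treat coboundaries and additivity. Replacing $\ell$ by $\ell'=\ell+g$ for a function $g\colon V\to W$ with $g(0)=0$ changes $f$ to $f+\partial g$, with $\partial g(v_1,v_2)=g(v_1)+g(v_2)-g(v_1+v_2)$, and changes $h$ to $h+\bigl((r,v)\mapsto rg(v)-g(rv)\bigr)$; conversely two enhanced cocycles differing by such a pair yield equivalent extensions. Checking that Baer sum corresponds to coordinatewise addition of the pairs $(f,h)$ then promotes the bijection to a group isomorphism $\Ext^1_T(V,W)\xrightarrow{\sim}\mathcal{D}(V,W)$ of $k$-vector spaces, which is the equality asserted in the statement. (As an aside, the $\pi$-annihilated extensions are exactly the $k[G]$-module extensions, so $\Ext^1_{[\pi],T}(V,W)\cong\Ext^1_{k[G]}(V,W)$, but I would not need this identification for the proof.)

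Then comes the map to $\Hom_T(V,W)$ and exactness. For any extension $X$, multiplication by $\pi$ carries $X$ into $W$ (as $\pi V=0$) and factors through $p$ (as $\pi W=0$), yielding $\bar\pi\colon V\to W$ with $\bar\pi(v)=\pi\tilde v$ for any lift $\tilde v$, which is $T$-linear because $\pi$ is central in $T$. In cocycle terms $\bar\pi(v)=h(\pi,v)$, since $\pi\ell(v)=h(\pi,v)+\ell(\pi v)=h(\pi,v)$. The assignment $[(f,h)]\mapsto h(\pi,-)$ is additive in the cocycle and annihilates coboundaries, because $rg(v)-g(rv)$ at $r=\pi$ equals $\pi g(v)=0$; hence it is a well-defined homomorphism $\mathcal{D}(V,W)\to\Hom_T(V,W)$. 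For exactness, the left map is the inclusion of the subgroup $\Ext^1_{[\pi],T}(V,W)$ of classes representable by $\pi$-annihilated extensions, so it is injective; and since every element of $X$ is a lift of its image in $V$, one has $\bar\pi=0$ if and only if $\pi X=0$, i.e.\ if and only if the class lies in $\Ext^1_{[\pi],T}(V,W)$. Thus $\ker\partial=\Ext^1_{[\pi],T}(V,W)$, which is exactly the claimed sequence (no surjectivity onto $\Hom_T(V,W)$ is asserted, consistent with the sequence terminating there).

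The hard part will be purely the bookkeeping of the first two paragraphs: verifying that each enhanced-cocycle axiom corresponds to the correct module axiom, that the inverse construction really produces a $T$-module (associativity of the twisted addition from the cocycle identity, the unit law from $h(1,\cdot)=0$), and that Baer sum matches addition of cocycles. Once multiplication by $\pi$ is recognized as the invariant $h(\pi,-)=\bar\pi$, the definition of $\partial$ and the exactness statement are entirely formal.
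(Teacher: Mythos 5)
Your proposal is correct and follows essentially the same route as the paper's proof: choose a normalized section, form the pair $(f,h)$, invert the construction via the twisted module structure on $W\times V$, and detect $\pi$-annihilation through the invariant $\iota(v)=h(\pi,v)$, which is $T$-linear because $\pi$ is central. Your write-up merely makes explicit some verifications (coboundaries, Baer-sum additivity, the unit law) that the paper leaves to the reader.
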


\begin{proof}  Let $0 \to W \xrightarrow{i} M \xrightarrow{j} V \to 0$
 be an exact sequence of $T$-modules with $\pi V=\pi W =0.$ Let $\sigma\!: \, V\to M$ be a section of $j$ such that $\sigma(0)=0.$ The associated cocycle is defined by $f(v_1,v_2)=\sigma(v_1)+\sigma(v_2)- \sigma(v_1+v_2) .$ If  $r$ is in $T$, then  $h(r,v)=r\sigma(v)-\sigma(rv)$ turns $f$ into an enhanced cocycle. For the   converse, give $W\times V$ the structure of a $T$-module by setting
$$
(w_1,v_1)+(w_2,v_2)=(w_1+w_2+f(v_1,v_2),v_1+v_2), \hspace{5 pt} r(w,v)=(rw+h(r,v),rv).
$$
Hence $\Ext^1_T(V,W)=\mathcal{D}(V,W)$.  Given $f$ as above, let $\iota:V\to W$ be defined by $\iota(a)=h(\pi,a).$ Since $\pi(w,v)=(\iota(v),0)$ and $\pi$ is in the center of $T$, we conclude that $\iota$ is a $T$-homomorphism and that the  sequence is exact.  
\end{proof} 

Using the Lemma, we give a refined variant  of \cite[Lemma 2.1]{Sch3}. 
Let $F$ be a number field and  $R$ its ring of $S$-integers for a finite set $S$ of primes.

\begin{prop} \label{Ext}
Let $\mathcal{V}$ and $\mathcal{W}$  be finite flat $\Lambda$-module schemes over $R$ killed by $\pi$, with associated Galois modules $V$ and $W.$ Let $\Ext^1_{[\pi],R}(\mathcal{V},\mathcal{W})$ denote the subgroup of  $\Ext^1_{R}(\mathcal{V},\,\mathcal{W})$ consisting of those extensions killed by $\pi.$  Then there is a natural exact sequence 
$$
0\to \Ext^1_{[\pi],R}(\mathcal{V},\,\mathcal{W}) \to \Ext^1_{R}(\mathcal{V},\,\mathcal{W})\to \Hom_{\Gal}(V,W).
$$
If $V$ is absolutely irreducible over $k,$ then $\End_{\Gal}(V)=k.$
\end{prop}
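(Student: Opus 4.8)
The plan is to produce the arrow $\Ext^1_{R}(\mathcal{V},\mathcal{W})\to\Hom_{\Gal}(V,W)$ by composing the generic fiber functor with the connecting homomorphism already furnished by Lemma \ref{sc1}, and then to identify its kernel with $\Ext^1_{[\pi],R}(\mathcal{V},\mathcal{W})$ by transporting the condition $\pi M=0$ back to the group scheme level through the faithfulness of the generic fiber functor. The inclusion $\Ext^1_{[\pi],R}(\mathcal{V},\mathcal{W})\hookrightarrow\Ext^1_{R}(\mathcal{V},\mathcal{W})$ is injective by definition, so exactness at the left end is free and only the description of the kernel requires work.

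First I would recall that the generic fiber functor $\mathcal{M}\rightsquigarrow M=\mathcal{M}_{|K}$ is exact and additive, so a short exact sequence $0\to\mathcal{W}\to\mathcal{M}\to\mathcal{V}\to 0$ of finite flat $\Lambda$-module schemes over $R$ produces a short exact sequence $0\to W\to M\to V\to 0$ of $\Lambda[G_F]$-modules, inducing a homomorphism $\Ext^1_{R}(\mathcal{V},\mathcal{W})\to\Ext^1_{\Gal}(V,W)$. Since every single extension class of Galois modules factors through some finite quotient $G$ of $G_F$, Lemma \ref{sc1} applies with $T=\Lambda[G]$ and supplies the connecting map $\Ext^1_{\Gal}(V,W)\to\Hom_{\Gal}(V,W)$ sending $[M]$ to $\iota_M(v)=h(\pi,v)=\pi\,\sigma(v)$ for a set-theoretic section $\sigma$; here $\pi v=0$ forces $\pi\sigma(v)\in W$, and Lemma \ref{sc1} guarantees that $\iota_M$ is a well-defined $\Lambda[G_F]$-homomorphism whose vanishing detects $\pi M=0$. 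Let $\mu$ denote the composite $\Ext^1_{R}(\mathcal{V},\mathcal{W})\to\Hom_{\Gal}(V,W)$, a homomorphism of abelian groups natural in $\mathcal{V}$ and $\mathcal{W}$.

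The crux is then the equivalence $\pi\mathcal{M}=0\iff\pi M=0$. By Lemma \ref{sc1} one has $\mu([\mathcal{M}])=\iota_M=0$ exactly when the generic fiber $M$ is killed by $\pi$, so I must descend this to $\mathcal{M}$. Here I would invoke faithfulness of the generic fiber functor: because $\mathcal{M}$ is $R$-flat, its affine algebra is torsion-free and hence embeds into its localization over $K$, so the endomorphism $[\pi]$ of $\mathcal{M}$ is determined by its generic fiber $[\pi]$ on $M$. Consequently $[\pi]=0$ on $\mathcal{M}$ if and only if $[\pi]=0$ on $M$, that is $\pi\mathcal{M}=0\iff\pi M=0$. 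Combining, $[\mathcal{M}]\in\ker\mu$ iff $\pi M=0$ iff $\pi\mathcal{M}=0$ iff $[\mathcal{M}]\in\Ext^1_{[\pi],R}(\mathcal{V},\mathcal{W})$, which is the asserted exactness. I expect this transfer step --- detecting annihilation by $\pi$ on the generic fiber, via flatness and the subgroup-scheme correspondence of Lemma \ref{CoRay} --- to be the only genuinely non-formal ingredient, everything else reducing to citations of Lemma \ref{sc1} and exactness of the generic fiber functor.

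For the final sentence I would argue by Schur's lemma. Since $\pi V=0$, the module $V$ is a $k[G_F]$-module and $\End_{\Gal}(V)=\End_{k[G_F]}(V)$. Absolute irreducibility means $V\otimes_k\bar{k}$ is irreducible over $\bar{k}[G_F]$, so Schur's lemma over the algebraically closed field $\bar{k}$ gives $\End_{\bar{k}[G_F]}(V\otimes_k\bar{k})=\bar{k}$. As $\End_{k[G_F]}(V)\otimes_k\bar{k}\cong\End_{\bar{k}[G_F]}(V\otimes_k\bar{k})$, we conclude $\dim_k\End_{k[G_F]}(V)=1$, whence $\End_{\Gal}(V)=k$.
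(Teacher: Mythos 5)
Your proposal is correct and takes essentially the same route as the paper: the paper's own proof simply says to apply Lemma \ref{sc1} with $G$ a suitable finite Galois group and then pass from Galois modules to group schemes "as in Schoof," which is exactly the transfer step you make explicit via exactness and faithfulness of the generic fiber functor (flatness giving $\pi\mathcal{M}=0\iff\pi M=0$). Your Schur's lemma argument for $\End_{\Gal}(V)=k$ is likewise the intended one.
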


\begin{proof} Apply  Lemma \ref{sc1} with $G$ the Galois group of a suitable finite extension of $F$. 
Then the  passage from    Galois modules to the associated  group schemes  is as in Schoof and so is left to the reader.
\end{proof} 

\section{Parabolic subgroups and an obstreperous cocycle}
For any group $G$, consider representations $\rho_{E_i}$ afforded by $\F_p[G]$-modules $E_i$ for $i = 1,2$.  If $g$ is in $G$ and $\delta_i = \rho_{E_i}(g)$, then $g$ acts on $m$ in $\gh = \Hom_{\F_p}(E_2,E_1)$ by $g(m) = \delta_1 m \delta_2^{-1}$.  In the category of $\F_p[G]$-modules, the extension classes of $E_2$ by $E_1$ under Baer sum form a group isomorphic to $H^1(G,\gh)$.  The exact sequence of $\F_p[G]$-modules $0 \to E_1 \to W \to E_2 \to 0$ gives rise to a cocycle $\psi\!: G \to \gh$ such that 
\begin{equation} \label{RepFromPsi}
\rho_W(g) = \left[\begin{smallmatrix} \delta_1  \, & \,  \psi(g) \, \delta_2 \\ 0 \, & \, \delta_2 \end{smallmatrix} \right]
\end{equation}
and the class $[W]$ in $\Ext^1_{\F_p[G]}(E_2,E_1)$ corresponds to that of $[\psi]$ in $H^1(G,\gh)$.  
If $N$ is a normal subgroup of $G$ contained in $\ker \rho_W$, then $[\psi]$ comes by inflation from a unique class in $H^1(G/N,\gh)$, also denoted by $[\psi]$.  

Note that $\rho_W(G)$ lies in a {\em parabolic} matrix group
\begin{equation} \label{defP}
\cP = \cP_{E_1, E_2}= \left\{g = \left[\begin{smallmatrix} \delta_1 & m \\ 0 & \delta_2 \end{smallmatrix} \right] \, \vert \, \delta_i = \rho_{E_i}(g), \, m \in \Mat_{n_1,n_2}(\F_p) \right\}
\end{equation}
with $n_i = \dim_{\F_p} E_i$.  If $H_i = \{ g \in G \, \vert \, g_{\vert E_i} = 1 \}$ and $\Delta_i = G/H_i$, then $E_i$ is a faithful $\F_p[\Delta_i]$-module.  Any normal subgroup $H$ of $G$ acting trivially on both $E_1$ and $E_2$ satisfies
$$ 
\rho_W(H) \, \subseteq \, \left\{g = \left[\begin{smallmatrix} 1 & m \\ 0 & 1 \end{smallmatrix} \right] \in \cP \, \vert \, m \in \Mat_{n_1,n_2}(\F_p) \right\}.
$$
Since $H^1(H, \gh)^{G/H}  = \Hom_{\F_p[G/H]}(H,\gh)$, the following sequence is exact:
\begin{equation} \label{InfRes}
0 \to H^1(G/H, \gh) \xrightarrow{inf} H^1(G,\gh) \xrightarrow{res} \Hom_{\F_p[G/H]}(H,\gh).
\end{equation}

\begin{Rem} \label{obstrep}
Let $E_1$ and $E_2$ above be $G_\Q$-modules, with $F = \Q(E_1,E_2)$ and $\Delta = \Gal(F/\Q)$.  If the extension $W = W_\psi$ belongs to a cocyle $\psi\!: \Delta \to \gh$ whose class in $H^1(\Delta,\gh)$ is not trivial, then $\Q(W) = F$, even though $W$ does not split as a $\Delta$-module.

For example, let $p=2$ and $E = E_1 = E_2$, with $\dim_{\F_2}(E) = 2n$, so that $\gh$ is isomorphic to $\Mat_{2n}(\F_2)$.  As in \cite[Remark 2.6]{BK3}, equip $E$ with the irreducible symplectic representation of $\Delta \subset \SP_{2n}(\F_2)$ isomorphic to $\cS_m$, with transvections corresponding to transpositions and $m = 2n+1$ or $2n+2$.  If $n=1$, then $\Delta=\cS_3$ and so $H^1(\Delta, \gh) = 0$.  If $n \ge 2$, there is a non-trivial class $[\psi]$ in $H^1(\Delta,\gh)$ such that $\psi(g) = \sign^+(g)I_{2n}$, where $\sign^+$ is the sign of the permutation $g$ with values in $\F_2$.   This situation can occur when $E$ is the kernel of multiplication by 2 on the Jacobian of a hyperelliptic curve of genus at least 2.

Suppose further that $E$ has prime conductor $N$ and let $\sigma_v$ generate inertia in $F/\Q$ at $v \vert N$.  Then $\sigma_v$ is a transposition in $\cS_m$, so $\psi(\sigma_v) = I_{2n}$.  It follows from \eqref{RepFromPsi} that $\rk \rho_W(\sigma_v-1) = 2n$.  Since the minimality assumption {\bf E3} on our category $\un{E}$ requires that this rank be 2, the extension $W$ is not acceptable when $n > 1$.   However, $W$ does prolong to a group scheme over $\Z[\frac{1}{N}]$ satisfying {\bf E1} and {\bf E2} under the hypotheses in Lemma \ref{gh}, since $H^1(\cD_\gp,\gh) = 0$ for the decomposition group $\cD_\gp$ at $\gp \vert 2$ in $F/\Q$.

\end{Rem}

\section{Some technical lemmas on local conductors} \label{condapp}
Let $K$ be a finite extension of $\Q_p$ with uniformizer $\pi_K$, ring of integers $\cO_K$ and absolute ramification index $e_K = \ord_{\pi_K}(p)$.  Set   
\begin{equation} \label{nUnits}
U_K^{(n)} = \{ u \in \cO_K^\times \, | \, \ord_{\pi_K}(u-1) \ge n \}.
\end{equation} 
See \cite[IV]{Ser1} for basic information about ramification groups and conductors. Let $L/K$ be a finite Galois extension.  The {\em index} of elements $g$ in $G = \Gal(L/K)$ is given by $i_{L/K}(g) = \ord_{\pi_L}(g(\theta) - \theta)$ for any choice of $\theta$ in $\cO_L$ such that $\cO_L = \cO_K[\theta]$.  Then $\ord_{\pi_L}(g (a) - a) \ge i_{L/K}(g)$ for all $a$ in $\cO_K$.   In Serre's {\em lower numbering} on ramification groups, $G_j = \{ g \in G  \, \vert \, i_{L/K}(g) \ge j+1\}$.  Thus $G_{-1} = G$, $G_0$ is the inertia group, its fixed field is the maximal unramified extension of $K$ inside $L$ and the $p$-Sylow subgroup $G_1$ is the wild ramification subgroup of $G$.  For $g$ in $G_0$, we have $i_{L/K}(g) = \ord_{\pi_L}(g(\pi_L) - \pi_L)$.  The Herbrand function is defined by 
\begin{equation} \label{HerbDef}
\varphi_{L/K}(x) = \int_0^x \frac{ds}{\fdeg{G_0}{G_s}}
\end{equation}
In Serre's {\em upper numbering}, $G^m = G_n$ with $m = \varphi_{L/K}(n)$.  

\begin{Not}
Let $c_{L/K} = \max\{j \, \vert \, G_j \ne 1\}$ and let $m_{L/K} = \varphi_{L/K}(c_{L/K})$.  Thus $G^{m_{L/K}} \ne 1$ but $G^{m_{L/K}+\epsilon} = 1$ for all $\epsilon > 0$.  When $L/K$ is abelian, the conductor exponent $\gf(L/K)$ is the smallest integer $n \ge 0$ such that $U_K^{(n)}$ is contained in the norm group $N_{L/K}(L^\times)$.   
\end{Not}

We have $\gf(L/K) =m_{L/K}+1$ by \cite[XV, \S2]{Ser1}, with $c_{L/K} = m_{L/K} = -1$ and $\gf(L/K) = 0$ when $L/K$ is unramified.  If $M/K$ is a Galois extension and the intermediate field $L$ also is Galois over $K$, then $m_{L/K} \le m_{M/K}$ because $\Gal(M/K)^\alpha \xrightarrow{\rm res} \Gal(L/K)^\alpha$ is surjective for all $\alpha$.  Translation by an unramified extension of the base does not affect the conductor, as we next recall.

\begin{lem}  \label{LF}
If $F/K$ is unramified, then $m_{LF/F} = m_{L/K}$.  If, in addition, $L/K$ is abelian, then $\gf(LF/F) = \gf(L/K)$.
\end{lem}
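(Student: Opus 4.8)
The plan is to reduce the asserted equality $m_{LF/F} = m_{L/K}$ of largest upper--numbering breaks to an equality of lower--numbering ramification filtrations, isolating the two sources of discrepancy: the unramified subextension $L \cap F$ sitting inside $L/K$, and the unramified base change by $F$. Set $E = L \cap F$; since $F/K$ is unramified, $E/K$ is unramified, and restriction gives an isomorphism $\Gal(LF/F) \xrightarrow{\sim} \Gal(L/E) =: H$, a subgroup of $G := \Gal(L/K)$. I would then establish the chain $m_{LF/F} = m_{L/E} = m_{L/K}$ in two steps.

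First I would show $m_{L/E} = m_{L/K}$. Because $E/K$ is unramified and $E \subseteq L$, the field $E$ lies inside the maximal unramified subextension $L_0$ of $L/K$; consequently $L_0$ is also the maximal unramified subextension of $L/E$, so the inertia group of $H = \Gal(L/E)$ coincides with that of $G$, namely $G_0 = \Gal(L/L_0)$. For $g$ in $G_0$ one computes $i_{L/E}(g) = \ord_{\pi_L}(g(\pi_L) - \pi_L) = i_{L/K}(g)$, since $\pi_L$ is a common uniformizer and $e(L/E) = e(L/K) = [L:L_0]$. Hence $H_j = G_j$ for all $j \ge 0$, so the Herbrand functions agree, $\varphi_{L/E} = \varphi_{L/K}$ on $[0,\infty)$, and $c_{L/E} = c_{L/K}$, giving $m_{L/E} = m_{L/K}$.

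Next I would show $m_{LF/F} = m_{L/E}$, i.e. that the unramified base change $F/E$ preserves the filtration. I would pick $\theta$ with $\cO_L = \cO_E[\theta]$ and use that $F/E$ unramified yields the integral--basis identity $\cO_{LF} = \cO_F[\theta]$ coming from the \'etale base change $\cO_F \otimes_{\cO_E} \cO_L \simeq \cO_{LF}$, together with the fact that $\pi_L$ also uniformizes $LF$ (as $LF/L$ is unramified). Then for $g$ in $\Gal(LF/F)$ with restriction $\bar g$ in $H$, one has $i_{LF/F}(g) = \ord_{\pi_{LF}}(g(\theta)-\theta) = \ord_{\pi_L}(\bar g(\theta)-\theta) = i_{L/E}(\bar g)$. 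Thus the lower--numbering groups match under the isomorphism, whence $\varphi_{LF/F} = \varphi_{L/E}$, $c_{LF/F} = c_{L/E}$, and $m_{LF/F} = m_{L/E}$.

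Finally, for the conductor statement, note that $LF/F$ is abelian whenever $L/K$ is, since $\Gal(LF/F) \simeq H$ is a subgroup of the abelian group $G$. Invoking the formula $\gf(L/K) = m_{L/K} + 1$ from \cite[XV, \S2]{Ser1} (and the convention $m = -1$, $\gf = 0$ in the unramified case), I would conclude $\gf(LF/F) = m_{LF/F} + 1 = m_{L/K} + 1 = \gf(L/K)$. I expect the main obstacle to be the integral--basis identity $\cO_{LF} = \cO_F[\theta]$ underpinning the second step, which is where unramifiedness of $F/E$ is genuinely used; once the lower--numbering groups are matched, everything else is routine bookkeeping with the Herbrand function.
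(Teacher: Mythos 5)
Your proof is correct and follows essentially the same route as the paper: identify $\Gal(LF/F)$ with $\Gal(L/L\cap F)$ by restriction, show the lower-numbering ramification groups match both under passage from $K$ to $L\cap F$ and under the unramified base change to $F$ (the paper does this via the common uniformizer $\pi_L$, you via the monogenic generator $\theta$ and \'{e}tale base change), and then conclude equality of Herbrand functions, of the breaks $m$, and of conductors via $\gf = m+1$. The extra detail you supply (the identity $\cO_{LF} = \cO_F[\theta]$) is a sound justification of the step the paper leaves as "it follows from the definition of the lower numbering."
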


\begin{proof}
The restriction map $\Gal(LF/F) \xrightarrow{\rm res} \Gal(L/ L \cap F)$ is an isomorphism.  Since $F/K$ is unramified, $\pi_L$ also is a prime element of $LF$.  For all $s \ge 0$, it follows from the definition of the lower numbering that restriction induces an isomorphism $\Gal(LF/F)_s \xrightarrow{\sim} \Gal(L/ L \cap F)_s = \Gal(L/K)_s$   Thus the Herbrand functions of $LF/F$ and $L/K$ agree and the rest is clear.
\end{proof}

\begin{prop}  \label{cond1}
Let $L = K(t)$ be Galois over $K$, with $\ord_{\pi_L}(t) = -n$ prime to $p$ and negative.  If $g(t) - t$ is a unit for all $g \ne 1$ in $G_0$, then $G_0$ is an elementary abelian $p$-group and $\gf(L/K) = i_{L/K}(g) = n+1$.
\end{prop}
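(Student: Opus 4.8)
The plan is to determine the lower ramification filtration of $G_0$ completely by showing that $i_{L/K}(g) = n+1$ for \emph{every} $g \neq 1$ in $G_0$; this forces $G_0 = G_1 = \cdots = G_n$ and $G_{n+1} = 1$, and both the group structure and the conductor can then be read off from this single jump. Throughout I would replace $K$ by the maximal unramified subextension $K_0$ of $L/K$, so that $L/K_0$ is totally ramified with group $G_0$ and $\pi_L$ is a uniformizer; by Lemma \ref{LF} (applied with the unramified extension $K_0/K$, for which $LK_0 = L$) this changes neither the lower-numbered groups $G_j$ nor the index $i_{L/K}(g)$ on $G_0$, and it preserves the conductor whenever $L/K$ is abelian.

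Before computing the index I must first rule out tame ramification, and this is the step I expect to be the main obstacle. Suppose $G_0 \neq G_1$, so that $m = [G_0 : G_1] > 1$ is prime to $p$. Since $G_1$ is a normal $p$-Sylow subgroup and $G_0/G_1$ is cyclic of order prime to $p$, I may choose $g_0 \in G_0$ of order $m$ mapping to a generator of $G_0/G_1$, and set $C = \langle g_0\rangle$. Then $L/L^C$ is cyclic, totally and tamely ramified of degree $m$, so tame Kummer theory lets me pick $\pi_L$ with $g_0(\pi_L) = \zeta\pi_L$ for a primitive $m$-th root of unity $\zeta$, giving the eigenspace decomposition $L = \bigoplus_{j=0}^{m-1}\pi_L^j L^C$. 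Decomposing $t = \sum_{j=0}^{m-1} t_j$ accordingly, each nonzero component satisfies $\ord_{\pi_L}(t_j) \equiv j \pmod{m}$, because $\ord_{\pi_L}$ takes values in $m\Z$ on $L^C$. As $g_0 \neq 1$ acts faithfully on $L = K(t)$, some $t_j$ with $1 \le j \le m-1$ is nonzero, and since $\zeta^j - 1$ is a unit for those $j$, the valuation of $g_0(t) - t = \sum_{j=1}^{m-1}(\zeta^j - 1)t_j$ equals the minimum of the $\ord_{\pi_L}(t_j)$, which lies in a nonzero residue class modulo $m$. In particular it cannot be $0$, contradicting the hypothesis that $g_0(t) - t$ is a unit. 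Hence $G_0 = G_1$ is a $p$-group and every $g \neq 1$ in $G_0$ is wildly ramified.

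With tameness excluded, the index computation is routine. For $g \neq 1$ in $G_0$, write $t = \pi_L^{-n} u$ with $u \in \cO_L^\times$ and set $w = g(\pi_L)/\pi_L$, so that $\ord_{\pi_L}(w-1) = i_{L/K}(g) - 1 \geq 1$. Then $g(t) - t = \pi_L^{-n}\bigl(u(w^{-n}-1) + w^{-n}(g(u)-u)\bigr)$. Because $p \nmid n$ one has $\ord_{\pi_L}(w^{-n}-1) = \ord_{\pi_L}(w-1) = i_{L/K}(g)-1$ (the cofactor $1 + w + \cdots + w^{n-1}$ reduces to $n$, a unit), while $\ord_{\pi_L}(g(u)-u) \geq i_{L/K}(g)$; so the first term dominates and $\ord_{\pi_L}(g(t)-t) = i_{L/K}(g) - n - 1$. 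The hypothesis that $g(t)-t$ is a unit then gives $i_{L/K}(g) = n+1$ for all $g \neq 1$.

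Finally I would assemble the conclusions. From $i_{L/K}(g) = n+1$ for all $g \neq 1$, the definition $G_j = \{g : i_{L/K}(g) \geq j+1\}$ yields $G_0 = G_1 = \cdots = G_n$ and $G_{n+1} = 1$, a single jump at $n \geq 1$. The standard injective homomorphism $G_n/G_{n+1} \hookrightarrow U_L^{(n)}/U_L^{(n+1)}$ into the additive group of the residue field from \cite[IV]{Ser1} then shows that $G_0 = G_n/G_{n+1}$ is an elementary abelian $p$-group. For the conductor, the filtration makes the integrand in \eqref{HerbDef} equal to $1$ on $[0,n]$, so $c_{L/K} = n$ and $m_{L/K} = \varphi_{L/K}(n) = n$; hence $\gf(L/K) = m_{L/K} + 1 = n+1$, first over $K_0$ where $L/K_0$ is abelian with group $G_0$, and then over $K$ whenever $L/K$ is abelian by Lemma \ref{LF}. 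This gives $\gf(L/K) = i_{L/K}(g) = n+1$, completing the proof.
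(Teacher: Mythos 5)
Your proof is correct, and its core index computation is the same as the paper's: you expand $g(t)-t$ using $w=g(\pi_L)/\pi_L$, use $p\nmid n$ to see that $1+w+\cdots+w^{n-1}$ reduces to the unit $n$, and use $\ord_{\pi_L}(g(u)-u)\ge i_{L/K}(g)$ to get $\ord_{\pi_L}(g(t)-t)=i_{L/K}(g)-n-1$; the paper performs the equivalent computation with $\theta=1/t$ (obtaining $n-1+i$) and equates it with the exact value $2n$ coming from the identity $g(\theta)-\theta=-u\,\theta\, g(\theta)$, which is just your equation rewritten. The endgame (single jump $G_0=G_n\supsetneq G_{n+1}=\{1\}$, elementary abelianness from the embedding of $G_n/G_{n+1}$ into the residue field, and the Herbrand function giving $\gf(L/K)=n+1$) is also handled identically. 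Where you genuinely diverge is in proving that $G_0$ is a $p$-group. The paper does this with a three-line telescoping trick: writing $g(t)=t+u$ with $u$ a unit and using that inertia acts trivially on the residue field, iterating gives $t=g^d(t)\equiv t+du \pmod{\pi_L}$ for $g$ of order $d$, forcing $p\mid d$; hence no nontrivial element of $G_0$ has order prime to $p$. Your route instead splits off a cyclic tame complement $C=\langle g_0\rangle$ via Schur--Zassenhaus, invokes tame Kummer theory (which silently requires $\mu_m\subset L^C$ — true here, since $C$ embeds into the roots of unity of the residue field and these lift by Hensel, but worth saying), and uses the eigenspace decomposition $L=\bigoplus_j \pi_L^j L^C$ to show $\ord_{\pi_L}(g_0(t)-t)\not\equiv 0\pmod m$. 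This is valid, and it even yields slightly more (the valuation is forced into a nonzero class mod $m$, not merely away from $0$), but it spends the splitting of $G_0\to G_0/G_1$, the lifting of roots of unity, and Kummer structure theory on a step the paper dispatches by counting orders.
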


\begin{proof}
By assumption, non-trivial elements $g$ of $G_0$ satisfy $g(t) = t + u$ with $u$ a unit in $\cO_L$ and $g(u) \equiv u \pmod{\pi_L}$.  If $g$ has order $d$, then   
$$
t = g^d(t) = t + u + g(u) + \dots + g^{d-1}(u) \equiv t + du \pmod{\pi_L},
$$
so $p \vert d$.  Hence $G_0 = G_1$ is a $p$-group and so $i = i_{L/K}(g) \ge 2$.   Furthermore, $\ord_\pi(g (a) - a) \ge i$ for all $a$ in $\cO_L$.

Set $\pi = \pi_L$, $\theta =  1/t = \alpha \pi^n$  and $g(\pi)-\pi = \beta \pi^i$, where $\alpha$ and $\beta$ are units in $ \cO_L$.  We have the following congruences modulo $\pi^{n+i} \cO_L$:  
$$
\begin{array}{l l l l l}
g(\theta)-\theta &=& (g-1)(\alpha \pi^n) &=& \alpha \, (g-1)(\pi^n) + g(\pi^n) \, (g-1)(\alpha) \\
 & &  &\equiv& \alpha \, (g-1)(\pi^n)  \\
  & & &\equiv& \alpha \, ((\pi + \beta \, \pi^i)^n - \pi^n)  \\
  & & &\equiv& \alpha \beta n \, \pi^{n-1+i} 
\end{array}
$$
and therefore $\ord_\pi(g(\theta)-\theta) = n-1+i$.  Explicitly, 
$$
g(\theta) - \theta = \frac{t-g(t)}{t \, g(t)} = -\frac{u}{t \, g(t)} = - u \cdot  \theta \, g(\theta),
$$
so $\ord_\pi(g(\theta)-\theta) = 2n$.  Hence $i = n+1$ and the lower ramification sequence has only one gap:
$
G_0 = G_n \, \supsetneq \, G_{n + 1} = \{1\}. 
$
By ramification theory, $G_n$ is an elementary abelian $p$-group and we have $\gf(L/K) = \varphi_{L/K}(n)+1 = n + 1$.
\end{proof}

Next, we recall the conductors of Kummer extensions of degree $p$.  

\begin{lem} \label{cond2}
Let $K$ contain $\Mu_p$ and $L =  K(\kappa^{1/p})$ with $\kappa \in K^\times$.  Then 
$$
\gf(L/K) = \frac{pe_K}{p-1} + 1  \quad \text{if } \ord_{\pi_K}(\kappa) \not \equiv 0 \bmod{p}
$$
and this is maximal for cyclic extensions of $K$ of degree $p$.  If $\ord_{\pi_K}(\kappa-1) = n$ with $1 \le n <  pe_K/(p-1)$ and $n \not\equiv 0 \bmod{p}$, then  
$ 
\displaystyle{\gf(L/K) = \frac{pe_K}{p-1} - n + 1}. 
$
\end{lem}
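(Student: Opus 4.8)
The plan is to reduce both formulas to a single index computation and then read off maximality. First I would invoke Kummer theory: every cyclic degree $p$ extension of $K$ is $L = K(\kappa^{1/p})$, and replacing $\kappa$ by $\kappa u^p$ with $u \in K^\times$ changes neither $L$ nor the hypotheses, so I may normalize $\ord_{\pi_K}(\kappa)$ freely. In each case below $L/K$ turns out to be totally ramified of degree $p$, so $G = \Gal(L/K) = G_0$ is cyclic of order $p$ with a single ramification break $c$. Since the Herbrand function is the identity up to the first jump, the upper and lower breaks coincide, and combining the appendix facts with $\gf(L/K) = m_{L/K}+1$ yields $\gf(L/K) = c+1 = i_{L/K}(g) = \ord_{\pi_L}(g(\pi_L)-\pi_L)$ for any uniformizer $\pi_L$ and any $g \ne 1$ in $G$, in the same spirit as Proposition \ref{cond1}. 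The one external input I record is that, because $K \supseteq \Mu_p$, one has $\ord_{\pi_K}(\zeta_p - 1) = e_K/(p-1)$, hence $\ord_{\pi_L}(\zeta_p - 1) = pe_K/(p-1)$ in a totally ramified degree $p$ extension.

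For the first formula I would adjust $\kappa$ by a $p$th power so that $0 < r := \ord_{\pi_K}(\kappa) < p$; as $\gcd(r,p) = 1$ the extension is totally ramified of degree $p$. Putting $\Pi = \kappa^{1/p}$ gives $\ord_{\pi_L}(\Pi) = r$, and I choose $g$ with $g(\Pi) = \zeta_p \Pi$. Taking integers $a,b$ with $ar + bp = 1$, the element $\pi_L = \Pi^a \pi_K^b$ is a uniformizer, $a \equiv r^{-1} \pmod p$ is prime to $p$, and $g(\pi_L) - \pi_L = (\zeta_p^a - 1)\pi_L$. Since $\zeta_p^a$ is again a primitive $p$th root of unity, $\ord_{\pi_L}(g(\pi_L) - \pi_L) = pe_K/(p-1) + 1$, the claimed conductor.

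For the second formula I may assume $\kappa$ is a unit with $\ord_{\pi_K}(\kappa - 1) = n$. Writing $\alpha = \kappa^{1/p}$ and $\eta = \alpha - 1$, I would expand $(1+\eta)^p = \kappa$ and compare the Newton polygon of $p\eta + \cdots + \eta^p = \kappa - 1$; using $n < pe_K/(p-1)$ the term $\eta^p$ dominates, forcing $\ord_{\pi_L}(\eta) = n$, so $L/K$ is totally ramified of degree $p$ since $\gcd(n,p) = 1$. The key simplification is the identity $g(\eta) - \eta = (\zeta_p - 1)(1 + \eta) = (\zeta_p - 1)\alpha$, giving $\ord_{\pi_L}(g(\eta) - \eta) = pe_K/(p-1)$. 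Choosing $a,b$ with $an + bp = 1$ and $0 < a < p$ (so $a \equiv n^{-1}$ is prime to $p$), the uniformizer $\pi_L = \eta^a \pi_K^b$ satisfies $g(\pi_L) - \pi_L = \pi_K^b(g(\eta)^a - \eta^a)$, whose leading term $a\,\eta^{a-1}(g(\eta) - \eta)$ has valuation $bp + (a-1)n + pe_K/(p-1) = pe_K/(p-1) - n + 1$. Hence $\gf(L/K) = pe_K/(p-1) - n + 1$.

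Maximality then follows by comparison: every ramified cyclic degree $p$ extension lands in one of the two cases (unramified ones have conductor $0$), and the second value is strictly smaller than the first since $n \ge 1$; equivalently $U_K^{(m)} \subseteq K^{\times p}$ once $m > pe_K/(p-1)$, so no larger break is possible. The delicate point throughout is the valuation bookkeeping in the second case—pinning down $\ord_{\pi_L}(\eta) = n$ from the Newton polygon, and checking that the leading binomial coefficients $r^{-1}, n^{-1} \bmod p$ are units so that no unexpected cancellation inflates $\ord_{\pi_L}(g(\pi_L) - \pi_L)$.
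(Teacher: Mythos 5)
Your proof is correct, and its skeleton matches the paper's: choose a Kummer generator, use $g(\kappa^{1/p}) = \zeta_p\,\kappa^{1/p}$ together with $\ord_{\pi_K}(\zeta_p-1) = e_K/(p-1)$, and identify $\gf(L/K)$ with $i_{L/K}(g)$ for a totally ramified degree-$p$ extension having a single ramification break. The differences are in execution. In the second case the paper never constructs a uniformizer: it sets $\theta = \kappa^{1/p}-1$, rescales to $t = \theta/(\zeta-1)$, checks that $g(t)-t = \kappa^{1/p}$ is a unit and that $\ord_{\pi_L}(t) = n - pe_K/(p-1)$ is negative and prime to $p$ (obtained by comparing ordinals in the integral equation \eqref{tPoly} satisfied by $t$ --- your Newton polygon in different clothing), and then simply quotes Proposition \ref{cond1}; your Bezout uniformizer $\pi_L = \eta^a\pi_K^b$ and the binomial estimate on $g(\eta)^a - \eta^a$ essentially re-prove that proposition inline, so you carry by hand the bookkeeping the paper outsources. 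Conversely, your maximality argument is more self-contained than the paper's: the inclusion $U_K^{(m)} \subseteq K^{\times p} \subseteq N_{L/K}(L^\times)$ for $m > pe_K/(p-1)$ bounds the conductor of \emph{every} cyclic degree-$p$ extension at once, whereas the paper's proof is silent on maximality and only disposes of the remaining possibilities ($\ord_{\pi_K}(\kappa-1)$ divisible by $p$, or at least $pe_K/(p-1)$, giving unramified or trivial extensions) in the Remark that follows the lemma. Both routes are sound: yours is longer but needs no auxiliary result, while the paper's is shorter because Proposition \ref{cond1} is already available --- and that proposition is reused for the conductor computations in Lemma \ref{BasicConds}, which is what its isolation buys.
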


\begin{proof}
In the first case, assume without loss of generality that $\ord_{\pi_K}(\kappa) = 1$, so $\theta = \kappa^{1/p}$ is a prime element for $L$.  If $g \ne 1$ in $\Gal(L/K)$, then $g(\theta) - \theta = (\zeta-1) \pi_L$ for some a $p$-th root of unity $\zeta$ and the conductor follows by definition.  

In the second case, set $\kappa = 1 + u\pi_K^n$ with $u$ in $U_K$ and $\theta = \kappa^{1/p} - 1$.  Then $g(\theta) = \zeta \kappa^{1/p}-1 = \theta + (\zeta-1)\kappa^{1/p}$, where $\theta$ satisfies
$
x^p + \sum_{j=1}^{p-1} \bino{p}{j} x^j =  u\pi_K^n  .
$
Let $t = \theta/(\zeta-1)$, to find that $g(t) - t = \kappa^{1/p}$ is a unit in $L$ and $t$ satisfies 
\begin{equation} \label{tPoly}
z^p + \sum_{j=1}^{p-1} a_j z^j = \frac{u\pi_K^n}{(\zeta-1)^p} \quad \text{with} \quad a_j = \bino{p}{j} (\zeta-1)^{j-p}.
\end{equation}
For $1 \le j \le p-1$, we have 
\begin{eqnarray*}
\ord_{\pi_K}(a_j) = e_K - (p-j)\frac{e_K}{p-1} = (j-1) \frac{e_K}{p-1} \ge 0.
\end{eqnarray*}
Put $z = t$ in \eqref{tPoly} and compare ordinals on both sides, using $p \nmid n$, to see that  $L/K$ is totally ramified of degree $p$ and
$$ 
\ord_L(t^p) = n \ord_{\pi_L}(\pi_K)  -  p\ord_{\pi_L}(\zeta-1) = np - p \frac{pe_K}{p-1}.
$$
Thus $\ord_p(t) = n - \frac{pe_K}{p-1}$ and $\gf(L/K)$ can be found by using Proposition \ref{cond1}.
\end{proof}

\begin{Rem}
Since the choice of $\kappa$ can be changed by multiplying by a suitable element of $K^{\times p}$, the only remaining cases are $n \ge \frac{pe_K}{p-1}$.  If equality holds, then \eqref{tPoly} gives an integral polynomial satisfied by $t$ whose reduction modulo $\pi_K$ has the form $z^p + \ov{a}_1 z^{p-1} - \ov{b}$ with $b = u\pi^n(\zeta-1)^{-p}$.   Since $a_1$ and $b$ are unit in $\cO_K$, this polynomial is separable and $L/K$ is unramified, but  possibly split.  If $n > \frac{pe_K}{p-1}$, then $\kappa$ is in $K^{\times p}$ and $L = K$.
\end{Rem}

\begin{lem} \label{cond3}
Let $L_i/K$ be Galois and let $m_i = m_{L_i/K}$ be the upper numbering of the last non-trivial ramification subgroup of $\Gal(L_i/K)$.  If $M = L_1L_2$, then $m_{M/K} = \max\{m_1,m_2\}$ and if $L$ is a subfield of $M$ with $L/K$ abelian, then $\gf(L/K) \le m_{M/K} +1$.
\end{lem}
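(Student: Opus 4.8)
The plan is to compute $m_{M/K}$ directly from the behaviour of the upper numbering under quotients. Set $G = \Gal(M/K)$, which is well defined since the compositum of Galois extensions is Galois, and let $N_i = \Gal(M/L_i)$ be the two normal subgroups, so that $\Gal(L_i/K) = G/N_i$. Because $M = L_1L_2$, the fixed field of $N_1 \cap N_2$ is $L_1L_2 = M$, hence $N_1 \cap N_2 = 1$; equivalently, restriction embeds $G$ into $\Gal(L_1/K) \times \Gal(L_2/K)$. The one external input I would invoke is Herbrand's theorem (\cite[IV]{Ser1}), which says the upper numbering is compatible with quotients: for each real $\alpha$ and each $i$,
\[
\Gal(L_i/K)^\alpha = G^\alpha N_i / N_i .
\]

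First I would establish $m_{M/K} \ge \max\{m_1,m_2\}$. For each $i$, the choice of $m_i$ gives $\Gal(L_i/K)^{m_i} \ne 1$, so by the displayed identity $G^{m_i} N_i \ne N_i$, whence $G^{m_i} \ne 1$ and therefore $m_{M/K} \ge m_i$. For the reverse inequality, fix any $\alpha > \max\{m_1,m_2\}$. Then $\Gal(L_i/K)^\alpha = 1$ for both $i$, i.e.\ $G^\alpha N_i = N_i$, i.e.\ $G^\alpha \subseteq N_i$; taking $i = 1,2$ gives $G^\alpha \subseteq N_1 \cap N_2 = 1$. Thus $G^\alpha = 1$ for all $\alpha > \max\{m_1,m_2\}$, which forces $m_{M/K} \le \max\{m_1,m_2\}$. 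Combining the two inequalities yields $m_{M/K} = \max\{m_1,m_2\}$.

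For the last assertion, let $L$ be a subfield of $M$ with $L/K$ abelian, hence Galois. The monotonicity of the top upper-numbering break under passage to Galois subextensions, recalled before the statement (surjectivity of $\Gal(M/K)^\alpha \xrightarrow{\rm res} \Gal(L/K)^\alpha$), gives $m_{L/K} \le m_{M/K}$. Since $L/K$ is abelian, the identity $\gf(L/K) = m_{L/K}+1$ recorded earlier applies, so $\gf(L/K) = m_{L/K}+1 \le m_{M/K}+1$, as desired.

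I expect no serious obstacle here: the entire argument rests on the functoriality of the upper numbering under quotients, and once that identity is in hand both the equality $m_{M/K} = \max\{m_1,m_2\}$ and the conductor bound follow by elementary manipulation of the groups $G^\alpha$. The only point requiring a little care is to state Herbrand's compatibility for arbitrary real $\alpha$, not merely at integer breaks, so that the strict inequality $\alpha > \max\{m_1,m_2\}$ can be used cleanly to conclude $G^\alpha = 1$.
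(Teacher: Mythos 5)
Your proof is correct and follows essentially the same route as the paper's: both arguments rest on the compatibility of the upper numbering with quotients (surjectivity of $\Gal(M/K)^\alpha \to \Gal(L_i/K)^\alpha$, i.e.\ Herbrand's theorem) to get $m_{M/K} \ge \max\{m_1,m_2\}$, then use $\Gal(M/L_1) \cap \Gal(M/L_2) = 1$ to kill $\Gal(M/K)^\alpha$ for $\alpha > \max\{m_1,m_2\}$, and finally apply $\gf(L/K) = m_{L/K}+1$ for the abelian subextension. The paper's version is just a more compressed rendering of the identical argument, phrased in terms of restrictions $g_{\vert L_i}$ rather than the quotient groups $G^\alpha N_i/N_i$.
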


\begin{proof}
If $m = \max\{m_1,m_2\}$, then $m_{M/K} \ge m$.  But if $g$ is in $\Gal(M/K)^\alpha$ with $\alpha > m$, then $g_{\vert L_i} = 1$ for $i=1,2$, so $g = 1$.  Hence $m_{M/K} = m$.  It follows that $m_{L/K} \le m$ and therefore $\gf(L/K) \le m+1$. 
\end{proof}

\begin{lem}\label{cond5}
Assume that $F/K$ is Galois and $L/F$ is abelian.  Let $M$ be the Galois closure of $L/K$.  Then $M/F$ is abelian and $\gf(M/F) = \gf(L/F)$.
\end{lem}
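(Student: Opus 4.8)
The plan is to realize $M$ as the compositum of the (finitely many) $\Gal(\ov{K}/K)$-conjugates of $L$ and to exploit the normality of $F/K$. Since $F/K$ is Galois, every $\sigma$ in $\Gal(\ov{K}/K)$ satisfies $\sigma(F)=F$, so each conjugate $\sigma(L)$ contains $\sigma(F)=F$, and the Galois (normal) closure is $M=\prod_{\sigma}\sigma(L)$, the compositum of the finitely many distinct conjugates $\sigma(L)$ with $\sigma$ in $\Gal(\ov{K}/K)$.

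First I would establish that $M/F$ is abelian. Because $M/K$ is Galois and $F$ is an intermediate field, $M/F$ is Galois. Each $\sigma(L)/F=\sigma(L)/\sigma(F)$ is a conjugate of the abelian extension $L/F$, hence abelian: conjugation by $\sigma$ carries $\Gal(L/F)=\Gal(\ov{K}/F)/\Gal(\ov{K}/L)$ isomorphically onto $\Gal(\sigma(L)/F)=\Gal(\ov{K}/F)/\Gal(\ov{K}/\sigma(L))$, using that $\Gal(\ov{K}/F)$ is normal in $\Gal(\ov{K}/K)$. As a compositum of abelian extensions of $F$, the extension $M/F$ is abelian, so $\gf(M/F)$ is defined.

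Next I would compute the conductor. The key point is that the automorphism $\sigma$ of $\ov{K}$ fixes $K$ and maps the pair $F\subseteq L$ isomorphically onto $F\subseteq \sigma(L)$ (again using $\sigma(F)=F$), preserving the valuation and hence the entire ramification filtration; therefore $\gf(\sigma(L)/F)=\gf(L/F)$ for every $\sigma$. Applying Lemma \ref{cond3}, iterated over the finitely many factors, to the compositum $M=\prod_{\sigma}\sigma(L)$ of Galois extensions of $F$ gives $m_{M/F}=\max_{\sigma} m_{\sigma(L)/F}$. Since $\gf=m+1$ for abelian extensions and one of the conjugates is $L$ itself, this yields $\gf(M/F)=\max_{\sigma}\gf(\sigma(L)/F)=\gf(L/F)$.

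The main obstacle is the invariance $\gf(\sigma(L)/F)=\gf(L/F)$, where I must be careful that $\sigma$ does not fix $F$ pointwise but only setwise, so the correct statement is that $\sigma$ is an isomorphism of extensions lying over the automorphism $\sigma|_F$ of $F$. Since the conductor exponent is intrinsic, namely the least $n$ with $U_F^{(n)}\subseteq N_{L/F}(L^\times)$, and $\sigma|_F$ preserves the unit filtration $U_F^{(n)}$ while carrying $N_{L/F}(L^\times)$ onto $N_{\sigma(L)/F}(\sigma(L)^\times)$, the two conductors coincide. The remaining ingredients, that a compositum of abelian extensions is abelian and that Lemma \ref{cond3} iterates, are routine.
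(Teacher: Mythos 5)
Your proposal is correct and follows essentially the same route as the paper's proof: realize $M$ as the compositum of the conjugates $\sigma(L)$, use that $F/K$ Galois forces $\sigma(F)=F$ so each $\sigma(L)/F$ is abelian with $\gf(\sigma(L)/F)=\gf(L/F)$, and then apply Lemma \ref{cond3} to the compositum. Your write-up merely fills in details the paper leaves implicit, notably the justification that conjugation preserves the conductor via the norm-group characterization.
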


\begin{proof}
Since $m_{L/F} \le m_{M/F}$, we have $\gf(L/F) \le \gf(M/F)$.  If $\tau$ is in $\Gal(M/K)$, then  $\tau(L)/F$ is abelian and $\gf(\tau(L)/F) = \gf(L/F)$.  But $M$ is the compositum of all $\tau(L)$ as $\tau$ varies. Therefore, $M/F$ is abelian and by Lemma \ref{cond3}, $\gf(M/F) \le \gf(L/F)$, giving equality.
\end{proof}

For the next lemma, refer to the following diagram:   \quad
\begin{tikzpicture}[scale=1, thin, baseline=(current  bounding  box.center)]	
    \draw (0,0) -- (0,1.1);
    \draw (0,0) node {$\bullet$};   \draw (.3,0) node {$K$};
    \draw (0,.5) node {$\bullet$};     \draw (.35,.5) node {$K_1$};
    \draw (0,1.1) node {$\bullet$};   \draw (.29,1.15) node {$F$};
    \draw (.9,.85) node {unram};
    \draw (0,.5) -- (-.4,.9) -- (-.4,1.5);
    \draw (-.4,.9) node {$\bullet$};  \draw (-.7,.95) node {$K'$};
    \draw (-.4,1.5) node {$\bullet$};   \draw (.05,1.65) node {$K'F$};
    \draw (0,1.1) -- (-.8,1.9);
    \draw (-.8,1.9) node {$\bullet$};  \draw (-.55,2.1) node {$M$};
\end{tikzpicture}

\begin{lem}  \label{condK'L}
Let $F$ be the Galois closure of $K_1/K$  and assume that $F/K_1$ is unramified.  Let $K'$ be an abelian extension of $K_1$ and let $M$ be the Galois closure of $K'/K$.  Then $M$ is abelian over $F$ and $\gf(M/F) = \gf(K'/K_1)$.
\end{lem}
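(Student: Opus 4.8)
The plan is to reduce the statement to Lemma~\ref{cond5} and Lemma~\ref{LF} by passing to the compositum $K'F$. First I would record the structural facts suggested by the diagram. Since $F/K$ is Galois and $K_1$ is an intermediate field, $F/K_1$ is automatically Galois, and it is unramified by hypothesis. Moreover $K_1 \subseteq F$, so $K_1 \subseteq K' \cap F$, and the restriction isomorphism $\Gal(K'F/F) \xrightarrow{\sim} \Gal(K'/K'\cap F)$ exhibits $\Gal(K'F/F)$ as a subgroup of the abelian group $\Gal(K'/K_1)$. Hence $K'F/F$ is abelian, which is the shape of hypothesis needed to feed into Lemma~\ref{cond5}.

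Next I would identify the Galois closure $M$ of $K'/K$ with the Galois closure $M'$ of $K'F$ over $K$. One inclusion, $M \subseteq M'$, is immediate from $K' \subseteq K'F$. For the reverse inclusion, observe that $M$ is Galois over $K$ and contains $K' \supseteq K_1$, so it contains every $K$-conjugate of $K_1$ and therefore contains their compositum $F$; thus $K'F \subseteq M$, and minimality of $M'$ gives $M' \subseteq M$, whence $M = M'$. With this identification secured, Lemma~\ref{cond5} applies to the abelian extension $K'F/F$ whose Galois closure over $K$ is $M$, and it yields both that $M/F$ is abelian and that $\gf(M/F) = \gf(K'F/F)$.

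Finally I would invoke Lemma~\ref{LF} with base field $K_1$ in place of $K$: since $F/K_1$ is unramified and $K'/K_1$ is abelian, the translation-by-an-unramified-extension formula gives $\gf(K'F/F) = \gf(K'/K_1)$. Combining the two equalities produces $\gf(M/F) = \gf(K'/K_1)$, completing the proof. The only point requiring genuine care is the identification $M = M'$, which rests entirely on the inclusion $F \subseteq M$; once that is in place and the hypothesis ``$F/K_1$ unramified'' is applied at the level of $K_1$ rather than $K$ when citing Lemma~\ref{LF}, the conductor bookkeeping is automatic, so I do not expect any substantive obstacle beyond lining up the roles of the base fields correctly.
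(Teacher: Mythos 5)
Your proof is correct and takes essentially the same route as the paper's: both identify $M$ with the Galois closure of $K'F/K$ (via the inclusion $F \subseteq M$), apply Lemma~\ref{cond5} to the abelian extension $K'F/F$ to get $\gf(M/F) = \gf(K'F/F)$, and then use Lemma~\ref{LF} over the base $K_1$ to conclude $\gf(K'F/F) = \gf(K'/K_1)$. Your write-up just makes explicit the details (the restriction isomorphism and the two inclusions giving $M = M'$) that the paper states more tersely.
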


\begin{proof} 
The field $M$ contains $F$ because $K'$ contains $K_1$.  Moreover, $M$ is the Galois closure of $K'F/K$.  Since $K'$ is abelian over $K_1$, the extension $K'F/F$ is abelian.  By Lemma \ref{cond5}, with $L$ there equal to $K'F$ here, we find that $M/F$ is abelian and $\gf(M/F) = \gf(K'F/F)$.  By Lemma \ref{LF}, translation of the base via an unramified extension does not change the conductor, so $\gf(K'F/F) = \gf(K'/K_1)$.  Hence $\gf(M/F) = \gf(K'/K_1)$. 
\end{proof}

When $L = K(V)$, where $\cV$ is a finite flat group scheme over $\cO_K$ of exponent $p^n$, Fontaine \cite{Fon4} showed that $m_{L/K} \le e_K(n+\frac{1}{p-1})-1$.  Now consider the conductor exponent of an intermediate abelian 
extension.  

\begin{prop} \label{FontCond}
Let $L = K(V)$ and suppose that $K \subseteq F \subseteq F' \subseteq L$, with  $F'/F$ abelian and the relative ramification index $e_{F/K}$ equal to the tame ramification degree $\fdeg{G_0}{G_1}$ of $L/K$.  Then 
$
\gf(F'/F) \le e_F (n+\frac{1}{p-1})-e_{F/K}+1.
$
\end{prop}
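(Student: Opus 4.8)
The plan is to reduce everything to Fontaine's bound on $m_{L/K}$ in two elementary steps and then do one Herbrand-function computation. First, since $F'/F$ is abelian we have $\gf(F'/F) = m_{F'/F}+1$, and since $F'/F$ is an intermediate Galois subextension of the Galois extension $L/F$, the surjectivity of restriction on the upper-numbering ramification groups (the remark preceding Lemma \ref{LF}, with base $F$, top $L$, intermediate $F'$) gives $m_{F'/F} \le m_{L/F}$. Thus it suffices to prove $m_{L/F} \le e_F(n+\frac{1}{p-1}) - e_{F/K}$, and the entire content is to relate $m_{L/F}$ to $m_{L/K}$, to which Fontaine's inequality $m_{L/K} \le e_K(n+\frac{1}{p-1})-1$ applies.

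Next I would extract the group-theoretic consequence of the hypothesis at the level of $G = \Gal(L/K)$ and its subgroup $\Gamma = \Gal(L/F)$ (note $L = K(V)$ is Galois over $K$, so $L/F$ is Galois). Write $G_0$ for the inertia group and $G_1$ for its wild ramification subgroup, the unique, hence normal, $p$-Sylow of $G_0$, with $\fdeg{G_0}{G_1}$ prime to $p$. Since $e_{L/F} = |\Gamma_0| = |G_0\cap\Gamma|$ and $e_{L/K} = |G_0|$, the hypothesis $e_{F/K} = \fdeg{G_0}{G_1}$ reads $|G_0\cap\Gamma| = |G_1|$. As $G_0\cap\Gamma$ is a $p$-group contained in the unique $p$-Sylow $G_1$ of $G_0$, an order count forces $G_0\cap\Gamma = G_1$, so $G_1 \subseteq \Gamma$. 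Invoking the compatibility of the lower numbering with subgroups, $\Gamma_i = G_i\cap\Gamma$ (\cite{Ser1}, IV), I obtain $\Gamma_0 = \Gamma_1 = G_1$ and $\Gamma_i = G_i$ for all $i \ge 1$; that is, $F$ absorbs exactly the tame part of the ramification and $L/F$ is totally wildly ramified with inertia $G_1$.

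The crux is then a Herbrand computation. Assuming wild ramification is present (otherwise $L/F$ is unramified, so $F'/F$ is unramified, $\gf(F'/F)=0$, and the asserted bound, being positive, holds trivially), the equalities $\Gamma_i = G_i$ for $i\ge 1$ give $c_{L/F} = c_{L/K} =: c$. In the defining integral \eqref{HerbDef} the only discrepancy between $\varphi_{L/F}$ and $\varphi_{L/K}$ occurs on $(0,1]$: there the relevant index is $\fdeg{G_0}{G_1} = e_{F/K}$ for $L/K$ but $\fdeg{\Gamma_0}{\Gamma_1} = 1$ for $L/F$, whereas on each later interval $(j,j+1]$ the two indices differ by the constant factor $e_{F/K}$, since $\fdeg{G_0}{G_{j+1}} = e_{F/K}\,\fdeg{G_1}{G_{j+1}} = e_{F/K}\,\fdeg{\Gamma_0}{\Gamma_{j+1}}$. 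Hence $\varphi_{L/K}(x) = \frac{1}{e_{F/K}}\,\varphi_{L/F}(x)$ for $0 \le x \le c$, and evaluating at $x=c$ yields the clean identity $m_{L/F} = e_{F/K}\,m_{L/K}$.

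Finally I would combine these: by Fontaine's bound and $e_F = e_{F/K}\,e_K$,
$$
m_{L/F} = e_{F/K}\,m_{L/K} \le e_{F/K}\Big(e_K\big(n+\tfrac{1}{p-1}\big)-1\Big) = e_F\big(n+\tfrac{1}{p-1}\big) - e_{F/K},
$$
so $\gf(F'/F) \le m_{L/F}+1 \le e_F(n+\frac{1}{p-1}) - e_{F/K}+1$. I expect the main obstacle to be the middle step: deducing $G_1 \subseteq \Gamma$ from the ramification-index hypothesis (the order-count argument via the unique $p$-Sylow), and then carefully executing the passage from the lower to the upper numbering in the integral so that the tame factor $e_{F/K}$ factors out uniformly. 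The degenerate tame/unramified cases must be isolated but are immediate.
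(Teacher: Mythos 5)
Your proof is correct and takes essentially the same route as the paper: the identical Herbrand-function computation showing the tame factor pulls out, $\varphi_{L/F}(x) = e_{F/K}\,\varphi_{L/K}(x)$, hence $m_{L/F} = e_{F/K}\,m_{L/K}$, followed by Fontaine's bound. The only organizational difference is that the paper routes through the maximal tame subextension $L_1 = L^{G_1}$ and the unramified base-change Lemma \ref{LF}, whereas you work directly with $\Gamma = \Gal(L/F)$; your Sylow order-count showing $G_0 \cap \Gamma = G_1$ is in fact exactly the justification of the paper's unproved assertion that $F \subseteq L_1$ with $L_1/F$ unramified, so your write-up is, if anything, slightly more complete on that point.
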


\begin{proof} 
The fixed field $L_1$ of $H = G_1$ is the maximal subfield of $L$ tamely ramified over $K$.  Since $H_0 = G_1$ and $H_s = G_s$ for all $s > 0$, \eqref{HerbDef} gives
$$
\varphi_{L/L_1}(x) = \fdeg{G_0}{G_1} \varphi_{L/K}(x) = e_{F/K} \, \varphi_{L/K}(x) \quad \text{for all } x > 0.
$$
We may assume that $L$ properly contains $L_1$.  Using $c_{L/L_1} = c_{L/K}$, we have 
$$
m_{F'L_1/L_1} \le m_{L/L_1} = \varphi_{L/L_1}(c_{L/L_1}) = e_{F/K} \, \varphi_{L/K} (c_{L/K}) = e_{F/K} \, m_{L/K}. 
$$
But $F$ is contained in $L_1$ and $L_1/F$ is unramified.  Hence Lemma \ref{LF} shows that
$\gf(F'/F) = \gf(F'L_1/L_1) \le 1 + e_{F/K} \, m_{L/K}$.  Conclude with Fontaine's bound.
\end{proof}

\section{Some Data}  \label{DataSection}  
The  quintic field $F_0$ is  {\em amiable} if its Galois closure $F$ is amiable as in Definition \ref{amiable}, so that the uniqueness in Theorem \ref{main} applies.  To check amiability, construct the pair-resolvent field $K$ and ask  Magma, under GRH, for the 2-rank of the ray class groups  of $K$ with the desired moduli, as  in Theorem \ref{IntroThm}.  A favorable abelian surface $A$ is of {\em type} $F_0$ if  $\Q(A[2])$ is the Galois closure of $F_0$.  To find representatives for isogeny classes of abelian surfaces of prime conductor $N$, it suffices to search for Jacobians by \cite[Theorem 3.4.11]{BK4}.  If $F$ is amiable, then it is not totally real by Remark \ref{amRem}.  The Magma database of quintic fields contains 1919 favorable quintic fields that are not totally real.  Their absolute discriminants are at most $5 \!\cdot \!10^6$ and 714 of them are amiable.  We know Jacobians for only 82 of the latter, but expect conductors of abelian surfaces to be sparse among integers.  

We tabulate explicit information for favorable fields and curves with $N < 25000$ and summarize some data for $N < 10^{10}$.  In all our tables, $[a_0,a_1,a_2, \dots]$ denotes the polynomial $a_0+a_1x+a_2x^2+ \dots$, as in Magma.  

\vspace{5 pt}

\centerline{\bf Legend for Tables \ref{Fields} and \ref{Curves}}  

\vspace{2 pt}

Table \ref{Fields} gives a defining polynomial $f(x)$ for each of the 172 favorable quintic fields $F_0$ of discriminant $\pm16N$ with $N<25000.$   Table \ref{Curves} consists of 75 curves $y^2=g(x)$ whose Jacobians represent distinct known isogeny classes of favorable abelian surfaces of prime conductor $N<25000$.   If $C$ is curve number 25, 63 or 64 in that table, its leading coefficient has the form $4m^3.$  These curves exhibit {\em mild reduction} \cite[p.~1162]{BK4}, in that $C$ is bad at $p|m$ but the reduction of $J(C)$ at $p$ is the product of two elliptic curves.  

In both tables, the column marked $\epsilon$ contains an $\alpha$ if $F_0$ is amiable.   
For each field $F_0$ in Table \ref{Fields}, the column marked \#C contains one of the following:
\begin{enumerate}[$\bullet$]
\item the line number of a curve in Table \ref{Curves} such that $g$ has a root in $F_0$;
\item 0 if we can prove that no abelian surface of type $F_0$ exists by \cite{BK5};
\item P if no non-lift paramodular form of that level exists, so no such surface is expected to exist;
\item U if there is at most one isogeny class of that type, but it is unknown whether such an abelian surface actually exists; 
\item $\nu$ if $F_0$ is not amiable and we do not know whether or not any surface exists.
\end{enumerate}

\vspace{5 pt}

\centerline{\bf Legend for Tables \ref{data} and \ref{LargeN}}  

\vspace{2 pt}

We know 276109 curves, including 10360 mild curves with $3\le m\le 53,$  whose Jacobians are favorable and non-isogenous of prime conductor $N <10^{10},$ for a total of 275494 non-isomorphic fields.   Table \ref{data} summarizes the statistics.  For $0 \le j \le 9$, the $j$-th column refers to $N$ between $j \cdot 10^9$ and $(j+1) \cdot 10^9$.  The rows A, F and $\alpha$, respectively, give the number of abelian varieties, fields and amiable fields.   It is remarkable that approximately 11.8\% of the favorable fields are amiable, uniformly for each slice of size $10^9.$ For the reader's entertainment,  Table \ref{LargeN} lists the  curves we found with largest  conductors below $10^{10}$ and amiable Jacobians.

\begin{table}
\centering{
\begin{caption}{Favorable quintic fields}\label{Fields}\end{caption}
\begin{tabular}{|c|c|c|c|c||c|c|c|c|c|}
\hline
\#$F_0$&\text{$f(x)$}&$N$&$\epsilon$&\#$C$&\#$F_0$&\text{$f(x)$}&$N$&$\epsilon$&\#$C$\\
\hline 
1&[-1,-1,-2,0,1,1]&277&$\alpha$&1&47&[2,4,-4,-4,2,1]&5867&&27\\
2&[-1,1,0,0,1,1]&349&$\alpha$&2&48&[2,-4,2,-2,2,1]&6277&$\alpha$&U\\
3&[-1,3,0,-2,1,1]&461&$\alpha$&3&49&[-1,-1,-8,-4,1,1]&6317&$\alpha$&0\\
4&[1,3,2,2,1,1]&613&$\alpha$&P&50&[-3,-5,-6,2,1,1]&6373&$\alpha$&U\\
5&[1,1,2,0,1,1]&677&$\alpha$&P&51&[2,4,0,-2,2,1]&6397&$\alpha$&0\\
6&[2,2,2,2,2,1]&797&$\alpha$&4&52&[2,-2,0,-2,0,1]&6491&&28\\
7&[-2,0,0,0,2,1]&971&$\alpha$&5&53&[2,0,4,6,0,1]&6701&&0\\
8&[1,1,0,-2,1,1]&997&$\alpha$&6&54&[-2,2,4,-4,0,1]&6763&&29\\
9&[-1,-3,0,4,1,1]&1051&$\alpha$&7&55&[-1,9,-2,-6,1,1]&6907&$\alpha$&U\\
10&[2,-2,-2,0,2,1]&1061&$\alpha$&U&56&[2,6,4,0,0,1]&7013&$\alpha$&U\\
11&[1,-1,2,-2,1,1]&1109&$\alpha$&9&57&[-2,0,-4,-2,2,1]&7109&&30\\
12&[-1,3,-2,0,1,1]&1109&$\alpha$&8&58&[2,-4,-2,4,2,1]&7541&$\alpha$&U\\
13&[-2,-4,-2,2,2,1]&1277&$\alpha$&0&59&[2,-2,6,0,0,1]&7549&$\alpha$&U\\
14&[2,-4,4,-2,0,1]&1597&$\alpha$&0&60&[-3,7,2,6,1,1]&7589&$\alpha$&U\\
15&[2,-2,2,0,0,1]&1637&$\alpha$ &10&61&[6,2,-8,-4,2,1]&7723&&$\nu$\\
16&[1,-3,0,2,1,1]&1811&$\alpha$&11&62&[2,6,0,-6,0,1]&7877&&31\\
17&[-2,2,2,4,2,1]&2069&$\alpha$&U&&&7877&&32\\
18&[-2,0,2,-2,0,1]&2243&$\alpha$&12&&&7877&&33\\
19&[3,5,4,4,1,1]&2269&$\alpha$ &U&63&[11,-1,-4,-4,1,1]&7963&&$\nu$\\
20&[-3,-1,-2,2,1,1]&2341&$\alpha$&13&64&[-2,4,0,-2,2,1]&8243&$\alpha$&34\\
21&[2,4,2,2,2,1]&2557&$\alpha$&0&65&[2,4,2,2,0,1]&8581&&$\nu$\\
22&[2,4,0,-2,0,1]&2677&$\alpha$&14&66&[-1,-5,-4,6,1,1]&8803&&35\\
23&[-2,0,2,0,0,1]&2693&&15&67&[-3,13,-4,-6,1,1]&9091&$\alpha$&36\\
24&[2,4,2,0,0,1]&2909&$\alpha$&U&68&[5,7,0,0,1,1]&9781&$\alpha$&U\\
25&[6,8,8,6,2,1]&3037&$\alpha$&0&69&[7,3,-6,-4,1,1]&9803&&37\\
26&[2,-2,4,0,0,1]&3109&$\alpha$&U&70&[2,-2,4,0,2,1]&9941&$\alpha$&38\\
27&[-2,4,2,-6,0,1]&3251&$\alpha$&16&71&[7,1,2,-2,1,1]&9949&&0\\
28&[1,5,2,4,1,1]&3461&$\alpha$&U&72&[2,-8,8,0,0,1]&10037&&39\\
29&[-1,-3,-2,-2,1,1]&3499&&17&73&[1,-3,-4,-2,1,1]&10163&$\alpha$&U\\
30&[2,0,2,0,0,1]&3557&&18&74&[2,4,0,6,0,1]&10253&&0\\
31&[2,2,0,0,0,1]&3637&$\alpha$&19&75&[-2,2,2,-8,0,1]&10259&&$\nu$\\
32&[2,6,0,-4,0,1]&3701&$\alpha$ &20&76&[1,3,6,2,1,1]&10453&$\alpha$&U\\
33&[2,0,0,2,2,1]&3853&$\alpha$&0&77&[3,-7,10,-6,1,1]&10789&&40\\
34&[2,0,0,2,0,1]&3989&&21&78&[2,-2,4,-4,0,1]&10837&&41\\
35&[-2,-2,-2,2,2,1]&3989&$\alpha$&U&79&[2,2,6,4,2,1]&10853&&42\\
36&[-1,5,-4,-4,1,1]&4003&&0&80&[6,-4,0,-2,0,1]&10949&$\alpha$&43\\
37&[2,2,-2,-2,2,1]&4157&$\alpha$&22&81&[1,1,6,-6,1,1]&10957&&$\nu$\\
38&[2,-6,4,0,0,1]&4219&$\alpha$&U&82&[-3,-1,0,0,1,1]&11117&&44\\
39&[2,2,0,2,0,1]&4517&$\alpha$&23&83&[-1,-5,-6,-4,1,1]&11131&&$\nu$\\
40&[2,0,-6,-2,2,1]&5059&$\alpha$&24&84&[5,11,0,-4,1,1]&11243&$\alpha$&U\\
41&[-1,1,0,-4,1,1]&5227&&25&85&[-1,5,-6,6,1,1]&11261&&0\\
42&[2,2,2,0,0,1]&5261&$\alpha$&0&86&[-1,3,2,-4,1,1]&11579&&45\\
43&[-2,-2,2,4,2,1]&5309&$\alpha$&U&87&[-3,1,0,2,1,1]&11701&&$\nu$\\
44&[-1,-3,-6,-2,1,1]&5381&&$\nu$&88&[2,-10,14,-4,0,1]&11971&&46\\
45&[3,-1,4,6,1,1]&5437&$\alpha$&0&&&11971&&47\\
46&[-2,-4,0,2,2,1]&5651&$\alpha$&26&89&[13,11,-6,-6,1,1]&12037&&$\nu$\\
\hline
\end{tabular}}
\end{table}

\begin{table}
\centering{
\begin{tabular}{|c|c|c|c|c||c|c|c|c|c|}
\hline
\#$F_0$&\text{$f(x)$}&$N$&$\epsilon$&\#$C$&\#$F_0$&\text{$f(x)$}&$N$&$\epsilon$&\#$C$\\
\hline 
90&[3,-1,-2,0,1,1]&12109&&$\nu$&133&[4,-4,8,-2,0,1]&17341&$\alpha$&U\\
91&[3,11,0,-4,1,1]&12301&&$\nu$&134&[-2,0,4,2,0,1]&17341&&0\\
92&[2,10,6,-2,0,1]&12541&$\alpha$&U&135&[-4,4,4,0,0,1]&17389&$\alpha$&59\\
93&[10,6,-8,-4,2,1]&12757&&$\nu$&136&[3,7,6,4,1,1]&17597&$\alpha$&0\\
94&[2,2,4,2,0,1]&12781&$\alpha$&U&137&[14,24,4,-6,0,1]&17923&&$\nu$\\
95&[-3,5,-2,-4,1,1]&12781&$\alpha$&U&138&[6,-4,6,0,0,1]&18077&$\alpha$&60\\
96&[-3,-5,-10,-6,1,1]&12907&$\alpha$&U&139&[-1,-3,-8,-4,1,1]&18181&$\alpha$&0\\
97&[-3,1,-6,-6,1,1]&12923&$\alpha$&48&140&[-1,-5,-4,2,1,1]&18691&&0\\
98&[-1,-1,2,-4,1,1]&13003&$\alpha$&U&141&[1,7,2,-2,1,1]&18757&&$\nu$\\
99&[-2,2,-2,0,2,1]&13037&$\alpha$&0&142&[10,4,-8,-4,2,1]&18869&&$\nu$\\
100&[-2,4,-2,-4,2,1]&13147&$\alpha$&49&143&[-1,3,-8,-8,1,1]&19051&$\alpha$&U\\
101&[7,-1,-2,-4,1,1]&13147&$\alpha$&50&144&[-2,-2,4,4,2,1]&19211&&61\\
102&[2,-4,0,0,0,1]&13259&&51&    &                     &19211&&62\\
103&[3,-1,4,-4,1,1]&13597&&0&145&[2,0,4,4,2,1]&19429&&63\\
104&[2,8,8,6,2,1]&13597&&$\nu$&146&[-2,-12,-22,-8,2,1]&19469&$\alpha$&U\\
105&[1,5,2,-12,1,1]&13723&&52&147&[-1,-5,-14,-8,1,1]&19531&$\alpha$&64\\
106&[6,4,6,4,2,1]&13829&$\alpha$&U&148&[4,0,-8,2,2,1]&19597&&0\\
107&[1,1,-4,-6,1,1]&13963&&$\nu$&149&[4,4,0,4,2,1]&20389&&$\nu$\\
108&[-2,6,2,-6,0,1]&13997&&53&150&[1,-3,2,4,1,1]&20533&$\alpha$&U\\
109&[4,-4,4,0,2,1]&13997&&$\nu$&151&[-2,6,0,2,2,1]&21061&$\alpha$&U\\
110&[-9,-1,4,0,1,1]&14149&&$\nu$&152&[-2,2,2,-4,2,1]&21211&$\alpha$&65\\
111&[15,13,-6,-6,1,1]&14197&&54&153&[-5,11,2,-12,1,1]&21283&&0\\
112&[2,-2,6,-2,2,1]&14293&&$\nu$&154&[-6,-4,4,-4,0,1]&21563&&66\\
113&[-3,-1,-2,-2,1,1]&14629&$\alpha$&U&155&[-14,-18,-10,-2,2,1]&21739&$\alpha$&U\\
114&[-46,48,6,-14,0,1]&14779&&$\nu$&156&[18,8,-12,-6,2,1]&21787&&67\\
115&[2,4,4,4,0,1]&14821&$\alpha$&U&157&[-3,-1,2,2,1,1]&22277&&68\\
116&[-2,4,2,-2,0,1]&15013&&$\nu$&158&[-2,8,-8,-6,2,1]&22291&&69\\
117&[1,-3,2,-4,1,1]&15227&&$\nu$&159&[-1,-3,-8,4,1,1]&22637&&0\\
118&[-2,0,2,0,2,1]&15307&&55&160&[-3,13,2,10,1,1]&22709&&$\nu$\\
119&[-2,2,4,4,0,1]&15373&$\alpha$&U&161&[2,0,-6,-4,2,1]&22787&$\alpha$&U\\
120&[3,7,0,0,1,1]&15493&$\alpha$&U&162&[1,9,6,2,1,1]&22861&&70\\
121&[-2,4,-2,0,2,1]&15581&&$\nu$&163&[-5,13,-4,-8,1,1]&23003&&71\\
122&[5,9,4,6,1,1]&15749&&56&164&[-3,-1,-4,-4,1,1]&23059&$\alpha$&U\\
123&[4,0,0,-2,2,1]&15749&$\alpha$&U&165&[1,-3,-2,4,1,1]&23131&&72\\
124&[2,-6,2,2,2,1]&15923&$\alpha$&U&&&23131&&73\\
125&[-2,0,10,8,0,1]&16139&&$\nu$&166&[2,-4,-2,0,2,1]&23251&&$\nu$\\
126&[2,-2,-10,-4,2,1]&16451&&$\nu$&167&[6,4,2,4,0,1]&23669&&$\nu$\\
127&[1,5,2,0,1,1]&16901&$\alpha$&U&168&[-6,2,4,-2,0,1]&24109&$\alpha$&0\\
128&[-6,4,2,-4,0,1]&16981&$\alpha$&U &169&[2,8,0,6,0,1]&24469&&74\\
129&[9,5,-6,-4,1,1]&17029&&$\nu$&&&24469&&75\\
130&[-7,5,4,-2,1,1]&17203&&$\nu$&170&[2,-4,2,2,0,1]&24533&&$\nu$\\
131&[-2,10,-12,-2,2,1]&17291&&57&171&[-6,4,6,-6,0,1]&24611&$\alpha$&U\\
132&[-15,13,6,-4,1,1]&17317&&58&172&[-7,-5,-2,-2,1,1]&24763&&$\nu$\\
\hline
\end{tabular}}
\end{table}

\begin{table}
\centering{
\begin{caption}{Curves $y^2 = g(x)$, their 2-division fields and conductors}\label{Curves}\end{caption}
\centering{
\begin{tabular}{|c|c|c|c|c|c|}
\hline
\#$C$&\#$F_0$&$g(x)$&$N$& $\epsilon$ \\
\hline 
1&1&[1,-4,8,-8,0,4]&277  & $\alpha$ \\
2&2&[1,-4,4,4,-8,4]&349  &  $\alpha$\\
3&3&[1,8,20,12,-8,4]&461  &  $\alpha$ \\
4&6&[1,0,0,4,-4,4]&797  &$\alpha$  \\
5&7&[1,4,0,-8,0,4]&971  &  $\alpha$ \\
6&8&[1,0,-4,8,-8,4]&997  &  $\alpha$ \\
7&9&[1,-4,4,0,-4,4]&1051  &  $\alpha$ \\
8&11&[-79,-304,-560,-200,-4,4]&1109  &  $\alpha$ \\
9&12&[1,4,4,-4,-4,4]&1109  &   $\alpha$\\
10&15&[1,0,-4,4,-4,4]&1637  &   \\
11&16&[5,-24,44,-36,8,4]&1811  & $\alpha$  \\
12&18&[1,4,4,4,8,4]&2243  &  $\alpha$ \\
13&20&[-3,-4,0,8,8,4]&2341  &  $\alpha$ \\
14&22&[5,-16,20,-8,-4,4]&2677  &  $\alpha$ \\
15&23&[1,0,0,4,8,4]&2693  &   \\
16&27&[1,4,-8,-4,4,4]&3251  & $\alpha$  \\
17&29&[9,-40,60,-32,0,4]&3499  &   \\
18&30&[1,0,0,4,-8,4]&3557 &   \\
19&31&[1,0,4,0,4,4]&3637  & $\alpha$  \\
20&32&[161,-360,284,-80,-4,4]&3701  & $\alpha$  \\
21&34&[1,-4,4,0,0,4]&3989  &   \\
22&37&[-3,8,-12,12,-8,4]&4157  &  $\alpha$ \\
23&39&[1,-4,8,-8,4,4]&4517  &  $\alpha$ \\
24&40&[-3,8,0,-12,4,4]&5059  &  $\alpha$ \\
25&41&[5,-20,-40,240,-600,500]&5227  &   \\
26&46&[5185,-6384,2664,-396,-4,4]&5651  &  $\alpha$ \\
27&47&[73,-180,152,-40,-8,4]&5867  &   \\
28&52&[1,4,0,-8,4,4]&6491  &   \\
29&54&[-3,4,4,-8,0,4]&6763  &   \\
30&57&[25,28,-12,-16,4,4]&7109  &   \\
31&62&[41,-148,160,-56,-4,4]&7877  &   \\
32&62&[1,8,12,-8,-8,4]&7877  &   \\
33&62&[73,-228,232,-84,0,4]&7877  &   \\
34&64&[-591,-1160,-792,-204,-4,4]&8243  &  $\alpha$ \\
35&66&[1,-8,20,-12,-8,4]&8803  &   \\
36&67&[1,-8,24,-28,4,4]&9091  & $\alpha$  \\
37&69&[1,-8,16,-8,-4,4]&9803  &   \\
38&70&[1,8,20,16,8,4]&9941  & $\alpha$  \\
39&72&[1,0,4,0,0,4]&10037  &   \\
40&77&[1,12,44,52,4,4]&10789  &   \\
41&78&[13,4,-20,-8,8,4]&10837  &   \\
42&79&[5,12,0,-12,0,4]&10853  &   \\
43&80&[-7,12,4,16,4,4]&10949  &  $\alpha$ \\
44&82&[1,-4,4,-4,8,4]&11117  &   \\
45&86&[1,12,44,44,-4,4]&11579  &   \\
46&88&[1,4,0,-4,4,4]&11971  &   \\
47&88&[1461041,-565424,78052,-4092,8,4]&11971  &   \\
48&97&[1,4,0,-8,-4,4]&12923  &  $\alpha$ \\
49&100&[1,12,32,28,8,4]&13147  & $\alpha$  \\
50&101&[1,-4,4,-4,4,4]&13147  &  $\alpha$ \\

\hline
\end{tabular}}}
\end{table}

\begin{table}
\centering{
\begin{tabular}{|c|c|c|c|c|c|}
\hline
\#$C$&\#$F_0$&$g(x)$&$N$ & $\epsilon$  \\
\hline 
51&102&[5,-28,48,-24,-4,4]&13259  &   \\
52&105&[1,-4,0,4,8,4]&13723  &   \\
53&108&[137,-356,328,-116,4,4]&13997  &   \\
54&111&[9,16,-4,-16,0,4]&14197  &   \\
55&118&[1,4,-8,-4,8,4]&15307  &   \\
56&122&[1,4,4,8,8,4]&15749  &   \\
57&131&[1,-4,4,0,-8,4]&17291  &   \\
58&132&[-3,8,-8,8,-8,4]&17317  &   \\
59&135&[1,0,0,-4,4,4]&17389  &  $\alpha$  \\
60&138&[-3,-20,-40,-20,4,4]&18077  &  $\alpha$ \\
61&144&[-247,552,-200,-136,4,4]&19211  &   \\
62&144&[-7,16,4,-16,0,4]&19211  &   \\
63&145&[-3,36,-144,192,-108,108]&19429  &   \\
64&147&[-11,-44,264,440,968,5324]&19531  &  $\alpha$ \\
65&152&[-3,-4,8,4,-8,4]&21211  & $\alpha$  \\
66&154&[-21167,-18908,-5996,-712,0,4]&21563  &   \\
67&156&[-3,-16,-28,-16,4,4]&21787  &   \\
68&157&[9,-32,40,-20,0,4]&22277  &   \\
69&158&[1,-4,8,-12,4,4]&22291  &   \\
70&162&[1,4,8,4,4,4]&22861  &   \\
71&163&[5,-36,76,-40,4,4]&23003  &   \\
72&165&[1909,-2652,1308,-236,-4,4]&23131  &   \\
73&165&[1,8,-12,-8,8,4]&23131  &   \\
74&169&[1,8,20,16,0,4]&24469  &   \\
75&169&[7309,-8208,3292,-504,4,4]&24469  &   \\
\hline
\end{tabular}}
\end{table}

\begin{table}[h]
\begin{caption} {Amiable fields among favorable fields} \label{data}\end{caption}
\centering{
{\tiny \begin{tabular}{|c|c|c|c|c|c|c|c|c|c|c|c|c|}
\hline
$j$ &0&1& 2&3&4& 5&6& 7&8&9& Total\\
\hline
A& 63563& 35507& 29047& 25450& 23684& 22099& 20500& 19505& 18773& 17981& 276109\cr
F& 63212& 35429& 28998& 25417& 23657& 22079& 20479& 19493& 18761& 17969& 275494\cr
$\alpha$& 7632& 4290& 3362& 2948& 2799& 2606& 2375& 2340& 2189& 2127&32668\cr 
\hline
\end{tabular}}}
\end{table}

\begin{table}[h]
\begin{caption}[FC2]
{Curves $y^2 = 1+4P(x)$ of large conductor with amiable fields}\label{LargeN}\end{caption}
\centering{
\small \begin{tabular}{|c|c||c|c|}
\hline
$P(x)$ & $N$ & $P(x)$  & $N$ \\
\hline
    [-9    0, -184, -136, -39, -1, 1]&  9882329341&  [10, 22, 7, -7, 0, 1] & 9891907261 \cr
     [11, 26, -7, -8, 0, 1] &       9893121157&    [11, 17, 3, -4, -2, 1]           &    9897613669\cr
   [-8428, -6910, -2025, -226, -1, 1] &    9898501189&     [-21, 6, 10, -1, 1, 1]     &    9911121709\cr  
    [87, -106, 56, -9, -2, 1]&   9934582709&   [-61, 50, 9, -13, 0, 1]      &    9982174061\cr
   [-33, 20, -1, 10, 1, 1] &    9987633941& [-2, -3, -15, -9, 0, 1]  &    9994370909\cr
\hline
\end{tabular}}
\end{table}

\end{document}